\newcommand{\stacks}[1]{[\href{https://stacks.math.columbia.edu/tag/#1}{St:~#1}]}
\newcommand{\arxiv}[1]{\href{https://arxiv.org/abs/#1}{\texttt{arXiv:\allowbreak #1}}}
\newcommand{\xhat}[1]{\bm\hat{#1}}
\newcommand{\xtilde}[1]{\bm\tilde{#1}}
\newcommand{\xbar}[1]{\bm\bar{#1}}
\newcommand{\et}{\textup{\'et}}
\newcommand{\nop}{\textup{op}}
\newcommand{\npf}{\textit{pf}}
\newcommand{\nan}{\textup{an}}
\newcounter{ctr} \numberwithin{ctr}{subsection}
\theoremstyle{plain}
\newtheorem{thm}[ctr]{Theorem}
\newtheorem{lem}[ctr]{Lemma}
\newtheorem{prp}[ctr]{Proposition}
\newtheorem{cor}[ctr]{Corollary}
\theoremstyle{definition}
\newtheorem{dfn}[ctr]{Definition}
\newtheorem{rmk}[ctr]{Remark}
\newtheorem{exa}[ctr]{Example}
\newtheorem*{conv}{Convention}
\newtheorem*{rmk*}{Remark}
\newtheorem*{intrormk*}{Remark}
\DeclareMathOperator{\GL}{GL}
\DeclareMathOperator{\coker}{coker}
\DeclareMathOperator{\Tor}{Tor}
\newcommand{\bF}{\mathbb{F}}
\newcommand{\bG}{\mathbb{G}}
\newcommand{\bQ}{\mathbb{Q}}
\newcommand{\bR}{\mathbb{R}}
\newcommand{\bZ}{\mathbb{Z}}
\newcommand{\cA}{\mathcal{A}}
\newcommand{\cB}{\mathcal{B}}
\newcommand{\cC}{\mathcal{C}}
\newcommand{\cF}{\mathcal{F}}
\newcommand{\cM}{\mathcal{M}}
\newcommand{\cN}{\mathcal{N}}
\newcommand{\cO}{\mathcal{O}}
\newcommand{\fm}{\mathfrak{m}}
\newcommand{\fp}{\mathfrak{p}}
\newcommand{\Fq}{\bF_{\!q}}
\newcommand{\Fqbar}{\smash{\xbar{\bF}}}
\newcommand{\uH}{\mathrm{H}}
\newcommand{\rF}[1]{#1(\!(z)\!)}
\newcommand{\rO}[1]{#1[[z]]}
\newcommand{\rFt}[1]{#1(\!(t^{-1})\!)}
\DeclareMathOperator{\ord}{ord}
\newcommand{\app}[2]{{}^{#1}{#2}}
\newcommand{\abar}{\bm\bar{a}}
\newcommand{\nell}{\fp}
\newcommand{\nellab}{p}
\newcommand{\npF}[1]{F_{#1}}
\newcommand{\nFinf}{\npF{\infty}}
\newcommand{\nFell}{\npF{\nell}}
\newcommand{\nKc}{K}
\newcommand{\nFloc}{\smash{\bm\hat{F}}}
\newcommand{\nOloc}{\cO}
\newcommand{\nlocm}{\fm}
\newcommand{\nUK}[1]{Y_{K,#1}}
\newcommand{\nFF}[1]{X_{K,#1}}
\newcommand{\nlUK}{\nUK{\nFloc}}
\newcommand{\nlFF}{\nFF{\nFloc}}
\newcommand{\nsigma}{\sigma}
\newcommand{\nlin}{\textit{lin}}
\newcommand{\taulin}{\tau^{\nlin}}
\newcommand{\unit}{\mathbbm{1}} 
\newcommand{\unitob}[1]{\mathbbm{1}_{#1}} 
\newcommand{\ncoef}{F}
\newcommand{\ncoint}{\cO}
\newcommand{\ncomax}{\fm}
\newcommand{\ncons}{\kappa}
\newcommand{\nresdeg}{d}
\newcommand{\ninfdeg}{d}
\newcommand{\nresf}{k}
\newcommand{\nadic}{\ncomax}
\newcommand{\nSig}[1]{#1^\nan}
\newcommand{\qcshet}[1]{#1_\et}
\newcommand{\shet}[1]{#1_\et^\tau}
\newcommand{\shetx}[1]{(#1)_\et^\tau}
\newcommand{\etsite}[1]{(\Spec #1)_\et}
\newcommand{\nslope}{\lambda}
\newcommand{\nslopealt}{\mu}
\newcommand{\nerank}{r}
\newcommand{\neshift}{h}
\newcommand{\nval}{v}
\newcommand{\nDval}{v_\nDN}
\newcommand{\nvaltau}{v_{\tau^{-1}}}
\newcommand{\nvalinf}{v_{\infty}}
\newcommand{\nOK}{R}
\newcommand{\nur}{u}
\newcommand{\nKur}{K^\nur}
\newcommand{\nKcur}{K^c}
\newcommand{\nRcur}{\nOK^c}
\newcommand{\symnB}{\cA}
\newcommand{\symnBz}{\cB}
\newcommand{\nVarB}{\symnBz}
\newcommand{\nBR}[1]{\symnB_{#1}}
\newcommand{\nB}{\nBR{\nOK}}
\newcommand{\nBK}{\nBR{K}}
\newcommand{\nFBR}[2]{\nBR{#2,#1}}
\newcommand{\nFBRz}[2]{\nBRz{#2,#1}}
\newcommand{\nFBK}[1]{\nFBR{#1}{K}}
\newcommand{\nFBKz}[1]{\nFBRz{#1}{K}}
\newcommand{\ncBRz}[1]{\nFBRz{\nFloc}{#1}}
\newcommand{\nQq}{\bQ_q}
\newcommand{\nqBRz}[1]{\nFBRz{\nQq}{#1}}
\newcommand{\nqBK}{\nFBK{\nQq}}
\newcommand{\nqBKz}{\nFBKz{\nQq}}
\newcommand{\npBRz}[2]{\nFBRz{\npF{#1}}{#2}}
\newcommand{\npBKz}[1]{\npBRz{#1}{K}}
\newcommand{\nIC}{M}
\newcommand{\nICb}{M}
\newcommand{\nBRz}[1]{\symnBz_{#1}}
\newcommand{\nBz}{\nBRz{\nOK}}
\newcommand{\nBKz}{\nBRz{K}}
\newcommand{\nBkz}{\nBRz{k}}
\newcommand{\nBzc}{\xhat{\symnBz}}
\newcommand{\nboun}[1]{#1^{\diamond}}
\newcommand{\nBboun}[1]{#1^{\,\diamond}}
\newcommand{\nBzboun}[1]{#1^{\,\diamond}}
\newcommand{\nBl}{\nBboun{\nBR{\nOK}}}
\newcommand{\nBzl}{\nBzboun{\nBRz{\nOK}}}
\newcommand{\nMloc}[1]{\nboun{#1}}
\newcommand{\nBllat}{\nboun{\nBlat}}
\newcommand{\rMloc}{\nboun{M}_\infty}
\newcommand{\nMzl}[1]{K\otimes_{\nOK} #1}
\newcommand{\nL}{L}
\newcommand{\nOL}{S}
\newcommand{\nMLzl}[1]{\nL\otimes_{\nOL} #1}
\newcommand{\nBLz}{\nBRz{\nOL}}
\newcommand{\nBLzl}{\nBzboun{\nBLz}}
\newcommand{\rMLloc}{\nboun{M}_{\nL,\infty}}
\newcommand{\roML}{\roM_{\nL}}
\newcommand{\roMLinf}{\ro{M}_{\nL,\infty}}
\newcommand{\rML}{M_{\nL}}
\newcommand{\notim}[1]{\otimes}
\newcommand{\nfrob}{\delta}
\newcommand{\ngamma}{\phi}
\newcommand{\nFgamma}[2]{[#1]^*#2}
\newcommand{\nShift}{\Delta}
\newcommand{\nbackm}{r}
\newcommand{\nlatlift}{U}
\newcommand{\nlatfam}{\mathcal{U}}
\newcommand{\nKo}{K^{\ngamma}}
\newcommand{\nsigmao}{\widetilde{\sigma}}
\newcommand{\tauo}{\widetilde{\tau}}
\newcommand{\nBRo}[1]{\widetilde{\symnB}_{#1}}
\newcommand{\nBRzo}[1]{\widetilde{\symnBz}_{#1}}
\newcommand{\nBo}{\nBRo{\nKo}}
\newcommand{\nBzo}{\nBRzo{\nKo}}
\newcommand{\nkprojo}{\nkproj{\ngamma}}
\newcommand{\nkProj}{\nkprojo^*}
\newcommand{\nKi}[1]{K^{(#1)}}
\newcommand{\nBi}[1]{\nBRo{\nKi{#1}}}
\newcommand{\nBzi}[1]{\nBRzo{\nKi{#1}}}
\newcommand{\nkmap}[1]{\ngamma\nfrob^{#1}}
\newcommand{\nkproj}[1]{[#1]}
\newcommand{\nkfold}[2]{#1^{[#2]}}
\newcommand{\nlatio}[2]{\ngen{\tauo^{#1}#2}}
\newcommand{\nBlat}{T}
\newcommand{\ngen}[1]{\langle{#1}\rangle}
\newcommand{\nlati}[2]{\ngen{\tau^{#1}#2}}
\newcommand{\nBKlat}{T_K}
\newcommand{\nGalZero}{G_0}
\newcommand{\nWCl}{\overline{W}_{\!K}}
\newcommand{\nWInv}{K^w}
\newcommand{\nZ}{\gamma_1^\bZ}
\newcommand{\nZCl}{\Gamma}
\newcommand{\ro}[1]{\xtilde{#1}}
\newcommand{\roM}{\ro{M}}
\newcommand{\roN}{\ro{N}}
\newcommand{\rolat}{\ro{\nBlat}}
\newcommand{\nAR}[1]{A_{#1}}
\newcommand{\nAK}{\nAR{K}}
\newcommand{\raMinf}[1]{\nBlat_{K,#1}}
\newcommand{\raoMinf}{\nBlat}
\newcommand{\rrM}{\xtilde{M}_k}
\newcommand{\rrMinf}{\xtilde{M}_{k,\infty}}
\newcommand{\roMa}[1]{#1_{\infty}}
\newcommand{\rrMa}[1]{#1_{k}}
\newcommand{\rrMinfa}[1]{#1_{k,\infty}}
\newcommand{\nDK}{\cC_K}
\newcommand{\nDKcur}{\cC_{\nKcur}}
\newcommand{\rtauO}[1]{#1[[\tau^{-1}]]}
\newcommand{\rtauF}[1]{#1(\!(\tau^{-1})\!)}
\newcommand{\nER}[1]{#1\langle\!\langle t\rangle\!\rangle}
\newcommand{\nEK}{\nER{K\!}}
\newcommand{\ncurve}{C}
\newcommand{\nDMOK}{\xtilde{E}}
\newcommand{\nev}{\textup{ev}}
\newcommand{\ncoev}{\mu}
\newcommand{\nKyu}{K}
\newcommand{\nKyualt}{{L}}
\newcommand{\nKyual}{\xbar{K}}
\newcommand{\nV}{V}
\newcommand{\nv}{v}
\newcommand{\naV}{T}
\newcommand{\nW}{W}
\newcommand{\nht}{r}
\newcommand{\nYuV}{V}
\newcommand{\nYuW}{J}
\newcommand{\nYuVal}{\xbar{V}}
\newcommand{\nYuWal}{\xbar{J}}
\newcommand{\nphi}{\phi}
\newcommand{\nfnel}{y}
\newcommand{\nyel}{u}
\newcommand{\ntauind}[1]{#1^\natural}
\newcommand{\nDN}{D}
\newcommand{\nOD}{\cO_D}
\newcommand{\naideal}{I}
\newcommand{\naring}[1]{\nBR{#1}/\naideal\nBR{#1}}
\newcommand{\nCentN}{D_\iota}
\newcommand{\neCentN}{\alpha}
\newcommand{\nTate}[1]{T(#1)}
\newcommand{\nTinf}[1]{\nTate{#1_\infty}}
\newcommand{\nlatN}{\nBlat_N}
\newcommand{\navg}[1]{#1_{\textup{av}}}
\newcommand{\navi}[1]{\nBlat_{#1}}
\newcommand{\Zhat}{\smash{\widehat{\bZ}}}
\newcommand{\isochar}{\ensuremath{\sim}}
\newcommand{\shortisochar}{\kern-1pt\ensuremath{\sim}\kern-0.5pt}
\newcommand{\genisosign}[1]{\smash{\raisebox{-0.65ex}{#1}}}
\newcommand{\isosign}{\genisosign{\isochar}} 
\newcommand{\shortisosign}{{\kern-1pt}\xrightarrow{\genisosign{\shortisochar}}} 
\newcommand{\antitr}{\alpha}
\DeclareMathOperator{\End}{End}
\DeclareMathOperator{\Gal}{Gal}
\DeclareMathOperator{\Hom}{Hom}
\DeclareMathOperator{\iHom}{\mathscr{H}\mathrm{om}}
\DeclareMathOperator{\Lie}{Lie}
\DeclareMathOperator{\rank}{rank}
\DeclareMathOperator{\Spec}{Spec}
\DeclareMathOperator{\tr}{tr}
\DeclareMathOperator{\Gr}{Gr}
\DeclareMathOperator{\Proj}{Proj}
\newcommand{\ngcoc}[1]{C(G,\,#1)}
\newcommand{\nArt}{\textup{Art}}
\DeclareMathOperator{\mlim}{\lim\nolimits}
\newcommand{\spr}{\fp}
\begin{document}
\title[Tate modules of isocrystals]{Tate modules of isocrystals and \\ good reduction of Drinfeld modules}
\author{M. Mornev}
\address{ETH Z\"urich, D-MATH, R\"amistrasse 101, 8092 Z\"urich, Switzerland}
\begin{abstract}%
A Drinfeld module has a $\nell$-adic Tate module not only for every finite place $\nell$
of the coefficient ring but also for $\nell = \infty$.
This was discovered by J.-K.~Yu in the form of a representation of the Weil group.

Following an insight of Taelman we construct the $\infty$-adic Tate module
by means of the theory of isocrystals. 
This applies more generally to pure $A$-motives and to pure $F$-isocrystals of $p$-adic cohomology theory.

We demonstrate that a Drinfeld module has good reduction if and only if
its $\infty$-adic Tate module is unramified.
The key to the proof is the theory of Hartl and Pink which 
gives an analytic classification of
vector bundles on the Fargues-Fontaine curve in equal characteristic.%
\end{abstract}%
\maketitle%
\setcounter{tocdepth}{1}%
\tableofcontents%
\section{Introduction}

\subsection{The good reduction criterion}\label{ss:introcrit}

Let $\Fq$ be a finite field of cardinality $q$.
Fix a function field $F$ over $\Fq$ and a place~$\infty$ of~$F$.
Let $\nFinf$ be the completion of $F$ at $\infty$
and $A \subset F$ the ring of elements which are integral outside~$\infty$.
Let $K$ be a field over $\Fq$
and $E$ a Drinfeld $A$-module of rank $r$ over $\Spec K$.
In this setting J.-K.~Yu~\cite{yu} constructed a representation $\rho_\infty\colon W_K \to \nCentN^\times$ where
\begin{itemize}
\item $W_K$ is the Weil group of $K$, see Definition~\ref{dfnweilintro},

\item $\nCentN$ is a central division algebra over $\nFinf$ of Hasse invariant $-\frac{1}{r}$.
\end{itemize}
In this text we shall define the $\infty$-adic Tate module of the motive of $E$.
We shall see that the representation $\rho_\infty$ comes from the action of $W_K$ on this object.
 
Suppose that we are given a valuation $\nval\colon K^\times \twoheadrightarrow \bZ$
with the ring of integers $\nOK$.
Let $\iota_E\colon A \to K$ be the morphism induced by the action of $A$
on the tangent space of $E$ at~$0$.
We assume that $\iota_E(A) \subset \nOK$.
It then makes sense to ask whether $E$ has good reduction over $\Spec\nOK$.
We shall establish the following criterion:

\begin{thm}\label{introcrit}%
The Drinfeld module $E$ has good reduction over $\Spec\nOK$ if and only if
the representation
$\rho_\infty$ is unramified at $\nval$.%
\end{thm}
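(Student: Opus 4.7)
The plan is to argue the two directions separately. The forward direction should be formal from the construction of $\rho_\infty$; the reverse direction is the substantive content and will use the Hartl--Pink classification in an essential way.

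Forward direction. Assume $E$ extends to a Drinfeld $A$-module $\cE$ over $\Spec\nOK$. Then the pure $A$-motive of $E$ extends to an $A$-motive over $\nOK$, and with it the pure $F$-isocrystal from which the $\infty$-adic Tate module is built. In particular that isocrystal is already defined over the residue field, so the $W_K$-action on its Tate module factors through the quotient $W_K \twoheadrightarrow \Zhat$ corresponding to the residue field: inertia acts trivially on any datum already living over the residue field. Hence $\rho_\infty$ is unramified at $\nval$.

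Reverse direction. Assume $\rho_\infty$ is unramified at $\nval$, and replace $K$ by its $\nval$-adic completion, which preserves both the hypothesis and the conclusion. The $\infty$-adic Tate module has been constructed as an invariant of the vector bundle on the equal-characteristic Fargues--Fontaine curve $\nFF{\nFinf}$ attached to the motive of $E$, and the $W_K$-action on the Tate module reflects the $W_K$-action on this bundle. By Hartl--Pink, such a bundle --- together with its slopes, its $A$-action, and the tangent-type datum coming from $\iota_E$ --- can be reconstructed from its \emph{\'etale} part plus slope data. Unramifiedness of $\rho_\infty$ therefore forces the entire bundle with its extra structure to descend to the residue field $k$. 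Applying the equivalence between Drinfeld $A$-modules and effective pure $A$-motives over $k$ produces a Drinfeld module $\xbar{E}$ over $k$ with the correct motive, and lifting through the deformation theory over $\nOK$ yields a model $\cE$ of $E$ over $\Spec\nOK$, which is precisely good reduction.

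The principal obstacle is the middle portion of the reverse direction: converting the unramifiedness of the $W_K$-action on the Tate module --- an essentially \'etale statement at the generic point --- into a genuine descent of the entire $F$-isocrystal, together with all its extra structure, to the residue field. Integral structure over $\nOK$ cannot be read off from $\rho_\infty$ directly, since the $\infty$-adic place is not a finite place; it must instead be extracted from the geometry of the Fargues--Fontaine curve. The Hartl--Pink classification is exactly the dictionary that makes this extraction possible, so the crux of the argument will be verifying that the full bundle, and not just its \'etale realization, descends under the unramifiedness hypothesis.
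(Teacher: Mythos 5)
Your forward direction is essentially right in outline, though it quietly uses two nontrivial inputs the paper has to establish: that good reduction of $E$ is equivalent to the existence of a good model of its motive (Theorem~\ref{gardeyn}), and that for a pure isocrystal with an integral model the Tate module is unramified (Theorem~\ref{tatered}, which for nonzero slope requires a twisting argument to reduce to slope~$0$ and then Katz's equivalence between $\sigma$-modules and lisse sheaves). Note also that a $\nBz$-model lives over $\nOK$, not over the residue field, so ``inertia acts trivially on a datum over the residue field'' is not quite the statement being proved.

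The reverse direction has a genuine gap, and the mechanism you propose points in the wrong direction. First, unramifiedness of $\rho_\infty$ does \emph{not} force the bundle to descend to the residue field; what it gives (again via the slope-$0$ reduction of Theorem~\ref{tatered}) is only that the isocrystal $M_\infty$ admits a model over $\rF{\nOK}$. The remaining, substantive step is to pass from good reduction of $M_\infty$ to good reduction of $E$, and the obstruction is concentrated in the Tate-uniformization part of $M$: Gardeyn's analytic semistability theorem produces (after a finite separable extension, reducing to stable reduction) an exact sequence $0\to\cN\to\cM\to\cM_1\to 0$ of $\sigma$-bundles in which $\cN$ is potentially trivial of slope~$0$ and vanishes exactly when $E$ has good reduction. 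The paper kills $\cN$ by showing that for a \emph{discrete} valuation the functor from pure isocrystals of good reduction to $\sigma$-bundles is fully faithful (Theorem~\ref{hpzero}), which holds precisely because $\cM_\nslope^\tau$ is then trivial for $\nslope\neq 0$ --- equivalently, the equal-characteristic Fargues--Fontaine curve is \emph{empty} over such a field (Remark~\ref{flop}). This is the opposite of the situation over an algebraically closed field, where the Hartl--Pink classification applies but the functor is \emph{not} full; so invoking that classification, or any ``reconstruction from the \'etale part plus slope data,'' cannot close the gap. Finally, ``descend to $k$ and lift by deformation theory'' cannot yield good reduction even granting the descent: a deformation of a Drinfeld module over $k$ need not have generic fiber $E$. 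One must produce a model of $E$ itself, which the paper does by a direct reflexive-hull and gluing construction (Lemma~\ref{makemod}, Theorem~\ref{gardeyn}) once $\cN=0$ forces the stable rank to equal $r$.
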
%
The morphism $\iota_E$ is not assumed to be injective
so the Drinfeld module $E$ may have special characteristic.
The field $K$ need not be $\nval$-adically complete and
the residue field of $\nval$ may be infinite.

\begin{cor}%
The image of inertia under $\rho_\infty$ is infinite for every Drinfeld module
which does not have potential good reduction over $\Spec\nOK$.\qed%
\end{cor}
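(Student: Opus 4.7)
The corollary is the contrapositive of potential good reduction implying finite image of inertia, so the plan is to assume that $\rho_\infty(I_K)$ is finite (where $I_K \subset W_K$ denotes the inertia at $\nval$) and deduce that $E$ has good reduction after a finite extension of $K$.

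Since $\rho_\infty$ is a continuous representation and $\rho_\infty(I_K)$ is finite and discrete in $\nCentN^\times$, the kernel $N = \ker(\rho_\infty|_{I_K})$ is an open subgroup of $I_K$ of finite index. I would first replace $N$ by the intersection of its $W_K$-conjugates so as to assume it is normal in $W_K$. Then I would invoke the standard dictionary between open subgroups of the Weil group and finite separable extensions to produce a finite extension $K'/K$ equipped with an extension $\nval'$ of $\nval$ such that $I_{K'} \subseteq N$; in other words, $\rho_\infty|_{W_{K'}}$ is unramified at $\nval'$.

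The second step is a direct application of Theorem~\ref{introcrit} to the base change $E \otimes_K K'$ with its valuation ring $\nOK'$. The $\infty$-adic representation attached to $E \otimes_K K'$ is the restriction $\rho_\infty|_{W_{K'}}$, which has been arranged to be unramified at $\nval'$. The theorem therefore gives good reduction of $E \otimes_K K'$ over $\Spec \nOK'$, i.e.\ potential good reduction of $E$, contradicting the hypothesis.

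The only delicate point is the passage from an open finite-index subgroup of $I_K$ to a genuine finite extension of $K$ (rather than of the strict henselisation), which must rest on the precise definition of $W_K$ recalled in Definition~\ref{dfnweilintro} and on the compatibility of Weil groups under finite extensions. Once that point is settled, the rest is a purely formal deduction from Theorem~\ref{introcrit}.
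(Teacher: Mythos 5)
Your argument is correct and is exactly the deduction the paper intends: the corollary carries a \qed{} in its statement precisely because it is the contrapositive of Theorem~\ref{introcrit} applied after a finite base change, which is what you do. The one point you flag is settled in a line — continuity of $\rho_\infty$ makes $N=\ker(\rho_\infty|_{I_K})$ open in $I_K$ for the subspace topology from $G_K$, so $N\supseteq I_K\cap H$ for some open subgroup $H\leqslant G_K$, and for $K'=(K^s)^H$ with the induced valuation one has $I_{K'}=I_K\cap H$ (no need to first normalize $N$ in $W_K$, and note that $N$ is a subgroup of $I_K$, not an open subgroup of $W_K$, so the Galois correspondence should be applied to $H\leqslant G_K$ rather than to $N$ itself).
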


There are good reduction criteria in terms of every
finite place of $F$, that is,
a nonzero prime ideal $\nell \subset A$.
\begin{enumerate}%
\item\label{takacrit}%
Suppose that $\iota_E(\nell)$ does not lie in the maximal ideal of $\nOK$.
Then $E$ has good reduction
if and only if
the $\nell$-adic Tate module $T_\nell E$ is unramified.

\item\label{hhcrit}%
More generally,
$E$ has good reduction
if and only if
the completion of the motive of $E$ at $\nell$
arises from a local shtuka over $\nOK$.
\end{enumerate}%
The first is a theorem of Takahashi~\cite{takahashi}
and the second is a result of Hartl and H\"usken \cite{hartl-huesken}.
Theorem~\ref{introcrit} extends this list to $\nell = \infty$.

In the theory of abelian varieties
the result \eqref{takacrit}
corresponds to the criterion of N\'eron-\hspace{0pt}Ogg-\hspace{0pt}Shafarevich and
\eqref{hhcrit} to the $p$-adic criterion of Grothendieck and de~Jong.
Theorem~\ref{introcrit} seems to lack a counterpart.
For abelian varieties over any base the role of the ring $A$ 
is played by the ring of integers $\bZ$.
There is no $\nellab$-adic Tate module for the place $\nellab = \infty$ of this ring.

\subsection{Tate modules for pure isocrystals}\label{ss:introtate}
The first result of this paper is a construction of Tate modules for pure isocrystals
developed in Section~\ref{sec:tate}.
This construction underlies the definition of the $\infty$-adic Tate module 
and is essential to the proof of Theorem~\ref{introcrit}.

Let us discard the choices of $F$, $K$ and $\nval$ above. 
Let $\nFloc$ be a local field over $\Fq$
with the ring of integers $\nOloc$ and the maximal ideal $\nlocm$.
For each field $\nKc/\Fq$ we set
\begin{equation*}
\ncBRz{\nKc} = \big(\mlim_{n>0} \nKc\otimes_{\Fq} \ncoint/\ncomax^n\big) \otimes_{\ncoint} \nFloc.
\end{equation*}
This is the completed tensor product of $K$ with the discrete topology and
$\nFloc$ with the $\nadic$-adic topology.
We denote this ring
by $\nBRz{\nKc}$ when the choice of $\nFloc$ is clear.

The $q$-Frobenius of $K$ induces an endomorphism
$\nsigma\colon \nBRz{\nKc}\to\nBRz{\nKc}$
by functoriality of the construction $\nKc \mapsto \nBRz{\nKc}$.
Let $\nsigma^*$ be the corresponding functor of extension of scalars.

In the context of this introduction an~\emph{isocrystal} is
a finitely generated $\nBKz$-module $M$ equipped with an isomorphism
$\nsigma^* M \xrightarrow{\isosign} M$.
A morphism of isocrystals is a $\nsigma$-equivariant morphism of the underlying modules.
When the field $K$ is algebraically closed the category of isocrystals is semisimple
\cite[Theorem~2.4.5]{laumon}.
Its simple objects are classified up to isomorphism by a single invariant,
a rational number called the \emph{slope}.
An isocrystal is \emph{pure of slope $\nslope$} if 
it decomposes into a direct sum of simple isocrystals of slope $\nslope$
over an algebraic closure of $K$.

Fix a field $K$ over $\Fq$ and $\nslope \in \bQ$.
The Tate module functor for pure isocrystals of slope $\nslope$
depends on the choice of two objects:
\begin{enumerate}
\renewcommand{\theenumi}{\roman{enumi}}%
\item%
A separable closure $K^s$ of $K$.
Let $\Fqbar$ be the algebraic closure of~$\Fq$ in~$K^s$.

\item%
A pure and simple isocrystal $N$ of slope $\nslope$ over $\nBRz{\Fqbar}$.
\end{enumerate}%
The isocrystal $N$ is unique up to a non-unique isomorphism
and is similar in this respect to the separable closure~$K^s$.

\begin{dfn}%
The \emph{Tate module} of a pure isocrystal $\nIC$ of slope $\nslope$ is the $\Hom$~set
\begin{equation*}
\nTate{\nIC} = \Hom(\nBRz{K^s}\otimes_{\nBRz{\Fqbar}} N,\:\nBRz{K^s\!}\!\otimes_{\nBKz} \nIC).
\end{equation*}
computed in the category of $\nBRz{K^s}$-isocrystals.%
\end{dfn}
The endomorphism ring $\nDN$ of $N$ is a central division algebra over $\nFloc$.
Its Hasse invariant is the residue class of $-\nslope$ in $\bQ/\bZ$.
The Tate module $\nTate{\nIC}$ is naturally a right $\nDN$-vector space. 
Its dimension is finite and is determined by the rank of $\nIC$ over $\nBKz$. 

We shall equip $\nTate{\nIC}$ with an action of a subgroup of $G_K = \Gal(K^s/K)$.
Let us identify the group $\Gal(\Fqbar/\Fq)$ with $\Zhat$
using the \emph{geometric} Frobenius as a generator.
Restriction to $\Fqbar$ defines a homomorphism
$\ord\colon G_K \to \Zhat$.

\begin{dfn}\label{dfnweilintro}%
The \emph{Weil group} $W_K$ is $\ord^{-1}(\bZ)$ with the topology
in which $\ord^{-1}\{0\}$ is an open subgroup with its 
profinite topology.
\end{dfn}

The structure isomorphism $\taulin_N\colon \nsigma^* N \xrightarrow{\isosign} N$ 
induces an additive map
\begin{equation*}
\tau_N\colon N \to N, \quad x \mapsto \taulin_N(1\otimes x)
\end{equation*}
where $1\otimes x$ denotes the element of $\nsigma^* N = \nBRz{\Fqbar}\otimes_{\nBRz{\Fqbar}} N$
corresponding to $x$.
This map is bijective and commutes with every endomorphism of $N$.


\begin{dfn}\label{dfnweilactintro}%
For every $\gamma \in W_K$ and every $f \in \nTate{\nIC}$
we define a morphism
$\gamma(f)\colon \nBRz{K^s}\otimes_{\nBRz{\Fqbar}} N \to \nBRz{K^s}\otimes_{\nBKz} \nIC$
by the formula
\begin{equation*}
1 \otimes x \mapsto \gamma\big(f(1\otimes \tau_N^{\ord\gamma} x)\big). 
\end{equation*}%
Here $G_K$ acts on $\nBRz{K^s}\otimes_{\nBKz} M$ by functoriality
of $\nBRz{K^s}$.
\end{dfn}
Lemma~\ref{actwd} shows that $\gamma(f)$ is a well-defined morphism of isocrystals.
%
The resulting action of $W_K$ on $\nTate{\nIC}$
is $\nDN$-linear by construction. One can prove that this action is continuous
in the $\nadic$-adic topology of $\nTate{\nIC}$.

\begin{rmk}\label{introtate0}%
In the case $\nslope = 0$ the algebra $\nDN$ is equal to $\nFloc$.
The Tate module $\nTate{\nIC}$ is naturally isomorphic to the space of invariants
\begin{equation*}
\{ m \in \nBRz{K^s}\otimes_{\nBKz} \nIC \mid \tau_M(m) = m \}
\end{equation*}
where $\tau_M$ is the additive endomorphism 
induced by the isomorphism $\nsigma^* M \xrightarrow{\isosign} M$.
The Galois group acts on this space via $K^s$,
and the induced action of $W_K$ 
is exactly the one of Definition~\ref{dfnweilactintro}.
This construction of the Tate module is well-known.
\end{rmk}

The main result on the construction $\nIC \mapsto \nTate{\nIC}$ in this paper is Theorem~\ref{tate}.
Under assumption that $W_K$ is dense in $G_K$
this theorem shows that 
the Tate module functor is fully faithful
and describes its essential image.

The isocrystals of this text are related to $F$-isocrystals of $p$-adic cohomology theory.
The construction of Tate modules 
and Theorem~\ref{tate}
extend naturally
to pure $F$-isocrystals over perfect fields.
See Remark~\ref{tatefisoc} for details.%

The Tate module $\nTate{M}$
is a hybrid of the Tate module
introduced by Gross~\cite{gross} for $p$-divisible groups and a construction of J.-K.~Yu~\cite{yu}
for Drinfeld modules.
The underlying $D$-vector space is the one of \cite[\S2]{gross} 
but the Galois action 
is modelled on \cite[Construction~2.5]{yu}.
The key idea is to define
an action of the Weil group rather than the full Galois group.
This is due entirely to J.-K.~Yu.%

Let us explain the role of the Weil group.
Suppose that $\nFloc = \Fq(\!(z)\!)$.
Then there is a canonical pure and simple isocrystal $N$ of slope $\nslope$ over~$\nBRz{\Fq}$,
defined as follows.
Write $\nslope = \tfrac{s}{r}$ in lowest terms,
set
$N = \bigoplus_{i=1}^r \nBRz{\Fq} e_i$
and define an isomorphism $\taulin_N\colon \nsigma^* N \xrightarrow{\isosign} N$
by the formula
\begin{equation*}
\taulin_N(1 \otimes e_1) = e_{2}, \quad \dotsc,
\quad\taulin_N(1 \otimes e_{r-1}) = e_r,
\quad\taulin_N(1 \otimes e_r) = z^s \cdot e_1.
\end{equation*}
%
One can try to define the Tate module by the formula
\begin{equation*}
\nTate{\nIC} = \Hom(\nBRz{K^s}\otimes_{\nBRz{\Fq}} N, \:\nBRz{K^s}\otimes_{\nBKz} \nIC).
\end{equation*}
The full Galois group $G_K$ acts on this by functoriality of~$\nBRz{K^s}$.
However such a definition has a hidden defect:
The isomorphism type of 
the $G_K$-module $\nTate{\nIC}$
depends on the choice of the uniformizer $z\in\nFloc$.
The reason is that a change of $z$ may alter the isomorphism type of $N$. 

Definition~\ref{dfnweilactintro} removes this non-canonicity
at the expense of restricting to the Weil group $W_K$.
This group is not always dense in $G_K$,
and in such cases our Tate modules for slope $\nslope = 0$
give less information than the classical construction of
Remark~\ref{introtate0}.
Nonetheless $W_K$ is dense in $G_K$
in many situations of interest to number theory, such as
the cases when $K$ is finitely generated over $\Fq$
or is local. Whenever the density condition holds
Theorem~\ref{tate} shows that pure isocrystals can be reconstructed
functorially from their Tate modules.

\subsection{Motives and isocrystals}\label{ss:motiso}
To explain 
the relation with the work of J.-K.~Yu \cite{yu}
we return to the setting of Section~\ref{ss:introcrit}
where
$E$ is a Drinfeld $A$-module of rank $r$ over $\Spec K$.

We shall use the motive of $E$ as introduced by Anderson~\cite{anderson}.
Set $\nAK = K\otimes_{\Fq} A$.
Let $\nsigma\colon \nAK \to \nAK$ be the homomorphism $x \otimes a \mapsto x^q \otimes a$
and let $\nsigma^*$ be the functor of extension of scalars along~$\nsigma$.
Consdier the $\nAK$-module
\begin{equation*}
M = \Hom_{\Fq}(E,\,\bG_{a,K})
\end{equation*}
of $\Fq$-linear group scheme morphisms
on which $K$ acts via $\bG_{a,K}$ and $A$ acts via $E$.
The \emph{motive} of $E$ is the pair consisting of $M$ and the morphism
$\nsigma^* M \to M$ induced by the $q$-Frobenius of $\bG_{a,K}$,
see Definition~\ref{dfnmot}.
It is known from the work of Drinfeld~\cite{drinfeld-commrgs} that $M$ is locally free of rank $r$ over $\nAK$.

For every place $\nell$ of $F$ we denote the corresponding local field by $\nFell$
and set
\begin{equation*}
M_\nell = \npBKz{\nell} \otimes_{A_K} M.
\end{equation*}
This may be viewed as the $\nell$-adic completion of the rational motive $F\otimes_A M$.
The morphism $\nsigma^* M \to M$ gives rise to an isomorphism $\nsigma^* M_\nell \xrightarrow{\isosign} M_\nell$.
So we obtain an isocrystal $M_\nell$ for each place~$\nell$ including $\nell = \infty$.

As before we denote by $\iota_E\colon A \to K$ the morphism induced by the action of $A$
on the tangent space of $E$ at~$0$.
It is known that the isocrystal $M_\nell$ is pure for each $\nell \ne \ker(\iota_E)$.
Invoking the construction of Section~\ref{ss:introtate} we get a Tate module $\nTate{M_\nell}$
for every such place $\nell$. In particluar 
we obtain the Tate module $\nTinf{M}$.

If $\nell$ is different from $\ker(\iota_E)$ and $\infty$ then the slope of $M_\nell$ is~$0$.
By Remark~\ref{introtate0}
the action of the Weil group on $\nTate{M_\nell}$ extends canonically
to a continuous action of the full Galois group. 
It follows from \cite[Proposition~1.8.3]{anderson} that
the resulting representation is dual to the rational Tate module
$V_\nell E$: There~is a natural Galois-equivariant isomorphism
\begin{equation*}
\nTate{M_\nell} \xrightarrow{\isosign} \Hom_{\nFell}(V_\nell E, \:\Omega_\nell) 
\end{equation*}
where $\Omega_\nell = \nFell\otimes_F \Omega_{F/\Fq}$
is the $\nell$-adic completion of the space of rational K\"ahler differentials $\Omega_{F/\Fq}$.

Drinfeld proved in \cite{drinfeld-commrgs} that the isocrystal $M_\infty$ is pure of slope~$-\frac{1}{r}$.
So the Tate module $\nTinf{M}$ is naturally a right vector space 
over a central division algebra $\nDN$ of invariant~$\frac{1}{r}$ over $\nFinf$.
Since $M$ is of rank $r$ over $\nAK$
it follows that $\nTinf{M}$ is of dimension~$1$ over $\nDN$.
Theorem~\ref{tateyu} says that, up to technical details, $\nTinf{M}$ is
the representation introduced by J.-K.~Yu~\cite{yu}.

The construction of~\cite{yu} has nothing to do with torsion points of $E$.
Nonetheless we see that $\nTinf{M}$ and the Tate modules at finite places arise in the same way from the motive of $E$.
This puts the construction of J.-K.~Yu on equal footing with 
the other Tate modules.

One must add that the details of Theorem~\ref{tateyu} hide a difference:
The representation of J.-K.~Yu depends on a choice of extra parameters
but the Tate module $\nTinf{M}$ does not.
See Remark~\ref{yudeps} and the discussion before it.

In the same way we get the notion of the $\infty$-adic Tate module for pure Anderson motives
and, more generally, pure $\tau$-sheaves, see Section~\ref{ss:andpure}.
The main theorem of \cite{yu} extends readily to this setting.
A related result of Zywina \cite[Corollary~4.9]{zywina-satotate}
implies the full faithfulness part of an $\infty$-adic Tate conjecture for pure $\tau$-sheaves.
I plan to discuss all this in a forthcoming paper.

\subsection{Tate modules, \texorpdfstring{$\sigma$}{sigma}-bundles and the curve}

To prove the good reduction criterion 
we shall study ramification properties of Tate modules. 
Fix a valuation
$\nval\colon K^\times \twoheadrightarrow \bZ$
and let $\nOK$ be its ring of integers.
In Section~\ref{ss:red} we introduce the notion of good reduction over $R$ for isocrystals.
We show that the Tate module of a pure isocrystal is unramified at $\nval$
if and only if the isocrystal has good reduction 
(Theorem~\ref{tatered}).

The key to the proof of Theorem~\ref{introcrit} is the fact that every
isocrystal of good reduction gives rise to a $\sigma$-bundle of Hartl-Pink
\cite{hartl-pink}.
Let us explain how this works.
%
For expository purposes we consider a valuation $\nval\colon K^\times \to \bR$ which is not necessarily discrete.
We assume that $K$ is complete with respect to $\nval$ and work with the norm $|x| = q^{-\nval(x)}$.
We denote by $\nFloc$ the local field $\rF{\Fq}$.

A $\sigma$-bundle of Hartl and Pink
\cite[Definition~2.1]{hartl-pink}
is a $\nsigma$-equivariant vector bundle on the punctured open unit disk 
\begin{equation*}
\nlUK = \{ z \in K \mid 0 < |z| < 1 \}
\end{equation*}
where $\nsigma\colon \nlUK\to\nlUK$ is the $q$-Frobenius.
The field $\nFloc$ appears in this setting as the subring
of $\sigma$-invariant analytic functions on $\nlUK$.
Hartl and Pink obtained a complete classification of $\sigma$-bundles for algebraically closed
fields~$K$ \cite[Theorem~11.1]{hartl-pink}. 

If $K$ is perfect then $\nsigma$ is an automorphism of $\nlUK$.
Following \cite[Remark 2.2]{hartl-pink}
one can then think of $\sigma$-bundles as vector bundles on the quotient
\begin{equation*}
\nlFF = \nlUK / \nsigma^{\bZ}.
\end{equation*}
Fargues and Fontaine 
\cite{ff-vbrep}, \cite[Sections~6.5 and 7.3]{lacourbe}
constructed $\nlFF$ as a scheme over $\Spec\nFloc$.
This scheme is not of finite type, 
but is separated integral noetherian regular of dimension~$1$ and has genus~$0$
in the sense that $H^1(\nlFF) = 0$. It also has a completeness property.
The classification of vector bundles on $\nlFF$ is similar to Grothendieck's classification of vector bundles on the projective line.
This parallel was observed already by Hartl and Pink and plays a major role in their theory.
\begin{rmk*}%
Fargues and Fontaine introduced a complementary $p$-adic construction: 
there is a punctured open unit disk $\nUK{\bQ_p}$ and its quotient
$\nFF{\bQ_p}$ is a scheme over $\Spec\bQ_p$ which behaves as a complete curve of genus~$0$.
This scheme has a deep connection to $p$-adic Hodge theory.%
\end{rmk*}

By assumption $\nFloc = \rF{\Fq}$ and so $\nFBKz{\nFloc}= \rF{K}$
is the ring of Laurent series with finite principal parts.
Isocrystals of good reduction 
have models over $\rF{\nOK}$. 
All the elements of $\rF{\nOK}$ converge on the punctured open unit disk. 
This allows us to define a base change functor $\nICb \mapsto \nSig{\nICb}$ from
pure isocrystals of good reduction to $\sigma$-bundles.

The main result in this direction is Theorem~\ref{hpzero}:
if the valuation $\nval$ is discrete then the functor
$\nICb\mapsto \nSig{\nICb}$ is fully faithful. 
%
To prove this theorem we show that the Fargues-Fontaine construction of $\nlFF$
yields an empty scheme whenever the valuation $\nval$ is discrete (Remark~\ref{flop}).
In contrast, if $K$ is perfect then $\nlFF$ is not empty and the functor $\nICb \mapsto \nSig{\nICb}$ is not full.%

\subsection{Good reduction of Drinfeld modules}
Let us explain how Theorem~\ref{hpzero} implies the good reduction criterion.
As before we denote by $M$ the motive of the Drinfeld module $E$ over $\Spec K$.
For simplicity we assume that $\nFinf = \rF{\Fq}$.

To speak of the reduction type of $E$ over $\Spec\nOK$
we must assume that $\iota_E(A)$ is contained in $\nOK$.
The motive $M$ then gives rise to a $\nsigma$-bundle $\cM$ on $\nUK{\nFinf}$.
Let us further assume that $E$ has stable reduction.
The results of Drinfeld \cite[\S7]{drinfeld-ell} and Gardeyn \cite[Theorem~1.2]{gardeyn-anmor} imply that
there is a saturated subbundle $\cN \subset \cM$
with the following properties:
\begin{itemize}
\item
$\cM/\cN$ arises from a pure isocrystal of good reduction,

\item
$\cN$ arises from a pure isocrystal $N$ of slope $0$ and good reduction,

\item
$\cN = 0$ if and only if $E$ has good reduction.
\end{itemize}
If $M_\infty$ has good reduction then Theorem~\ref{hpzero} implies that
the inclusion $\cN \hookrightarrow \cM$
arises from a morphism of isocrystals $N \to M_\infty$.
Such a morphism is zero since the slopes of $N$ and $M_\infty$ are different.
Hence $\cN = 0$ and $E$ has good reduction.

 
\subsection{Good reduction of \texorpdfstring{$A$}{A}-motives}
The good reduction criterion of Hartl and H\"usken \cite{hartl-huesken} applies to arbitrary $A$-motives.
The criterion of Takahashi~\cite{takahashi} was extended to $A$-motives by Gardeyn \cite{gardeyn-goodred}.
However the criterion of this paper does not yet work in such generality.

Let $\cM$ be the $\nsigma$-bundle over $\nUK{\nFinf}$ associated to $M$.
The analytic semistablity theorem of Gardeyn \cite[Theorem~4.13]{gardeyn-anstruct}
shows that after a finite separable extension of $K$ the $\nsigma$-bundle $\cM$
acquires a decreasing filtration
\begin{equation*}
\cM = \cM_0 \supset \cM_1 \supset \cdots \supset \cM_{n-1} \supset \cM_n = 0
\end{equation*}
where every subquotient $\cM_i/\cM_{i+1}$ is not zero and arises from an isocrystal $M_i$ of good reduction.
The motive $M$ has potential good reduction if and only if $\cM_1 = 0$.
The filtration on a motive of a Drinfeld module has $\cM_2 = 0$
and $\cM_1 = \cN$. 

Assume that the isocrystal $M_\infty$ is pure and that its Tate module $\nTinf{M}$ is unramified.
Using the results and the methods of this paper one should be able to prove
that the motive $M$ has good reduction
provided that 
one of the isocrystals $M_0$, $M_{n-1}$ enjoys the following properties:
\begin{enumerate}
\item\label{critnewton}
The isocrystal $M_i$ 
is pure,

\item\label{critineq}
The slope of $M_i$ is different from the slope of $M_\infty$ when $n > 0$.
\end{enumerate}
Apart from the case of Drinfeld modules
it is not known if the filtration of Gardeyn's theorem
has the properties \eqref{critnewton}, \eqref{critineq}.
The general case is a subject of current research.

\subsection*{Acknowledgements}
This paper was motivated by an insight of Lenny Taelman.
He predicted that the representation of J.-K.~Yu~\cite{yu} should
arise by a functorial construction 
from the isocrystal $M_\infty$. 

Theorem~\ref{tate} is a result of a discussion with Richard Pink.
He sketched the statement and guessed an inverse to the Tate module functor.

Ambrus P\'al attracted my attention to the work of Watson~\cite{watson}.
Conversations with Ambrus led to the discovery of Example~\ref{bzlmixed}.
Urs Hartl aided me in understanding the details of his paper~\cite{hartl-annals}.

I would like to thank Urs Hartl, Ambrus P\'al, Richard Pink, Lenny Taelman
and the anonymous reviewer of this paper.
My research is supported by ETH Z\"urich Postdoctoral Fellowship Program
and Marie Sk\l{}odowska-Curie Actions COFUND Program.

\section{Preliminaries}

\subsection{Notation and conventions}
We shall use The Stacks Project \cite{stacks} as a source for commutative algebra and the theory of schemes.
References to The Stacks Project
have the form [St:~$wxyz$] where ``$wxyz$'' is a combination of four letters and numbers.
The corresponding item is located at \url{https://stacks.math.columbia.edu/tag/}$wxyz$.

We follow the convention of \cite{stacks} with respect to limits and colimits.
So in this paper a \emph{limit} is what is otherwise known as a \emph{projective} (or \emph{inverse}) limit.
The limits will be denoted by the symbol ``$\lim$''.

All the rings in this text are associative and unital.
The rings are also commutative by default.
When the assumption of commutativity is not made we speak explicitly of associative unital rings.

Given a ring $R$ we denote by $\rF{R}$ the ring $\rO{R}[z^{-1}]$ of Laurent series over $R$ with finite principal parts.

\begin{dfn}%
Let $f\colon R \to S$ be a homomorphism of associative unital rings.
Given a left $R$-module $M$ we denote by $f^\ast M$ the left $S$-module $S\otimes_R M$.
Given a left $S$-module $N$ we denote by $f_\ast N$ the left $R$-module obtained
by restriction of scalars along $f$.%
\end{dfn}

With this choice of notation the functor $f^*$ is left adjoint to $f_*$.

\subsection{Difference rings}
\begin{dfn}[\cite{kedlaya-diff}, Definition~14.1.1]%
A \emph{difference ring} is a pair $(R,\nsigma)$ consisting of a ring $R$
and an endomorphism $\nsigma\colon R \to R$.%
\end{dfn}

\begin{conv}%
We shall work with a single difference ring structure on every suitable ring~$R$.
So we speak of $R$ itself as a difference ring
and reserve the letter $\nsigma$ to denote the corresponding endomorphism.%
\end{conv}

\begin{dfn}%
A \emph{morphism of difference rings} $R \to S$ is a ring homomorphism
which commutes with $\nsigma$.%
\end{dfn}

\begin{dfn}[\cite{kedlaya-diff}, Definition~14.2.1]\label{taupoly}%
Let $R$ be a difference ring.
The \emph{twisted polynomial ring} $R\{\tau\}$
is the ring of polynomials in a formal variable $\tau$
with coefficients in $R$. The multiplication in $R\{\tau\}$ is
subject to the identity
$\tau \cdot x = \nsigma(x) \cdot \tau$ for all $x\in R$.%
\end{dfn}


A left $R\{\tau\}$-module is a pair $(M,\tau)$ where $M$ is an $R$-module and
$\tau\colon M \to M$ an additive map satisfying $\tau(x \cdot m) = \nsigma(x)
\cdot \tau(m)$ for all $(x,m) \in R\times M$. In other words $\tau$ is an
$R$-linear morphism from $M$ to $\nsigma_\ast M$.
\begin{dfn}%
The \emph{structure morphism}
$\taulin\colon \nsigma^* M \to M$
is the adjoint of $\tau\colon M \to \nsigma_* M$.
We write $\taulin_M$ when the dependence 
on $M$ needs an emphasis.%
\end{dfn}

From time to time we shall use an ``adjoint'' description of the category
of left $R\{\tau\}$-modules:

\begin{lem}\label{adjdesc}%
The functor $M \mapsto (M, \taulin_M)$ is an equivalence
of categories of left $R\{\tau\}$-modules and 
pairs $(M,a_M)$ where
\begin{itemize}
\item
$M$ is an $R$-module and
$a_M\colon \nsigma^* M \to M$ is a morphism of $R$-modules,

\item
a morphism of pairs $(M,a_M) \to (N,a_N)$ is a morphism of $R$-modules
$\mu\colon M \to N$ such that $a_N \circ \nsigma^*(\mu) = \mu \circ a_M$.\qed%
\end{itemize}%
\end{lem}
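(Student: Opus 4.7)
The plan is to exhibit an explicit quasi-inverse and check the compositions, relying only on the adjunction $\nsigma^* \dashv \nsigma_*$. The essential content is that $R$-linear maps $\nsigma^* M \to M$ and $R$-linear maps $M \to \nsigma_* M$ correspond bijectively; the semilinearity condition $\tau(xm) = \nsigma(x)\tau(m)$ is exactly what makes $\tau\colon M \to M$ factor as an $R$-linear map to $\nsigma_* M$, whose adjoint is an $R$-linear map $\nsigma^* M \to M$.

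First I would construct the inverse functor $G\colon (M, a_M) \mapsto (M, \tau_M)$ by the formula $\tau_M(m) = a_M(1 \otimes m)$. For $x \in R$ and $m \in M$, the computation
\begin{equation*}
\tau_M(x \cdot m) = a_M(1 \otimes x m) = a_M(\nsigma(x) \otimes m) = \nsigma(x) \cdot a_M(1 \otimes m) = \nsigma(x) \cdot \tau_M(m)
\end{equation*}
uses $R$-linearity of $a_M$ together with the defining relation of $\nsigma^* M = R \otimes_{R,\nsigma} M$, and shows that $(M,\tau_M)$ is a left $R\{\tau\}$-module. For a morphism $\mu\colon (M, a_M) \to (N, a_N)$, the equality $a_N \circ \nsigma^*(\mu) = \mu \circ a_M$ evaluated on $1 \otimes m$ gives $\tau_N(\mu(m)) = \mu(\tau_M(m))$, so $\mu$ is a morphism of $R\{\tau\}$-modules. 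Functoriality of $G$ is immediate.

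Next I would check that both composites are the identity on the nose. Starting from $(M,\tau)$, the definition of $\taulin_M$ as the adjoint of $\tau$ says precisely that $\taulin_M(1 \otimes m) = \tau(m)$; applying $G$ recovers $(M,\tau)$. Conversely, starting from $(M, a_M)$, the $R$-linear map obtained by adjunction from $m \mapsto a_M(1 \otimes m)$ is the unique $R$-linear extension sending $1 \otimes m$ to $a_M(1 \otimes m)$, and this is $a_M$ itself by the universal property of $\nsigma^* M$. Both checks are strict equalities of data, so the functor is in fact an isomorphism of categories. On morphisms the two functors act by the identity on the underlying additive map, so there is nothing more to verify.

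There is no serious obstacle: the only point requiring attention is the well-definedness of the inverse passage, namely that the formula $x \otimes m \mapsto x \cdot \tau(m)$ descends to the balanced tensor product $R \otimes_{R,\nsigma} M$. This is exactly the universal property of the extension of scalars functor $\nsigma^*$ applied to the semilinear map $\tau$, which is the content of the adjunction used throughout.
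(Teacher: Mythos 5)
Your proof is correct and follows the only natural route — the adjunction $\nsigma^*\dashv\nsigma_*$ identifying semilinear maps $\tau\colon M\to M$ with $R$-linear maps $\nsigma^*M\to M$ — which is exactly why the paper states the lemma with no written proof. The explicit verification of the quasi-inverse and both composites is a careful unpacking of what the paper treats as immediate; nothing is missing.
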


Let $f\colon R \to S$ be a morphism of difference rings.
For each left $R\{\tau\}$-module $M$
the natural map $S\otimes_R M \shortisosign S\{\tau\}\otimes_{R\{\tau\}} M$ is an $S$-linear isomorphism.
The resulting action of $\tau$ on $S\otimes_R M$ is given by
the formula $\tau(s\otimes m) = \nsigma(s)\otimes \tau(m)$.

\begin{lem}%
The pullback $f^*(\taulin_M)$ is naturally isomorphic to $\taulin_{f^*M}$.\qed%
\end{lem}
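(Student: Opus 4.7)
The result is formal: it encodes the compatibility between the pullback of $\tau$-structures and the identity $f \circ \nsigma = \nsigma \circ f$ built into the definition of a morphism of difference rings. The plan is to proceed in two steps: first construct a canonical natural isomorphism
\[
c_M\colon f^*(\nsigma^* M) \shortisosign \nsigma^*(f^* M),
\]
then verify the compatibility $\taulin_{f^* M} \circ c_M = f^*(\taulin_M)$, which is what the statement asserts.

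For the construction of $c_M$, the key observation is that the two functors $f^* \circ \nsigma^*$ and $\nsigma^* \circ f^*$ from $R$-modules to $S$-modules are each extension of scalars along the same ring map $R \to S$, namely $f \circ \nsigma = \nsigma \circ f$. Accordingly both $f^*(\nsigma^* M)$ and $\nsigma^*(f^* M)$ identify canonically with $S \otimes_R M$, in which $S$ is viewed as a right $R$-module through this common composition; these identifications define $c_M$ and its inverse, and naturality in $M$ is automatic from the functoriality of the pullbacks.

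For the compatibility, I would evaluate both morphisms on a simple tensor $s \otimes (1 \otimes m) \in f^*(\nsigma^* M)$. On the one hand,
\[
f^*(\taulin_M)(s \otimes (1 \otimes m)) = s \otimes \taulin_M(1 \otimes m) = s \otimes \tau_M(m).
\]
On the other, $c_M$ sends the same element to $s \otimes (1 \otimes m) \in \nsigma^*(f^* M)$, whose image under $\taulin_{f^*M}$ is $s \otimes \tau_{f^*M}(1 \otimes m) = s \otimes \tau_M(m)$ thanks to the formula $\tau_{f^*M}(s' \otimes m) = \nsigma(s') \otimes \tau_M(m)$ displayed just before this lemma. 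The only thing requiring real care is the bookkeeping of tensor-product conventions — one must verify that the two identifications with $S \otimes_R M$ respect the $S$-module structures and that $c_M$ is well defined on equivalence classes — after which the verifications are one-line computations and no deeper difficulty is expected.
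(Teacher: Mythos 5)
Your proposal is correct: you factor the claim into the canonical identification $f^*\circ\nsigma^*\cong\nsigma^*\circ f^*$ (both being extension of scalars along $f\circ\nsigma=\nsigma\circ f$) followed by a check on generating simple tensors $s\otimes(1\otimes m)$, which is precisely the routine verification the paper omits by stating the lemma with no proof. The only quibble is notational: $\taulin_{f^*M}$ sends $s\otimes x$ to $s\cdot\tau_{f^*M}(x)$ (the $S$-module action, not a tensor), but since $s\cdot(1\otimes\tau_M(m))=s\otimes\tau_M(m)$ in $S\otimes_R M$ your computation still lands on the right element.
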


\subsection{\texorpdfstring{$\nsigma$}{Sigma}-modules}

Fix a difference ring $R$.
We review
the notion of a $\nsigma$-module over~$R$ and discuss some
constructions with $\nsigma$-modules which make sense for
arbitrary difference rings.

\pagebreak
\begin{dfn}%
A left $R\{\tau\}$-module $M$ is called a \emph{$\nsigma$-module} if
\begin{itemize}
\item $M$ is locally free of finite type as an $R$-module,

\item the structure morphism of $M$ is an isomorphism.%
\end{itemize}%
\end{dfn}

These objects are known under many names in the literature.
Our terminology is borrowed from \cite{hartl-annals} where it is used for a restricted class of difference rings.

\begin{dfn}%
The \emph{unit $\nsigma$-module}
$\unitob{R}$ is 
$R$ with the action of $\tau$ given by $\nsigma$.%
\end{dfn}%

\begin{dfn}%
Let $M$, $N$ be $\nsigma$-modules.
The \emph{tensor product $\nsigma$-module} $M \notim{R} N$ is
$M \otimes_{R} N$
with the action of $\tau$ given by the formula
$\tau(m \otimes n) = \tau(m) \otimes \tau(n)$.%
\end{dfn}

\begin{dfn}%
Let $M$ and $N$ be $\nsigma$-modules.
The \emph{Hom $\nsigma$-module} $\iHom(M,N)$ is
$\Hom_R(M,N)$ with the 
action of $\tau$ given by the formula
\begin{equation*}
\tau(f) = \taulin_N \circ \nsigma^\ast(f) \circ (\taulin_M)^{-1}.
\end{equation*}%
The \emph{dual $\nsigma$-module} $\iHom(M, \unitob{R})$ is denoted by $M^*$.%
\end{dfn}

\begin{lem}\label{fdmhomtensor}%
For all $\nsigma$-modules $M$, $N$
the natural map $M^* \notim{R} N \xrightarrow{\isosign} \iHom(M,N)$ is an isomorphism
of $\nsigma$-modules.\qed\end{lem}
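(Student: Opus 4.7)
The plan is to observe that the statement has two independent parts: the natural map is an isomorphism of underlying $R$-modules, and this isomorphism is compatible with the $\tau$-structures. The first part is classical, the second is a direct unwinding of definitions.

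First I would define the map explicitly as $\Phi\colon M^* \notim{R} N \to \iHom(M,N)$, $\varphi \otimes n \mapsto [m \mapsto \varphi(m)\cdot n]$, and recall that when $M$ is locally free of finite type over $R$ this is an isomorphism of the underlying $R$-modules. The standard argument is to reduce, by localizing at elements of $R$ covering $\Spec R$, to the case when $M$ is free of finite rank, where both sides identify with $N^{\oplus\rank M}$, and then glue by faithful flatness of the cover. No features specific to the difference ring structure enter here.

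Next I would check that $\Phi$ commutes with the action of $\tau$ on both sides. Using Lemma~\ref{adjdesc}, this amounts to proving that $\Phi \circ \taulin_{M^* \notim{R} N} = \taulin_{\iHom(M,N)} \circ \sigma^*(\Phi)$. The computation is done on an elementary tensor $\varphi \otimes n$ and evaluated at $m \in M$. Writing $(\taulin_M)^{-1}(m) = \sum_i r_i \otimes m_i$ in $\sigma^* M$, one unwinds the definition of $\tau$ on $M^*$ to obtain
\begin{equation*}
\tau(\varphi)(m) \cdot \tau(n) \;=\; \Bigl(\sum_i r_i\, \sigma(\varphi(m_i))\Bigr)\cdot \tau(n),
\end{equation*}
using that $\taulin_{\unitob{R}}(r\otimes s) = r\sigma(s)$. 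On the other hand, applying $\taulin_{\iHom(M,N)}$ to $\Phi(\varphi \otimes n)$ yields $\taulin_N \circ \sigma^*(\Phi(\varphi \otimes n)) \circ (\taulin_M)^{-1}$, which sends $m$ to $\taulin_N\bigl(\sum_i r_i \otimes \varphi(m_i)\, n\bigr)$. Using the defining relation $r\sigma(a)\otimes x = r \otimes a x$ in $\sigma^* N$, this rewrites as $\taulin_N\bigl(\sum_i r_i\sigma(\varphi(m_i)) \otimes n\bigr) = \sum_i r_i \sigma(\varphi(m_i))\cdot \tau(n)$, which agrees with the previous expression.

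There is no serious obstacle; the only care required is to keep track of the difference between $\sigma^* N = R \otimes_R N$ (with the right-hand $R$ viewed as an $R$-module via $\sigma$) and the corresponding relation $r\sigma(a)\otimes n = r \otimes a n$, so that scalars are moved across the tensor symbol correctly. Once this bookkeeping is handled both sides of the equivariance identity evaluate to the same expression, completing the proof.
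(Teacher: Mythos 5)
Your proof is correct. The paper gives no argument for this lemma (it is stated with an immediate \qed as a routine verification), and what you have written is exactly the standard check one would supply: the underlying $R$-module isomorphism for $M$ locally free of finite type, followed by the $\tau$-equivariance computation, which you carry out correctly, including the bookkeeping of the relation $r \otimes a n = r\sigma(a)\otimes n$ in $\nsigma^* N$.
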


\begin{lem}\label{doubledual}%
For every $\nsigma$-module $M$ the natural map
$M \xrightarrow{\isosign} M^{**}$ is an isomorphism of $\nsigma$-modules.\qed%
\end{lem}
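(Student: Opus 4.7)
The plan is to prove this in two steps: first that the underlying map of $R$-modules is an isomorphism, then that it commutes with the structure morphisms. For the first step, since $M$ is locally free of finite type over $R$, the evaluation
\begin{equation*}
\nev_M\colon M \to M^{**},\qquad m \mapsto \bigl(f \mapsto f(m)\bigr),
\end{equation*}
is an isomorphism of $R$-modules by standard commutative algebra. So everything reduces to verifying $\tau$-equivariance.

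For the second step I would first obtain a convenient description of $\taulin_{M^{**}}$. Local freeness of $M$ implies that the canonical base-change morphism $\nsigma^* M^* \to (\nsigma^* M)^*$ is an isomorphism, and the same holds after passing to the double dual. Unravelling the definition of the $\tau$-action on a Hom $\nsigma$-module, one checks that under these identifications $\taulin_{M^*}$ corresponds to $\bigl((\taulin_M)^*\bigr)^{-1}$, and consequently $\taulin_{M^{**}}$ corresponds to $(\taulin_M)^{**}$. This mirrors the way duals transform under base change in any rigid symmetric monoidal situation, as is already visible in Lemmas~\ref{fdmhomtensor} and~\ref{adjdesc}.

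Granted this description, $\tau$-equivariance of $\nev_M$ amounts to the identity
\begin{equation*}
\nev_M \circ \taulin_M \;=\; (\taulin_M)^{**} \circ \nsigma^*(\nev_M),
\end{equation*}
which, after identifying $\nsigma^*(\nev_M)$ with $\nev_{\nsigma^* M}$ via the same canonical base-change isomorphism, is nothing but the naturality of $\nev$ applied to the $R$-module isomorphism $\taulin_M\colon \nsigma^* M \to M$. The main technical point, and the only place any real work is needed, is the bookkeeping in the middle step: verifying that the canonical identifications between $\nsigma^*$ and dualization really intertwine the structure morphisms as claimed, and that $\nev$ commutes with $\nsigma^*$ under these identifications. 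Once this is done the equivariance is a formal consequence of the naturality of $\nev$, and the lemma follows.
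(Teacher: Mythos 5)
Your argument is correct and is exactly the standard verification that the paper leaves implicit (the lemma is stated without proof). The two steps — the evaluation map being an $R$-module isomorphism because $M$ is locally free of finite type, and $\tau$-equivariance via the identification of $\taulin_{M^{**}}$ with $(\taulin_M)^{**}$ under the canonical base-change isomorphisms together with naturality of evaluation — are precisely what is needed, and the bookkeeping you flag as the only real work does go through.
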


Given a left $R\{\tau\}$-module $M$
we set $M^\tau = \{ m \in M \mid \tau m = m \}$. 

\begin{lem}\label{dmhominv}%
For all $\nsigma$-modules $M$ and $N$ we have
\begin{equation*}
\Hom_{R\{\tau\}}(M,N) = \iHom(M,N)^{\tau}
\end{equation*}
as submodules of $\Hom_R(M,N)$.\qed%
\end{lem}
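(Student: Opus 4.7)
The plan is to verify the identity by unwinding both sides into concrete conditions on an $R$-linear map $f\colon M\to N$ and matching them via the adjunction between $\nsigma^*$ and $\nsigma_*$ recorded in Lemma~\ref{adjdesc}.

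First I would spell out the left-hand side. An element $f\in\Hom_R(M,N)$ lies in $\Hom_{R\{\tau\}}(M,N)$ exactly when it commutes with the additive $\tau$-operators, i.e.\ $f\circ\tau_M=\tau_N\circ f$ as additive endomorphisms $M\to N$. This is just the definition of a left $R\{\tau\}$-module morphism.

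Next I would unfold the right-hand side. By definition of the $\tau$-action on the Hom $\nsigma$-module, $f\in\iHom(M,N)^{\tau}$ means $\taulin_N\circ \nsigma^*(f)\circ(\taulin_M)^{-1}=f$, equivalently
\begin{equation*}
\taulin_N\circ \nsigma^*(f)=f\circ \taulin_M
\end{equation*}
as $R$-linear maps $\nsigma^*M\to N$ (here we use that $\taulin_M$ is an isomorphism because $M$ is a $\nsigma$-module).

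The heart of the proof is to show these two conditions coincide, and this is where I would invoke the adjunction of Lemma~\ref{adjdesc}. Evaluating both sides of the displayed equation on a pure tensor $1\otimes m\in\nsigma^*M$ gives, on the one hand, $\taulin_N(1\otimes f(m))=\tau_N(f(m))$ and, on the other, $f(\taulin_M(1\otimes m))=f(\tau_M(m))$. Since the elements $1\otimes m$ generate $\nsigma^*M$ as an $R$-module and both sides of the equation are $R$-linear, agreement on these generators is equivalent to the full equation. Thus the condition $\taulin_N\circ \nsigma^*(f)=f\circ \taulin_M$ is equivalent to $\tau_N\circ f=f\circ \tau_M$, which matches the left-hand side. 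There is no genuine obstacle: the only care needed is tracking the passage between the linear form $\taulin$ and the additive form $\tau$ via the adjunction, and using that $M$ being a $\nsigma$-module makes $\taulin_M$ invertible so that the two formulations of the $\tau$-invariance condition are literally the same.
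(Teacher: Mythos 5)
Your verification is correct and is exactly the routine unwinding the paper has in mind when it marks this lemma with \qed and no written proof: the $\tau$-invariance condition $\taulin_N\circ\nsigma^*(f)=f\circ\taulin_M$, tested on the generators $1\otimes m$ of $\nsigma^*M$, is precisely the condition $\tau_N\circ f=f\circ\tau_M$ defining an $R\{\tau\}$-linear map. Nothing is missing.
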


\begin{conv}%
Whenever the difference ring $R$ is clear from the context
we shall write $\unit$ for $\unit_R$ and
$\Hom(M,N)$ for $\Hom_{R\{\tau\}}(M,N)$.%
\end{conv}

\section{Isocrystals}\label{sec:isoc}\label{sec:coef}

Fix a finite field $\Fq$ of cardinality $q$.
From now on all morphisms are assumed to be $\Fq$-linear.

\subsection{Locally constant sheaves}
Let $R$ be an $\Fq$-algebra and $\nsigma\colon R\to R$
the $q$-Frobenius. We shall review
some results on $\nsigma$-modules over $R$.
Unfortunately this does not seem
to appear in the literature in the form we need.

A left $R\{\tau\}$-module $M$
determines a quasi-\hspace{0pt}coherent sheaf $\qcshet{M}$
on the small \'etale site $\etsite{R}$
together with an $\Fq$-linear endomorphism $\tau\colon \qcshet{M} \to\qcshet{M}$.
We set
\begin{equation*}
\shet{M} = \ker\!\big(\qcshet{M} \xrightarrow{\:1-\tau\:} \qcshet{M}\big).
\end{equation*}
For each \'etale $R$-algebra $S$ one has
\begin{equation*}
\qcshet{M}(S) = S\otimes_R M, \quad
\tau\colon s \otimes m \mapsto s^q \otimes \tau(m), \quad (s,m) \in S\times M.
\end{equation*}
As a consequence $\shet{M}(S) = (S\otimes_R M)^\tau$.

\begin{thm}[Katz]\label{wk-equiv}%
The functor $M \mapsto \shet{M}$ is an equivalence of categories
of $\nsigma$-modules over $R$ and locally constant sheaves
of finite-dimensional $\Fq$-vector spaces on $\etsite{R}$.
This equivalence is compatible with arbitrary
base change $R \to S$.%
\end{thm}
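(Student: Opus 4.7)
The plan is to reduce the theorem to the étale-local triviality of $\nsigma$-modules and then derive both full faithfulness and essential surjectivity from this trivialization. For the main technical lemma, let $M$ be a $\nsigma$-module of rank $n$. Zariski-locally on $\Spec R$ it has a basis $(e_1,\dotsc,e_n)$ in which $\tau$ is encoded by a matrix $A \in \GL_n(R)$ (invertibility coming from the structure morphism being an isomorphism, see Lemma~\ref{adjdesc}). A change of basis $e'_i = \sum_j P_{ji}e_j$ trivializes the $\tau$-action precisely when $A\nsigma(P) = P$, so I form the $R$-algebra
\begin{equation*}
S = R[P_{ij},\,\det(P)^{-1}]\big/\bigl(P_{ij}^q - (A^{-1}P)_{ij}\bigr)_{i,j}.
\end{equation*}
Since $R$ has characteristic $p$, the $q$-th power terms contribute nothing to the Jacobian of these relations with respect to the $P_{ij}$, leaving the constant matrix $-A^{-1}\otimes I_n$; this is invertible, so $S/R$ is étale. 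Surjectivity of $\Spec S\to\Spec R$ amounts to finding $P\in\GL_n(k)$ solving $\nsigma(P)=A^{-1}P$ at each geometric point, which is Lang's theorem for $\GL_n$. Thus every $\nsigma$-module is étale-locally isomorphic to $\unit^n$. For the trivial module $\shet{\unit}(T) = \{t\in T:t^q=t\}$, and the idempotents $\prod_{b\ne a}(s-b)/(a-b)$ identify this set with locally constant $\Fq$-valued functions on $\Spec T$; so $\shet{\unit^n} = \underline{\Fq}^n$ and hence $\shet{M}$ is locally constant of $\Fq$-rank $n$ in general.

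Full faithfulness follows from the natural identification $\shet{\iHom(M,N)} \cong \iHom_{\underline{\Fq}}(\shet{M},\shet{N})$ of étale sheaves. The comparison map is an isomorphism after simultaneously trivializing $M$ and $N$, hence globally. Taking global sections and invoking Lemma~\ref{dmhominv} yields
\begin{equation*}
\Hom_{R\{\tau\}}(M,N) = \iHom(M,N)^\tau = \shet{\iHom(M,N)}(R) = \Hom(\shet{M},\shet{N}).
\end{equation*}
For essential surjectivity, given a locally constant $\Fq$-sheaf $\mathcal F$ of rank $n$, choose a finite étale Galois cover $R\to S$ with group $G$ trivializing $\mathcal F$. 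The resulting $G$-representation on $\mathcal F(S) \cong \Fq^n$ induces a semilinear $G$-action on $\unit_S^n$ in the category of $\nsigma$-modules, and étale (Galois) descent yields a $\nsigma$-module $M$ over $R$ with $\shet{M} \cong \mathcal F$. Compatibility with base change is automatic since both the $\nsigma$-module construction and the étale site depend functorially on $R$.

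The principal obstacle is the étale-local trivialization step. The Jacobian calculation is formal, but surjectivity on geometric points is a genuine existence assertion — essentially Lang's theorem for $\GL_n$ over an algebraically closed $\Fq$-algebra — and it is what forces the étale topology rather than a Zariski-local trivialization.
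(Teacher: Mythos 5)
Your argument is correct, but it is not the route the paper takes. The paper's proof is a reduction: it invokes Drinfeld's anti-equivalence $M \mapsto \Gr M$ between $\nsigma$-modules over $R$ and finite \'etale $\Fq$-vector space schemes over $\Spec R$, composes with the duality $M \mapsto M^{*}$ to make the functor covariant, and then identifies locally constant sheaves of finite-dimensional $\Fq$-vector spaces with finite \'etale schemes via \stacks{03RV}. You instead reprove the statement from scratch: the explicit Lang torsor $S = R[P_{ij},\det(P)^{-1}]/\bigl(P_{ij}^q-(A^{-1}P)_{ij}\bigr)$ gives \'etale-local triviality of $M$, full faithfulness follows from the local isomorphism $\shet{\iHom(M,N)}\cong\iHom(\shet{M},\shet{N})$ together with Lemma~\ref{dmhominv}, and essential surjectivity follows by descent from a trivializing cover. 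Both arguments bottom out in the same input --- surjectivity of the Lang map $P\mapsto P\nsigma(P)^{-1}$ for $\GL_n$ over an algebraically closed field, which the paper cites from Katz's SGA~7 expos\'e and which you rederive as surjectivity of $\Spec S\to\Spec R$ on geometric points. Your version buys self-containedness and an explicit trivializing cover; the paper's buys brevity and the extra fact (used nowhere else) that $\shet{M}$ is represented by the finite \'etale scheme $\Gr(M^{*})$. One small repair to your essential surjectivity step: the theorem is for an arbitrary $\Fq$-algebra $R$, and $\Spec R$ need not be connected, so a finite \'etale cover trivializing $\cF$ cannot in general be taken Galois. This is harmless: take any finite \'etale surjection $U\to\Spec R$ splitting the finite \'etale scheme representing $\cF$, and use faithfully flat descent along $U\to\Spec R$ (finite projective modules descend, and the structure morphism $\nsigma^{*}M\to M$ descends with them), which is all your construction actually requires.
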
%
The original result of Katz
\cite[Chapter~4, Proposition~4.1.1]{katz-modforms}
is stated for a normal integral scheme. 
The general case follows from Lang's isogeny theorem for $\GL_{n,\Fq}$
reproved by Katz in
\cite[Corollaire~1.1.2]{katz-sga7}.%
\begin{proof}[Proof of Theorem~\ref{wk-equiv}]%
Drinfeld
\cite[Proposition~2.1]{drinfeld-fmod} constructed a \emph{contravariant}
functor $M \mapsto \Gr M$ from the category of $\nsigma$-modules over $R$ to
the category of finite \'etale $\Fq$-vector space schemes over $\Spec R$
and proved that this is an anti-equivalence.
It follows from the description of $\Gr$ in the proof 
that 
$\shet{M}$ is the \'etale sheaf defined
by the group scheme $\Gr(M^*)$
and that $\Gr$ is compatible with arbitrary base change.
Now the functor $M \mapsto M^*$ is compatible with arbitrary
base change by definition and is an anti-\hspace{0pt}equivalence by Lemma~\ref{doubledual}.
The result follows since every locally constant sheaf of
finite-dimensional $\Fq$-vector spaces on $\etsite{R}$ is represented
by a finite \'etale scheme \stacks{03RV}.%
\end{proof}

Theorem~\ref{wk-equiv} implies that the category of $\sigma$-modules over~$R$ is abelian.
The following result shows that exact sequences of $\sigma$-modules
remain exact in the category left $R\{\tau\}$-modules.

\begin{prp}\label{wk-abelian}%
The subcategory of $\nsigma$-modules over $R$ is closed
under kernels and cokernels in the category of left $R\{\tau\}$-modules.
\end{prp}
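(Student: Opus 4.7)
The plan is to reduce the claim to an étale-local computation and then invoke Theorem~\ref{wk-equiv}. First I would observe that the kernel of a morphism $f\colon M \to N$ of $\nsigma$-modules in the category of left $R\{\tau\}$-modules is simply the $R$-module kernel $\ker_R f$ endowed with the $\tau$-action induced from $M$ (well-defined because $f$ is $\tau$-equivariant); symmetrically, the cokernel in $R\{\tau\}$-modules is $N/f(M)$ with its induced $\tau$-action. The content of the proposition is that both of these $R\{\tau\}$-modules are themselves $\nsigma$-modules, which amounts to showing: (a) they are locally free of finite type as $R$-modules, and (b) their canonical structure morphisms $\taulin\colon \nsigma^*(-) \to (-)$, obtained by restriction respectively passage to the quotient from $M$ and $N$, are isomorphisms.

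Both conditions (a) and (b) are fpqc-local on $\Spec R$, so I would pass to a finite étale cover $R \to R'$ which trivialises $\shet{M}$ and $\shet{N}$ (and, after further refining, has connected spectrum). By Theorem~\ref{wk-equiv} such an $R'$ exists, and over it $\shet{M}|_{R'}$ and $\shet{N}|_{R'}$ become constant $\Fq$-vector spaces. The inverse direction of the same equivalence then gives $M_{R'} \simeq \unitob{R'}^m$ and $N_{R'} \simeq \unitob{R'}^n$ as $\nsigma$-modules, with $f_{R'}$ corresponding to an $\Fq$-matrix $A \in \mathrm{Mat}_{n \times m}(\Fq)$. The $R'$-module kernel and cokernel of such $A$ are free $R'$-modules, canonically isomorphic as $\nsigma$-modules to $\unitob{R'}^{\dim_{\Fq}(\ker A)}$ and $\unitob{R'}^{\dim_{\Fq}(\coker A)}$ respectively, so both (a) and (b) hold after pullback to $R'$. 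Faithfully flat descent along $R \to R'$ then transports these conclusions back to $R$, since ``locally free of finite type'' is fpqc-local, and so is the property that a morphism of quasi-coherent $R$-modules is an isomorphism.

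The one subtle point, which I expect to be the main obstacle, is verifying that the Frobenius pullback $\nsigma^*$ is compatible with the étale base change $R \to R'$, so that the ``structure morphism'' of the putative $\nsigma$-module $\ker_R f$ on $R$ really pulls back to the structure morphism computed over $R'$ (and symmetrically for the cokernel). This compatibility is ultimately guaranteed by the fact that the relative Frobenius of an étale morphism is an isomorphism, which formally yields a canonical isomorphism between $\nsigma^*(-)$ base-changed to $R'$ and $\nsigma_{R'}^*((-)_{R'})$. Since Theorem~\ref{wk-equiv} is already stated to be compatible with base change, this bookkeeping is effectively packaged into the equivalence, and once it is spelled out the rest of the descent argument is standard.
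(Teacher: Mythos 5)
Your proposal is correct and follows essentially the same route as the paper: reduce étale-locally via Theorem~\ref{wk-equiv} to the case where $f$ is a constant morphism $f_0\otimes_{\Fq}\textup{id}_{\unit}$ of direct sums of unit objects, compute the kernel and cokernel there as $\ker f_0\otimes_{\Fq}\unit$ and $\coker f_0\otimes_{\Fq}\unit$, and descend. The compatibility of $\nsigma^*$ with étale base change that you flag is indeed the point silently absorbed into the base-change compatibility of Theorem~\ref{wk-equiv}, so your elaboration matches the paper's (terser) argument.
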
%
Remarkably, this holds with no restriction on $R$ whatsoever.
\begin{proof}[Proof of Proposition~\ref{wk-abelian}]%
Let $f\colon M \to N$ be a morphism of $\nsigma$-modules
and let $K$, $Q$ be its kernel and cokernel computed in the category
of left $R\{\tau\}$-modules. We need to show that $K$ and $Q$
are locally free $R$-modules of finite type and that
their structure morphisms are isomorphisms.
It is enough to do so \'etale-locally on $\Spec R$.
In view of Theorem~\ref{wk-equiv} we are free to assume
that $f$ comes from a constant morphism of constant sheaves.
In other words $f$ has the form
\begin{equation*}
f_0 \otimes_{\Fq} \textup{id}_\unit \colon U \otimes_{\Fq} \unit \to V \otimes_{\Fq} \unit
\end{equation*}
where $f_0\colon U \to V$
is a morphism of finite-\hspace{0pt}dimensional $\Fq$-vector spaces.
Then $K = \ker f_0 \otimes_{\Fq} \unit$
and $Q = \coker f_0 \otimes_{\Fq} \unit$, and the result follows.%
\end{proof}

\subsection{The coefficient field}

For the duration of Sections~\ref{sec:isoc} -- \ref{sec:isocdvr}
we fix a local field $\ncoef$ over $\Fq$, the \emph{coefficient field}.
We use the following notation:
\begin{itemize}
\item $\ncoint \subset \ncoef$ is the ring of integers,

\item $\ncomax \subset \ncoint$ is the maximal ideal,

\item $\ncons$ is the algebraic closure of $\Fq$ in $\ncoef$,

\item $\nresdeg = [\ncons : \Fq]$.
\end{itemize}
The field $\ncons$ is naturally isomorphic to the residue field $\ncoint/\ncomax$.

We also fix a uniformizer $z\in\ncoef$.
Save for 
Definition~\ref{dfnsimplebun}
this choice does not affect the definitions and the results which follow.

\subsection{Lisse sheaves}

Let $R$ be an $\Fq$-algebra.
Recall that a \emph{lisse $\ncomax$-adic sheaf}
on the small \'etale site $\etsite{R}$
is an inverse system $\cF = \{\cF_n\}_{n\geqslant 1}$ where
\begin{itemize}
\item
each $\cF_n$ is a locally constant
sheaf of finitely generated $\ncoint/\ncomax^n$-modules,

\item the transition maps are $\ncoint$-linear and
induce isomorphisms
\begin{equation*}%
\cF_{n+1}/\ncomax^n\cF_{n+1} \xrightarrow{\isosign} \cF_n.%
\end{equation*}%
\end{itemize}%
The stalk of $\cF$ at a geometric point $\bm\bar{x}$
is the $\ncoint$-module $\mlim_{n>0} (\cF_n)_{\bm\bar{x}}$.
This is automatically finitely generated.
A \emph{morphism} of lisse $\ncomax$-adic sheaves is an $\ncoint$-linear morphism of inverse systems.

Normally one works with lisse sheaves on noetherian schemes only.
This restriction is not relevant in our setting.

\begin{dfn}\label{defetring}%
The ring
$\nFBR{\ncoint}{R}$ is the completion of 
$R \otimes_{\Fq} \ncoint$ at the ideal
$R \otimes_{\Fq} \ncomax$.
We denote by $\nsigma\colon \nFBR{\ncoint}{R} \to \nFBR{\ncoint}{R}$
the completion of the endomorphism 
which acts 
as the $q$-Frobenius on $R$
and
as the identity on $\ncoint$.
\end{dfn}%

If the ring $\ncoint$ is $\Fq[[z]]$ then $\nFBR{\ncoint}{R} = R[[z]]$.
The endomorphism $\sigma$ acts on $R[[z]]$ by the formula
$\sum_{n\geqslant 0} \alpha_n z^n \mapsto \sum_{n\geqslant 0} \alpha_n^q z^n$.

\begin{conv}%
For brevity we shall 
omit the subscript $\ncoint$ in $\nFBR{\ncoint}{R}$
when the choice of $\ncoint$ is clear.
\end{conv}

\begin{thm}\label{wellknown}%
The functor $M \mapsto \{\shetx{M/\ncomax^n M}\}_{n\geqslant 1}$
is an equivalence of categories of
$\nsigma$-modules over $\nBR{R}$
and lisse $\ncomax$-adic sheaves on $\etsite{R}$
all of whose stalks are free. 
This equivalence is compatible with arbitrary base change $R \to S$.%
\end{thm}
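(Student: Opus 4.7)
The plan is to prove Theorem~\ref{wellknown} by reducing modulo $\ncomax^n$ and applying Theorem~\ref{wk-equiv} at each level, then reassembling via $\ncomax$-adic completeness.

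First, I would show that $\nsigma$-modules over $\nBR{R}$ are equivalent to compatible inverse systems $\{M_n\}$ where each $M_n$ is a $\nsigma$-module over $R_n := R \otimes_{\Fq} \ncoint/\ncomax^n$ and the transition maps induce isomorphisms $M_{n+1}/\ncomax^n M_{n+1} \xrightarrow{\isosign} M_n$. One direction uses that a locally free $\nBR{R}$-module of finite type is $\ncomax$-adically complete and separated, so $M = \mlim_n M/\ncomax^n M$. For the reverse, one sets $M = \mlim_n M_n$ and must verify that $M$ is locally free of finite rank over $\nBR{R}$. This is \'etale-local on $\Spec R$, and after passing to a cover on which $M_1$ becomes free of some rank $r$, successive lifting through the surjections $M_{n+1} \twoheadrightarrow M_n$ produces compatible bases of the $M_n$'s that assemble in the $\ncomax$-adic limit to a basis of $M$.

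Second, for each $n$ I would invoke Theorem~\ref{wk-equiv} with ground ring $R$ rather than $R_n$, exploiting that $\ncoint/\ncomax^n$ is finite-dimensional over $\Fq$, so $R_n$ is finite free over $R$, and hence any $\nsigma$-module over $R_n$ is also a $\nsigma$-module over $R$. The ring map $\ncoint/\ncomax^n \to \End(M_n)$ giving the $R_n$-structure lands in $\nsigma$-equivariant endomorphisms (since $\nsigma$ fixes $\ncoint$ pointwise), and by functoriality of Theorem~\ref{wk-equiv} it transfers to an $\ncoint/\ncomax^n$-action on the associated sheaf $\cF_n = \shetx{M_n}$, making $\cF_n$ a locally constant sheaf of $\ncoint/\ncomax^n$-modules on $\etsite{R}$. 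By tracking the \'etale-local model $(R_n^r,\,\textup{standard}\;\nsigma)$, which corresponds to the constant sheaf $(\ncoint/\ncomax^n)^r$, one sees that local freeness of $M_n$ of rank $r$ over $R_n$ is equivalent to the stalks of $\cF_n$ being free of rank $r$ over $\ncoint/\ncomax^n$.

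Combining these, a $\nsigma$-module over $\nBR{R}$ corresponds to a compatible inverse system of locally constant sheaves of free $\ncoint/\ncomax^n$-modules with $\cF_{n+1}/\ncomax^n\cF_{n+1} \xrightarrow{\isosign} \cF_n$ --- precisely the data of a lisse $\ncomax$-adic sheaf with free stalks. Compatibility with base change $R \to S$ is inherited from the corresponding property of Theorem~\ref{wk-equiv} at each finite level and preserved by the inverse limit on both sides. The main obstacle I foresee is the first step, namely proving that the inverse limit of a compatible system of locally free $\nsigma$-modules of constant rank over the $R_n$'s is itself locally free of the same rank over the (potentially non-noetherian) ring $\nBR{R}$. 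The argument rests on $\ncomax$-adic completeness of $\nBR{R}$ and on lifting bases through the surjections $M_{n+1} \twoheadrightarrow M_n$, which is possible because the kernels $\ncomax^n M_{n+1}$ are topologically nilpotent inside modules over a complete ring.
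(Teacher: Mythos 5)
Your proposal is correct and follows essentially the same route as the paper: d\'evissage modulo $\ncomax^n$ using $\nadic$-adic completeness of locally free $\nBR{R}$-modules (the paper's Lemmas~\ref{complete} and~\ref{bunsys}, which cite \stacks{0D4B} where you lift bases through the surjections by hand), then Katz's equivalence (Theorem~\ref{wk-equiv}) applied at each finite level with ground ring $R$, together with a comparison of local freeness over $R\otimes_{\Fq}\ncoint/\ncomax^n$ with freeness of the stalks over $\ncoint/\ncomax^n$. The only technical difference is in that last comparison: the paper packages it as a $\Tor$-vanishing criterion (Lemma~\ref{wk-flat}) applied uniformly to $S=R$ on the module side and $S=\Fq$ on the stalk side, whereas you argue via \'etale-local trivialization of the sheaf and faithfully flat descent; both are sound.
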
%

This theorem is related to \cite[Proposition~6.1]{tw}.
Although the theorem is widely known, I have not found a reference. 
The following proof is included for the readers' convenience.
We shall only need the cases where $R$ is a field or a discrete valuation ring,
but the latter case is not much easier than the one of arbitrary $R$.
%
We shall deduce Theorem~\ref{wellknown} from a sequence of lemmas.

\begin{lem}\label{wk-flat}%
Let $n > 0$ be an integer and let $S$ be an $\Fq$-algebra.
Set $A = \ncoint/\ncomax^n$.
For every $S\otimes_{\Fq} A$-module $M$
the following are equivalent:
\begin{enumerate}
\item\label{wkflat-seq}
$M/zM$ is locally free over $S$ and
the sequence $M \xrightarrow{\:z^{n-1}\:} M \xrightarrow{\:z\:} M$
is exact.

\item $M$ is locally free over $S\otimes_{\Fq} A$.%
\end{enumerate}%
\end{lem}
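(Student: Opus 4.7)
The plan is to prove the easy direction (2)$\Rightarrow$(1) by direct computation with the free structure, and then tackle the main implication (1)$\Rightarrow$(2) by constructing a $B$-basis stalk-locally on $\Spec B$, where $B = S \otimes_{\Fq} A$.

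For (2)$\Rightarrow$(1) I would observe that locally $M \cong B^{(I)}$. Reducing modulo $z$ yields $(B/zB)^{(I)} = (S \otimes_{\Fq} \ncons)^{(I)}$, which is locally free over $S$ because $S \otimes_{\Fq} \ncons$ is a free $S$-module of finite rank. The exactness of $M \xrightarrow{z^{n-1}} M \xrightarrow{z} M$ reduces to the same statement for $B$, which in turn reduces to $A$ itself, where $\ker(z : A \to A) = z^{n-1} A$ by the very definition $A = \ncoint/\ncomax^n$.

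For (1)$\Rightarrow$(2) I would proceed in four steps. First, by induction on $i$, the hypothesis $\ker(z) = z^{n-1} M$ extends to $\ker(z^i : M \to M) = z^{n-i} M$ for every $1 \leq i \leq n$; the inductive step uses that $z^{i+1} m = 0$ implies $z^i m \in \ker(z) = z^{n-1} M$, so $z^i m = z^i \cdot z^{n-1-i} m'$ for some $m'$, and the inductive hypothesis applied to $z^i(m - z^{n-1-i} m') = 0$ places $m$ in $z^{n-i-1} M$. Second, multiplication by $z^i$ then gives isomorphisms $M/zM \shortisosign z^i M / z^{i+1} M$ for $0 \leq i \leq n-1$, so the finite filtration
\begin{equation*}
M \supset zM \supset \cdots \supset z^{n-1} M \supset 0
\end{equation*}
has every subquotient isomorphic to $M/zM$.

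Third, since $zB$ is nilpotent the underlying sets of $\Spec B$ and $\Spec(B/zB) = \Spec(S \otimes_{\Fq} \ncons)$ coincide, so it suffices to show $M_\fq$ is free over $B_\fq$ at every prime $\fq$ of $B$. Let $\bar\fq$ be the image of $\fq$ in $\Spec(S \otimes_{\Fq} \ncons)$ and $\fp$ its image in $\Spec S$; localization gives $(M/zM)_{\bar\fq} = (M/zM)_\fp \otimes_{S_\fp} (S \otimes_{\Fq} \ncons)_{\bar\fq}$, which is free over $(S \otimes_{\Fq} \ncons)_{\bar\fq}$ as the base change of the $S_\fp$-free module $(M/zM)_\fp$. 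Fourth, lift a basis $\{\bar e_\alpha\}$ of $(M/zM)_{\bar\fq}$ to elements $\{e_\alpha\} \subset M_\fq$; the filtration of the second step then shows that the family $\{z^j e_\alpha\}_{0 \leq j \leq n-1,\,\alpha}$ is an $(S \otimes_{\Fq} \ncons)_{\bar\fq}$-basis of $M_\fq$, equivalently that $\{e_\alpha\}$ is a $B_\fq$-basis. The main obstacle I expect is this third step, converting $S$-local freeness of $M/zM$ into $(S \otimes_{\Fq} \ncons)$-local freeness; this relies on $\ncons/\Fq$ being finite separable so that $S \otimes_{\Fq} \ncons$ is finite free (and étale) over $S$, which makes the base-change identity preserve freeness at stalks.
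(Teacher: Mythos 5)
Your strategy is sound, and for the main implication (1)$\Rightarrow$(2) it amounts to reproving by hand the local criterion for freeness over a nilpotent ideal that the paper outsources to \stacks{051H}: the paper writes down the periodic free resolution $\cdots\to S\otimes_{\Fq}A\xrightarrow{\,z\,}S\otimes_{\Fq}A\xrightarrow{\,z^{n-1}\,}S\otimes_{\Fq}A\xrightarrow{\,z\,}S\otimes_{\Fq}A$ of $S\otimes_{\Fq}A/zA$, observes that exactness of the sequence in (1) is exactly the vanishing of $\Tor_1(S\otimes_{\Fq}A/zA,\,M)$, reduces to the case where $M/zM$ is free, and cites the Stacks Project; your Steps 1, 2 and 4 (the identities $\ker(z^i)=z^{n-i}M$, the filtration $M\supset zM\supset\cdots\supset z^{n-1}M\supset 0$ with graded pieces $M/zM$, and the lifting of a basis) constitute a correct self-contained proof of that same criterion. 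The easy direction (2)$\Rightarrow$(1) and Steps 1, 2, 4 are fine.

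Step 3 is the one place where the argument, as written, fails. The formula $(M/zM)_{\bar{\fq}}=(M/zM)_{\fp}\otimes_{S_\fp}(S\otimes_{\Fq}\ncons)_{\bar{\fq}}$ is false: localizing at $\bar{\fq}$ is base change of the existing $(S\otimes_{\Fq}\ncons)_\fp$-module structure along $(S\otimes_{\Fq}\ncons)_\fp\to(S\otimes_{\Fq}\ncons)_{\bar{\fq}}$, not base change of the underlying $S_\fp$-module along $S_\fp\to(S\otimes_{\Fq}\ncons)_{\bar{\fq}}$. For instance, with $S=\ncons$ and $[\ncons:\Fq]=2$ one has $S\otimes_{\Fq}\ncons\cong\ncons\times\ncons$; the module $\ncons\times 0$ is free of rank $1$ over $S$, its localization at one of the two primes is zero, yet your right-hand side equals $\ncons$ at both primes. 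The statement you actually need --- an $(S\otimes_{\Fq}\ncons)$-module that is locally free over $S$ is locally free over $S\otimes_{\Fq}\ncons$ --- is true, and you correctly point to separability of $\ncons/\Fq$ as the reason, but the proof is the one you only hint at: $S\to S\otimes_{\Fq}\ncons$ is finite \'etale, so the multiplication map $(S\otimes_{\Fq}\ncons)\otimes_S(S\otimes_{\Fq}\ncons)\to S\otimes_{\Fq}\ncons$ splits and $M/zM$ is a direct summand of $(M/zM)\otimes_S(S\otimes_{\Fq}\ncons)$ as an $(S\otimes_{\Fq}\ncons)$-module; the latter is free after localizing on $\Spec S$, so $(M/zM)_{\bar{\fq}}$ is projective over the local ring $(S\otimes_{\Fq}\ncons)_{\bar{\fq}}$ and hence free. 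Note also that freeness of $M_\fq$ over $B_\fq$ at every prime gives ``locally free'' only for finitely presented $M$; this is invisible in every application of the lemma (and the paper's own reduction to the free case carries the same implicit restriction), but it is worth keeping in mind since the statement quantifies over arbitrary modules.
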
%
We shall apply this to $S = R$ and $S = \Fq$.
\begin{proof}[Proof of Lemma~\ref{wk-flat}]%
The $S\otimes_{\Fq} A$-module $S\otimes_{\Fq} A/zA$ admits a free resolution
\begin{equation*}
\cdots \to 
S\otimes_{\Fq} A \xrightarrow{\:z\:}
S\otimes_{\Fq} A \xrightarrow{\:z^{n-1}\:}
S\otimes_{\Fq} A \xrightarrow{\:z\:}
S\otimes_{\Fq} A.
\end{equation*}
Thus the sequence in \eqref{wkflat-seq} is exact if and only if $\Tor_1(S\otimes_{\Fq} A/zA,\,M) = 0$.
Without loss of generality we assume that $M/zM$ is $S$-free.
Then $\Tor_1(S\otimes_{\Fq} A/zA,\,M) = 0$ 
if and only if $M$ is free over $S\otimes_{\Fq} A$ by \stacks{051H}.%
\end{proof}


\begin{lem}\label{wk-equiv-coeff}%
Let $n > 0$ be an integer. Set $A = \ncoint/\ncomax^n$
and define an endomorphism $\nsigma$ of $R\otimes_{\Fq}A$ by
the formula $x\otimes a \mapsto x^q\otimes a$.
Then the functor $M \mapsto \shet{M}$ is an equivalence of categories
of $\nsigma$-modules over $R\otimes_{\Fq}A$ and locally constant sheaves
of finitely generated free $A$-modules on $\etsite{R}$.%
\end{lem}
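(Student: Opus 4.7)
The plan is to reduce to Katz's equivalence (Theorem~\ref{wk-equiv}) applied to the base ring $R$ itself, and then to detect which of the resulting sheaves upgrade to $\nsigma$-modules over $R \otimes_{\Fq} A$ by means of the flatness criterion of Lemma~\ref{wk-flat}.

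First I would observe that because $A$ is a finite free $\Fq$-module, $R \otimes_{\Fq} A$ is a finite free $R$-algebra, and the endomorphism $\nsigma$ acts as the $q$-Frobenius on the $R$-factor and as the identity on $A$. Hence if $M$ is a $\nsigma$-module over $R \otimes_{\Fq} A$, its underlying $R\{\tau\}$-module is a $\nsigma$-module over $R$ (with the same structure morphism, under the natural identification of pullbacks) carrying a commuting $A$-action. Applying Theorem~\ref{wk-equiv} and tracking the $A$-action yields an equivalence between $\nsigma$-modules over $R$ equipped with a commuting $A$-action and locally constant sheaves of finitely generated $A$-modules on $\etsite{R}$.

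The remaining question is which such $M$ are in addition locally free over $R \otimes_{\Fq} A$. By Lemma~\ref{wk-flat} (applied with $S = R$), this is equivalent to the conjunction of ($\ast$) $M/zM$ being locally free over $R$, and ($\ast\ast$) the exactness of the sequence $M \xrightarrow{z^{n-1}} M \xrightarrow{z} M$. Condition ($\ast$) is automatic: since $z$ is $\nsigma$-fixed, multiplication by $z$ is a morphism of $\nsigma$-modules over $R$, and by Proposition~\ref{wk-abelian} the cokernel $M/zM$ remains a $\nsigma$-module over $R$, hence is locally free over $R$. Because the Katz equivalence is exact, condition ($\ast\ast$) translates into the equality $\cF[z] = z^{n-1}\cF$ of subsheaves of $\cF = \shet{M}$.

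Finally, applying the structure theorem for finitely generated modules over the local principal ideal ring $A = \ncoint/\ncomax^n$ to a stalk $V$ gives $V \cong \bigoplus_i A/\ncomax^{a_i}$ with $0 < a_i \leq n$, and a direct check shows $V[z] = z^{n-1}V$ if and only if every $a_i$ equals $n$, i.e.\ if and only if $V$ is free over $A$. Hence ($\ast\ast$) is equivalent to $\cF$ being a locally constant sheaf of finite free $A$-modules. Compatibility of morphisms is automatic: on both sides of the correspondence a morphism is a Katz-morphism that in addition commutes with the $A$-action. The main delicate point is the translation of local freeness over $R \otimes_{\Fq} A$ into stalkwise freeness of $\shet{M}$, but Lemma~\ref{wk-flat} combined with the exactness of the Katz equivalence reduces this to the elementary computation above.
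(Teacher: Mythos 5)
Your proposal is correct and follows essentially the same route as the paper: reduce to Katz's equivalence over $R$ via Theorem~\ref{wk-equiv}, then use Lemma~\ref{wk-flat} with $S=R$ together with Proposition~\ref{wk-abelian} and the exactness of the equivalence to translate local freeness over $R\otimes_{\Fq}A$ into a condition on the sheaf. The only cosmetic difference is at the last step, where the paper reapplies Lemma~\ref{wk-flat} with $S=\Fq$ to the stalks while you verify the condition $V[z]=z^{n-1}V$ directly from the structure theorem for modules over $\ncoint/\ncomax^n$; both are fine.
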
%
\begin{proof}%
Theorem~\ref{wk-equiv} identifies the category of left $(R\otimes_{\Fq}A)\{\tau\}$-modules
which are $\nsigma$-modules over $R$ with the category of locally constant sheaves
of finitely generated $A$-modules on $\etsite{R}$.
We need to prove that such a sheaf has free stalks if and only if
it corresponds to a $\nsigma$-module over $R\otimes_{\Fq}A$.

Let $M$ be a left $(R\otimes_{\Fq}A)\{\tau\}$-module which is a $\nsigma$-module over $R$.
Consider the sequence of left $R\{\tau\}$-modules
\begin{equation}\label{wkcoeff-mod}
M \xrightarrow{\:z^{n-1}\:} M \xrightarrow{\:z\:} M
\end{equation}
and the induced sequence of \'etale sheaves
\begin{equation}\label{wkcoeff-sh}
\shet{M} \xrightarrow{\:z^{n-1}\:} \shet{M} \xrightarrow{\:z\:} \shet{M}.
\end{equation}
We shall prove that the following are equivalent:
\begin{enumerate}
\renewcommand{\theenumi}{\roman{enumi}}%
\item\label{wkcoeff-sigma}
$M$ is a $\nsigma$-module over $R\otimes_{\Fq} A$.

\item\label{wkcoeff-modexact}
The sequence of $R\otimes_{\Fq}A$-modules \eqref{wkcoeff-mod} is exact.

\item\label{wkcoeff-shexact}
The sequence of sheaves \eqref{wkcoeff-sh} is exact.

\item\label{wkcoeff-shfree}
Every stalk of $\shet{M}$ is a free $A$-module.
\end{enumerate}

\eqref{wkcoeff-sigma} $\Leftrightarrow$ \eqref{wkcoeff-modexact}
The structure morphism of $M$ over $R\otimes_{\Fq} A$
coincides with the one over $R$, and so is an isomorphism.
Proposition~\ref{wk-abelian} implies that $M/zM$ is locally free over $R$.
Applying Lemma~\ref{wk-flat} with $S = R$
we conclude that $M$ is locally free over $R\otimes_{\Fq} A$
if and only if the sequence \eqref{wkcoeff-mod} is exact.

\eqref{wkcoeff-modexact} $\Leftrightarrow$ \eqref{wkcoeff-shexact}
Proposition~\ref{wk-abelian} shows that the sequence \eqref{wkcoeff-mod} is exact
in the category of left $R\{\tau\}$-modules
if and only if it is exact in the category of $\nsigma$-modules over $R$.
Hence the claim follows from Theorem~\ref{wk-equiv}.

\eqref{wkcoeff-shexact} $\Leftrightarrow$ \eqref{wkcoeff-shfree}
follows by applying Lemma~\ref{wk-flat} with $S = \Fq$
to the stalks of $\shet{M}$.%
\end{proof}

\begin{lem}\label{complete}%
Let $M$ be a locally free $\nBR{R}$-module of finite type.
\begin{enumerate}
\item
The natural morphism
$M/\fm^n M \xrightarrow{\isosign} (R\otimes_{\Fq} \ncoint/\ncomax^n) \otimes_{\nBR{R}} M$
is an isomorphism for all $n > 0$.

\item
The natural morphism $M \xrightarrow{\isosign} \mlim_{n>0} M/\fm^n M$
is an isomorphism.
In other words $M$ is $\nadic$-adically complete.%
\end{enumerate}%
\end{lem}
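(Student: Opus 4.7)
The plan is to reduce both statements to standard facts about adic completions, using two observations: that $\ncomax$ is principal (since $\ncoef$ is a local field) and that a locally free module of finite type over a commutative ring is a direct summand of a finite free module.

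For part~(1), let $z \in \ncoint$ be a uniformizer, so $\ncomax = (z)$, and set $A = R \otimes_{\Fq} \ncoint$. By Definition~\ref{defetring}, $\nBR{R}$ is the $zA$-adic completion of $A$. Since the ideal $zA$ is principal, the standard compatibility of completion with quotients by powers of a finitely generated ideal gives an isomorphism $\nBR{R}/z^n\nBR{R} \cong A/z^n A = R \otimes_{\Fq} (\ncoint/\ncomax^n)$ for each $n \geqslant 1$. Interpreting $\fm$ as the ideal of $\nBR{R}$ generated by $\ncomax$, we have $\fm^n M = z^n M$, and tensoring the above isomorphism with $M$ over $\nBR{R}$ yields part~(1).

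For part~(2), the plan is a direct-summand argument. Being locally free of finite type over the commutative ring $\nBR{R}$, the module $M$ is finitely generated projective, so there exist an $\nBR{R}$-module $N$ and an integer $r \geqslant 0$ with $M \oplus N \cong \nBR{R}^r$. Write $\widehat{(-)} = \mlim_{n>0} (-)/\fm^n(-)$ for the $\fm$-adic completion functor. It commutes with finite direct sums since projective limits do, so $\widehat M \oplus \widehat N \cong \widehat{\nBR{R}^r}$. The target equals $\nBR{R}^r$ itself because $\nBR{R}$ is $\fm$-adically complete by construction, and this identification agrees with the original decomposition $M \oplus N \cong \nBR{R}^r$ via the natural maps $M \to \widehat M$ and $N \to \widehat N$. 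Projection onto the first summand then produces the desired isomorphism $M \xrightarrow{\isosign} \widehat M$.

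I do not anticipate a significant obstacle: both parts are formal once one notes that $\ncomax$ is principal (so completion behaves transparently modulo powers of the uniformizer) and that $M$ is a direct summand of a finite free module (so completion preserves it). The only subtlety worth flagging is that $\nBR{R}$ need not be Noetherian, so one should avoid invoking exactness of completion on finitely generated modules; the direct-summand argument sidesteps this entirely.
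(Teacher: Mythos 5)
Your proof is correct and takes essentially the same route as the paper's: reduce to the free case via the direct-summand decomposition $M\oplus N\cong\nBR{R}^{\,r}$ and invoke the standard behaviour of completion at a principal (more generally, finitely generated) ideal, which the paper cites as \stacks{05GG}. The only cosmetic caveat is that completeness of $\nBR{R}$ is not literally ``by construction'' (completions at non--finitely generated ideals need not be complete) but is itself part of that same result, which you have already invoked for part~(1).
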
%
\begin{proof}%
By a direct sum argument we reduce to the case $M = \nBR{R}$.
The ring $\nBR{R}$ is the completion of $R\otimes_{\Fq}\ncoint$ 
at the principal ideal $R\otimes_{\Fq}\ncomax$.
So the claim follows by \stacks{05GG}.%
\end{proof}

\begin{lem}\label{bunsys}%
The functor $M \mapsto \{ M/\fm^n M\}_{n\geqslant 1}$
is an equivalence of categories
of finitely generated locally free $\nBR{R}$-modules 
and inverse systems $\{M_n\}_{n\geqslant 1}$ where
\begin{itemize}
\item
$M_n$ is a finitely generated locally free $R\otimes_{\Fq} \ncoint/\ncomax^n$-module,

\item
the transition maps are $\nBR{R}$-linear and induce isomorphisms
\begin{equation*}
M_{n+1}/\ncomax^n M_{n+1} \xrightarrow{\isosign} M_n.
\end{equation*}

\item
The morphisms of inverse systems are $\nBR{R}$-linear.%
\end{itemize}%
\end{lem}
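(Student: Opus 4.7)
My plan is to construct an inverse functor $\{M_n\}_{n\geqslant 1} \mapsto \mlim_{n>0} M_n$ and verify that it is mutually inverse to $M \mapsto \{M/\nadic^n M\}_{n\geqslant 1}$. That the forward-then-inverse composition recovers $M$ is immediate from Lemma~\ref{complete}: part~(1) shows the natural quotient $M/\nadic^n M$ agrees with $(R\otimes_{\Fq}\ncoint/\ncomax^n)\otimes_{\nBR{R}} M$, so the system lies in the right category, and part~(2) gives $M \xrightarrow{\isosign} \mlim M/\nadic^n M$.

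The work lies in the other direction. Given a compatible system $\{M_n\}$, set $M = \mlim_n M_n$. The ring $\nBR{R}$ is $\nadic$-adically complete with $\nBR{R}/\nadic^n = R\otimes_{\Fq}\ncoint/\ncomax^n$, so the system $\{M_n\}$ is naturally a system of $\nBR{R}$-modules and $M$ acquires an $\nBR{R}$-module structure. I would first verify Zariski-locally on $\Spec R$ that $M$ is free of some finite rank and that the natural map $M/\nadic^k M \to M_k$ is an isomorphism for every $k$.

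To produce the trivialization, note that the spectra $\Spec(R\otimes_{\Fq}\ncoint/\ncomax^n)$ share the same underlying topological space as $\Spec(R\otimes_{\Fq}\ncons)$, since $\ncomax$ is nilpotent in each quotient. Hence a basic open $D(f)\subset\Spec R\otimes_{\Fq}\ncons$ over which $M_1$ is free of rank $r$ gives a corresponding basic open on every $\Spec(R\otimes_{\Fq}\ncoint/\ncomax^n)$; on this open each $M_n$ is free of rank $r$ as well because the transition maps are compatible reductions and free rank is invariant under nilpotent thickenings. I would then inductively lift a basis: given a basis $e_1^{(n)},\dots,e_r^{(n)}$ of the localized $M_n$, choose any lift of each $e_i^{(n)}$ to the localized $M_{n+1}$ along the surjection $M_{n+1}\twoheadrightarrow M_n$; these lifts form a basis of $M_{n+1}$ because any set of elements in a finitely generated locally free $R\otimes_{\Fq}\ncoint/\ncomax^{n+1}$-module lifting a basis mod $\ncomax$ is a basis (this reduces to the local case, where it is Nakayama over the local ring). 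The sequences $(e_i^{(n)})_n\in M$ then give an $\nBR{R}$-basis of the localization of $M$, and the identification $M/\nadic^k M\cong M_k$ on this open follows since the limit of the truncated tails $\{\ncomax^k M_n\}_{n\geqslant k}$ is $\ncomax^k M$.

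The main obstacle will be the technicality of patching local trivializations: one must choose finitely many basic opens of $\Spec R$ (working quasi-compactly) and arrange the inductive lifts consistently so that the localized bases glue into a global locally-free structure on $M$. Once local freeness is established globally and the mod-$\nadic^k$ comparison holds Zariski-locally, the comparison maps $M/\nadic^k M\to M_k$ are isomorphisms after localization hence globally, completing the verification. Finally, the morphism correspondence is routine: morphisms of inverse systems induce $\nBR{R}$-linear morphisms on limits, and by Lemma~\ref{complete}(2) any $\nBR{R}$-linear morphism between locally free modules is determined by its reductions mod $\nadic^n$.
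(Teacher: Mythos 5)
Your overall architecture coincides with the paper's: the inverse functor is $\{M_n\}\mapsto\mlim_{n}M_n$, and the direction ``forward then back recovers $M$'' is handled exactly as in the paper via Lemma~\ref{complete}. The difference is in the hard direction. The paper simply invokes \stacks{0D4B}, which asserts precisely that $\mlim_n M_n$ is a finitely generated locally free $\nBR{R}$-module with the expected truncations; you attempt to reprove this by gluing local trivializations, and that is where there is a genuine gap.

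The problem is the step you yourself flag as ``the main obstacle'': localization does not commute with inverse limits. Your compatible bases $e_i^{(n)}$ live in the localizations $(M_n)_f$, so the sequences $(e_i^{(n)})_n$ are elements of $\mlim_n (M_n)_f$, not of $M=\mlim_n M_n$, let alone of $M_f$. What they trivialize is $\mlim_n (M_n)_f$ over the \emph{completed} localization $\mlim_n (R\otimes_{\Fq}\ncoint/\ncomax^n)_f$, which is strictly larger than the Zariski localization $(\nBR{R})_f$: for $R=\Fq[x]$, $\ncoint=\Fq[[z]]$ and $f=x$, the series $\sum_{n}x^{-n}z^n$ lies in $\Fq[x]_x[[z]]$ but not in $\Fq[x][[z]]_x$. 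So your construction produces a trivialization over a ``formal'' cover --- in effect reconstructing the compatible system you started from --- but not local freeness for the Zariski topology on $\Spec\nBR{R}$, which is the entire content of the statement (and no noetherian hypotheses on $R$ are available to force completion to be faithfully flat). The standard repair, and essentially the proof behind \stacks{0D4B}, avoids gluing altogether: lift a surjection $(R\otimes_{\Fq}\ncons)^{\oplus t}\twoheadrightarrow M_1$ together with a splitting compatibly up the tower, using Nakayama over the nilpotent thickenings and the projectivity of each $M_n$; passing to the limit exhibits $M$ as a direct summand of $\nBR{R}^{\oplus t}$, hence finite projective, hence finite locally free, after which the isomorphisms $M/\fm^nM\xrightarrow{\isosign}M_n$ follow. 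The remaining pieces of your proposal (basis lifting modulo nilpotents, and the morphism correspondence via Lemma~\ref{complete}) are sound.
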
%
\begin{proof}%
Let $\{M_n\}_{n\geqslant 1}$ be such an inverse system.
According to \stacks{0D4B} the limit $M = \mlim_{n>0} M_n$ is a finitely generated locally free $\nBR{R}$-module 
and the natural maps $M/\fm^n M \to M_n$ are isomorphisms.
Conversely, a finitely generated locally free $\nBR{R}$-module
is $\nadic$-adically complete by Lemma~\ref{complete}.%
\end{proof}

\begin{proof}[Proof of Theorem~\ref{wellknown}]%
We view $\nsigma$-modules over $\nBR{R}$ as pairs $(M,a_M)$
where $M$ is a locally free $\nBR{R}$-module of finite type
and $a_M\colon \nsigma^* M \xrightarrow{\isosign} M$ an isomorphism
(see Lemma~\ref{adjdesc}).
The same applies to $\nsigma$-modules over the rings
$R\otimes_{\Fq}\ncoint/\ncomax^n$.
The equivalence of Lemma~\ref{bunsys} is tautologically
compatible with arbitrary base change, and in particular with
the base change $\nsigma\colon \nBR{R} \to \nBR{R}$.
Therefore the functor $M \mapsto \{M/\ncomax^n M\}_{n>0}$
is an equivalence of categories of $\nsigma$-modules over $\nBR{R}$
and inverse systems $\{M_n\}_{n>0}$ where
\begin{itemize}
\item
each $M_n$ is a $\nsigma$-module over $R\otimes_{\Fq}\ncoint/\ncomax^n$,

\item
the transition maps are $\nBR{R}\{\tau\}$-linear
and induce isomorphisms
\begin{equation*}
M_{n+1}/\ncomax^n M_{n+1} \xrightarrow{\isosign} M_n.
\end{equation*}
\end{itemize}
Lemma~\ref{wk-equiv-coeff} implies that
the functor $\{M_n\}_{n>0} \mapsto \{ \shet{(M_n)} \}_{n>0}$ is an equivalence
of this category and the category of lisse $\nadic$-adic sheaves $\cF = \{\cF_n\}_{n>0}$
such that the stalks of every $\cF_n$ are free over $\ncoint/\ncomax^n$.
The stalk of $\cF$ at a geometric point $\bm\bar{x}$
is $\mlim_{n>0} (\cF_n)_{\bm\bar{x}}$. This is free over $\ncoint$
if and only if every term $(\cF_n)_{\bm\bar{x}}$ is free over $\ncoint/\ncomax^n$.
It remains to note that all the intermediate equivalences 
are compatible with arbitrary base change $R\to S$.%
\end{proof}



\subsection{Isocrystals}\label{ss:isocrys}

\begin{dfn}\label{defdieuring}%
We set $\nFBRz{\ncoef}{R} = \nFBR{\ncoint}{R} \otimes_{\ncoint}\ncoef$
and equip this ring with the unique endomorphism $\nsigma$
extending the one of $\nFBR{\ncoint}{R}$.%
\end{dfn}%
\begin{conv}%
For brevity we shall write $\nBRz{R}$ in place of $\nFBRz{\ncoef}{R}$
when the choice of the coefficient field $\ncoef$ is clear.
\end{conv}

If the field $\ncoef$ is $\rF{\Fq}$ then $\nBRz{R} = \rF{R}$ and
$\nsigma\big(\sum_n \alpha_n z^n\big) = \sum_n\alpha_n^q z^n$.

\begin{dfn}\label{dfnisoc}%
A \emph{$\nBRz{R}$-isocrystal} is a $\nsigma$-module over $\nBRz{R}$.%
\end{dfn}%
\begin{conv}%
We omit the prefix ``$\nBRz{R}$-'' when the ring $R$ is clear from the context.%
\end{conv}%

These objects are related to $F$-isocrystals
from Berthelot's theory of rigid cohomology, 
see \cite{kedlaya-notes}.
Informally speaking,
an $F$-isocrystal is an isocrystal with the coefficient field $\ncoef=\bQ_p$
equipped with a $\bQ_p$-linear connection.
Although connections do not appear in the theory of $\nBRz{R}$-isocrystals,
almost all the results of this article have analogs for $F$-isocrystals.

\begin{dfn}%
Let $M$ be a $\nBRz{R}$-module.
Given a subset $X \subset M$ we denote by $\ngen{X}$ the $\nBR{R}$-submodule generated by $X$.%
\end{dfn}

\begin{dfn}\label{defblat}%
Let $M$ be a locally free $\nBRz{R}$-module of finite type.
A \emph{lattice} $\nBlat$ in $M$ is a locally free $\nBR{R}$-submodule
of finite type such that $\nBlat\otimes_{\ncoint}\ncoef = M$.%
\end{dfn}

\begin{lem}\label{latshift}%
Let $M$ be an isocrystal and $\nBlat\subset M$ a lattice.
Then $\nlati{n}{\nBlat}$ is a lattice for every $n\geqslant 0$.%
\end{lem}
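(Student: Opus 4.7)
The plan is to reduce to the case $n=1$ and induct. For $n=0$ we have $\nlati{0}{\nBlat}=\nBlat$, so there is nothing to prove. For the inductive step I would verify the identity $\nlati{n+1}{\nBlat}=\ngen{\tau\nlati{n}{\nBlat}}$ as $\nBR{R}$-submodules of $M$: any element of $\nlati{n}{\nBlat}$ has the form $x=\sum a_i\tau^n(t_i)$ with $a_i\in\nBR{R}$ and $t_i\in\nBlat$, and the twisted Leibniz relation $\tau(a\cdot m)=\nsigma(a)\cdot\tau(m)$ yields $\tau(x)=\sum\nsigma(a_i)\tau^{n+1}(t_i)\in\nlati{n+1}{\nBlat}$; conversely $\tau^{n+1}(t)=\tau(\tau^n(t))$ lies in $\ngen{\tau\nlati{n}{\nBlat}}$. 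Thus it suffices to show that $\ngen{\tau\nBlat}$ is a lattice whenever $\nBlat$ is, and the induction proceeds by replacing $\nBlat$ with the lattice $\nlati{n}{\nBlat}$.

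For this base case I would exploit the structure isomorphism $\taulin_M\colon\nsigma^\ast M\xrightarrow{\isosign}M$. Since $\nBlat\subset M$ is locally free of some rank $r$ over $\nBR{R}$, the pullback $\nsigma^\ast\nBlat=\nBR{R}\otimes_{\nsigma,\nBR{R}}\nBlat$ is again locally free of rank $r$, and the inclusion $\nBlat\hookrightarrow M$ induces a natural $\nBR{R}$-linear map $\nsigma^\ast\nBlat\to\nsigma^\ast M$. Composing with $\taulin_M$ produces an $\nBR{R}$-linear map $\nsigma^\ast\nBlat\to M$ sending $1\otimes t\mapsto\tau(t)$, whose image is precisely $\ngen{\tau\nBlat}$. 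Once injectivity of this map is established, we deduce that $\ngen{\tau\nBlat}\cong\nsigma^\ast\nBlat$ is locally free of rank $r$ over $\nBR{R}$. Inverting the uniformizer of $\ncoint$ then recovers $\ngen{\tau\nBlat}\otimes_{\ncoint}\ncoef=\nsigma^\ast M=M$ because $\taulin_M$ is surjective, so $\ngen{\tau\nBlat}$ is a lattice.

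The key technical ingredient is the injectivity of $\nsigma^\ast\nBlat\to\nsigma^\ast M$. Since $\nsigma^\ast M=\nsigma^\ast\nBlat\otimes_{\ncoint}\ncoef$ is obtained from $\nsigma^\ast\nBlat$ by inverting the uniformizer of $\ncoint$, this reduces to showing that the uniformizer is a nonzero-divisor on the flat $\nBR{R}$-module $\nsigma^\ast\nBlat$, which in turn rests on the fact that it is a nonzero-divisor on $\nBR{R}$ itself. The latter is immediate from the description of $\nBR{R}$ as the adic completion of $R\otimes_{\Fq}\ncoint$ along the principal ideal generated by the uniformizer, so there is no serious obstacle; the proof is essentially a careful unwinding of the definitions of lattice and structure isomorphism.
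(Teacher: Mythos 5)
Your proposal is correct and follows essentially the same route as the paper: the paper's proof is exactly the observation that $\nlati{}{\nBlat} = \taulin(\nsigma^*\nBlat)$ is a lattice because $\taulin$ is an isomorphism, followed by induction via $\nlati{n+1}{\nBlat} = \nlati{}{\nlati{n}{\nBlat}}$. You merely spell out the details the paper leaves implicit (injectivity of $\nsigma^*\nBlat \to \nsigma^* M$ and the identification of $\ngen{\tau\nBlat}$ with the image of $\nsigma^*\nBlat$ under $\taulin$), and these details check out.
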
%
\begin{proof}%
By assumption the structure morphism $\taulin\colon \nsigma^* M \xrightarrow{\isosign} M$ is an isomorphism.
Therefore $\nlati{}{\nBlat} = \taulin(\nsigma^* \nBlat)$ is a lattice.
The claim follows by induction since $\nlati{n+1}{\nBlat} = \nlati{}{\nlati{n}{\nBlat}}$ for all $n\geqslant 0$%
\end{proof}


Recall that $\nresdeg = [\ncoint/\ncomax : \Fq]$ is the degree of the residue
field of $\ncoef$ over $\Fq$.

\begin{dfn}\label{dfnpure}%
An isocrystal $M$ is \emph{pure}
if
there are integers $s$ and $r$, $r > 0$,
and a lattice $\nBlat$ in $M$
such that 
$\nlati{r\nresdeg}{\nBlat} = z^s\nBlat$.
The rational number $\frac{s}{r}$ is called the \emph{slope} of $M$.
\end{dfn}

The notion of purity is central to this work.
In a moment we shall see that the slope of a nonzero pure isocrystal
does not depend on the choice of $s$, $r$ and $\nBlat$.
Note that in Definition~\ref{dfnpure} the exponent of $\tau$ is a multiple of $\nresdeg$.
This ensures that for every field $K$ the slopes of $\nBRz{K}$-isocrystals of rank $1$ are integral.

\begin{exa}%
Every Drinfeld module of rank $r$ gives rise
a pure isocrystal of slope $-\frac{1}{r}$,
see Proposition~\ref{infslope}.
Even though this proposition only covers the case of a field~$R$
the underlying construction of Drinfeld~\cite{drinfeld-commrgs}
works for arbitrary rings.
More generally,
pure Anderson motives of weight $w$
give rise to
pure isocrystals of slope $-w$, see Section~\ref{ss:andpure}.%
\end{exa}

\begin{exa}
An isocrystal which is not pure is called \emph{mixed}.
A natural source of such isocrystals is provided by 
Drinfeld modules of special characteristic.

Let $E$ be a Drinfeld module of characteristic $\spr$ over a field $K$.
We denote its rank by $r$ and its height by $h$. 
Consider the rational $\spr$-adic completion $M_\spr$ of the motive of $E$ as described in~\S\ref{ss:motiso}.
This isocrystal sits in a canonical short exact sequence
\begin{equation*}
0 \to (M_\spr)^0 \to M_\spr \to M_\spr/(M_\spr)^0 \to 0
\end{equation*}
where
\begin{itemize}
\item
the sub-isocrystal $(M_\spr)^0$ is pure of slope~$0$ and rank $r - h$,

\item
the quotient isocrystal $M_\spr / (M_\spr)^0$ is pure of slope $\tfrac{1}{h}$
and rank $h$.
\end{itemize}
Recall that $h$ is an integer from the interval $[1,r]$.
Therefore $M_\spr$ is mixed of slopes $0$ and $\tfrac{1}{h}$
whenever $E$ is not supersingular ($h < r$).

Although I am not aware of a reference for these results,
the existence of such a short exact sequence can be deduced
from the theory of Drinfeld modules of special characteristic.
\end{exa}

\begin{rmk}%
Assuming that $R$ is noetherian
one can show that every pure isocrystal of slope~$0$ arises from a $\nsigma$-module over $\nBR{R}$.
Unfortunately I do not know an easy proof.%
\end{rmk}


\pagebreak
\begin{prp}\label{fdmtensor}\label{fdmhom}%
Let $M$, $N$ be pure isocrystals of slopes $\nslope$, $\nslopealt$.%
\begin{enumerate}%
\item\label{fdmcons-tensor}%
$M\notim{K} N$ is pure of slope $\nslope + \nslopealt$.

\item\label{fdmcons-hom}%
$\iHom(M,N)$ is pure of slope $\nslopealt - \nslope$.%
\end{enumerate}%
\end{prp}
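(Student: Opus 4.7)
The plan is to prove the tensor product assertion \eqref{fdmcons-tensor} directly from the definition, and then to deduce \eqref{fdmcons-hom} via the isomorphism $\iHom(M,N) \cong M^* \notim{R} N$ of Lemma~\ref{fdmhomtensor}. As a preparatory step I would iterate the purity condition to clear denominators. If $\nslope = s_1/r_1$ is witnessed by a lattice $T_M$ with $\nlati{r_1 d}{T_M} = z^{s_1} T_M$, and analogously for $N$, the identity $\nlati{n+m}{T} = \nlati{n}{\nlati{m}{T}}$ combined with the fact that $z \in \nOc$ is fixed by $\nsigma$ (so $z$ and $\tau$ commute) yields $\nlati{k r_1 d}{T_M} = z^{k s_1} T_M$ for every $k \geqslant 1$. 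Replacing $(r_i,s_i)$ by a common multiple, I may then assume both lattices satisfy the purity condition with a single common exponent $rd$.

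For \eqref{fdmcons-tensor} I would take the lattice $T = T_M \otimes_{\nBR{R}} T_N \subset M \notim{R} N$. Under the canonical isomorphism $(\nsigma^{rd})^*(M \notim{R} N) \cong (\nsigma^{rd})^* M \notim{R} (\nsigma^{rd})^* N$ the $(rd)$-fold iterated structure morphism of $M \notim{R} N$ decomposes as the tensor product of those of $M$ and $N$. Applying this to $(\nsigma^{rd})^* T$ produces $\nlati{rd}{T} = z^{s_1} T_M \otimes z^{s_2} T_N = z^{s_1+s_2} T$, so $M \notim{R} N$ is pure of slope $\nslope + \nslopealt$.

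For \eqref{fdmcons-hom}, combining Lemma~\ref{fdmhomtensor} with \eqref{fdmcons-tensor} reduces the problem to showing that $M^*$ is pure of slope $-\nslope$. I would work with the dual lattice $T_M^* = \Hom_{\nBR{R}}(T_M,\nBR{R}) \subset M^*$. Writing $\phi\colon (\nsigma^{rd})^* M \shortisosign M$ for the iterated structure morphism of $M$, the iterated structure morphism of $M^*$ sends $f$ to $\phi_\unit \circ (\nsigma^{rd})^*(f) \circ \phi^{-1}$, where $\phi_\unit$ denotes the corresponding iterate for $\unit$. Purity of $M$ says $\phi((\nsigma^{rd})^* T_M) = z^{s_1} T_M$, so $\phi^{-1}$ bijects $T_M$ onto $z^{-s_1}(\nsigma^{rd})^* T_M$; chasing this through the formula yields $\nlati{rd}{T_M^*} = z^{-s_1} T_M^*$, giving slope $-\nslope$.

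The main obstacle is simply bookkeeping in the dual case; no genuinely hard input is needed. Both $\otimes$ and $\iHom$ commute with $\nsigma^*$ on locally free modules of finite type, and this compatibility translates the purity condition for $M$ mechanically into the corresponding conditions for $M \notim{R} N$ and $M^*$.
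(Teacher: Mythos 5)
Your proposal is correct and follows essentially the same route as the paper: part (1) by taking the tensor product of lattices (after passing to a common exponent), and part (2) by dualizing a lattice to see that $M^*$ is pure of slope $-\nslope$ and then invoking $\iHom(M,N) \cong M^*\notim{K} N$. The paper's proof is just a terser version of the same argument.
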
%
\begin{proof}%
\eqref{fdmcons-tensor} follows 
by taking
the tensor product of lattices.
Similarly, taking the dual of a lattice in $M$
we deduce that $M^*$ is pure of slope $-\nslope$.
As $\iHom(M,N) = M^*\notim{K} N$ the claim
\eqref{fdmcons-hom} follows from \eqref{fdmcons-tensor}.%
\end{proof}

\begin{lem}\label{injective}%
Let $M$ be a locally free $\nBRz{R}$-module
and $f\colon M \to V$ an $\ncoef$-linear morphism
to an $\ncoef$-vector space.
Suppose that there is a lattice $\nBlat \subset M$
and an $\ncoint$-submodule $U \subset V$
such that $f$ maps $\nBlat$ to $U$ and
induces an injection modulo $\ncomax$.
Then $f$ is injective.%
\end{lem}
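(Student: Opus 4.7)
The plan is to reduce the statement to the submodule $\nBlat$, then iterate the hypothesis modulo $\ncomax$ while exploiting the fact that $z$ acts invertibly on $V$, and finally conclude by $\ncomax$-adic separatedness of $\nBlat$.

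First I would handle the reduction step. Since $\nBlat\otimes_{\ncoint}\ncoef = M$, every $m \in M$ is of the form $z^{-n}t$ for some $t \in \nBlat$ and $n \geqslant 0$. If $f(m) = 0$ then $f(t) = z^n f(m) = 0$, so it suffices to show that $f$ restricted to $\nBlat$ is injective.

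Next, suppose $t \in \nBlat$ with $f(t) = 0$. The hypothesis that $f$ carries $\nBlat$ into $U$ and induces an injection modulo $\ncomax$ means that the class of $t$ in $\nBlat/\ncomax\nBlat$ maps injectively to $U/\ncomax U$, and since $f(t) = 0$, we get $t \in \ncomax\nBlat$. Writing $t = zs$ with $s \in \nBlat$, we have $zf(s) = f(t) = 0$; but $V$ is an $\ncoef$-vector space and $z \in \ncoef^\times$, so multiplication by $z$ is injective on $V$, giving $f(s) = 0$. Applying the same argument to $s$ shows $s \in \ncomax\nBlat$, hence $t \in \ncomax^2\nBlat$. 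Induction yields $t \in \bigcap_{n>0} \ncomax^n \nBlat$.

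Finally, by Lemma~\ref{complete} the lattice $\nBlat$ is $\ncomax$-adically complete, so in particular $\bigcap_{n>0}\ncomax^n\nBlat = 0$. Hence $t = 0$, which proves injectivity on $\nBlat$ and therefore on all of $M$. There is no real obstacle here; the only slightly subtle point is noting that the invertibility of $z$ on $V$ (not on $M$, where it need not be) is what powers the induction step, while separatedness of the lattice provides the terminal conclusion.
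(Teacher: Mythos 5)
Your proof is correct and follows essentially the same route as the paper: reduce to $\nBlat$ via $M=\bigcup_n z^{-n}\nBlat$, iterate the mod-$\ncomax$ injectivity (your explicit $t=zs$ induction is just the paper's observation that $f$ sends $z^n\nBlat$ to $z^nU$ injectively modulo $z$ for every $n$), and conclude from $\bigcap_n\ncomax^n\nBlat=0$ by Lemma~\ref{complete}. No gaps.
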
%
\begin{proof}%
Since $M$ is the union of $z^{-n}\nBlat$, $n\geqslant 0$,
it is enough to prove that $\ker(f|_\nBlat) = 0$.
For every $n \geqslant 0$
the morphism $f$ sends $z^n\nBlat$ to $z^n U$ and induces
an injection modulo~$z$.
Hence $\ker(f|_\nBlat) \subset \bigcap_{n>0} z^n\nBlat$.
The latter submodule is zero
since $\nBlat$ is $\nadic$-adically complete (Lemma~\ref{complete}).%
\end{proof}

\begin{prp}\label{slopeunique}%
Let $M$, $N$ be pure isocrystals of slopes $\nslope$, $\nslopealt$.
If $\nslope < \nslopealt$
then
$\Hom(M,\,N) = 0$.%
\end{prp}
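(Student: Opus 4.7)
The plan is to reduce to showing that a pure isocrystal of strictly positive slope has no nonzero $\tau$-invariants. By Proposition~\ref{fdmhom} the internal Hom $P = \iHom(M,N)$ is pure of slope $\nslopealt - \nslope > 0$, and by Lemma~\ref{dmhominv} one has $\Hom(M,N) = P^\tau$. So it suffices to prove that $P^\tau = 0$ whenever $P$ is a pure isocrystal of slope $\mu > 0$.

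To do so, fix a lattice $\nBlat \subset P$ and integers $r > 0$, $s$ with $\mu = s/r$ and $\nlati{r\nresdeg}{\nBlat} = z^s \nBlat$; positivity of $\mu$ forces $s > 0$. Given $m \in P^\tau$, I use that the endomorphism $\nsigma$ of $\nBR{R}$ fixes $z \in \ncoint$, so that multiplication by any integer power of $z$ commutes with $\tau$ on $P$. Since $P = \nBlat \otimes_{\ncoint} \ncoef$, there is some $k \geq 0$ with $z^k m \in \nBlat$, and $z^k m$ remains $\tau$-invariant; replacing $m$ by $z^k m$, I may assume $m \in \nBlat$. Then $\tau^{r\nresdeg}(m) = m$, while $\tau^{r\nresdeg}(\nBlat) \subset \nlati{r\nresdeg}{\nBlat} = z^s \nBlat$, forcing $m \in z^s \nBlat$. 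Consequently $z^{-s} m \in \nBlat$ is again $\tau$-invariant, and applying the same argument to it yields $m \in z^{2s}\nBlat$. Iterating gives $m \in z^{ns}\nBlat$ for every $n \geq 0$; by Lemma~\ref{complete} the lattice $\nBlat$ is $\nadic$-adically complete, so $\bigcap_{n\geq 0} z^n \nBlat = 0$ and $m = 0$.

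The substantive point is the interplay between the purity relation with $s > 0$ and $\tau$-invariance, which drives a fixed element into arbitrarily deep powers of $z$; completeness of the lattice then supplies the contradiction. The one mildly delicate issue is that the normalization $m \in \nBlat$ requires $\nsigma(z) = z$, without which $\tau$-invariance would not survive rescaling by powers of the uniformizer. I do not anticipate any genuine obstacle beyond keeping track of these two ingredients.
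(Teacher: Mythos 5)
Your proof is correct and follows essentially the same route as the paper: reduce via Proposition~\ref{fdmhom} and Lemma~\ref{dmhominv} to showing that a pure isocrystal of strictly positive slope has no nonzero $\tau$-invariants, then use the purity relation to push a putative invariant into $\bigcap_n z^n\nBlat = 0$. The only cosmetic difference is that the paper packages this iteration as the injectivity of $1-\tau^{r\nresdeg}$ via Lemma~\ref{injective}, whereas you run the same descent directly on the fixed vector.
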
%

\begin{proof}%
According to Lemma~\ref{dmhominv} the isocrystal $H = \iHom(M,N)$
has the property that
$H^\tau = \Hom(M,\,N)$.
We shall prove that $H^\tau = 0$.

The isocrystal $H$ is pure of a strictly positive slope by Proposition~\ref{fdmhom}.
So there is an integer $r > 0$ and
a lattice $\nBlat$ in $H$ such that 
$\tau^{r\nresdeg}\nBlat \subset z\nBlat$.
As a consequence the $\ncoef$-linear morphism $1-\tau^{r\nresdeg}\colon H \to H$
sends $\nBlat$ to $\nBlat$ and
reduces to the identity modulo~$z$.
Lemma~\ref{injective} shows that $1 - \tau^{r\nresdeg}$ is injective.
In other words $H$ contains no $\tau^{r\nresdeg}$-invariant elements.
It then contains no $\tau$-invariant elements either.%
\end{proof}

\begin{prp}\label{slopewd}%
The slope of a nonzero pure isocrystal is well-defined.%
\end{prp}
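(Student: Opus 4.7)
The plan is to deduce well-definedness from Proposition~\ref{slopeunique} by a short contradiction argument, using the identity morphism as a canonical nonzero element of the endomorphism group.

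Suppose that a nonzero isocrystal $M$ is pure of slope $\nslope$ and also pure of slope $\nslopealt$ with $\nslope \ne \nslopealt$. Without loss of generality I may assume $\nslope < \nslopealt$. The data expressing that $M$ is pure is used only to invoke Proposition~\ref{slopeunique}: applying that proposition with the first argument regarded as pure of slope $\nslope$ and the second argument regarded as pure of slope $\nslopealt$ gives $\Hom(M,M)=0$. But the identity $\mathrm{id}_M$ lies in $\Hom(M,M)$, and it is nonzero because $M\ne 0$. This contradiction forces $\nslope = \nslopealt$.

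The only subtlety is making sure that ``pure of slope $\nslope$'' and ``pure of slope $\nslopealt$'' in Proposition~\ref{slopeunique} may refer to the same underlying isocrystal endowed with two different pairs $(s,r,\nBlat)$ and $(s',r',\nBlat')$; this is immediate from the formulation of Definition~\ref{dfnpure} and Proposition~\ref{slopeunique}, which impose no compatibility between $M$ and $N$ beyond purity with the prescribed slopes. There is no real obstacle here, since Proposition~\ref{slopeunique} has already absorbed all the work: the essential content (that lattices with $\tau^{r\nresdeg}\nBlat\subset z\nBlat$ force $\tau$-invariants to vanish, via the $\nadic$-adic completeness of lattices in Lemma~\ref{complete} and the injectivity argument of Lemma~\ref{injective}) is exactly what makes the two putative slopes incompatible.
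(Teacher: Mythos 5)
Your proof is correct and is essentially the paper's own argument: both apply Proposition~\ref{slopeunique} to the identity morphism of $M$, viewed as a map from $M$ pure of slope $\nslope$ to $M$ pure of slope $\nslopealt$, to rule out $\nslope<\nslopealt$ (and by symmetry the reverse inequality). The only difference is cosmetic — the paper runs the inequality twice rather than phrasing it as a WLOG contradiction.
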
%
\begin{proof}%
Let $M$ be such an isocrystal and let $\nslope$, $\nslopealt$ be its slopes.
The identity morphism $M \to M$
is a nonzero morphism from a pure isocrystal of slope $\nslope$
to a pure isocrystal of slope $\nslopealt$.
Proposition~\ref{slopeunique} implies that $\nslope \geqslant \nslopealt$.
Interchanging $\nslope$ and $\nslopealt$ we conclude that $\nslope = \nslopealt$.%
\end{proof}

\begin{prp}\label{slopehom}%
Assume that $R$ is reduced.
Let $M$, $N$ be pure isocrystals of slopes $\nslope$, $\nslopealt$.
If $\nslope \ne \nslopealt$ then $\Hom(M,\,N) = 0$.%
\end{prp}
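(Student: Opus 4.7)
By Proposition~\ref{slopeunique} we may assume $\nslope > \nslopealt$. Setting $H = \iHom(M, N)$, Proposition~\ref{fdmhom} shows that $H$ is pure of slope $\nu = \nslopealt - \nslope < 0$, and Lemma~\ref{dmhominv} identifies $\Hom(M, N)$ with $H^\tau$, so it suffices to prove that $H^\tau = 0$. The sign of $\nu$ means that $\tau^{r\nresdeg}$ is no longer contracting on a lattice, so the direct injectivity argument used in the proof of Proposition~\ref{slopeunique} (via Lemma~\ref{injective} applied to $1 - \tau^{r\nresdeg}$) no longer applies, and one must reduce to a simpler base.

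The strategy is to reduce in two stages to the case where $R$ is an algebraically closed field $\bar{K}$, where the result follows from the semisimplicity of the isocrystal category, \cite[Theorem~2.4.5]{laumon} as cited in Section~\ref{ss:introtate}: any pure isocrystal of slope $\nu \ne 0$ decomposes as a sum of simple objects of slope $\nu$, and the identification $H^\tau = \Hom(\unit, H)$ then forces $H^\tau = 0$ because $\unit$ has slope~$0$ and Hom vanishes between non-isomorphic simples.

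The first reduction is where the reducedness hypothesis enters. Since $R$ is reduced, the natural map $R \to \prod_\fp \kappa(\fp)$ (product over the primes of $R$) is injective. Using flatness of $\ncoint$ over $\Fq$ and the description of $\nBR{R}$-modules as inverse systems at finite level (Lemma~\ref{bunsys} together with Lemma~\ref{complete}), I would propagate this injectivity to obtain that a section of the locally free $\nBRz{R}$-module $H$ vanishes if and only if it vanishes after base change along each $\nBRz{R} \to \nBRz{\kappa(\fp)}$, with purity and $\tau$-action preserved by each base change. This reduces the claim to the case that $R$ is a field $K$. For such $K$ I would extend further to an algebraic closure $\bar{K}$ by a filtered colimit argument over finite separable subextensions $K'/K$, for which $\nBRz{K'}$ is finite free, hence faithfully flat, over $\nBKz$; this yields an injection $H^\tau \hookrightarrow H_{\bar{K}}^\tau$, whose right-hand side vanishes by the previous paragraph.

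The main obstacle will be the technical verification that completion at $\ncomax$ is compatible with these base changes without a noetherian assumption on $R$. I expect this to be handled by reducing everything modulo each power $\ncomax^n$ before taking limits, where $\ncoint/\ncomax^n$ is flat over $\Fq$ and injectivity is manifest, and then invoking Lemma~\ref{bunsys} to lift the conclusion back to $\nBR{R}$-level.
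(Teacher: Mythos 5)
Your argument is correct in outline but takes a genuinely different route from the paper. The paper works directly over $R$: writing the slope of $H=\iHom(M,N)$ as $\frac{s}{r}$ with $s<0$, the map $1-\tau^{r\nresdeg}$ carries a suitable lattice $\nBlat$ into $z^s\nBlat$, and its reduction modulo $z$ coincides with $-\tau^{r\nresdeg}$, a $\nsigma^{r\nresdeg}$-semilinear map whose linearization is surjective (hence bijective by a rank count); for reduced $R$ such a map is injective --- this is exactly where reducedness enters, via $a^{q^{r\nresdeg}}=0\Rightarrow a=0$ --- and Lemma~\ref{injective} then shows $H$ has no $\tau^{r\nresdeg}$-invariants. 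You instead feed reducedness in through the embedding $R\hookrightarrow\prod_{\fp}\kappa(\fp)$, reduce to fields, and then pass to a closed field where the decomposition into simples is available. This is legitimate: the injectivity of $\nBRz{R}\to\prod_{\fp}\nBRz{\kappa(\fp)}$ does follow by working modulo $\ncomax^n$ and taking limits as you indicate (plus exactness of inverting $z$), base change preserves purity, lattices and $\tau$-invariants, and the facts you need over the closed field (Propositions~\ref{puredecomp}, \ref{subquot}, \ref{slopewd} and the abelianness from Proposition~\ref{ab}) do not depend on Proposition~\ref{slopehom}, so there is no circularity even though they appear later in the paper. The paper's proof is more elementary and self-contained at this point of the development; yours is conceptually cleaner but imports the field theory of Section~\ref{sec:isocfield}. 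Two corrections to your sketch: finite \emph{separable} subextensions only exhaust the separable closure, not the algebraic closure --- but the separable closure suffices, since Proposition~\ref{puredecomp} is proved there; and you should not invoke Laumon's full classification over a separably closed field (the paper's own remark following Proposition~\ref{simple} gives a counterexample for non-perfect $K$) --- only the decomposition of \emph{pure} isocrystals of a fixed slope, which does hold and is all your endgame requires.
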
%
\begin{rmk}%
More generally this holds if the nilradical of $R$ is nilpotent.%
\end{rmk}
\begin{proof}[Proof of Proposition~\ref{slopehom}]%
As before we consider the isocrystal
$H = \iHom(M,N)$ and aim to prove that $H^\tau = 0$.
In view of Proposition~\ref{slopeunique} we are free to assume that $\nslope > \nslopealt$.
Then there is a lattice $\nBlat \subset H$ and integers $s < 0$, $r > 0$
such that
$z^s \nBlat$ is generated by $\tau^{r\nresdeg}\nBlat$.
%
The $\ncoef$-linear morphism $1-\tau^{r\nresdeg}$
sends $\nBlat$ to $z^s\nBlat$.
Let $f$ 
be its reduction modulo~$z$.
We shall prove that $f$ is injective.
The result then follows from Lemma~\ref{injective}.

Let $\nsigma\colon R\to R$ be the $q$-Frobenius.
The morphism $f$ is $\nsigma^{r\nresdeg}$-linear since $s < 0$.
Its linearization
$f^\nlin\colon (\nsigma^{r\nresdeg})^*(\nBlat/z\nBlat) \to z^s\nBlat/z^{s+1}\nBlat$
is surjective since $\nlati{r\nresdeg}{\nBlat} = z^s\nBlat$.
The locally free $R$-modules 
$\nBlat/z\nBlat$ and $z^s\nBlat/z^{s+1}\nBlat$ are isomorphic by construction.
In particular their rank functions coincide.
The rank function of $(\nsigma^{r\nresdeg})^*(\nBlat/z\nBlat)$
is the same as the one of $\nBlat/z\nBlat$
since $\nsigma$ is the identity 
on the topological space $\Spec R$.
Hence $f^\nlin$ is an isomorphism.
As $R$ is reduced it follows that $f$ is injective.
\end{proof}

\begin{exa}%
Proposition~\ref{slopehom} can fail for a non-reduced ring $R$.
Let $\ncoef = \rF{\Fq}$ and
$R = \Fq[\varepsilon^{1/q^n}:n\geqslant 0]/(\varepsilon)$.
The isocrystal
$M = \rF{R}$, $\tau \cdot x = z\nsigma(x)$,
is pure of slope $1$ by construction.
The equation $\nsigma(y) = z y$ has a solution
\begin{equation*}
y = \varepsilon^{1/q} z + \varepsilon^{1/q^2} z^2 + \dotsc
\end{equation*}
This defines a nonzero morphism $M \to \unit$, $x \mapsto xy$
to a pure isocrystal of slope~$0$.%
\end{exa}

\begin{prp}\label{pureexist}%
For every pair of integers $(s,r)$ with $r > 0$
there is a pure isocrystal of slope $\frac{s}{r}$
which is free of rank $r\nresdeg$ over $\nBRz{R}$.%
\end{prp}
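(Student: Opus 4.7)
The plan is to write down an explicit cyclic isocrystal generalising the construction sketched for $\nFloc = \rF{\Fq}$ in Section~\ref{ss:introtate}. Set $d = r\nresdeg$ and take $M = \bigoplus_{i=1}^{d}\nBRz{R}\,e_i$, the free $\nBRz{R}$-module on generators $e_1,\dotsc,e_d$. I would define a $\nsigma$-semilinear map $\tau\colon M\to M$ by
\begin{equation*}
\tau(e_i) = e_{i+1}\quad (1\leqslant i < d), \qquad \tau(e_d) = z^s e_1,
\end{equation*}
turning $M$ into a left $\nBRz{R}\{\tau\}$-module of rank $d = r\nresdeg$.

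The first step is to verify that $M$ is an isocrystal. Local freeness of finite type is visible, so it remains to show that the structure morphism $\taulin\colon \nsigma^\ast M \to M$ is an isomorphism. Now $\taulin$ sends the basis $\{1\otimes e_i\}$ of $\nsigma^\ast M$ to $\{e_2,\dotsc,e_d,z^s e_1\}$, so in these bases its matrix is a companion-type matrix whose determinant equals $\pm z^s$. Since $z$ is a unit in $\nBRz{R}$, this matrix is invertible.

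The second step is to exhibit a lattice witnessing purity. Set $\nBlat = \bigoplus_{i=1}^{d}\nBR{R}\,e_i$; this is a free $\nBR{R}$-submodule of $M$ with $\nBlat\otimes_\ncoint \ncoef = M$, hence a lattice in the sense of Definition~\ref{defblat}. A direct iteration (using that $\nsigma$ fixes $z$) gives $\tau^d(e_i) = z^s e_i$ for every $i$, so
\begin{equation*}
\nlati{d}{\nBlat} = \ngen{\tau^d \nBlat} = z^s \nBlat.
\end{equation*}
By Definition~\ref{dfnpure} this exhibits $M$ as pure of slope $s/r$, completing the proof. The construction is elementary and I expect no substantive obstacle; the only subtlety worth flagging is that $\nsigma(z) = z$ in $\nBRz{R}$, so that after one full cycle of length $d$ the factor $z^s$ is picked up exactly once rather than being further twisted by $\nsigma$.
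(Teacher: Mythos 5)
Your construction is correct and is essentially the paper's own: your module with basis $e_1,\dotsc,e_{r\nresdeg}$ and $\tau(e_i)=e_{i+1}$, $\tau(e_{r\nresdeg})=z^s e_1$ is exactly $\nBRz{R}\{\tau\}/(\tau^{r\nresdeg}-z^s)$ with $e_i = \tau^{i-1}$, and the lattice you use is the same one. The only cosmetic difference is in checking that the structure morphism is an isomorphism — you compute the determinant $\pm z^s$ of the companion matrix and use that $z$ is a unit, while the paper notes surjectivity (from $\ngen{\tau^{r\nresdeg}\nBlat}=z^s\nBlat$) between free modules of equal rank; both are valid.
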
%
\begin{proof}%
Set $M = \nBRz{R}\{\tau\}/I$ where
$I$ is the left ideal generated by the element $\tau^{r\nresdeg} - z^s$.
The elements $1, \tau, \dotsc, \tau^{r\nresdeg-1}$ form a basis of $M$ over $\nBRz{R}$.
Let $\nBlat$ be the free $\nBR{R}$-submodule with this basis. 
By construction $\ngen{\tau^{r\nresdeg}\nBlat} = z^s\nBlat$.
So the structure morphism $\taulin\colon \nsigma^*M \to M$ is surjective.
Since $\nsigma^* M$ and $M$ are free $\nBRz{R}$-modules of the same rank
it follows that $\taulin$ is an isomorphism.
Hence $M$ is a pure isocrystal of slope $\frac{s}{r}$.%
\end{proof}

\begin{rmk}
In general there may be no pure isocrystals of slope $\frac{s}{r}$ and rank~$r$,
see Proposition~\ref{cor-embed}.%
\end{rmk}

\subsection{Rank}

For the moment we assume that \emph{$\Spec R$ is connected.}
\begin{lem}\label{bRidem}%
The spectrum of $\nBRz{R}$ has finitely many connected
components and
$\nsigma$ acts transitively on them.%
\end{lem}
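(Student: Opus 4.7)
The plan is to identify $\nBRz{R}$ with a Laurent series ring over the finite \'etale $R$-algebra $A := R \otimes_{\Fq}\ncons$ and then reduce the lemma to an elementary analysis of idempotents in $A$. Since $\ncoint$ is a complete discrete valuation ring of equal characteristic with residue field $\ncons$, Cohen's structure theorem yields $\ncoint \cong \ncons[[z]]$. A level-by-level computation of the completion at $R\otimes_{\Fq}\ncomax = (1\otimes z)$ then gives
\[
\nBR{R} \cong A[[z]], \qquad \nBRz{R} \cong A((z)),
\]
with $\nsigma$ acting as the $q$-Frobenius on $R$ and trivially on $\ncons$ and $z$. Because $\ncons/\Fq$ is finite separable of degree $\nresdeg$, $A$ is finite \'etale over $R$ of this rank; together with the connectedness of $\Spec R$ this forces $\Spec A$ to have at most $\nresdeg$ connected components, so $A$ has only finitely many primitive idempotents.

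The crucial claim is that every idempotent of $A((z))$ already lies in $A$. For this I would exploit the positive characteristic via the absolute $q$-Frobenius $F \colon a \mapsto a^q$ on $A((z))$: additivity of $F$ in characteristic $p$ gives the explicit formula
\[
F\!\left(\sum_n e_n z^n\right) = \sum_n e_n^q z^{qn}.
\]
Any idempotent $e$ satisfies $e = e^q = F(e)$, so comparing coefficients of $z^k$ yields $e_k = e_{k/q}^q$ when $q \mid k$ and $e_k = 0$ otherwise. Iterating, $e_k \ne 0$ forces $q^r \mid k$ for every $r \geq 0$, hence $k = 0$. Thus $e = e_0 \in A$, and the inclusion $A \hookrightarrow \nBRz{R}$ induces a $\nsigma$-equivariant bijection between idempotents of $A$ and of $\nBRz{R}$. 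In particular $\Spec \nBRz{R}$ has the same finite number of connected components as $\Spec A$.

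For the transitivity assertion, I would decompose the absolute $q$-Frobenius on $A$ itself as $F = \nsigma \circ \gamma = \gamma \circ \nsigma$, where $\gamma$ is the generator of $\Gal(\ncons/\Fq)$ acting on $A = R \otimes_{\Fq}\ncons$ trivially on $R$ and as Frobenius on $\ncons$. Since $F$ fixes every idempotent of $A$, the permutations induced on the finite set of primitive idempotents by $\nsigma$ and $\gamma^{-1}$ coincide. The $\gamma$-action is transitive on primitive idempotents by Galois descent: the sum of the primitive idempotents in any one $\gamma$-orbit is $\gamma$-invariant, hence lies in the fixed subring $A^\gamma = R$, which has no nontrivial idempotents by the connectedness of $\Spec R$; so there is only one orbit. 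Therefore $\nsigma$ acts transitively on primitive idempotents of $A$, and via the bijection above, on the connected components of $\Spec \nBRz{R}$.

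The main subtlety of the plan is the Frobenius fixed-point argument in the second paragraph: it collapses what would otherwise be a delicate leading-coefficient analysis of the equation $f(f - z^N) = 0$ in $A[[z]]$, needed to rule out idempotents with nontrivial principal part, into a routine coefficient comparison.
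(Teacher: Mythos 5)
Your argument is correct, and it diverges from the paper's in a way worth noting. Both proofs begin identically: identify $\nBRz{R}$ with $\rF{A}$ for $A = R\otimes_{\Fq}\ncons$ and reduce everything to idempotents. For the step that idempotents have no $z$-dependence, the paper introduces the auxiliary scheme $X = \Spec\Fq[x]/(x^{q^{\nresdeg}}-x)$, proves $X(\nBRz{R}) = X(R)\otimes_{\Fq}\ncons$ by the same coefficient comparison you use, and observes that idempotents lie in $X(\nBRz{R})$; your version, applying $e = e^q = F(e)$ directly and chasing the $q$-divisibility of exponents, is a mild streamlining of the same idea (just note explicitly that the freshman's-dream formula for $F$ on infinite Laurent series is justified because each coefficient of the $q$-th power is a finite sum). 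The genuinely different part is transitivity: the paper picks a separably closed field point of $\Spec R$ to show that $X(R)$ is a finite field inside $\ncons$, so that $X(R)\otimes_{\Fq}\ncons$ is a product of copies of $\ncons$ on which $\nsigma$ visibly acts by a cyclic permutation; you instead factor the absolute Frobenius as $F = \nsigma\circ\gamma$, use $F(e)=e$ to transfer the question to the Galois generator $\gamma$, and kill the orbit decomposition by noting that the sum of an orbit of primitive orthogonal idempotents is a $\gamma$-invariant idempotent in $A^\gamma = R$. Your route avoids the choice of a geometric point and the identification of $X(R)$ as a finite field, at the cost of needing the (standard) fact that $A$, being finite \'etale over the connected $\Spec R$, is a finite product of connected rings so that the orthogonal-primitive-idempotent bookkeeping applies; the paper's route gives slightly more explicit information, namely that $\nsigma$ permutes the components in a single cycle of length $[X(R):\Fq]$.
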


\begin{cor}\label{bRrank}%
Every isocrystal is locally free of constant rank over $\nBRz{R}$.\qed%
\end{cor}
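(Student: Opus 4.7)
The plan is to exploit the structure isomorphism $\taulin\colon \nsigma^\ast M \xrightarrow{\isosign} M$ together with Lemma~\ref{bRidem}. Since $M$ is by definition locally free of finite type over $\nBRz{R}$, it has a well-defined rank function $r_M\colon \Spec\nBRz{R} \to \bZ_{\geqslant 0}$, and this function is locally constant. Thus $r_M$ takes a constant value on each connected component of $\Spec\nBRz{R}$.

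Next I would observe that the rank function is $\nsigma$-invariant. Indeed, for any $\nBRz{R}$-module $N$, the rank of $\nsigma^\ast N = \nBRz{R}\otimes_{\nsigma,\nBRz{R}} N$ at a point $x \in \Spec\nBRz{R}$ equals the rank of $N$ at $\nsigma(x)$, because extension of scalars along $\nsigma$ corresponds geometrically to pullback along $\Spec\nsigma$. Applying this to the isomorphism $\taulin\colon \nsigma^\ast M \xrightarrow{\isosign} M$ yields $r_M(x) = r_M(\nsigma(x))$ for every point $x$, i.e.\ $r_M$ is constant along $\nsigma$-orbits.

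Finally I would invoke Lemma~\ref{bRidem}: the spectrum $\Spec\nBRz{R}$ has finitely many connected components, and $\nsigma$ acts transitively on the set of connected components. Combining this with the two previous observations, the rank function is constant on each component and the components are permuted transitively by $\nsigma$ preserving the rank, so $r_M$ takes a single value on all of $\Spec\nBRz{R}$. Hence $M$ is locally free of constant rank. There is no real obstacle here; the only point requiring a moment of care is the compatibility of rank with pullback along a not-necessarily-flat endomorphism like $\nsigma$, but this reduces immediately to the fact that the fibre of $\nsigma^\ast M$ at $x$ is obtained by base change of the fibre of $M$ at $\nsigma(x)$ along a residue field extension.
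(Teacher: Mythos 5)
Your proof is correct and is exactly the argument the paper intends: the corollary is stated with no written proof precisely because it follows from Lemma~\ref{bRidem} in the way you describe (local constancy of the rank, $\nsigma$-invariance via the structure isomorphism, and transitivity of $\nsigma$ on the finitely many components). Nothing further is needed.
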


\begin{dfn}%
The \emph{rank} of an isocrystal
is the rank of the underlying $\nBRz{R}$-module.%
\end{dfn}

\begin{proof}[Proof of Lemma~\ref{bRidem}]%
As usual we denote by $\ncons$ the algebraic closure of $\Fq$ in $\ncoef$.
Observe that
$\nBRz{R} = \rF{(R \otimes_{\Fq} \ncons)}$
by construction.
Consider the scheme
\begin{equation*}
X = \Spec \Fq[x]/(x^{q^{\nresdeg}} - x).
\end{equation*}

The set $X(R)$ is naturally an $\Fq$-subalgebra of $R$.
%
We claim that the natural map $X(R)\otimes_{\Fq} \ncons \to X(\nBRz{R})$ is bijective.
%
The natural map $X(R)\otimes_{\Fq}\ncons\to X(R\otimes_{\Fq}\ncons)$
is a bijection since the $q^\nresdeg$-Frobenius of $R\otimes_{\Fq}\ncons$
is the tensor product of the $q^\nresdeg$-Frobenius of $R$ and the identity endomorphism of $\ncons$.
We thus need to show that $X(R\otimes_{\Fq}\ncons) = X(\nBRz{R})$.
Write an element $f \in X(\nBRz{R})$ in the form $f_n + f_0 + f_p$
where $f_n$ has only negative powers of $z$,
$f_p$ only the positive ones and $f_0$ is the constant coefficient.
Since
$f_n^{q^\nresdeg} + f_0^{q^\nresdeg} + f_p^{q^\nresdeg} = f_n + f_0 + f_p$
we conclude that $f_n = 0 = f_p$.
As
$\nBRz{R} = \rF{(R \otimes_{\Fq} \ncons)}$
we conclude that $X(R\otimes_{\Fq}\ncons) = X(\nBRz{R})$.
Therefore the natural map
$X(R)\otimes_{\Fq} \ncons \xrightarrow{\isosign} X(\nBRz{R})$ is a bijection.

All the idempotents of $\nBRz{R}$ belong to the subset $X(\nBRz{R})$.
The endomorphism $\nsigma$ of $\nBRz{R}$ restricts to an endomorphism of
$X(R) \otimes_{\Fq} \ncons$ which acts as the $q$-Frobenius on $X(R)$
and as the identity on $\ncons$. Hence it is enough to study the action
of $\nsigma$ on the idempotents of $X(R) \otimes_{\Fq} \ncons$.

Pick an $R$-algebra $K$ which is a separably closed field.
The natural map $X(R) \to X(K)$ is injective 
since $\Spec R$ is connected and $X$ is finite \'etale over $\Spec\Fq$. 
As $X(K) \cong \ncons$ we conclude that $X(R)$ admits an embedding to $\ncons$. 
In particular $X(R)$ is a finite extension of $\Fq$.

As a consequence 
$X(R) \otimes_{\Fq} \ncons$ decomposes to a direct product of $[X(R):\Fq]$ copies of $\ncons$.
Under this decomposition
the endomorphism $\nsigma$ acts by a cyclic permutation of the coordinates.
Hence $\nsigma$ acts transitively on the set of minimal
idempotents of $X(R) \otimes_{\Fq} \ncons$.%
\end{proof}

\subsection{Morita equivalence}
Fix an isocrystal $N$.
We shall see that under a natural assumption on $N$
the twisting operation $M \mapsto N \otimes M$ admits an inverse.
This will let us to reduce some questions on pure isocrystals
to pure isocrystals of slope~$0$.

The isocrystal $S = \iHom(N,N)$
is an associative unital
$\nBRz{R}$-algebra under composition.
The maps $S \notim{R} S \to S$ and $\unit \to S$ induced by 
the multiplication and the unit of $S$ are morphisms of isocrystals.

\begin{dfn}\label{dfnisomod}%
A \emph{left $S$-isocrystal} is an isocrystal $P$
equipped with a morphism of isocrystals $S \notim{R} P \to P$
which makes $P$ into a left $S$-module. A \emph{morphism} of left 
$S$-isocrystals is an $S$-linear morphism of the underlying isocrystals.%
\end{dfn}

The notion of a right $S$-isocrystal is defined in the same manner.
If $M$ is an isocrystal then $N \notim{R} M$ is a left
$S$-isocrystal in a natural way. If $P$ is a left $S$-isocrystal
then $P^*$ is naturally a right $S$-isocrystal.

\begin{dfn}%
Let $P$ be a left $S$-isocrystal.
We equip $N^* \otimes_S P$ with the structure of a left $\nBRz{R}\{\tau\}$-module
given by the formula $\tau(f \otimes p) = \tau(f) \otimes \tau(p)$.%
\end{dfn}

\begin{prp}\label{generalmorita}%
Assume that $\Spec R$ is connected.
If $N \ne 0$ then the functor $M \mapsto N \notim{R} M$ is an equivalence of categories
of isocrystals and left $S$-isocrystals.
An inverse is given by the functor $P \mapsto N^* \otimes_S P$.%
\end{prp}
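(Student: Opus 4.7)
The plan is to lift the classical Morita equivalence for a progenerator module to the isocrystal setting, checking $\tau$-equivariance at every step.

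Since $\Spec R$ is connected and $N \ne 0$, Corollary~\ref{bRrank} implies that $N$ is locally free of constant positive rank over $\nBRz{R}$, hence faithfully projective, i.e., a Morita progenerator. Classical Morita theory then supplies two natural $\nBRz{R}$-linear isomorphisms: the composition map $\phi \colon N \otimes_{\nBRz{R}} N^* \xrightarrow{\isosign} S$ sending $n \otimes \alpha$ to the endomorphism $x \mapsto n \cdot \alpha(x)$, which coincides with the isomorphism of Lemma~\ref{fdmhomtensor} for $M = N$ and is hence automatically a morphism of isocrystals; and the trace map $\varepsilon \colon N^* \otimes_S N \xrightarrow{\isosign} \unit$ induced by the evaluation $N^* \otimes_{\nBRz{R}} N \to \unit$, which is a morphism of isocrystals by Lemma~\ref{dmhominv} and descends through the quotient $N^* \otimes_{\nBRz{R}} N \twoheadrightarrow N^* \otimes_S N$.

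With $\phi$ and $\varepsilon$ in hand the quasi-inverse property follows formally from associativity of tensor products: there are natural isomorphisms of isocrystals and of $S$-isocrystals respectively,
\[
\Psi \circ \Phi(M) \;=\; N^* \otimes_S (N \otimes_{\nBRz{R}} M) \;\cong\; (N^* \otimes_S N) \otimes_{\nBRz{R}} M \;\xrightarrow{\varepsilon \otimes \mathrm{id}}\; \unit \otimes_{\nBRz{R}} M \;=\; M,
\]
\[
\Phi \circ \Psi(P) \;=\; N \otimes_{\nBRz{R}} (N^* \otimes_S P) \;\cong\; (N \otimes_{\nBRz{R}} N^*) \otimes_S P \;\xrightarrow{\phi \otimes \mathrm{id}}\; S \otimes_S P \;=\; P.
\]

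The main obstacle is verifying that the functor $\Psi(P) = N^* \otimes_S P$ genuinely lands in isocrystals. Its candidate $\tau$-action $\tau(f \otimes p) = \tau(f) \otimes \tau(p)$ descends from $N^* \otimes_{\nBRz{R}} P$ to $N^* \otimes_S P$ because the induced $\tau$ on $S$ is an $\nBRz{R}$-semilinear ring automorphism compatible with the right $S$-action on $N^*$ and the left $S$-action on $P$. Local freeness of finite rank for $\Psi(P)$ follows from the module-theoretic Morita equivalence, while showing that the structure morphism of $\Psi(P)$ is an isomorphism requires a concrete computation with $\nsigma^*$ of tensor products over the noncommutative ring $S$, using that the structure morphisms of $N$, $N^*$, and $P$ are all isomorphisms. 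This verification of well-definedness is the main technical point requiring care.
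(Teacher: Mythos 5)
Your overall strategy is the same as the paper's: reduce to classical Morita theory for the progenerator $N$ (via Corollary~\ref{bRrank}) and then check $\tau$-equivariance. But you have correctly identified, and then left open, the one step that actually requires an argument: showing that the structure morphism of $\Psi(P) = N^* \otimes_S P$ is an isomorphism. Saying that this ``requires a concrete computation with $\nsigma^*$ of tensor products over the noncommutative ring $S$'' is not a proof; as written, the proposal has a genuine gap exactly at the crux of the proposition. Moreover, the direct computation you gesture at is awkward, because $\nsigma^*$ of a tensor product over $S$ does not obviously decompose ($\nsigma$ is only an endomorphism of $\nBRz{R}$, not of $S$ as a bimodule in any naive sense), which is presumably why you did not carry it out.

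The paper closes this gap without any such computation, by exploiting the isomorphism you already have in the other direction. Write $Q = N^* \otimes_S P$ and consider the Morita counit $\mu\colon N \otimes_{\nBRz{R}} Q \to P$, $n \otimes f \otimes p \mapsto \antitr(n\otimes f)\cdot p$, where $\antitr\colon N\otimes_{\nBRz{R}} N^* \xrightarrow{\isosign} S$. Classical Morita theory says $\mu$ is an $S$-linear isomorphism, which already gives that $Q$ is locally free of finite type over $\nBRz{R}$; one checks directly from the formulas for $\tau$ that $\mu$ is also $\tau$-equivariant. Hence the structure morphism of $N \otimes_{\nBRz{R}} Q$ is identified with that of $P$ and is therefore an isomorphism. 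Since $\taulin_{N\otimes Q}$ factors through $\taulin_N \otimes \mathrm{id}$ and $\mathrm{id}\otimes\taulin_Q$, and $\taulin_N$ is an isomorphism while $N$ is locally free of constant positive rank (so $N\otimes_{\nBRz{R}}-$ is faithfully exact), it follows that $\taulin_Q$ is an isomorphism. This replaces your missing computation entirely; the rest of your argument (the second natural isomorphism via the trace map $N^*\otimes_S N \xrightarrow{\isosign} \unit$ and faithfulness of $\Hom_{\nBRz{R}}(N,-)$) then goes through as in the paper.
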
%
\begin{proof}%
We denote $\antitr\colon N \otimes_{\nBRz{R}} N^* \to S$
the natural isomorphism.
First let us show that
for every left $S$-isocrystal $P$
the left $\nBRz{R}\{\tau\}$-module
$N^* \otimes_S P$ is an isocrystal and the natural map
$\mu\colon N \otimes (N^* \otimes_S P) \to P$, $n \otimes f \otimes p \mapsto \antitr(n \otimes f) \cdot p$
is an isomorphism of left $S$-isocrystals.

The map $\mu$ 
is an $S$-linear isomorphism by construction. It follows 
that $N^* \otimes_S P$ is a locally free $\nBRz{R}$-module of finite type.
Since the isomorphism $\mu$ 
is also $\nBRz{R}\{\tau\}$-linear 
we conclude that the structure morphism
of 
$N^* \otimes_S P$
is an isomorphism.
Therefore $N^* \otimes_S P$ is an isocrystal.%

Next let us show that for every isocrystal $M$ the $\nBRz{R}\{\tau\}$-linear
map
\begin{equation*}
N^* \otimes_S (N \notim{R} M) \to M, \quad f \otimes n \otimes m \mapsto f(n) \cdot m
\end{equation*}
is an isomorphism.
According to Corollary~\ref{bRrank}
the $\nBRz{R}$-module $N$ is locally free
of constant rank greater than zero. As a consequence the functor
$\Hom_{\nBRz{R}}(N, -)$ is faithful.
Morita theory \cite[Chapter 7, Proposition 18.17 (2a), p. 486]{lam}
shows that the map
$\tr\colon N^* \otimes_S N \to \nBRz{R}$, $f \otimes n \mapsto f(n)$,
is an isomorphism and the result follows.%
\end{proof}

\section{Isocrystals over a field}\label{sec:isocfield}

Fix a field $K$ over $\Fq$.
In this section we develop a theory of $\nBKz$-isocrystals.

\subsection{Generalities}


\begin{prp}\label{ab}%
The category of isocrystals is closed under kernels and cokernels
in the category of left $\nBKz\{\tau\}$-modules.%
\end{prp}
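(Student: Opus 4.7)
The plan is to leverage the explicit structure of $\nBKz$ when $K$ is a field. Since $\ncons/\Fq$ is finite separable, the tensor product $K \otimes_{\Fq} \ncons$ decomposes as a finite product $\prod_i K_i$ of finite field extensions of $K$. A direct computation of the $\nadic$-adic completion then gives $\nBR{K} \cong \prod_i K_i[[z]]$, and inverting a uniformizer yields $\nBKz \cong \prod_i K_i(\!(z)\!)$: a finite product of fields. (This refines Lemma~\ref{bRidem} in the case of a field base.) In particular $\nBKz$ is absolutely flat and Noetherian, every finitely generated $\nBKz$-module is locally free of finite type, and $\sigma^*$ is an exact endofunctor of the category of $\nBKz$-modules.

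Let $f\colon M \to N$ be a morphism of isocrystals, and let $K_0 = \ker f$ and $Q_0 = \coker f$ computed in the category of left $\nBKz\{\tau\}$-modules. As a submodule and as a quotient of finitely generated $\nBKz$-modules, both $K_0$ and $Q_0$ are finitely generated over $\nBKz$, hence locally free of finite type by the previous paragraph.

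It remains to show that the structure morphisms $\taulin_{K_0}$ and $\taulin_{Q_0}$ are isomorphisms. Applying the exact functor $\sigma^*$ to the four-term exact sequence $0 \to K_0 \to M \to N \to Q_0 \to 0$ yields an exact sequence which fits into a commutative ladder with the original one via the structure morphisms; commutativity of the middle two squares is exactly the statement that $f$ is a morphism of $\nBKz\{\tau\}$-modules (cf.\ Lemma~\ref{adjdesc}). The middle vertical arrows $\taulin_M$ and $\taulin_N$ are isomorphisms by assumption, so the five lemma (applied after splitting the four-term sequence into two short exact sequences) forces $\taulin_{K_0}$ and $\taulin_{Q_0}$ to be isomorphisms as well. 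The only nontrivial input is the semisimplicity of $\nBKz$, which yields both the finite generation of $K_0$, $Q_0$ and the exactness of $\sigma^*$; the rest is a diagram chase.
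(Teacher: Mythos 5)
Your proposal is correct and follows essentially the same route as the paper: exactness of $\nsigma^*$ plus the five lemma handles the structure morphisms, and the fact that $\nBKz$ is a finite product of fields gives local freeness of the kernel and cokernel. The only difference is that you spell out the decomposition of $\nBKz$ explicitly, which the paper takes for granted.
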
%
\begin{proof}%
Let $f\colon M \to N$ be a morphism of isocrystals.
Let $P$ and $Q$ be the kernel and the cokernel of $f$ in the category of left $\nBKz\{\tau\}$-modules.
The functor $\nsigma^*$ is exact on $\nBKz$-modules.
So five lemma shows that the structure morphisms of $P$ and $Q$ are isomorphisms.
The ring $\nBKz$ is a finite product of fields.
Hence $P$ and $Q$ are locally free $\nBKz$-modules of finite type.%
%
\end{proof}

%
%
%
The following technical result
will help us to relate the theory of isocrystals with the works of Drinfeld~\cite{drinfeld-commrgs} and J.-K.~Yu~\cite{yu}.
%
\begin{prp}\label{embed}%
An isocrystal $M$ of rank $r > 0$
is pure of slope $\frac{1}{r}$
if and only if
there is a decreasing family $\{\navi{n}\}_{n\in\bZ}$
of lattices in $M$ such that for all $n$
\begin{enumerate}%
\item\label{embed-gen}%
$\navi{n+1} = \nlati{}{\navi{n}}$,

\item\label{embed-shift}%
$\navi{n+\nerank\nresdeg} = z\navi{n}$,%

\item\label{embed-dim}%
$\dim_K \navi{n}/\navi{n+1} = 1$.
\end{enumerate}%
\end{prp}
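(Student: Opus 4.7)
I would prove the two implications separately.

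For the \emph{if} direction, assume a family $\{\navi{n}\}_{n\in\bZ}$ with properties (i)--(iii) exists. Setting $\nBlat = \navi{0}$ and iterating (i) gives $\navi{n} = \nlati{n}{\nBlat}$ for $n\geq 0$, so (ii) with $n=0$ reads $\nlati{r\nresdeg}{\nBlat} = z\nBlat$. This exhibits $M$ as pure of slope $\frac{1}{r}$ in the sense of Definition~\ref{dfnpure}. The rank claim follows from the dimension count
\begin{equation*}
\nresdeg\cdot\rank_{\nBR{K}}\nBlat = \dim_K(\nBlat/z\nBlat) = \sum_{n=0}^{r\nresdeg-1}\dim_K(\navi{n}/\navi{n+1}) = r\nresdeg,
\end{equation*}
whose first equality uses $\nBR{K}/z\nBR{K} \cong K\otimes_{\Fq}\ncons$ together with $[\ncons:\Fq] = \nresdeg$.

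For the converse, start with any lattice $T\subset M$ satisfying $\nlati{r\nresdeg}{T} = zT$ (Definition~\ref{dfnpure}) and form the enlargement $\tilde T = \sum_{k=0}^{r\nresdeg-1}\nlati{k}{T}$. Each summand is a lattice by Lemma~\ref{latshift}, so $\tilde T$ is a finitely generated $\nBR{K}$-submodule of $M$ sandwiched between $T$ and some $z^{-N}T$; since $\tilde T$ is torsion-free and $\nBR{K}$ is a finite product of complete discrete valuation rings (Lemma~\ref{bRidem}), $\tilde T$ is locally free, hence again a lattice. Using $\sigma(z)=z$ and $\nlati{r\nresdeg}{T} = zT$ a short direct computation yields both $\tau\tilde T \subset \tilde T$ and $\nlati{r\nresdeg}{\tilde T} = z\tilde T$. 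I then set $\navi{n} = \nlati{n}{\tilde T}$ for $n\geq 0$ and extend to all integers by $\navi{n+r\nresdeg} = z\navi{n}$; the $\tau$-stability of $\tilde T$ makes the chain decreasing, and conditions (i) and (ii) hold by construction.

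The main obstacle is property (iii). Because $\navi{n+r\nresdeg} = z\navi{n}\subset \navi{n+1}$, each quotient $\navi{n}/\navi{n+1}$ is killed by $z$ and is therefore a $K$-vector space; the same dimension count as above still gives
\begin{equation*}
\sum_{n=0}^{r\nresdeg-1}\dim_K(\navi{n}/\navi{n+1}) = r\nresdeg.
\end{equation*}
It therefore suffices to show that $\navi{n}\ne\navi{n+1}$ for every $n$. If on the contrary $\navi{n} = \navi{n+1} = \ngen{\tau\navi{n}}$, then iterating (i) gives $\navi{n} = \nlati{r\nresdeg}{\navi{n}} = \navi{n+r\nresdeg} = z\navi{n}$, so Nakayama applied factor-wise on $\nBR{K}$ forces $\navi{n} = 0$. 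Since $\navi{n} = \nlati{n}{\tilde T}$ this means $\tau^n\tilde T = 0$, and extending $\sigma^n$-semilinearly from $\tilde T$ to $M = \tilde T\otimes_{\ncoint}\ncoef$ yields $\tau^n M = 0$. But the structure isomorphism $\taulin_M\colon \nsigma^*M \xrightarrow{\isosign} M$ means $\ngen{\tau M} = M$, hence $M = \ngen{\tau^n M} = 0$, contradicting $r>0$. Each summand in the sum is therefore at least $1$, and the count forces $\dim_K(\navi{n}/\navi{n+1}) = 1$ for every $n$.
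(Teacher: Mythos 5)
Your ``if'' direction is fine, and your device for making the lattice $\tau$-stable (passing to $\tilde T=\sum_{k=0}^{r\nresdeg-1}\nlati{k}{T}$) is essentially the paper's $\navg{\nBlat}$. The problem is the very first line of your converse: you ``start with any lattice $T\subset M$ satisfying $\nlati{r\nresdeg}{T}=zT$ (Definition~\ref{dfnpure})''. Definition~\ref{dfnpure} does not provide such a lattice. Purity of slope $\tfrac{1}{r}$ only guarantees integers $s$ and $h$ with $\tfrac{s}{h}=\tfrac{1}{r}$ (so $h=sr$, possibly with $s>1$) and a lattice $T$ with $\nlati{h\nresdeg}{T}=z^{s}T$. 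Upgrading such a witness to one with exponent pair $(1,r)$ is not a normalization one can make for free --- it is precisely the content of property~(ii) of the proposition, i.e.\ the substantive claim being proved. With your starting assumption granted, the rest of your argument (the decreasing chain, the dimension count, the Nakayama argument showing each graded piece is nonzero) does go through, but the proof is circular at its core.

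To close the gap you must do what the paper does in its final paragraph: after replacing $T$ by its $\tau$-stable enlargement and defining the interpolated chain $\navi{n}=z^{sa}\nlati{b}{T}$ for $n=ah\nresdeg+b$, one first gets only $\navi{n+h\nresdeg}=z^{s}\navi{n}$ and, from the count plus nonvanishing, $\dim_K\navi{n}/\navi{n+1}=1$. One then looks at $V_n=\navi{n}/z\navi{n}$, a $K\{\tau\}$-module of $K$-dimension $r\nresdeg$ on which $\tau$ is nilpotent (because $\nlati{h\nresdeg}{\navi{n}}=z^{s}\navi{n}\subset z\navi{n}$), and invokes Lemma~\ref{nilpcrit} to conclude $\tau^{r\nresdeg}V_n=0$, i.e.\ $\navi{n+r\nresdeg}\subset z\navi{n}$; comparing dimensions of $\navi{n}/\navi{n+r\nresdeg}$ and $\navi{n}/z\navi{n}$ then forces equality. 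Without this nilpotency step (or an equivalent argument) the $z$-periodicity in~(ii) with period exactly $r\nresdeg$ is unjustified.
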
%
%
To prove this we need a lemma:
\begin{lem}\label{nilpcrit}%
Let $K\{\tau\}$ be the twisted polynomial ring defined by the $q$-Frobenius of $K$.
Let $V$ be a left $K\{\tau\}$-module of dimension $n < \infty$ over $K$.
If $\tau$ is nilpotent on $V$ then $V$ is $\tau^n$-torsion.
%
\end{lem}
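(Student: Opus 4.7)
The plan is to study the kernel filtration of $\tau$ on $V$. Since $\tau$ satisfies $\tau(cv) = c^q\tau(v)$ for all $c \in K$ and $v \in V$, I would first observe that each subset
\begin{equation*}
V_i \;=\; \ker(\tau^i\colon V \to V), \qquad i \geqslant 0,
\end{equation*}
is actually a $K$-subspace of $V$: if $\tau^i(v) = 0$ then $\tau^i(cv) = c^{q^i}\tau^i(v) = 0$, so $V_i$ is closed under scalar multiplication, and closure under addition is automatic. This gives an increasing chain $0 = V_0 \subseteq V_1 \subseteq V_2 \subseteq \cdots \subseteq V$ of $K$-subspaces.

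Next I would show that once this chain has two equal consecutive terms, it stabilizes: if $V_{i+1} = V_i$ and $v \in V_{i+2}$, then $\tau(v) \in V_{i+1} = V_i$, hence $v \in V_{i+1}$, giving $V_{i+2} = V_{i+1}$; iterating yields $V_j = V_i$ for all $j \geqslant i$. By the nilpotency assumption, $V_N = V$ for some $N$, so the chain must reach $V$ at some index, and every strict inclusion $V_i \subsetneq V_{i+1}$ contributes at least $1$ to $\dim_K$. Hence the chain reaches $V$ in at most $\dim_K V = n$ strict steps, which gives $V_n = V$, i.e.\ $\tau^n V = 0$.

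There is no real obstacle in this argument; the one subtlety worth emphasizing is precisely the point that the $V_i$ are $K$-subspaces despite $\tau$ being only $q$-semilinear, which is what allows the dimension-counting argument to go through as in the classical linear case.
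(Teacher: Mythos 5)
Your proof is correct and is essentially the paper's argument seen from the other end: the paper inducts on $n$ by splitting off a one-dimensional $\tau$-stable subspace $K\cdot v$ with $v\in\ker\tau$ and passing to the quotient, while you run the kernel filtration $V_i=\ker(\tau^i)$ directly. Both hinge on the same key observation — that $q$-semilinearity still makes the relevant kernels $K$-subspaces — so the classical dimension count for nilpotent operators goes through unchanged.
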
%
\begin{proof}%
We argue by induction on $n$.
If $n > 0$ then there is a nonzero $v \in V$ killed by $\tau$.
The subspace $V' = K \cdot v$ is a $K\{\tau\}$-submodule of $V$.
By induction $V/V'$ is killed by $\tau^{n-1}$ and so $V$ is killed by $\tau^n$.%
\end{proof}

\begin{proof}[Proof of Proposition~\ref{embed}]%
%
An isocrystal $M$ containing a family of lattices with the properties
\eqref{embed-gen}, \eqref{embed-shift} is 
pure of slope~$\frac{1}{r}$ since
$\langle \tau^{\nerank\nresdeg} \nBlat_0 \rangle = \nBlat_{\nerank\nresdeg} = z^\nerank \nBlat_0$.
Conversely, suppose that $M$
is pure of rank $\nerank>0$ and slope~$\frac{1}{\nerank}$.
By assumption there are integers $s$ and $\neshift$, $\neshift > 0$,
and a lattice $\nBlat$ in $M$ such that $\frac{s}{\neshift} = \frac{1}{\nerank}$
and $z^s \nBlat = \nlati{\neshift\nresdeg}{\nBlat}$.

Let $\navg{\nBlat}$ be the $\nBK$-submodule of $M$ generated by
$\nBlat, \tau\nBlat, \dotsc, \tau^{\neshift\nresdeg-1} \nBlat$. 
The ring $\nBK$ is a finite product of discrete valuation rings
and $\nBKz$ is the product of the corresponding fraction fields.
Hence $\navg{\nBlat}$ is a locally free $\nBK$-module of finite type.
By construction $z^s\navg{\nBlat} = \nlati{\neshift\nresdeg}{\navg{\nBlat}}$.
So we are free to replace $\nBlat$ with $\navg{\nBlat}$.
Now $\nBlat$ is closed under the action of $\tau$.
%
For each $n \in \bZ$
we set $\navi{n} = z^{sa}\nlati{b}{\nBlat}$
where $a \in \bZ$ and $b \in \{0,\dotsc,\neshift\nresdeg-1\}$ 
are unique integers such that $n = a \neshift\nresdeg + b$.
The family of lattices $\{\navi{n}\}_{n\in\bZ}$
has the property (1) 
by construction.
Furthermore $\navi{n+\neshift\nresdeg} = z^s\navi{n}$
for all $n \in \bZ$.

As $M$ is a free $\nBKz$-module of rank $\nerank$ it follows that each $\navi{n}$ is 
a free $\nBK$-module of rank $\nerank$. So the quotient
$\navi{n}/\navi{n+\neshift\nresdeg} = \navi{n}/z^s\navi{n}$
is a free module of rank $\nerank$ over the ring $K\otimes_{\Fq} \ncoint/\ncomax^s$.
Since $\nerank = \frac{\neshift}{s}$ we conclude that
\begin{equation*}
\dim_K \navi{n}/\navi{n+\neshift\nresdeg} = \nerank \cdot s \nresdeg = \neshift\nresdeg.
\end{equation*}
Every subquotient $\navi{n}/\navi{n+1}$ is nonzero.
Indeed if $\navi{n}/\navi{n+1} = 0$ 
then $\navi{n}$ is generated by $\tau\navi{n}$
and so $M$ is a pure isocrystal of slope~$0$, a contradiction to Proposition~\ref{slopewd}.
Now the equality
\begin{equation*}
\sum_{i=0}^{\neshift\nresdeg-1} \dim_K \navi{n+i}/\navi{n+i+1} =
\dim_K \navi{n}/\navi{n+\neshift\nresdeg} = \neshift\nresdeg
\end{equation*}
implies that $\dim_K \navi{n}/\navi{n+1} = 1$.
We thus get (3). 

Consider the quotient $V_n = \navi{n}/z\navi{n}$.
This is a left $K\{\tau\}$-module of dimension $\nerank\nresdeg$ over $K$.
By assumption $\tau^{\neshift\nresdeg}$ acts by zero on $V_n$.
Lemma~\ref{nilpcrit} shows that the endomorphism $\tau^{\nerank\nresdeg}\colon V_n \to V_n$ is zero.
Hence $\navi{n+\nerank\nresdeg} \subset z\navi{n}$.
We thus have a surjection $\navi{n}/\navi{n+\nerank\nresdeg} \to \navi{n}/z\navi{n}$.
Since its source and target are of dimension $\nerank\nresdeg$
it follows that $\navi{n+\nerank\nresdeg} = z\navi{n}$.
So we obtain (2).
\end{proof}

\subsection{The field of constants}
\label{subsec:nocoeff}

Recall that the field of constants $\ncons$
is the algebraic closure of $\Fq$ in the coefficient field $\ncoef$.

\begin{prp}\label{cor-embed}%
Suppose that there exists a $\nBKz$-isocrystal of rank $\nerank > 0$ and slope~$\frac{1}{\nerank}$.
Then the field of constants $\ncons$ admits an embedding to $K$.
\end{prp}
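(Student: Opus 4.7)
The plan is to apply Proposition~\ref{embed} to obtain a filtration by lattices with one-dimensional $K$-subquotients, and then observe that a single such subquotient is naturally a module over $K\otimes_{\Fq}\ncons$, which forces $\ncons$ to embed into $K$.

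First I would invoke Proposition~\ref{embed} for the given isocrystal $M$ to produce a decreasing family $\{\navi{n}\}_{n\in\bZ}$ of lattices in $M$ satisfying properties (1)--(3) there. In particular, the subquotient $V = \navi{0}/\navi{1}$ is a one-dimensional $K$-vector space.

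Next I would enrich $V$ with a module structure over a smaller coefficient ring. Property (2) gives $\navi{r\nresdeg} = z\navi{0}$, and since the filtration is decreasing and $r\nresdeg \geq 1$ this yields $z\navi{0} \subseteq \navi{1}$, so multiplication by $z$ annihilates $V$. Therefore $V$ is a module over the quotient $\nBK/z\nBK$. By Definition~\ref{defetring}, $\nBK$ is the completion of $K\otimes_{\Fq}\ncoint$ at the principal ideal $K\otimes_{\Fq}\ncomax = z(K\otimes_{\Fq}\ncoint)$, so there is a canonical isomorphism
\begin{equation*}
\nBK/z\nBK \;\cong\; K\otimes_{\Fq}(\ncoint/\ncomax) \;=\; K\otimes_{\Fq}\ncons.
\end{equation*}

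Finally, since $V$ is a $K\otimes_{\Fq}\ncons$-module which is one-dimensional over $K$ as a $K$-vector space, with $K$ acting by scalar multiplication, the action of $\ncons$ on $V$ defines an $\Fq$-algebra homomorphism $\ncons \to \End_K(V) = K$, and this must be injective because $\ncons$ is a field. I do not foresee any substantive obstacle: the whole argument reduces, via Proposition~\ref{embed}, to an elementary computation with the scalar ring $\nBK$.
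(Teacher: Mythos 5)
Your proof is correct and is essentially the paper's own argument: both apply Proposition~\ref{embed} and read off an embedding $\ncons\hookrightarrow K$ from the $K$-linear action of $\ncons$ on a one-dimensional subquotient $\navi{n}/\navi{n+1}$. The only difference is cosmetic: the paper uses directly that each $\navi{n}/\navi{n+1}$ is an $\nBK$-module (hence a $\ncons$-module via $\ncons\subset\ncoint\subset\nBK$), whereas you first pass to $\nBK/z\nBK\cong K\otimes_{\Fq}\ncons$ --- a correct but unnecessary extra step.
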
%
%
So there are no pure $\nBRz{\Fq}$-isocrystals of rank~$1$ and slope~$1$ whenever $\nresdeg > 1$.
\begin{proof}[Proof of Proposition~\ref{cor-embed}]%
Let $M$ be an isocrystal of rank $\nerank$ and slope $\frac{1}{\nerank}$.
Consider a family of lattices $\{\navi{n}\}_{n\in\bZ}$ in $M$
produced by Proposition~\ref{embed}.
Every quotient $\navi{n}/\navi{n+1}$ is an $\nBK$-module and so carries
a $K$-linear action of $\ncons$.
This determines a homomorphism $\ncons\to K$
since $\dim_K \navi{n}/\navi{n+1} = 1$.%
\end{proof}

Let us assume that there is 
a morphism $\ngamma\colon\ncons\to K$.
We denote by $\nKo$ the field $K$ viewed as a $\ncons$-algebra
via $\ngamma$.
The field $\ncons$ is finite, so we have a theory of isocrystals with $\ncons$ in place of $\Fq$.
We shall use the tilde accent to differentiate the objects of this theory from the usual $\Fq$-theory.
We have difference rings
$\nBo = \rO{\nKo}$, $\nBzo = \rF{\nKo}$ with endomorphisms $\nsigmao$,
the twisted polynomial ring $\nBzo\{\tauo\}$,
the notions of purity and slope.
Note the following facts:
\begin{itemize}%
\item%
The endomorphism $\nsigmao$ acts as the $q^\nresdeg$-Frobenius on the subfield $K \subset \nBzo$.

\item%
The parameter $\nresdeg$ is equal to $1$ in the theory of $\nBzo$-isocrystals.%
\end{itemize}%

We shall prove that the categories of $\nBKz$-isocrystals and $\nBzo$-isocrystals are equivalent,
and that this equivalence respects purity and slopes.
So computations with $\nBKz$-isocrystals can be done under extra assumption
$\nresdeg = 1$ whenever the field $K$ contains a copy of $\ncons$.

In the following we view left $\nBKz\{\tau\}$-modules
as pairs $(M,a)$ where $M$ is a $\nBKz$-module
and $a\colon\nsigma^* M \to M$ is a morphism (see Lemma~\ref{adjdesc}).
The same applies to left $\nBzo\{\tauo\}$-modules.

The morphism $K\otimes_{\Fq}\ncoint \to \nKo\otimes_{\ncons} \ncoint$
induces a morphism $\nkprojo\colon \nBK \to \nBo$ by completion.
This extends uniquely to a morphism $\nkprojo\colon \nBKz\to\nBzo$.
For each left $\nBKz\{\tau\}$-module $(M,a)$ we denote by $\nkfold{a}{\nresdeg}$ the composition
\begin{equation*}
(\nsigma^{\nresdeg})^* M \xrightarrow{\:(\nsigma^{\nresdeg-1})^* a\:}
(\nsigma^{\nresdeg-1})^* M \to \cdots \to \nsigma^* M \xrightarrow{\:a\:} M.
\end{equation*}
Thanks to the identity $\nkprojo \circ \nsigma^\nresdeg = \nsigmao \circ \nkprojo$
we can view $\nkprojo^* \nkfold{a}{\nresdeg}$ as a morphism
from $\nsigmao^* \nkprojo^* M$ to $\nkprojo^* M$.
We set $\nkProj(M,a) = (\nkprojo^*M,\,\nkprojo^*\nkfold{a}{\nresdeg})$.

\begin{prp}\label{traditio}%
The functor $(M,a) \mapsto \nkProj(M,a)$
is an equivalence of categories of $\nBKz$-isocrystals and $\nBzo$-isocrystals.
Moreover a $\nBKz$-isocrystal $(M,a)$ is pure of slope $\nslope$
if and only if $\nkProj(M,a)$ is pure of the same slope.%
\end{prp}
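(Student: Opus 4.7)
The plan is to realise $\nBKz$ as a product of $\nresdeg$ copies of $\nBzo$ that are cyclically permuted by $\nsigma$, and then read off both the equivalence of categories and the preservation of purity from this structural decomposition.

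First I would exhibit the product decomposition. Since $\ncons/\Fq$ is Galois of degree $\nresdeg$, the map $\ncons \otimes_{\Fq} \ncons \to \ncons^{\nresdeg}$ sending $c \otimes c' \mapsto (c^{q^{i}} c')_{i}$ is an isomorphism of $\Fq$-algebras on which the $q$-Frobenius of the left factor acts by a cyclic permutation of coordinates. Tensoring with $\nKo$ along $\ngamma$ and with $\ncoint$ over $\ncons$ and completing, this yields $\nBKz \cong \nBzo^{\nresdeg}$ with idempotents $e_0, \dotsc, e_{\nresdeg - 1}$; under this identification $\nkprojo$ is projection onto the $e_0$-factor, $\nsigma$ sends $e_i$ to $e_{i+1}$, and $\nsigma^{\nresdeg}$ restricts to $\nsigmao$ on $e_0 \nBKz \cong \nBzo$.

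Next I would translate this decomposition to isocrystals. A $\nBKz$-module $M$ splits as $M = \bigoplus_{i} M_{i}$ with $M_i = e_i M$, and a $\nBKz$-linear morphism $a\colon \nsigma^{*} M \to M$ corresponds to a tuple of $\nBzo$-linear maps $a_i\colon \nsigma^{*} M_{i-1} \to M_i$; it is an isomorphism precisely when each $a_i$ is. The iterate $\nkfold{a}{\nresdeg}$ preserves the $e_0$-component and restricts there to $a_0 \circ \nsigma^{*} a_{\nresdeg - 1} \circ \cdots \circ (\nsigma^{\nresdeg - 1})^{*} a_1$, a $\nsigmao$-linear isomorphism $\nsigmao^{*} M_0 \to M_0$; this is exactly the data of $\nkProj(M, a)$. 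An inverse functor sends $(N, b)$ to the $\nBKz$-module $\bigoplus_{i} (\nsigma^{i})^{*} N$ whose structure morphism is built from canonical identifications between adjacent factors and from $b$ on the closing factor; the two constructions are mutually quasi-inverse because they are compatible with the idempotent decomposition of $\nBKz$.

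Finally I would match up purity. Given a lattice $\nBlat \subset M$ I would replace it by the $\nBK$-module $\nBlat + \tau \nBlat + \dotsb + \tau^{\nresdeg - 1} \nBlat$, still a lattice by Lemma~\ref{latshift}, which decomposes as $\bigoplus_{i} \nBlat_i$ with $\nBlat_i = \ngen{\tau^{i} \nBlat_0}$. Since $\tau$ sends $M_i$ into $M_{i+1}$, the operator $\tau^{\nresdeg}$ preserves each component, and its restriction to $M_0$ coincides by construction with the operator $\tauo$ attached to $\nkProj(M, a)$. A short induction then gives $\nlati{r\nresdeg}{\nBlat} \cap M_i = \nlatio{r}{\nBlat_i}$ for every $i$ and every $r \geqslant 0$, so the equality $\nlati{r\nresdeg}{\nBlat} = z^{s}\nBlat$ is equivalent to $\nlatio{r}{\nBlat_0} = z^{s}\nBlat_0$. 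Because the residue degree of $\nBzo$ over $\ncons$ equals one, this last equality is exactly the purity condition of slope $\frac{s}{r}$ for $\nkProj(M, a)$. The principal obstacle lies in this last paragraph: one must track carefully how iterated $\nsigma$-pullback interacts with the cyclic permutation of the $e_i$, so that the component-wise reading of $\nlati{n}{\nBlat}$ used above is genuinely valid and nothing is lost in translation between the $\Fq$-theory and the $\ncons$-theory.
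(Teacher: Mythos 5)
Your overall route is the paper's own: decompose $\nBKz$ into $\nresdeg$ copies of $\nBzo$ cyclically permuted by $\nsigma$, read isocrystals off componentwise with $\nkProj$ extracting the $0$-component of the $\nresdeg$-fold iterate, take $\bigoplus_i(\nsigma^i)^*N$ as the quasi-inverse, and transfer lattices component by component (this is exactly what Lemmas~\ref{trad-iso}--\ref{trad-equiv} and \ref{trad-latred}--\ref{trad-lat} of the paper do). The equivalence-of-categories part of your argument is correct as sketched.

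The purity step, however, contains a genuine flaw, and it sits exactly where you flagged the danger. First, the averaged lattice $S=\nBlat+\ngen{\tau\nBlat}+\dots+\ngen{\tau^{\nresdeg-1}\nBlat}$ does not satisfy $S_i=\ngen{\tau^iS_0}$ in general: already for $\nresdeg=2$ one has $S_1=e_1\nBlat+\ngen{\tau(e_0\nBlat)}$ while $\ngen{\tau S_0}=\ngen{\tau(e_0\nBlat)}+\ngen{\tau^2(e_1\nBlat)}$, and $e_1\nBlat$ need not lie in the latter, so your claimed equivalence of the two lattice equations breaks down. Second, your argument is only set up for the direction ``$(M,a)$ pure $\Rightarrow\nkProj(M,a)$ pure'', for which no averaging is needed at all: taking $e_0$-components of $\nlati{r\nresdeg}{\nBlat}=z^s\nBlat$ already gives $\nlatio{r}{e_0\nBlat}=z^s\,e_0\nBlat$ (this is Lemma~\ref{trad-latred}). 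For the converse you are handed a lattice $U\subset\nkProj M$ with $\nlatio{r}{U}=z^sU$, and there is no reason for $U$ to arise as the $0$-component of an averaged lattice of $M$; instead you must manufacture a lattice of $M$ from $U$, for instance $\bigoplus_{i=0}^{\nresdeg-1}\ngen{\tau^iU}$, whose components do satisfy the relation you want by construction, after which $\ngen{\tau^{r\nresdeg}(\,\cdot\,)}=z^s(\,\cdot\,)$ follows componentwise. This is precisely the role of the paper's family $\nlatfam$ and its unique-lifting property (Lemma~\ref{trad-lat}). With that replacement your proof closes; as written, the converse direction does not go through.
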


We split the proof into a sequence of lemmas.
Let $\nfrob\colon \ncons\to\ncons$ be the $q$-Frobenius.
The set of all $\Fq$-algebra morphisms $\ncons\to K$
is
\begin{equation*}
\{ \ngamma, \:\ngamma\nfrob,\:\dotsc,\:\ngamma\nfrob^{\nresdeg-1}\}.
\end{equation*}
The choice of $\ngamma$ gives us a bijection between this set
and the abelian group $\bZ/\nresdeg$.
For each $i\in\bZ/\nresdeg$
we denote by $\nKi{i}$ the field $K$ with a $\ncons$-algebra structure $\nkmap{i}$
and
$\nkproj{i}\colon \nBK\to \nBi{i}$
the completion of the natural morphism
$K\otimes_{\Fq}\ncoint \to \nKi{i}\otimes_{\ncons} \ncoint$.
By definition $\nkproj{0} = \nkprojo$.

\begin{lem}\label{trad-iso}%
The morphisms $\nkproj{i}$ induce an isomorphism $\nBK \xrightarrow{\isosign} \prod_{i\in\bZ/\nresdeg} \nBi{i}$.%
\end{lem}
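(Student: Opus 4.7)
The plan is to descend through three algebraic layers: first decompose $K \otimes_{\Fq} \ncons$, then tensor with $\ncoint$, then pass to completions. The key input is that the residue field $\ncons$ of $\ncoef$ is a finite Galois extension of $\Fq$ of degree $\nresdeg$, whose Galois group over $\Fq$ is cyclic of order $\nresdeg$ generated by the $q$-Frobenius $\nfrob$. Since $K$ contains $\ngamma(\ncons)$, the $K$-algebra $K \otimes_{\Fq} \ncons$ is étale of degree $\nresdeg$ and is split, that is, it is isomorphic to $\prod_{i \in \bZ/\nresdeg} K$ via the map
\begin{equation*}
x \otimes y \mapsto \bigl(x \cdot \nkmap{i}(y)\bigr)_{i \in \bZ/\nresdeg}.
\end{equation*}
This gives a decomposition into a product of copies of $K$, where the $i$-th factor carries the $\ncons$-algebra structure $\nkmap{i}$.

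Next, I would tensor both sides with $\ncoint$ over $\ncons$. The left side becomes $K \otimes_{\Fq} \ncoint$ since $\ncoint$ is a $\ncons$-algebra and $K \otimes_{\Fq} \ncoint = (K \otimes_{\Fq} \ncons) \otimes_{\ncons} \ncoint$. The right side, being a \emph{finite} product, commutes with tensor product, and so becomes $\prod_{i \in \bZ/\nresdeg} (K \otimes_{\ncons,\,\nkmap{i}} \ncoint)$. Under this isomorphism, the ideal $K \otimes_{\Fq} \ncomax$ corresponds to the product of the ideals $K \otimes_{\ncons,\,\nkmap{i}} \ncomax$, each of which is the maximal ideal of the $i$-th factor relative to the $\nadic$-adic topology.

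The final step is to complete. The completion of a finite product of rings at a product of ideals is the product of the completions of each factor. Applying this yields an isomorphism
\begin{equation*}
\nBK \xrightarrow{\isosign} \prod_{i \in \bZ/\nresdeg} \nBi{i}.
\end{equation*}
To identify this with the map induced by the $\nkproj{i}$, one observes that each $\nkproj{i}$ is by definition the completion of the projection $K \otimes_{\Fq} \ncoint \to K \otimes_{\ncons,\,\nkmap{i}} \ncoint$ arising from the decomposition above. Hence the product map $(\nkproj{i})_i$ is exactly the completion of the decomposition isomorphism.

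The only genuine point to verify with care is the identification of $K \otimes_{\Fq} \ncons$ with $\prod_i K$ together with the correct $\ncons$-action on each factor; once this is pinned down, the rest of the argument is formal. No other step looks like a real obstacle.
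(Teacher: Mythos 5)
Your proof is correct and follows essentially the same route as the paper: split $K\otimes_{\Fq}\ncons$ into $\prod_{i\in\bZ/\nresdeg} \nKi{i}$ via the embeddings $\nkmap{i}$, tensor with $\ncoint$ over $\ncons$, and pass to $\nadic$-adic completions, using that completion commutes with finite products. The identification of the resulting map with $(\nkproj{i})_i$ is exactly as in the paper, since each $\nkproj{i}$ is by definition the completion of the corresponding projection.
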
%
\begin{proof}%
We have an isomorphism of $\ncons$-algebras
\begin{equation*}
K\otimes_{\Fq}\ncons \xrightarrow{\isosign} \prod_i \nKi{i}, \quad \alpha \otimes_{\Fq} x \mapsto
\big(\alpha\cdot\nkmap{i}(x)\big)_i
\end{equation*}
Taking the tensor product over $\ncons$ with $\ncoint$ on the right we obtain an isomorphism
\begin{equation*}
K\otimes_{\Fq}\ncoint \xrightarrow{\isosign}
\prod_i \nKi{i}\otimes_{\ncons} \ncoint, \quad
\alpha \otimes_{\Fq} x \mapsto
\big(\alpha \otimes_{\ncons} x\big)_i
\end{equation*}
Its $\nadic$-adic completion is the desired isomorphism.%
\end{proof}

Let $\nShift\colon \nBi{i} \to \nBi{i+1}$
be the completion of the morphism which acts as the $q$-Frobenius on $K$
and as the identity on $\ncoint$.

\begin{lem}\label{trad-com}%
For each $i\in\bZ/\nresdeg$ we have
$\nkproj{i} \circ \nsigma = \nShift \circ \nkproj{i-1}$.\qed%
\end{lem}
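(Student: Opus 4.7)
The plan is to reduce the identity to a computation on the dense subring $K\otimes_{\Fq}\ncoint \subset \nBK$. All four maps appearing in the equation $\nkproj{i}\circ\nsigma = \nShift\circ\nkproj{i-1}$ are continuous $\Fq$-algebra homomorphisms obtained by $\nadic$-adic completion of $\Fq$-algebra homomorphisms between $K\otimes_{\Fq}\ncoint$ and the $\nKi{j}\otimes_{\ncons}\ncoint$. Since both sides are continuous and the uncompleted tensor products are dense in their completions, it suffices to verify equality on elementary tensors $x\otimes a$, with $x\in K$ and $a \in \ncoint$.

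Before doing so, I would first record the small bookkeeping observation that the source and target of $\nShift$ carry \emph{different} $\ncons$-algebra structures: the elements $y \in \ncons$ act on $\nKi{i}\otimes_{\ncons}\ncoint$ through $\nkmap{i} = \ngamma\nfrob^i$ and on $\nKi{i+1}\otimes_{\ncons}\ncoint$ through $\nkmap{i+1}$. Well-definedness of $\nShift$ relies on the identity $\nkmap{i}(y)^q = \ngamma(\nfrob^i(y))^q = \ngamma(\nfrob^{i+1}(y)) = \nkmap{i+1}(y)$, which ensures that the map $x\otimes a \mapsto x^q\otimes a$ carries the relations $\nkmap{i}(y)\otimes a = 1\otimes ya$ to the corresponding relations in the target.

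Now compute both sides on an elementary tensor $x\otimes a$. On the one hand, $\nsigma(x\otimes a) = x^q\otimes a$ inside $K\otimes_{\Fq}\ncoint$, and applying $\nkproj{i}$ lands this element as $x^q\otimes_{\ncons} a$ in $\nKi{i}\otimes_{\ncons}\ncoint$. On the other hand, $\nkproj{i-1}(x\otimes a) = x\otimes_{\ncons} a \in \nKi{i-1}\otimes_{\ncons}\ncoint$, and $\nShift$ sends this to $x^q\otimes_{\ncons} a \in \nKi{i}\otimes_{\ncons}\ncoint$. The two results coincide, which proves the lemma.

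There is no real obstacle here; the only thing to be careful about is the change of $\ncons$-algebra structure across $\nShift$, addressed in the second paragraph. Once that is checked, density plus the elementary-tensor computation closes the argument.
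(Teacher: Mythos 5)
Your proof is correct and is essentially the argument the paper intends: the lemma is stated with no proof precisely because it reduces to the elementary-tensor computation you give, together with the observation that all four maps are completions of (hence continuous extensions of) maps defined on $K\otimes_{\Fq}\ncoint$. Your extra care about the change of $\ncons$-algebra structure across $\nShift$, via $\nkmap{i}(y)^q = \nkmap{i+1}(y)$, is exactly the one point worth recording.
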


Let $\nbackm\colon \nBi{0} \to \nBK$ be the unique morphism
such that $\nkproj{i} \circ \nbackm = \nShift^i$ for every
$i \in \bZ/\nresdeg\bZ$.

\begin{lem}\label{trad-comp}%
We have
\begin{equation*}
\nkproj{i} \circ (\nsigma \circ \nbackm) = \left\{\!\!\begin{array}{ll}%
\nShift^i, & i \in \{1,\dotsc,\nresdeg-1\}, \\
\nsigmao, & i = 0. \qed%
\end{array}\right.%
\end{equation*}%
\end{lem}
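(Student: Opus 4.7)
The plan is to compute $\nkproj{i} \circ \nsigma \circ \nbackm$ directly, reducing it via Lemma~\ref{trad-com} to a power of $\nShift$ and then identifying what that power is.

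First, I would apply Lemma~\ref{trad-com} to rewrite
\begin{equation*}
\nkproj{i} \circ \nsigma \circ \nbackm = \nShift \circ \nkproj{i-1} \circ \nbackm,
\end{equation*}
where the index $i-1$ is taken in $\bZ/\nresdeg\bZ$. By the defining property of $\nbackm$, the second factor equals $\nShift^{i-1}\colon \nBi{0}\to \nBi{i-1}$. Composing with the remaining $\nShift\colon\nBi{i-1}\to\nBi{i}$ gives $\nShift^i$.

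For $i\in\{1,\dotsc,\nresdeg-1\}$, this is already the desired expression and there is nothing more to say. The only subtlety is the case $i = 0$, where the above calculation yields $\nShift^\nresdeg\colon \nBi{0}\to\nBi{\nresdeg} = \nBi{0}$, and I need to argue this endomorphism coincides with $\nsigmao$. The main step (and really the only nontrivial point) is this identification. By construction $\nShift$ is the $\nadic$-adic completion of the map acting as the $q$-Frobenius on $K$ and as the identity on $\ncoint$, so its $\nresdeg$-th iterate is the completion of the map acting as the $q^\nresdeg$-Frobenius on $K$ and as the identity on $\ncoint$. Since $\nBo$ is the completion of $\nKo\otimes_{\ncons}\ncoint$ and $\nsigmao$ is by its description precisely the $q^\nresdeg$-Frobenius on $\nKo = K\subset\nBo$ fixing $\ncoint$, these two endomorphisms agree on a dense subring and hence coincide by continuity.

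The main obstacle is bookkeeping: one has to keep track that the index $i-1$ is in $\bZ/\nresdeg\bZ$, so $i = 0$ really pulls back the $\nresdeg$-fold iterate, and to verify that $\nShift^\nresdeg$ equals $\nsigmao$ as an endomorphism of $\nBi{0}$ rather than merely as a map sharing the same formula on generators. No further constructions are needed beyond Lemmas~\ref{trad-com} and the definition of $\nbackm$.
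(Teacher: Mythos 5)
Your computation is correct and is exactly the verification the paper leaves implicit (the lemma is stated with \qed and no proof): Lemma~\ref{trad-com} plus the defining property of $\nbackm$ give $\nShift\circ\nShift^{i-1}=\nShift^i$, and for $i=0$ the identification of $\nShift^{\nresdeg}$ with $\nsigmao$ by density of $\nKo\otimes_{\ncons}\ncoint$ in the completion is the right way to close the loop. Nothing is missing.
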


Inverting $z$ we obtain morphisms
$\nkproj{i}\colon \nBKz\to\nBzi{i}$,
$\nShift\colon\nBzi{i} \to \nBzi{i+1}$ and
$\nbackm\colon \nBzi{0} \to \nBKz$.
Lemma~\ref{trad-iso} implies that the morphisms $\nkproj{i}$
induce an isomorphism $\nBKz\xrightarrow{\isosign}\prod_i\nBzi{i}$.

Let $(M,a)$ be a left $\nBzi{0}\{\tauo\}$-module.
Observe that $\nkproj{i}^*(\nbackm^* M) = (\nShift^i)^* M$
by definition of $\nbackm$.
We denote by $\nbackm^*(M,a)$ the left $\nBKz\{\tau\}$-module
whose underlying $\nBKz$-module is $\nbackm^* M$
and the structure morphism 
is the unique morphism
which restricts to the identity on $\nBzi{i}$, $i \in \{1,\dotsc,\nresdeg-1\}$,
and to the morphism $a$ on $\nBzi{0}$.
This construction makes sense thanks to Lemma~\ref{trad-comp}.

Let $(M,a)$ be a left $\nBKz\{\tau\}$-module.
For each $i\in\bZ/\nresdeg$ we set
$a_i = \nkproj{i}^*(a)$. Lemma~\ref{trad-com}
implies that this is a morphism
from $\nShift^*(\nkproj{i-1}^* M)$ to $\nkproj{i}^* M$.
For every $n \geqslant 0$
we denote by $a_i^{[n]}\colon (\nShift^n)^* \nkproj{i-n}^* M \to \nkproj{i}^* M$
the composition which is defined inductively
as follows:
\begin{equation*}
a_i^{[0]} = \textup{id}_{\nkproj{i}^*M}, \quad
a_i^{[n+1]} = a_i \circ \nShift^*(a_{i-1}^{[n]}).
\end{equation*}

\begin{lem}\label{trad-fund}%
$\nkProj(M,a) = (\nkproj{0}^* M, \,a_0^{[\nresdeg]})$.\qed%
\end{lem}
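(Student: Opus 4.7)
My plan is to unfold both sides of the claimed equality into explicit compositions of pullbacks of $a$ and then match them term by term, using Lemma~\ref{trad-com} as the only nontrivial ingredient.

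First I would unfold the right-hand side. By induction on $n$, using the recursive definition $a_i^{[n+1]} = a_i \circ \nShift^*(a_{i-1}^{[n]})$ together with $a_i^{[0]} = \textup{id}$, one obtains
\begin{equation*}
a_0^{[n]} = a_0 \circ \nShift^*(a_{-1}) \circ (\nShift^2)^*(a_{-2}) \circ \cdots \circ (\nShift^{n-1})^*(a_{-(n-1)}),
\end{equation*}
where the indices on $a_{-j}$ are read modulo $\nresdeg$. Specializing to $n = \nresdeg$ and substituting $a_i = \nkproj{i}^*(a)$ gives an explicit formula for $a_0^{[\nresdeg]}$.

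Next I would unfold the left-hand side. From the definition of $\nkfold{a}{\nresdeg}$ as the composition
\begin{equation*}
\nkfold{a}{\nresdeg} = a \circ \nsigma^*(a) \circ (\nsigma^2)^*(a) \circ \cdots \circ (\nsigma^{\nresdeg-1})^*(a)
\end{equation*}
and functoriality of pullback, I get
\begin{equation*}
\nkproj{0}^*\nkfold{a}{\nresdeg} = \nkproj{0}^*(a) \circ \nkproj{0}^*\nsigma^*(a) \circ \cdots \circ \nkproj{0}^*(\nsigma^{\nresdeg-1})^*(a).
\end{equation*}
Iterating Lemma~\ref{trad-com} yields $\nkproj{0} \circ \nsigma^j = \nShift^j \circ \nkproj{-j}$, hence $\nkproj{0}^*(\nsigma^j)^*(a) = (\nShift^j)^*\nkproj{-j}^*(a) = (\nShift^j)^*(a_{-j})$. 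Substituting into the previous expression produces exactly the formula for $a_0^{[\nresdeg]}$ derived in the first step.

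The equality of underlying modules $\nkprojo^* M = \nkproj{0}^* M$ is by definition, so this finishes the proof. No real obstacle is expected; the only care needed is in keeping the indices (both the cyclic indices mod $\nresdeg$ on the $a_i$ and the exponents of $\nShift$ and $\nsigma$) consistent through the unfolding, which is why I would treat the inductive unfolding of $a_0^{[n]}$ explicitly before matching.
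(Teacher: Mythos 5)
Your proof is correct and is exactly the verification the paper leaves implicit (the lemma is stated with an immediate \qed): unfolding $a_0^{[\nresdeg]}$ by induction and rewriting $\nkproj{0}^*(\nsigma^j)^*(a)$ as $(\nShift^j)^*(a_{-j})$ via iterated application of Lemma~\ref{trad-com} is the intended argument, and your index bookkeeping checks out.
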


We denote by $\mu\colon \nbackm^* \nkProj(M,a) \to (M,a)$ the unique morphism
which restricts to $a_i^{[i]}$ on every $\nBzi{i}$.

\begin{lem}\label{trad-nat}%
The map $\mu$ is a natural morphism of left $\nBKz\{\tau\}$-modules.\qed%
\end{lem}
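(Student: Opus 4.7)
The plan is to verify both the $\tau$-compatibility and the naturality of $\mu$ by decomposing everything according to the isomorphism $\nBKz \cong \prod_{i\in\bZ/\nresdeg} \nBzi{i}$ coming from Lemma~\ref{trad-iso} (after inverting $z$). Under this splitting, any $\nBKz$-module $M$ decomposes canonically as $\prod_i \nkproj{i}^* M$. Since Lemma~\ref{trad-com} gives $\nkproj{i} \circ \nsigma = \nShift \circ \nkproj{i-1}$, the endomorphism $\nsigma$ acts by a cyclic shift of idempotents, so the $i$-th component of $\nsigma^* M$ is $\nShift^*(\nkproj{i-1}^* M)$, and the $i$-th component of any structure morphism $a\colon \nsigma^* M \to M$ is exactly $a_i$. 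Applying this to $\nbackm^* \nkProj(M,a) = \nbackm^* \nkproj{0}^* M$ and using $\nkproj{i}\circ\nbackm = \nShift^i$, the $i$-th component is $(\nShift^i)^* \nkproj{0}^* M$; by construction the structure morphism on this component is the identity for $i\ne 0$ and equals $a_0^{[\nresdeg]}$ for $i = 0$, where the latter identification uses Lemma~\ref{trad-fund}.

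To see that $\mu$ commutes with $\tau$, I would check the equality $\mu \circ \tau_{\nbackm^*\nkProj(M,a)} = \tau_M \circ \nsigma^*(\mu)$ one component at a time. On the $i$-th factor with $i\ne 0$, the left-hand side is $a_i^{[i]} \circ \textup{id}$, whereas the right-hand side unwinds to $a_i \circ \nShift^*(a_{i-1}^{[i-1]})$, and these agree by the very recursive definition of $a_i^{[i]}$. The case $i = 0$ is where the index wrap-around matters: the left-hand side becomes $a_0^{[0]} \circ a_0^{[\nresdeg]} = a_0^{[\nresdeg]}$, while the right-hand side becomes $a_0 \circ \nShift^*(a_{\nresdeg-1}^{[\nresdeg-1]})$, which is again $a_0^{[\nresdeg]}$ by the same recursion, with indices read mod $\nresdeg$.

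For naturality, given a morphism $f\colon (M,a) \to (N,b)$ of left $\nBKz\{\tau\}$-modules, the two definitions of $\mu$ fit into a square whose commutativity, on the $i$-th component, is the identity $b_i^{[i]} \circ (\nShift^i)^* \nkproj{0}^* f = \nkproj{i}^* f \circ a_i^{[i]}$. This follows by induction on $i$: the base case $i = 0$ is tautological, and the inductive step uses the recursive definition of the brackets together with the fact that $f$ intertwines $a_i$ with $b_i$ (a direct consequence of $f$ being $\tau$-equivariant, pulled back along $\nkproj{i}$).

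The only real subtlety is bookkeeping around the index $i = 0$: the structure morphism of $\nbackm^*\nkProj(M,a)$ concentrates its non-triviality at a single component, precisely so as to compensate for the $\nresdeg$-fold iteration inherent in the definition of $\nkProj$. Once this matching is transparent, every assertion reduces to the single recursion $a_i^{[n+1]} = a_i \circ \nShift^*(a_{i-1}^{[n]})$ and its obvious compatibility with morphisms.
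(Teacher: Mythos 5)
Your verification is correct: the paper leaves this lemma to the reader (it is stated with a \qed), and the componentwise check you carry out — identifying the $i$-th factor of $\nsigma^*$ of a module with $\nShift^*$ of the $(i-1)$-st factor, and then reducing both $\tau$-equivariance and naturality to the recursion $a_i^{[n+1]} = a_i \circ \nShift^*(a_{i-1}^{[n]})$, with the only non-trivial component at $i=0$ — is exactly the intended routine argument. Nothing to add.
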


\begin{lem}\label{trad-natiso}%
The map $\mu$ is an isomorphism for every $\nBKz$-isocrystal $(M,a)$.\qed%
\end{lem}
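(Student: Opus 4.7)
The plan is to exploit the product decomposition $\nBKz \xrightarrow{\isosign} \prod_{i\in\bZ/\nresdeg} \nBzi{i}$ from Lemma~\ref{trad-iso} and reduce the claim to the statement that each of the ``component'' maps $a_i^{[i]}$ is an isomorphism, which follows formally from the fact that $a$ itself is an isomorphism (by hypothesis that $(M,a)$ is an isocrystal).

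First, I would use Lemma~\ref{trad-iso} to write every $\nBKz$-module $N$ as $N \xrightarrow{\isosign} \prod_i \nkproj{i}^* N$. Applied to the target $M$ this gives $M \xrightarrow{\isosign} \prod_i \nkproj{i}^* M$. Applied to the source $\nbackm^* \nkProj(M,a) = \nBKz \otimes_{\nBzo,\nbackm} \nkproj{0}^* M$ and using $\nkproj{i}\circ \nbackm = \nShift^i$ (the defining property of $\nbackm$) this gives
\begin{equation*}
\nkproj{i}^*\bigl(\nbackm^*\nkProj(M,a)\bigr) \xrightarrow{\isosign} (\nShift^i)^* \nkproj{0}^* M.
\end{equation*}

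Second, unwinding the definition of $\mu$ one sees that under these two decompositions $\mu$ is exactly the product of the maps $a_i^{[i]}\colon (\nShift^i)^* \nkproj{0}^* M \to \nkproj{i}^* M$. So it suffices to show that each $a_i^{[i]}$ is an isomorphism.

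Third, since $(M,a)$ is an isocrystal, $a$ is an isomorphism, hence so is each pullback $a_j = \nkproj{j}^*(a)$. A straightforward induction on $n$ using the recursion $a_i^{[n+1]} = a_i \circ \nShift^*(a_{i-1}^{[n]})$ then shows that every $a_i^{[n]}$ is an isomorphism. In particular $a_i^{[i]}$ is an isomorphism for each $i \in \bZ/\nresdeg$, and so $\mu$ is an isomorphism.

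There is no serious obstacle here; the argument is a direct bookkeeping exercise once the product decomposition is in place. The only mild delicacy is keeping the indices in $\bZ/\nresdeg$ straight and matching the two sides of the isomorphism $\nkproj{i}\circ\nbackm = \nShift^i$ correctly when identifying the $i$-th factor of the source.
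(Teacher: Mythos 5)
Your argument is correct and is exactly the bookkeeping the paper leaves implicit (the lemma is stated with the proof omitted): decompose via Lemma~\ref{trad-iso}, identify the $i$-th component of $\mu$ with $a_i^{[i]}$, and note that each $a_i^{[i]}$ is an isomorphism by induction since the structure morphism $a$ of an isocrystal is an isomorphism. Nothing further is needed.
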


\begin{lem}\label{trad-equiv}%
The functor $(M,a) \mapsto \nkProj(M,a)$ is an equivalence of
categories of $\nBKz$-isocrystals and $\nBzo$-isocrystals.%
\end{lem}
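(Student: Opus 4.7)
The plan is to exhibit $\nbackm^*$ as a quasi-inverse to $\nkProj$. Lemma~\ref{trad-natiso} already supplies a natural isomorphism $\mu\colon \nbackm^* \circ \nkProj \xRightarrow{\sim} \textup{id}$ on the category of isocrystals, so the two remaining tasks are to check that $\nkProj$ and $\nbackm^*$ actually restrict to functors between the categories of $\nBKz$-isocrystals and $\nBzo$-isocrystals, and to produce the reverse natural isomorphism $\nkProj \circ \nbackm^* \xRightarrow{\sim} \textup{id}$.

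First I would verify that $\nkProj$ preserves isocrystals: if $(M,a)$ is a $\nBKz$-isocrystal, then $\nkproj{0}^* M$ is a locally free $\nBzo$-module of finite type by standard base change along $\nkproj{0}\colon \nBKz \to \nBzo$, and the structure morphism $\nkproj{0}^* a^{[\nresdeg]}$ is an isomorphism because each $a_i = \nkproj{i}^*(a)$ is obtained from $a$ by base change, and $a_0^{[\nresdeg]}$ is a composition of such. For $\nbackm^*$, under the decomposition $\nBKz \cong \prod_i \nBzi{i}$ of Lemma~\ref{trad-iso}, the underlying module $\nbackm^* N$ identifies with $\prod_i (\nShift^i)^* N$ by definition of $\nbackm$, which is locally free of finite type over $\nBKz$ whenever $N$ is over $\nBzo$. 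The structure morphism of $\nbackm^*(N,a)$ is the identity on the summand indexed by $i \in \{1,\dotsc,\nresdeg-1\}$ and equal to $a$ on the summand indexed by $i = 0$, hence is an isomorphism on each factor.

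Second, I would compute $\nkProj \circ \nbackm^*$ directly. Since $\nkproj{0}\circ\nbackm = \nShift^0 = \textup{id}_{\nBzo}$ by Lemma~\ref{trad-comp}, the underlying module $\nkproj{0}^*\nbackm^* N$ equals $N$. Writing $\tilde{a}$ for the structure morphism of $\nbackm^*(N,a)$, an induction on $n$ using the recursion $\tilde{a}_i^{[n+1]} = \tilde{a}_i \circ \nShift^* \tilde{a}_{i-1}^{[n]}$ together with $\tilde{a}_i = \textup{id}$ for $i \in \{1,\dotsc,\nresdeg-1\}$ collapses the composition to $\tilde{a}_0^{[\nresdeg]} = a$. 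By Lemma~\ref{trad-fund} we thus have $\nkProj\bigl(\nbackm^*(N,a)\bigr) = (N,a)$ on the nose, providing the required natural isomorphism in the reverse direction.

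Combining these two natural isomorphisms with the preservation statements yields the equivalence. The main obstacle is the second step: one has to keep track carefully of the cyclic indexing, the identification $\nkproj{i}\circ\nbackm = \nShift^i$, and the piecewise definition of the structure morphism of $\nbackm^*(N,a)$ to make sure the iterated composition really does telescope to $a$. Once this bookkeeping is set up the remaining verifications are immediate.
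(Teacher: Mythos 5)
Your proposal is correct and follows essentially the same route as the paper: the paper's proof also takes $\nbackm^*$ as the quasi-inverse, invokes Lemma~\ref{trad-fund} for $\nkProj\circ\nbackm^*\cong\mathrm{id}$ and Lemma~\ref{trad-natiso} for $\mu\colon\nbackm^*\circ\nkProj\xrightarrow{\isosign}\mathrm{id}$. You merely spell out the telescoping of $\tilde{a}_0^{[\nresdeg]}$ and the preservation of isocrystals, which the paper leaves implicit.
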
%
\begin{proof}%
The functor $\nbackm^*$ transforms $\nBzo$-isocrystals to $\nBKz$-isocrystals.
Lemma~\ref{trad-fund} implies that
the composition of functors $\nkProj \circ \nbackm^*$ is isomorphic to the identity.
The natural transformation $\mu\colon \nbackm^* \circ \nkProj \to \textrm{id}$
is an isomorphism on the category of $\nBKz$-isocrystals.
Whence the result.%
\end{proof}

\begin{lem}\label{trad-latred}%
Let $(M,a)$ be a $\nBKz$-isocrystal and $\nBlat \subset M$ a lattice.
Then for every $n \geqslant 0$ we have
$\nkProj\nlati{n\nresdeg}{\nBlat} = \nlatio{n}{\nkProj\nBlat}$.\qed%
\end{lem}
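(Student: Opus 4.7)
The plan is to show that both sides are equal as $\nBo$-submodules of $\nkprojo^* M$ by identifying them with the same explicit generating set. First I would fix the interpretation: for a $\nBK$-submodule $\nBlat' \subset M$, the symbol $\nkProj \nBlat'$ must mean the image of $\nBo \otimes_{\nBK} \nBlat'$ in $\nkprojo^* M = \nBo \otimes_{\nBK} M$. This image identifies with $\nBo \otimes_{\nBK} \nBlat'$ itself, because by Lemma~\ref{trad-iso} the map $\nkprojo\colon \nBK \to \nBo$ is the projection onto a factor of a finite product, hence flat. The same remark gives a clean inclusion $\nkProj \nlati{n\nresdeg}{\nBlat} \hookrightarrow \nkprojo^* M$.

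The next step is the key identification
\begin{equation*}
\tauo^n(1 \otimes m) = 1 \otimes \tau^{n\nresdeg} m \quad \text{for all } m \in M,\ n \geqslant 0.
\end{equation*}
By the adjoint description (Lemma~\ref{adjdesc}) and the definition of $\nkProj$, the endomorphism $\tauo$ of $\nkprojo^* M$ is induced by the structure morphism $\nkprojo^* \nkfold{a}{\nresdeg}$, so
$\tauo(1 \otimes m) = \nkprojo^*\nkfold{a}{\nresdeg}(1 \otimes 1 \otimes m) = 1 \otimes \nkfold{a}{\nresdeg}(1 \otimes m)$.
A straightforward induction on the length $\nresdeg$ of the composition defining $\nkfold{a}{\nresdeg}$ gives $\nkfold{a}{\nresdeg}(1\otimes m) = \tau^\nresdeg m$ in $M$, so $\tauo(1 \otimes m) = 1 \otimes \tau^\nresdeg m$. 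Iterating in $n$ yields the claim.

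Finally I would compare the two sides. By $\nsigmao^n$-semilinearity, applying $\tauo^n$ to a general element of $\nkProj \nBlat = \nBo \otimes_{\nBK} \nBlat$ gives
\begin{equation*}
\tauo^n\!\Big(\textstyle\sum_i c_i \otimes m_i\Big) = \sum_i \nsigmao^n(c_i) \otimes \tau^{n\nresdeg} m_i,
\end{equation*}
and since $\nsigmao^n(c_i) \in \nBo$, the $\nBo$-submodule generated by $\tauo^n(\nkProj \nBlat)$ coincides with the $\nBo$-span of $\{1 \otimes \tau^{n\nresdeg} m : m \in \nBlat\}$. On the other hand,
$\nkProj \nlati{n\nresdeg}{\nBlat} = \nBo \otimes_{\nBK} (\nBK \cdot \tau^{n\nresdeg} \nBlat)$
sits in $\nkprojo^* M$ as exactly the same $\nBo$-submodule, because $\nBK$-scalars are absorbed into $\nBo$ via $\nkprojo$. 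This proves equality.

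The whole argument is essentially a definition chase; the only step with any friction is verifying that $\nkfold{a}{\nresdeg}(1\otimes m)$ computes $\tau^\nresdeg m$, since this requires correctly tracking the semilinear tensor structure through $\nresdeg$ successive pullbacks along $\nsigma$. I expect this inductive compatibility check to be the main (and only) technical point.
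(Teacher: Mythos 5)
Your argument is correct and matches the paper's intent: the lemma is stated with an omitted proof precisely because it is the definition chase you carry out, and your two ingredients — flatness of $\nkprojo$ as projection onto a factor of the product decomposition of Lemma~\ref{trad-iso}, and the identity $\tauo^n(1\otimes m)=1\otimes\tau^{n\nresdeg}m$ obtained by unwinding $\nkprojo^*\nkfold{a}{\nresdeg}$ — are exactly what is needed. The final comparison of the two $\nBo$-submodules via the common generating set $\{1\otimes\tau^{n\nresdeg}m : m\in\nBlat\}$ is also handled correctly, including the point that $\nsigmao^n$ need not be surjective.
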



\begin{lem}\label{trad-lat}%
For every $\nBKz$-isocrystal $(M,a)$
there exists a family $\nlatfam$ of lattices in $M$ 
with the following properties:
\begin{enumerate}
\item\label{trad-lat-lift}%
For each lattice $\nBlat \subset \nkProj M$
there is a unique $\nlatlift \in \nlatfam$ such that
$\nkProj\nlatlift = \nBlat$.

\item\label{trad-lat-tau}%
For each $\nlatlift \in \nlatfam$ the lattice $\nlati{\nresdeg}{\nlatlift}$
belongs to $\nlatfam$.

\item\label{trad-lat-aut}%
For each automorphism $f\colon M \to M$
and each $\nlatlift \in \nlatfam$ the lattice $f(\nlatlift)$
belongs to $\nlatfam$.
\end{enumerate}
\end{lem}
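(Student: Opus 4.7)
My plan is to construct $\nlatfam$ explicitly. Under the isomorphism $\nBK\cong\prod_{i\in\bZ/\nresdeg}\nBi{i}$ of Lemma~\ref{trad-iso}, the $\nBKz$-module $M$ decomposes as $M=\prod_i M_i$ with $M_i=\nkproj{i}^*M$; a lattice in $M$ is then the same data as a lattice $\nlatlift_i\subset M_i$ for each $i$. By Lemma~\ref{trad-com} the endomorphism $\nsigma$ cyclically permutes the factors via $\nShift\colon\nBi{i}\to\nBi{i+1}$, so $\tau$ restricts to $\nShift$-semilinear bijections $M_{i-1}\to M_i$ whose linearizations are the components $a_i\colon\nShift^*M_{i-1}\xrightarrow{\isosign} M_i$ of the structure morphism $a$.

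For each lattice $\nBlat\subset\nkProj M = M_0$, I would set
\begin{equation*}
\nlatlift(\nBlat) \,=\, \ngen{\,\nBlat\cup\tau(\nBlat)\cup\cdots\cup\tau^{\nresdeg-1}(\nBlat)\,}_{\nBK},
\end{equation*}
whose $i$-th component equals $a_i^{[i]}\bigl((\nShift^i)^*\nBlat\bigr)$ by induction on $i$ using the $\nShift$-semilinearity of $\tau$. Since each $a_i^{[i]}$ is an isomorphism and $\nBi{i}$ is a discrete valuation ring, $\nlatlift(\nBlat)$ is a lattice in $M$. I take $\nlatfam$ to be the collection of all such $\nlatlift(\nBlat)$. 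Property~(1) is then immediate: $\nkProj\nlatlift(\nBlat) = \nlatlift(\nBlat)_0 = \nBlat$, and the assignment $\nBlat\mapsto\nlatlift(\nBlat)$ is manifestly injective.

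For property~(2), Lemma~\ref{trad-latred} gives $\nkProj\nlati{\nresdeg}{\nlatlift(\nBlat)} = \nlatio{1}{\nBlat}$, so it suffices to verify $\nlati{\nresdeg}{\nlatlift(\nBlat)} = \nlatlift(\nlatio{1}{\nBlat})$. Since $\nsigma^{\nresdeg}$ preserves each factor $\nBi{i}$ (acting there as $\nShift^{\nresdeg}$) and by Lemma~\ref{trad-fund} the restriction $\tau^{\nresdeg}|_{M_0}$ agrees with the $\tauo$-action on $\nkProj M$, a componentwise computation yields
\begin{equation*}
\nlati{\nresdeg}{\nlatlift(\nBlat)}_i \,=\, \ngen{\tau^{\nresdeg+i}(\nBlat)}_{\nBi{i}} \,=\, \nlatlift(\nlatio{1}{\nBlat})_i.
\end{equation*}
For property~(3), any automorphism $f$ of $M$ is $\nBKz$-linear and $\tau$-equivariant, hence decomposes as $f=\prod_i f_i$ with $f_i\tau = \tau f_{i-1}$; therefore $f(\nlatlift(\nBlat)) = \nlatlift(f_0(\nBlat))\in\nlatfam$.

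The main obstacle is purely organizational: aligning the cyclic indexing in $\bZ/\nresdeg$, the $\nShift$-semilinearity of $\tau$ between factors, and the various $\nBi{i}$-linear span operations. Once the factorwise description of $\nlatfam$ is set up, all three properties reduce to the routine identities indicated above.
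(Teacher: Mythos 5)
Your construction is correct and coincides with the paper's: the lattice $\nlatlift(\nBlat)=\ngen{\,\nBlat\cup\tau\nBlat\cup\dots\cup\tau^{\nresdeg-1}\nBlat\,}$ is exactly the paper's $\mu(\nbackm^*\nBlat)$, whose $i$-th component is $a_i^{[i]}\bigl((\nShift^i)^*\nBlat\bigr)$. The only difference is presentational: the paper gets properties \eqref{trad-lat-tau} and \eqref{trad-lat-aut} from Lemma~\ref{trad-latred} and the naturality of $\mu$, whereas you verify the same identities componentwise by hand.
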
%
\begin{proof}%
The natural transformation $\mu$ is an isomorphism on the category
of $\nBKz$-isocrystals. So for every lattice $\nBlat$ in $\nkProj M$
the $\nBK$-module $\mu(\nbackm^* \nBlat)$ is a lattice in $M$.
Consider the family $\nlatfam = \{ \mu(\nbackm^* \nBlat) \}$ where $\nBlat$ runs
over the lattices in $\nkProj M$.
This family has property \eqref{trad-lat-lift} by construction.
Property~\eqref{trad-lat-tau} follows from Lemma~\ref{trad-latred}
and property~\eqref{trad-lat-aut} is a consequence of the fact
that $\mu$ is a natural transformation.%
\end{proof}

\begin{proof}[Proof of Proposition~\ref{traditio}]
In view of Lemma~\ref{trad-equiv} it remains to show that an isocrystal $(M,a)$ is pure of slope $\nslope$
if and only if $\nkProj(M,a)$ is pure of slope $\nslope$.

If $(M,a)$ is pure of slope $\nslope$ then Lemma~\ref{trad-latred}
implies that $\nFgamma{\ngamma}(M,a)$ is pure of slope $\nslope$.
To prove the converse
write $\nslope = \frac{s}{r}$ with $r > 0$.
Suppose that there is a lattice $\nBlat$ in $\nkProj M$
such that $z^s\nBlat = \nlatio{r}{\nBlat}$.
Let $\nlatfam$ be the family of lattices introduced in Lemma~\ref{trad-lat}
and let $\nlatlift\in\nlatfam$ be the unique lattice such that
$\nkProj\nlatlift = \nBlat$.
The lattice $\nlati{r\nresdeg}{\nlatlift}$
is an element of $\nlatfam$ by \eqref{trad-lat-tau}.
The lattice $z^s\nlatlift$ belongs to $\nlatfam$ by \eqref{trad-lat-aut}.
We have $\nFgamma{\ngamma}\nlati{r\nresdeg}{\nlatlift} = \nFgamma{\ngamma}z^s\nlatlift$ by construction.
The unicity part of \eqref{trad-lat-lift} implies that $\nlati{r\nresdeg}{\nlatlift} = z^s \nlatlift$.
Hence the isocrystal $(M,a)$ is pure of slope $\frac{s}{r}$.%
\end{proof}

\subsection{Pure isocrystals}\label{sec:dmfield}


\begin{prp}\label{subquot}%
Every subquotient of a pure isocrystal is pure of the same slope.%
\end{prp}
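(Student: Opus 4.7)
The plan is, starting from a single lattice $\nBlat\subset M$ witnessing the purity of $M$, to produce compatible lattices in every sub- and quotient-isocrystal of $M$; applying the two constructions in succession then handles an arbitrary subquotient. Write the slope as $\nslope = s/\nerank$ and fix $\nBlat$ satisfying $\nlati{\nerank\nresdeg}{\nBlat} = z^s\nBlat$. Throughout I rely on the fact that $\nBK$ is a finite product of complete discrete valuation rings with uniformizer $z$ on each factor, so that every finitely generated $z$-torsion-free $\nBK$-module is locally free.

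For the quotient $Q=M/N$ along $\pi\colon M\twoheadrightarrow Q$, I would set $\nBlat_Q := \pi(\nBlat)$. It is finitely generated and, because $N$ is a $\nBKz$-submodule of $M$ and hence satisfies $z^{-1}N = N$, it is $z$-torsion-free; so it is a lattice in $Q$. The equality
\[
\nlati{\nerank\nresdeg}{\nBlat_Q} \;=\; \pi\bigl(\nlati{\nerank\nresdeg}{\nBlat}\bigr) \;=\; \pi(z^s\nBlat) \;=\; z^s\nBlat_Q
\]
is immediate from the $\tau$- and $\nBK$-linearity of $\pi$, so $Q$ is pure of slope $\nslope$.

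For the sub-isocrystal $N$, I would take $\nBlat_N := \nBlat\cap N$. This module is finitely generated over the noetherian ring $\nBK$ (as a submodule of $\nBlat$), is $z$-torsion-free, and satisfies $\nBlat_N[z^{-1}] = N$ because any $n\in N\subset M$ can be scaled by a power of $z$ into $\nBlat\cap N$; so $\nBlat_N$ is a lattice in $N$. The inclusion
\[
\nlati{\nerank\nresdeg}{\nBlat_N} \;\subseteq\; \nlati{\nerank\nresdeg}{\nBlat}\cap N \;=\; z^s\nBlat\cap N \;=\; z^s\nBlat_N
\]
is automatic. To promote it to equality I would consider the commutative diagram
\[
\begin{array}{ccccccccc}
0 & \to & (\nsigma^{\nerank\nresdeg})^*\nBlat_N & \to & (\nsigma^{\nerank\nresdeg})^*\nBlat & \to & (\nsigma^{\nerank\nresdeg})^*\nBlat_Q & \to & 0 \\
  &     & \downarrow                             &     & \downarrow                         &     & \downarrow                            &     &   \\
0 & \to & z^s\nBlat_N                            & \to & z^s\nBlat                          & \to & z^s\nBlat_Q                           & \to & 0
\end{array}
\]
whose rows are exact (the top one because $\nBlat_Q$ is flat over $\nBK$) and whose vertical arrows are the $\nBK$-linearizations of the iterated structure morphism of $M$. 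The middle and right verticals are surjective onto $z^s\nBlat$ and $z^s\nBlat_Q$ respectively by the purity of $M$ and of $Q$; on each factor of $\nBK$ they are then surjections of free modules of equal finite rank and therefore isomorphisms. The five lemma forces the left vertical to be an isomorphism as well, yielding $\nlati{\nerank\nresdeg}{\nBlat_N} = z^s\nBlat_N$ and the purity of $N$ of slope $\nslope$.

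The step I expect to demand the most care is ensuring that $\nBlat_N$ and $\nBlat_Q$ qualify as lattices in the sense of Definition~\ref{defblat}, i.e.\ that they are locally free of finite type over $\nBK$; this is also what makes the upper row of the diagram remain exact after base change along $(\nsigma^{\nerank\nresdeg})^*$. Both facts rest on the decomposition of $\nBK$ into a finite product of complete DVRs, after which the remainder is a routine diagram chase.
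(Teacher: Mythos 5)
Your proposal is correct. The quotient half is exactly the paper's argument: since $\nBK$ is a finite product of discrete valuation rings, the image of a lattice under a surjection of isocrystals is again a lattice, and the purity relation $\nlati{\nerank\nresdeg}{\nBlat} = z^s\nBlat$ pushes forward verbatim. For sub-isocrystals, however, you take a genuinely different route. The paper dualizes: an injection $N \hookrightarrow M$ gives a surjection $M^* \twoheadrightarrow N^*$ (using Lemma~\ref{doubledual} and Proposition~\ref{ab}), $M^*$ is pure of slope $-\nslope$ by taking the dual lattice, the quotient case applies to $N^*$, and then $N \cong N^{**}$. You instead intersect the lattice with $N$, verify directly that $\nBlat\cap N$ is a lattice, and run a snake/five-lemma argument on the linearized structure morphisms applied to $0\to\nBlat_N\to\nBlat\to\nBlat_Q\to 0$ to upgrade the inclusion $\nlati{\nerank\nresdeg}{\nBlat_N}\subseteq z^s\nBlat_N$ to an equality. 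Both work; the paper's dualization is shorter and leans on machinery already in place, while yours is more self-contained and makes the lattice-level mechanism explicit. The only point you gloss over is the equality of ranks of the source and target of the right-hand vertical map, which requires that $\nBlat_Q$ have \emph{constant} rank on the factors of $\nBK$ (since $\nsigma$ permutes them); this is supplied by Corollary~\ref{bRrank} applied to the isocrystal $Q$, so the gap is cosmetic.
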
%
\begin{proof}%
Let $M$ be a pure isocrystal and $f\colon M \to N$ a surjection.
The ring $\nBK$ is a finite product of discrete valuation rings and $\nBKz$ is the product of the corresponding
fraction fields. Hence the image of any lattice in $M$ under $f$ is a lattice in $N$,
and the claim follows.

If $f\colon N \to M$ is injective
then $f^*\colon M^* \to N^*$ is an epimorphism
as the functor $M \mapsto M^*$ is an antiequivalence by Lemma~\ref{doubledual}.
Proposition~\ref{ab} implies that $f^*$ is surjective on
the level of $\nBKz$-modules.
So $N^*$ is pure of the same slope as $M^*$ by the argument above.%
\end{proof}

\begin{prp}\label{fdmpure0}%
An isocrystal is pure of slope~$0$ if and only if
it arises from a $\nsigma$-module over $\nBK$.%
\end{prp}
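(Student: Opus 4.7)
The plan is to derive both directions directly from Definition~\ref{dfnpure}; the ``if'' direction is essentially tautological, so all the substantive content lies in the ``only if'' direction.

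For the ``if'' direction, given a $\nsigma$-module $T$ over $\nBK$, I would view $T$ as a lattice inside $M := T \otimes_{\ncoint} \ncoef$. The structure isomorphism $\nsigma^* T \xrightarrow{\isosign} T$ translates to $\nlati{1}{T} = T$, and inductively $\nlati{n}{T} = T$ for every $n \geqslant 0$. Taking $s = 0$ and $r = 1$ in Definition~\ref{dfnpure} then exhibits $M$ as pure of slope~$0$.

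For the ``only if'' direction my strategy is to start from any lattice $\nBlat \subset M$ satisfying $\nlati{r\nresdeg}{\nBlat} = \nBlat$ (provided by the purity hypothesis) and produce a $\tau$-stable lattice by averaging:
\begin{equation*}
T := \sum_{i=0}^{r\nresdeg - 1} \nlati{i}{\nBlat}.
\end{equation*}
The periodicity $\nlati{i+r\nresdeg}{\nBlat} = \nlati{i}{\nBlat}$ will immediately yield $\nlati{1}{T} = T$, so the structure morphism of $M$ restricts to a surjection $\nsigma^* T \twoheadrightarrow T$. To see that $T$ is genuinely a lattice I plan to invoke Lemma~\ref{bRidem}, which identifies $\nBK$ with a finite product of complete discrete valuation rings: a finitely generated torsion-free $\nBK$-submodule of $M$ is then automatically locally free of finite type, and $\nBlat \subset T$ ensures $T \otimes_{\ncoint} \ncoef = M$.

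The only place I expect to have to be careful is in upgrading the surjection $\nsigma^* T \twoheadrightarrow T$ to an isomorphism, which is precisely what is needed for $T$ to be a $\nsigma$-module over $\nBK$. Component by component over the product decomposition of $\nBK$, this becomes a surjection of finitely generated free modules over a complete DVR, which is automatic once the ranks agree. The rank matching should follow from Corollary~\ref{bRrank} ($M$ has constant rank over $\nBKz$) together with the fact that $\nsigma$ merely permutes the components of $\Spec\nBK$ (again Lemma~\ref{bRidem}). Should this bookkeeping across components turn out to be unpleasant, an alternative is to first reduce to the case $\nresdeg = 1$ via Proposition~\ref{traditio}, in which $\nBK$ becomes a single complete DVR and the components disappear altogether.
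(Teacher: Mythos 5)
Your proof is correct and essentially identical to the paper's: the paper also forms the averaged lattice $\navg{\nBlat}=\ngen{\nBlat\cup\tau\nBlat\cup\dots\cup\tau^{r\nresdeg-1}\nBlat}$, uses the periodicity $\nlati{r\nresdeg}{\nBlat}=\nBlat$ to see it is $\tau$-stable with $\ngen{\tau\navg{\nBlat}}=\navg{\nBlat}$, and uses that $\nBK$ is a finite product of discrete valuation rings to conclude local freeness. The only cosmetic difference is that the paper leaves the upgrade from surjectivity to bijectivity of the structure morphism implicit (it follows at once from injectivity of $\taulin_M$, or from your rank count), and the product-of-DVRs description of $\nBK$ is stated elsewhere in the text rather than in Lemma~\ref{bRidem}.
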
%
\begin{proof}%
Let $M$ be a pure isocrystal of slope~$0$.
By definition there is a lattice $\nBlat$ in $M$ and an integer $r > 0$ such that
$\nBlat = \nlati{r\nresdeg}{\nBlat}$.
Consider the $\nBK$-module $\navg{\nBlat} = \ngen{\nBlat \cup \tau\nBlat \cup \dotsc \cup \tau^{r\nresdeg-1}\nBlat}$.
By construction $\navg{\nBlat} = \nlati{}{\navg{\nBlat}}$ and $M = \navg{\nBlat}[z^{-1}]$.
The ring $\nBK$ is a finite product of discrete valuation rings.
Hence $\navg{\nBlat}$ is locally free of finite type and the claim follows.%
\end{proof}

\begin{prp}\label{purehom}%
Assume that $K$ is separably closed.
For all pure isocrystals $M$ and $N$ of the same slope we have
\begin{equation*}
\dim_F \Hom(M,\,N) = \rank M \cdot \rank N.%
\end{equation*}%
\end{prp}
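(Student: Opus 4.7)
The strategy is to internalize the $\Hom$, use purity of slope~$0$ to descend to an integral $\nsigma$-module, and then apply Theorem~\ref{wellknown} over the essentially trivial site $\etsite{K}$.

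To begin, set $H = \iHom(M,N)$. Lemma~\ref{fdmhomtensor} identifies $H$ with $M^*\notim{K}N$, which is free of rank $\rank M\cdot\rank N$ over $\nBKz$. Lemma~\ref{dmhominv} gives $\Hom(M,N) = H^\tau$, so it suffices to prove $\dim_\ncoef H^\tau = \rank M \cdot \rank N$. Since $M$ and $N$ have the same slope, Proposition~\ref{fdmhom} shows that $H$ is pure of slope~$0$, and then Proposition~\ref{fdmpure0} produces a $\nsigma$-module $H_0$ over $\nBK$ of the same rank as $H$ with $H = H_0\otimes_\ncoint\ncoef$.

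Next I would apply Theorem~\ref{wellknown} at $R = K$: the $\nsigma$-module $H_0$ corresponds to a lisse $\ncomax$-adic sheaf with free stalks on $\etsite{K}$. Because $K$ is separably closed, every locally constant \'etale sheaf on $\Spec K$ is constant, so this lisse sheaf is the constant sheaf attached to some free $\ncoint$-module $V$ of rank $\rank M\cdot\rank N$. Tracing through the equivalence, this identifies $H_0$ with $V\otimes_\ncoint \nBK$ carrying the ``trivial'' $\tau$-action $v\otimes x \mapsto v\otimes \nsigma(x)$, and consequently $H \cong \unit^{\rank M\cdot\rank N}$ as an isocrystal.

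Taking $\tau$-invariants reduces the claim to the computation $\nBKz^\nsigma = \ncoef$. For this I would use the description $\nBKz = \rF{(K\otimes_{\Fq}\ncons)}$ from the proof of Lemma~\ref{bRidem}, where $\nsigma$ acts coefficientwise as the $q$-Frobenius on $K$ tensored with the identity on $\ncons$. A quick calculation with an $\Fq$-basis of $\ncons$ shows $(K\otimes_{\Fq}\ncons)^\nsigma = \ncons$, and hence $\nBKz^\nsigma = \rF{\ncons} = \ncoef$. This yields $\dim_\ncoef \Hom(M,N) = \rank M\cdot\rank N$, as required.

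The main obstacle is bookkeeping rather than substance: one has to make the inverse direction of Theorem~\ref{wellknown} sufficiently explicit to recognize that a constant \'etale sheaf translates into a ``constant coefficients'' $\nsigma$-module of the form $V\otimes_\ncoint\nBK$, and one must then verify that inverting the uniformizer $z$ commutes with taking $\tau$-invariants in this trivialized situation.
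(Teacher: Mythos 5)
Your argument is correct and follows essentially the same route as the paper: internalize to $H=\iHom(M,N)$, note $H$ is pure of slope $0$, descend to a $\nsigma$-module over $\nBK$ via Proposition~\ref{fdmpure0}, and use Theorem~\ref{wellknown} over the separably closed field $K$ to see that $H$ is a direct sum of copies of $\unit$, whence $\Hom(M,N)=H^\tau$ has the stated dimension. Your explicit verification that $\nBKz^\nsigma=\ncoef$ is a detail the paper leaves implicit, but it is exactly the computation needed.
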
%
\begin{proof}%
The isocrystal $H = \iHom(M,\,N)$ is pure of slope~$0$. 
By Proposition~\ref{fdmpure0} it arises from a $\nsigma$-module over $\nBK$.
As $K$ is separably closed Theorem~\ref{wellknown}
implies that every such $\nsigma$-module is a direct sum of unit objects.
Hence the isocrystal $H$ is itself a direct sum of unit objects.
Now $H^\tau = \Hom(M,\,N)$ by Lemma~\ref{dmhominv} and the result follows.%
\end{proof}

\begin{prp}[Laumon~\cite{laumon}]\label{simple}%
Assume that $K$ is separably closed.
\begin{enumerate}
\item\label{simple-dfn}%
For every rational number $\nslope$ there is a simple
isocrystal $M_\nslope$ which is pure of slope $\nslope$.
This isocrystal is unique up to isomorphism.

\item\label{simple-rank}%
The rank of $M_\nslope$ is equal to the denominator
of $\nslope$ written in lowest terms.


\item\label{simple-end}%
The ring $\End M_\nslope$
is a central division algebra over $\ncoef$.
Its Hasse invariant is the residue class of $-\nslope$ in $\bQ/\bZ$.%
\end{enumerate}%
\end{prp}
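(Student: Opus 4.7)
The plan is to first reduce to $\nresdeg = 1$ using Proposition~\ref{traditio}, then construct $M_\nslope$ explicitly as in Proposition~\ref{pureexist}, establish simplicity from a rank-divisibility lemma proved via top exterior powers, and finally identify $\End(M_\nslope)$ with a cyclic algebra over $\ncoef$. Since $K$ is separably closed, $\ncons$ (finite and separable over $\Fq$) embeds into $K$, so Proposition~\ref{traditio} furnishes an equivalence of categories between $\nBKz$-isocrystals and $\nBzo$-isocrystals which preserves slopes; the decomposition $\nBKz \cong \prod_{i \in \bZ/\nresdeg} \nBzi{i}$ moreover shows that this equivalence preserves ranks. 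After the reduction we may assume $\nresdeg = 1$, so $\ncoef = \rF{\Fq}$ and $\nBKz = \rF{K}$. For $\nslope = \frac{s}{r}$ in lowest terms with $r > 0$, I take $M_\nslope = \nBKz\{\tau\}/(\tau^r - z^s)$, which by Proposition~\ref{pureexist} is pure of slope $\nslope$ and has rank $r$.

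The crux of the proof is the following lemma: \emph{every pure isocrystal $N$ of slope $\frac{s}{r}$ has rank divisible by $r$}. To establish it, choose a lattice $T \subset N$ with $\nlati{r_0}{T} = z^{s_0} T$ and $s_0/r_0 = s/r$. Since $\nBK = \rO{K}$ is a DVR and $T$ is free of rank $r' = \rank N$, the elements $\tau^{r_0}(e_i)/z^{s_0}$ form a basis of $T$; passing to the top wedge then shows that $\det T = \Lambda^{r'}T$ is a rank-one lattice in $\det N = \Lambda^{r'}N$ satisfying $\nlati{r_0}{\det T} = z^{r' s_0}\det T$, so $\det N$ is pure of slope $r' s/r$. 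A direct computation with a generator of a rank-one lattice shows that the slope of any rank-one pure isocrystal equals $v_z$ of the scalar by which $\tau$ acts, hence is an integer; therefore $r \mid r' s$, and $\gcd(s,r) = 1$ forces $r \mid r'$.

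Combining Proposition~\ref{subquot} with this lemma, any proper nonzero sub-isocrystal $N \subsetneq M_\nslope$ would be pure of slope $\nslope$ with rank a positive multiple of $r$ bounded above by $r-1$, which is impossible; and any sub-isocrystal of rank $r$ equals $M_\nslope$ since $\nBKz$ is a field. Hence $M_\nslope$ is simple, yielding (1) and (2). For uniqueness, Proposition~\ref{purehom} gives $\dim_\ncoef \Hom(M_\nslope, N) = r \cdot \rank N > 0$ for any simple pure $N$ of slope $\nslope$, and any nonzero morphism between simple isocrystals is an isomorphism by Proposition~\ref{ab}.

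For (3), an endomorphism $\alpha$ is determined by $x = \alpha(1) = \sum_{i=0}^{r-1} x_i \tau^i$, and the compatibility $\alpha(\tau^r \cdot 1) = \tau^r \alpha(1)$ combined with the relation $\tau^r = z^s$ forces $\sigma^r(x_i) = x_i$; the fixed ring $(\nBKz)^{\sigma^r}$ is the degree-$r$ unramified extension $L = \bF_{q^r}(\!(z)\!)$ of $\ncoef$. Since composition in $\End(M_\nslope)$ corresponds to the opposite multiplication in the twisted polynomial ring, $\End(M_\nslope)^{\nop} \cong L\{\tau\}/(\tau^r - z^s)$ is the cyclic algebra $(L/\ncoef, \sigma, z^s)$, which has Hasse invariant $s/r$ by standard local class field theory; hence $\End(M_\nslope)$ has Hasse invariant $-\nslope$ and, since $\gcd(s,r) = 1$, is a central division algebra. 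The main obstacle is the rank-divisibility lemma, whose delicate points are verifying the determinant relation $\nlati{r_0}{\det T} = z^{r' s_0}\det T$ and the integer-slope statement for rank-one pure isocrystals; the remainder is formal bookkeeping with the equivalence from Section~\ref{subsec:nocoeff} and the Hom-dimension computation of Proposition~\ref{purehom}.
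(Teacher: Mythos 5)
Your proposal is correct, and it follows the paper's skeleton for the easy parts while replacing the paper's appeals to Laumon by self-contained arguments for the hard parts. Like the paper, you reduce to $\nresdeg=1$ via Proposition~\ref{traditio} (the rank-preservation you use for \eqref{simple-rank} is not stated there, but is immediate from the decomposition $\nBKz\cong\prod_i\nBzi{i}$), construct $M_\nslope=\nBKz\{\tau\}/(\tau^r-z^s)$ exactly as in Proposition~\ref{pureexist}, and prove uniqueness by combining Proposition~\ref{purehom} with Proposition~\ref{ab} — this last step is verbatim the paper's argument. The divergence is in simplicity and in \eqref{simple-end}: the paper simply cites Laumon (Lemma~B.1.10 for simplicity, Theorem~2.4.5 for the structure of $\End M_\nslope$), whereas you prove simplicity via the rank-divisibility lemma (top exterior power of a $\tau$-stable-up-to-$z^{s_0}$ lattice, plus integrality of the slope of a rank-one pure isocrystal) together with Proposition~\ref{subquot}, and you compute $\End M_\nslope$ explicitly: the characterization of endomorphisms of the cyclic module $R/Rf$ by $(\tau^r-z^s)\cdot\bar x=0$, the fixed ring $(\nBKz)^{\sigma^r}=L$ equal to the unramified degree-$r$ extension of $\ncoef$ (here note that after the reduction the base field is $\ncons$, not $\Fq$, so $L$ has residue field of degree $r$ over $\ncons$), and the identification $\End(M_\nslope)^{\nop}\cong(L/\ncoef,\sigma,z^s)$, whose invariant $s/r$ gives invariant $-\nslope$ for $\End(M_\nslope)$; with $\gcd(s,r)=1$ this forces a division algebra. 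Both the determinant computation and the opposite-algebra bookkeeping check out. What your route buys is independence from Laumon's appendix — relevant since, as the paper itself remarks, Laumon's written statements assume $K$ algebraic over $\Fq$ — at the cost of the extra exterior-power lemma and of invoking the standard local computation of invariants of cyclic algebras, where one must (and you do) keep track of the arithmetic-Frobenius sign convention and of the opposite arising from $\End_R(R/Rf)$.
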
%
\begin{proof}
First we prove the unicity part of \eqref{simple-dfn}.
Let $M$ be a simple isocrystal which is pure of slope $\nslope$.
Proposition~\ref{purehom} shows that there is a nonzero morphism $M_\nslope \to M$.
Since $M_\nslope$ and $M$ are simple
Proposition~\ref{ab} implies that this is an isomorphism.

Let us establish the existence of $M_\nslope$.
The field $K$ contains a copy of $\ncons$, 
so in view of Proposition~\ref{traditio}
it is enough to deal with the case $\ncoef = \rF{\Fq}$.
This case was treated by Laumon \cite[Appendix~B]{laumon}.
Although Laumon restricts to the case when $K$ is algebraic over $\Fq$
we shall rely on the arguments which work without change for arbitrary $K$.

Write $\nslope = \frac{s}{r}$ in lowest terms and let $M_\nslope$
be the left $\nBKz\{\tau\}$-module
generated by an element $m$ with a relation $\tau^r m = z^s m$.
This is clearly an isocrystal.
Lemma~B.1.10 of \cite{laumon} shows that $M_\nslope$ is simple.
Let $\nBlat \subset M_\nslope$ be the free $\nBK$-submodule on the basis
$m, \tau m, \dotsc, \tau^{r-1} m$. 
Then $\tau^r \nBlat$ generates $z^s\nBlat$ and so
$M_\nslope$ is pure of slope $\nslope$. We thus get the existence
part of \eqref{simple-dfn}. Property \eqref{simple-rank} holds by construction.
Property~\eqref{simple-end} is the second claim of
Theorem~2.4.5 in \cite[Section~2.4]{laumon}.
\end{proof}

\begin{rmk}%
It is sometimes claimed in the literature that
the classification theorem of Laumon \cite[Theorem~2.4.5]{laumon} works for every separably closed field $K$,
see \cite[Proposition~5.1.3]{taelman-art} or \cite[Proposition~4.5]{zywina-satotate}.
The following counterexample shows that this is false: If $K$ is not perfect
then there are mixed isocrystals which do not split into a direct sum of pure ones.

Assume that $\ncoef = \rF{\Fq}$. Then $\nBKz = \rF{K}$.
Let $\alpha \in K$ be an element which has no $q$-th root.
Consider the isocrystal $M$ with a $\rF{K}$-basis $e_0$, $e_1$ on which $\tau$
acts as follows:
\begin{equation*}
\tau(e_0) = e_0, \quad \tau(e_1) = z e_1 + \alpha e_0.
\end{equation*}
The subisocrystal $M_0 = \rF{K}\cdot e_0$ is pure of slope~$0$
and $M/M_0$ is pure of slope~$1$. Proposition~\ref{subquot} implies
that $M$ is not pure. We claim that $M_0$ is not a direct summand of $M$.

Let $s\colon M \to M_0$ be a splitting of the inclusion $M_0 \hookrightarrow M_1$.
Then $s(e_0) = e_0$ and $s(e_1) = x e_0$ where $x \in \rF{K}$
is a solution of the equation $\nsigma(x) = z x + \alpha$.
The coefficients of $x$ at negative powers of $z$ are zero
and the coefficient at $z^0$ is a $q$-th root of $\alpha$
which does not exist by assumption.
\end{rmk}

\begin{rmk}%
Isocrystals over any field $K$ carry a unique increasing filtration with pure subquotients of strictly increasing slopes,
the \emph{Harder-Narasimhan filtration},
see \cite[Proposition~1.5.10]{hartl-annals}.
If the field $K$ is perfect then this filtration splits
giving rise to a canonical decomposition into pure sub-isocrystals.
\end{rmk}

\begin{prp}\label{puredecomp}
If $K$ is separably closed then every pure isocrystal
of slope $\nslope$ decomposes into a direct sum of $M_\nslope$.%
\end{prp}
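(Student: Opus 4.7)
The plan is to construct an explicit isomorphism $M \shortisosign M_\nslope^{\oplus n}$ by choosing a basis of $\Hom(M_\nslope, M)$ over the endomorphism division algebra of $M_\nslope$. Set $N = M_\nslope$, write $r = \rank N$, and let $D = \End N$. By Proposition~\ref{simple} the ring $D$ is a central division algebra over $F$, and by Proposition~\ref{purehom} applied to $\Hom(N,N)$ we have $\dim_F D = r^2$. The $F$-vector space $\Hom(N, M)$ carries a right $D$-action by precomposition, $f \cdot d = f \circ d$. By Proposition~\ref{purehom} we obtain $\dim_F \Hom(N, M) = r \cdot \rank M$. Since $D$ is a division ring, every right $D$-module is free, so $\Hom(N, M) \cong D^{n}$ where $n = \rank M / r$; in particular $r$ divides $\rank M$.

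Pick a right $D$-basis $f_1, \dotsc, f_n$ of $\Hom(N, M)$ and consider the morphism of isocrystals
\begin{equation*}
\Phi \colon N^{\oplus n} \to M, \qquad (x_1, \dotsc, x_n) \mapsto \sum_{i=1}^n f_i(x_i).
\end{equation*}
To see that $\Phi$ is injective, suppose $\ker\Phi \ne 0$. By Proposition~\ref{ab} the kernel is a sub-isocrystal of $N^{\oplus n}$, and by Proposition~\ref{subquot} it is pure of slope $\nslope$. Proposition~\ref{purehom} then forces $\Hom(N, \ker\Phi) \ne 0$. Any nonzero $g \colon N \to \ker\Phi \hookrightarrow N^{\oplus n}$ has coordinates $g_1, \dotsc, g_n \in D$, not all zero, and satisfies
\begin{equation*}
0 = \Phi \circ g = \sum_i f_i \circ g_i = \sum_i f_i \cdot g_i
\end{equation*}
in $\Hom(N, M)$. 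Right $D$-linear independence of the $f_i$ forces every $g_i$ to vanish, a contradiction.

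For surjectivity, both $N^{\oplus n}$ and $M$ are locally free over $\nBKz$ of the same constant rank $nr = \rank M$ (Corollary~\ref{bRrank}). Since $\nBKz$ is a finite product of fields (Lemma~\ref{bRidem}), an injective morphism between free modules of the same rank is an isomorphism on each factor, hence $\Phi$ is an isomorphism of isocrystals.

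The heart of the argument is the observation that $\Hom(N, M)$ is automatically free over the \emph{division} algebra $D$, which simultaneously forces the divisibility $r \mid \rank M$ and provides enough morphisms $N \to M$ to assemble an isomorphism. The semisimplicity statement thus reduces, once the right $D$-action is identified, to a standard linear-algebra argument over a division ring, together with the dimension count supplied by Proposition~\ref{purehom}.
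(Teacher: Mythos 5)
Your proof is correct, but it organizes the argument differently from the paper. The paper proceeds by induction on the rank: it produces a single nonzero morphism $f\colon M_\nslope\to M$ via Proposition~\ref{purehom}, notes that $f$ is injective because $M_\nslope$ is simple (Proposition~\ref{ab}), and then shows $f$ splits by checking that the sequence $0\to\Hom(M',M_\nslope)\to\Hom(M,M_\nslope)\to\End(M_\nslope)\to 0$ (with $M'=\coker f$) is exact by a dimension count, again using Propositions~\ref{subquot} and \ref{purehom}; peeling off one copy of $M_\nslope$ at a time gives the decomposition. You instead build the whole decomposition in one step: you identify $\Hom(M_\nslope,M)$ as a free right module over the division algebra $D=\End M_\nslope$, read off the divisibility $r\mid\rank M$ from the dimension count, and assemble a right $D$-basis into a single map $\Phi\colon M_\nslope^{\oplus n}\to M$, whose injectivity follows from $D$-linear independence (applied to a hypothetical nonzero map $M_\nslope\to\ker\Phi$, which exists by Propositions~\ref{ab}, \ref{subquot} and \ref{purehom}) and whose surjectivity follows from equality of ranks over the product of fields $\nBKz$. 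Both arguments rest on exactly the same inputs; yours is closer in spirit to the Morita-theoretic viewpoint of Proposition~\ref{generalmorita} and has the small advantage of exhibiting the multiplicity $n=\rank M/r$ and the divisibility explicitly, while the paper's splitting-plus-induction is slightly shorter to write. The only point you leave implicit is that $M_\nslope^{\oplus n}$ is itself pure of slope $\nslope$ (take the direct sum of lattices), which is needed to invoke Proposition~\ref{subquot} for $\ker\Phi$; this is immediate and does not affect correctness.
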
%
\begin{proof}%
Suppose that $M\ne 0$.
By Proposition~\ref{purehom} there is a nonzero morphism $f\colon M_\nslope\to M$.
As $M_\nslope$ is simple Proposition~\ref{ab} shows that $f$ is injective.
Let $M'$ be the cokernel of $f$ and consider the sequence
\begin{equation*}
0 \to \Hom(M',\,M_\nslope)
\to \Hom(M,\,M_\nslope)
\xrightarrow{-\circ f} \Hom(M_\nslope,\,M_\nslope)
\to 0.
\end{equation*}
The isocrystal $M'$ is pure of slope $\nslope$ by Proposition~\ref{subquot}.
Comparing the dimensions we deduce from Proposition~\ref{purehom}
that this sequence is exact. Hence $f$ splits.
We get the result by induction on the rank of $M$.%
\end{proof}

%
%
%

\begin{prp}\label{simpleiso}
Let $\lambda\in \bQ$ and let $r > 0$ be the denominator of $\lambda$ written in lowest terms.
If a pure isocrystal of slope $\lambda$ has rank $r$
then it is simple.%
\end{prp}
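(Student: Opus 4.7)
Let $M$ be a pure isocrystal of slope $\lambda = s/r$ in lowest terms and rank $r$. The strategy is to reduce to the case of separably closed $K$, where the classification via Propositions~\ref{puredecomp} and~\ref{simple} applies, and then descend the conclusion.

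Suppose for a contradiction that $N \subsetneq M$ is a non-zero sub-isocrystal. Fix a separable closure $K^s$ of $K$ and form the base changes $M^s = \nBRz{K^s} \otimes_{\nBKz} M$ and $N^s = \nBRz{K^s} \otimes_{\nBKz} N$, equipped with their induced $\tau$-structures. A lattice $T \subset M$ realizing $\langle \tau^{rd} T\rangle = z^s T$ base-changes to a lattice $\nBR{K^s} \otimes_{\nBK} T$ in $M^s$ satisfying the same equation, so $M^s$ is pure of slope $\lambda$; its rank remains $r$ because $M$ is locally free of constant rank $r$ over the product of fields $\nBKz$. By Proposition~\ref{puredecomp} we have $M^s \cong M_\lambda^{\oplus n}$, and Proposition~\ref{simple} gives $\rank M_\lambda = r$, forcing $n = 1$. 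Hence $M^s \cong M_\lambda$ is simple.

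By Proposition~\ref{ab} the quotient $M/N$ is again an isocrystal, and in particular a locally free (hence flat) $\nBKz$-module. Consequently the short exact sequence $0 \to N \to M \to M/N \to 0$ remains exact after $- \otimes_{\nBKz} \nBRz{K^s}$, and neither $N^s$ nor $(M/N)^s$ vanishes since $N$ and $M/N$ are non-zero locally free $\nBKz$-modules while $\nBRz{K^s}$ is a non-zero ring. Thus $N^s$ is a non-zero proper sub-isocrystal of the simple $M^s$, a contradiction.

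The main obstacle is the bookkeeping around base change from $\nBKz$ to $\nBRz{K^s}$: preservation of the purity equation, preservation of the rank, and injectivity of $N^s \hookrightarrow M^s$. All three points reduce to Lemma~\ref{bRidem}, which describes $\nBKz$ and $\nBRz{K^s}$ as finite products of fields, together with the flatness of $M/N$, which supplies the required Tor-vanishing without needing flatness of $\nBRz{K^s}$ over $\nBKz$ directly.
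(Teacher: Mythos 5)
Your proof is correct and follows the same route as the paper: reduce to separably closed $K$, invoke Propositions~\ref{puredecomp} and~\ref{simple} to see the base change is a single copy of $M_\lambda$, and descend simplicity. The paper states this in two lines; you have merely supplied the (correct) details of why purity, rank, and proper nonzero subobjects survive base change to $\nBRz{K^s}$.
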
%
\begin{proof}%
We are free to assume that $K$ is separably closed.
The claim follows by applying Proposition~\ref{puredecomp} and comparing
the ranks.%
\end{proof}

\section{The \texorpdfstring{$\nadic$-}{}adic topology}\label{sec:top}


Let $K$ be a field over $\Fq$.
Some of our arguments employ the notion of $\nadic$-adic topology
on $\nBK$-modules and $\nBKz$-modules which we now introduce.

\subsection{\texorpdfstring{$\nBK$}{A\_K}-modules}

\begin{dfn}%
The \emph{$\nadic$-adic topology} on a finitely generated $\nBK$-module $\nBlat$ is given
by the fundamental system of open submodules
$\ncomax^n\nBlat$.%
\end{dfn}

Let $K^s$ be a separable closure of $K$ and let $G$ be the corresponding Galois group.
The ring $\nBR{K^s}$ carries an action of $G$ by functoriality.

\begin{dfn}\label{semiact}%
Let $\nBlat$ be a finitely generated $\nBR{K^s}$-module
equipped with an action of $G$ on the underlying abelian group.
The action is called \emph{semi-\hspace{0pt}linear} if the identity
$\gamma(x t) = \gamma(x)\cdot\gamma(t)$
holds for all $(\gamma,x,t)\in G\times \nBR{K^s}\times \nBlat$.%
\end{dfn}

Our main application of the $\nadic$-adic topology is the following
descent result:

\begin{prp}\label{descent}%
Let $\nBlat$ be a locally free $\nBR{K^s}$-module of finite type
equipped with a semi-\hspace{0pt}linear action of ${G}$
which is continuous in the $\nadic$-adic topology.
Then the natural map 
$\nBR{K^s}\otimes_{\nBK} \nBlat^{{G}} \xrightarrow{\isosign} \nBlat$
is an isomorphism.%
\end{prp}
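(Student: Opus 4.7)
My plan is to establish the descent isomorphism by reducing modulo powers of $\ncomax$, applying classical Galois descent at each finite level, and then passing to the inverse limit using the completeness provided by Lemma~\ref{complete}.

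First I would reduce to a finite level. Set $T_n = \nBlat/\ncomax^n\nBlat$; by Lemma~\ref{complete} this is a locally free module of finite type over $\nBR{K^s}/\ncomax^n \cong K^s \otimes_{\Fq} \ncoint/\ncomax^n$, and in particular a finite-dimensional $K^s$-vector space. Continuity of the action on $\nBlat$ in the $\ncomax$-adic topology means that each element of $\nBlat$ has an open stabilizer modulo $\ncomax^n\nBlat$, so the induced $G$-action on the discrete module $T_n$ is continuous. Since $T_n$ is finite-dimensional over $K^s$, a finite generating set has a common open stabilizer; replacing it by a normal subgroup inside this stabilizer yields a finite Galois extension $L/K$ inside $K^s$ (depending on $n$) such that the $G$-action on $T_n$ factors through $H = \Gal(L/K)$.

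Next I would invoke classical Galois descent over the extension $(L\otimes_{\Fq}\ncoint/\ncomax^n)/(\nBK/\ncomax^n)$. This is finite \'etale and Galois with group $H$: the decomposition $L\otimes_K L \cong \prod_{\sigma\in H} L$, tensored with $\ncoint/\ncomax^n$, trivializes the extension after self base change. Writing $M_n = T_n^{\Gal(K^s/L)}$, a locally free $(L\otimes_{\Fq}\ncoint/\ncomax^n)$-module of finite type with semi-linear $H$-action, faithfully flat Galois descent identifies $T_n^G = M_n^H$ as a locally free $\nBK/\ncomax^n$-module of the same rank and yields an isomorphism
\begin{equation*}
(\nBR{K^s}/\ncomax^n)\otimes_{\nBK/\ncomax^n} T_n^G \xrightarrow{\isosign} T_n.
\end{equation*}

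Finally I would take the inverse limit. Continuity of the $G$-action gives $\nBlat^G = \lim_n T_n^G$, and the transition maps $T_{n+1}^G \to T_n^G$ are surjective because they are compatible with the descent isomorphisms and with the surjections $T_{n+1}\to T_n$. The inverse system $\{T_n^G\}$ thus consists of finitely generated $\nBK/\ncomax^n$-modules, so the limit $\nBlat^G$ is a finitely generated $\nBK$-module that is $\ncomax$-adically complete. Combined with $\ncomax$-adic completeness of $\nBlat$ (Lemma~\ref{complete}), passing to the limit in the displayed isomorphism yields the desired identification $\nBR{K^s}\otimes_{\nBK}\nBlat^G \xrightarrow{\isosign} \nBlat$.

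The main obstacle will be the final limit step: one must verify that $\lim_n[(\nBR{K^s}/\ncomax^n)\otimes_{\nBK/\ncomax^n} T_n^G]$ recovers the ordinary tensor product $\nBR{K^s}\otimes_{\nBK}\nBlat^G$. This relies crucially on $\nBlat^G$ being finitely generated over $\nBK$, so that the algebraic tensor product already coincides with its $\ncomax$-adic completion. The Galois descent step itself is classical once the finite \'etale Galois structure of $(L\otimes_{\Fq}\ncoint/\ncomax^n)/(\nBK/\ncomax^n)$ is verified.
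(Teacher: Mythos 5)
Your proposal is correct and follows the same three-step architecture as the paper's proof (Lemmas~\ref{descent-lim}, \ref{descent-termwise} and \ref{descent-locfree}): reduce modulo $\ncomax^n$, perform Galois descent at each finite level, and pass to the limit using finite generation of $\nBlat^G$. The differences are at the level of tools. For the termwise descent the paper invokes Hilbert's Theorem~90 for the profinite group $G$ acting on the finite-dimensional $K^s$-vector space $T_n$ directly, whereas you factor through a finite subextension $L/K$ and use finite faithfully flat Galois descent; both are standard. For the limit step, note that mere surjectivity of the transition maps $T_{n+1}^G\to T_n^G$ is not enough to conclude that $\lim_n T_n^G$ is finitely generated over $\nBK$ --- what you need is the isomorphism $T_{n+1}^G/\ncomax^n T_{n+1}^G\xrightarrow{\isosign} T_n^G$ (then \stacks{0D4B} applies, as in Lemma~\ref{bunsys}); this does follow from your setup, by comparing the descent isomorphisms at levels $n$ and $n+1$ and using faithful flatness of $\nBK/\ncomax^n\to\nBR{K^s}/\ncomax^n$, but you should state it. The paper obtains the same identity by a hypercohomology spectral sequence argument with additive Theorem~90. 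Finally, one phrase to repair: the semi-linear $G$-action on $T_n$ does not literally ``factor through $\Gal(L/K)$'', since $\Gal(K^s/L)$ still acts nontrivially on the $K^s$-scalars; what you mean, and what your argument actually uses, is that $\Gal(K^s/L)$ fixes a generating set pointwise, so that $T_n$ descends to $L$ with a residual semi-linear $\Gal(L/K)$-action.
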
%

We split the proof into a sequence of lemmas.
\begin{lem}\label{descent-lim}%
The natural morphism $\nBlat^{{G}} \xrightarrow{\isosign} \lim (\nBlat/\ncomax^n\nBlat)^{{G}}$
is an isomorphism.%
\end{lem}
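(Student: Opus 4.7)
The plan is to deduce this from the $\nadic$-adic completeness of $\nBlat$ (Lemma~\ref{complete}) together with the observation that the semi-linear $G$-action descends to each quotient $\nBlat/\ncomax^n \nBlat$. Indeed, since $\ncomax \subset \nBK$ is fixed pointwise by $G$, semi-linearity implies $\gamma(\ncomax^n \nBlat) = \ncomax^n \nBlat$ for every $\gamma \in G$, so the reduction maps $\nBlat \to \nBlat/\ncomax^n\nBlat$ are $G$-equivariant.

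For injectivity of $\nBlat^G \to \lim (\nBlat/\ncomax^n\nBlat)^G$: if $t \in \nBlat^G$ reduces to zero in every $\nBlat/\ncomax^n\nBlat$, then $t \in \bigcap_{n>0}\ncomax^n\nBlat$, which is zero by the completeness/separatedness of $\nBlat$.

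For surjectivity: given a compatible system $(t_n)_{n\geqslant 1}$ with $t_n \in (\nBlat/\ncomax^n\nBlat)^G$, the completeness of $\nBlat$ produces a unique $t \in \nBlat$ whose reduction modulo $\ncomax^n$ is $t_n$ for all $n$. It remains to check that $t$ is $G$-invariant. For every $\gamma \in G$, the equivariance of the reduction map together with the $G$-invariance of $t_n$ gives
\begin{equation*}
\gamma(t) - t \;\equiv\; \gamma(t_n) - t_n \;=\; 0 \pmod{\ncomax^n \nBlat}
\end{equation*}
for every $n$. Hence $\gamma(t) - t \in \bigcap_{n>0}\ncomax^n\nBlat = 0$, so $\gamma(t) = t$, as required.

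No serious obstacle is expected: the assertion is a formal consequence of completeness and the fact that each $\ncomax^n\nBlat$ is a $G$-stable submodule. In particular the continuity hypothesis is not strictly needed for this lemma; it will be used in the main Proposition~\ref{descent} to pass between $\nBlat^G$ and a descent datum on $\nBlat$.
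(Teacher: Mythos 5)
Your proof is correct and is essentially the paper's argument in element-wise form: the paper packages the same two ingredients (the $\nadic$-adic completeness and separatedness of $\nBlat$ from Lemma~\ref{complete}, and left-exactness of taking $G$-invariants) into a commutative diagram with the continuous-cochain functor $\ngcoc{-}$. Your side remark is also accurate --- the continuity of the action is not needed for this particular lemma, only the $G$-stability of the submodules $\ncomax^n\nBlat$; continuity enters later in the proof of Proposition~\ref{descent}.
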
%
\begin{proof}%
Given a topological abelian group $A$ on which ${G}$ acts continuously
we denote by $\ngcoc{A}$ the abelian group of continuous maps ${G} \to A$
and $d\colon A \to \ngcoc{A}$ the morphism which sends an element $a\in A$
to the map $\gamma \mapsto \gamma a - a$.
Consider a commutative diagram
\begin{equation*}
\xymatrix{
0 \ar[r] & \nBlat^{{G}} \ar[r] \ar[d] & \nBlat \ar[r]^{d\quad} \ar[d] & \ngcoc{\nBlat} \ar[d] \\
0 \ar[r] & \lim(\nBlat/\ncomax^n\nBlat)^{{G}} \ar[r] &
\lim \nBlat/\ncomax^n\nBlat \ar[r]^{\lim d\quad\quad} &
\lim \ngcoc{\nBlat/\ncomax^n\nBlat}.
}
\end{equation*}
The bottom row of this diagram is exact since limits are left exact.
The middle and right vertical arrows are isomorphisms since $\nBlat$ is
$\nadic$-adically complete. Hence the first vertical arrow is an isomorphism.%
\end{proof}

\begin{lem}\label{descent-termwise}%
The natural morphism 
$K^s\otimes_K (\nBlat/\ncomax^n\nBlat)^{G} \xrightarrow{\isosign} \nBlat/\ncomax^n\nBlat$
is an isomorphism for all $n>0$.%
\end{lem}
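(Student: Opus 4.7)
The plan is to reduce the statement to classical Galois descent for finite-dimensional semilinear representations of $G = \Gal(K^s/K)$, sometimes called Speiser's lemma (or Hilbert 90 for $\GL_n$).

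First I would use Lemma~\ref{complete} to identify $\nBR{K^s}/\ncomax^n\nBR{K^s}$ with $K^s\otimes_{\Fq}\ncoint/\ncomax^n$. Since $\ncoint/\ncomax^n$ is a finite-dimensional $\Fq$-algebra, this ring is a finite free $K^s$-module. Consequently the finitely generated $\nBR{K^s}/\ncomax^n$-module $\nBlat/\ncomax^n\nBlat$ is a finite-dimensional $K^s$-vector space, and the natural map in question lives in the classical Galois-descent setting.

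Next I would verify the two hypotheses required by Speiser's lemma. The action of $G$ on $\nBR{K^s}$ is by functoriality in $K^s$, so it fixes the subring $\ncoint$ and is therefore $K^s$-semilinear; passing to the quotient by $\ncomax^n$ yields a $K^s$-semilinear action on $\nBlat/\ncomax^n\nBlat$. For continuity, note that by the definition of the $\ncomax$-adic topology on $\nBlat$ the quotient $\nBlat/\ncomax^n\nBlat$ carries the discrete topology. The continuity hypothesis on the $G$-action on $\nBlat$ therefore forces the induced action on the quotient to be continuous for the discrete topology, i.e.\ each vector has an open stabilizer in~$G$.

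Finally I would invoke Speiser's lemma: for every finite-dimensional $K^s$-vector space $V$ with a continuous $K^s$-semilinear action of~$G$, the natural map $K^s\otimes_K V^G \to V$ is an isomorphism. Applied to $V = \nBlat/\ncomax^n\nBlat$ this is exactly the claim. There is no real obstacle here — the content of the lemma is entirely the classical descent statement, and the only work is to confirm that the $\ncomax$-adic hypotheses reduce us to it, which is immediate because the coefficient ring $\ncoint/\ncomax^n$ is finite over $\Fq$.
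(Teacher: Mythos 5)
Your proposal is correct and is essentially the paper's own argument: the paper likewise observes that $\nBlat/\ncomax^n\nBlat$ is a finite-dimensional $K^s$-vector space with a semi-linear $G$-action that is continuous for the discrete topology, and concludes by Hilbert's Theorem~90 (i.e.\ the Galois-descent statement you call Speiser's lemma). The only difference is that you spell out, via Lemma~\ref{complete} and the finiteness of $\ncoint/\ncomax^n$ over $\Fq$, why the quotient is finite-dimensional over $K^s$, a point the paper leaves implicit.
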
%
\begin{proof}%
The quotient $\nBlat/\ncomax^n\nBlat$ is a finite-\hspace{0pt}dimensional
$K^s$-vector space equipped with a semi-\hspace{0pt}linear action of ${G}$ which is
continuous in the discrete topology. So the result follows from Hilbert's Theorem~90.%
\end{proof}

\begin{lem}\label{descent-locfree}%
The $\nBK$-module $\nBlat^{G}$ is locally free of finite type.
Furthermore the natural morphism
$\nBlat^{G}/\ncomax^n\nBlat^{G} \xrightarrow{\isosign} (\nBlat/\ncomax^n\nBlat)^{G}$
is an isomorphism for each $n > 0$.%
\end{lem}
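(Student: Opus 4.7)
The plan is to combine Lemma~\ref{descent-lim} with Lemma~\ref{bunsys} applied to the inverse system $\{(\nBlat/\ncomax^n\nBlat)^{G}\}_{n\geqslant 1}$. Lemma~\ref{descent-lim} already identifies $\nBlat^{G}$ with the limit; so if the inverse system meets the hypotheses of Lemma~\ref{bunsys} over $R = K$, both conclusions follow at once: the limit is locally free of finite type over $\nBK$, and its reductions modulo $\ncomax^n$ recover $(\nBlat/\ncomax^n\nBlat)^{G}$.

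The first thing I would establish is that each $(\nBlat/\ncomax^n\nBlat)^{G}$ is a finitely generated locally free module over $\nBR{K}/\ncomax^n\nBR{K} = K\otimes_{\Fq}\ncoint/\ncomax^n$. The ring inclusion $\nBR{K}/\ncomax^n \hookrightarrow \nBR{K^s}/\ncomax^n$ is identified with the base change along the faithfully flat extension $K \to K^s$, since $\nBR{K^s}/\ncomax^n = K^s\otimes_K \nBR{K}/\ncomax^n$. By Lemma~\ref{descent-termwise} there is a canonical isomorphism
\begin{equation*}
\nBR{K^s}/\ncomax^n \otimes_{\nBR{K}/\ncomax^n} (\nBlat/\ncomax^n\nBlat)^{G}
\;=\; K^s\otimes_K (\nBlat/\ncomax^n\nBlat)^{G}
\xrightarrow{\isosign} \nBlat/\ncomax^n\nBlat,
\end{equation*}
whose target is locally free of finite type over $\nBR{K^s}/\ncomax^n$. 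Faithfully flat descent of locally free modules of finite type then yields the claim.

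Next I would verify the Mittag-Leffler style compatibility required by Lemma~\ref{bunsys}: the transition map from $(\nBlat/\ncomax^{n+1}\nBlat)^{G}/\ncomax^n (\nBlat/\ncomax^{n+1}\nBlat)^{G}$ to $(\nBlat/\ncomax^n\nBlat)^{G}$ is an isomorphism. Since $K \to K^s$ is faithfully flat, it suffices to check this after applying $K^s\otimes_K -$. Using Lemma~\ref{descent-termwise} together with exactness of that tensor product, both sides become $\nBlat/\ncomax^n\nBlat$, and the transition map is identified with the identity. Faithfully flat descent then promotes this to an isomorphism over $\nBR{K}$.

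With these two ingredients in place, Lemma~\ref{bunsys} (or equivalently \stacks{0D4B}) shows that $\lim (\nBlat/\ncomax^n\nBlat)^{G}$ is a locally free $\nBK$-module of finite type and that the natural reduction maps to each $(\nBlat/\ncomax^n\nBlat)^{G}$ are isomorphisms. Feeding this into Lemma~\ref{descent-lim} delivers both assertions simultaneously. The main obstacle is the descent step for local freeness of the invariants at finite level; once that is secured, the Mittag-Leffler condition and the passage to the limit are routine.
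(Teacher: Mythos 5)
Your proposal is correct, and its overall skeleton coincides with the paper's: establish that each $U_n = (\nBlat/\ncomax^n\nBlat)^{G}$ is finitely generated locally free and that the transition maps satisfy $U_{n+1}/\ncomax^n U_{n+1} \xrightarrow{\isosign} U_n$, then conclude via Lemma~\ref{descent-lim} and \stacks{0D4B}. The first step (local freeness of $U_n$ by faithfully flat descent from Lemma~\ref{descent-termwise}) is identical to the paper's. Where you genuinely diverge is the second step. The paper proves the transition-map compatibility by comparing the two hypercohomology spectral sequences of the complex $\nBlat/\ncomax^{n+1}\nBlat \xrightarrow{z^n} \nBlat/\ncomax^{n+1}\nBlat$, using additive Hilbert~90 to collapse both; in effect it needs the vanishing of higher Galois cohomology of $K^s$-vector spaces. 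You instead apply $K^s\otimes_K -$ to the map $U_{n+1}/\ncomax^n U_{n+1} \to U_n$, use right-exactness of the tensor product to commute it past the quotient, identify both sides with $\nBlat/\ncomax^n\nBlat$ via Lemma~\ref{descent-termwise}, check the resulting map is the canonical one by naturality, and descend the isomorphism along the faithfully flat extension $K\to K^s$. This is valid and arguably cleaner: it uses only the degree-zero descent statement and never invokes higher cohomology, whereas the paper's argument makes explicit the cohomological vanishing that underlies the surjectivity of $U_{n+1}\to U_n$.
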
%
\begin{proof}%
Set $U_n = (\nBlat/\ncomax^n\nBlat)^{G}$
and consider the inverse system 
$\{ U_n \}_{n>0}$. We claim that for every $n > 0$%
\begin{enumerate}%
\renewcommand{\theenumi}{\roman{enumi}}%
\item%
\label{descent-locfree-termwise}%
the $K\otimes_{\Fq}\ncoint/\ncomax^n$-module $U_n$ is locally free of finite type,

\item%
\label{descent-locfree-reg}%
the transition maps induce isomorphisms
$U_{n+1}/\ncomax^n U_{n+1} \xrightarrow{\isosign} U_n$.%
\end{enumerate}%
Now $\nBlat^{G} = \lim U_n$ by Lemma~\ref{descent-lim}.
In view of \eqref{descent-locfree-termwise} and \eqref{descent-locfree-reg}
the result follows from \stacks{0D4B}.

The base change of $U_n$ to the ring $K^s\otimes_{\Fq}\ncoint/\ncomax^n$
is locally free of finite type by Lemma~\ref{descent-termwise}.
So faithfully flat descent implies \eqref{descent-locfree-termwise}.
To prove \eqref{descent-locfree-reg}
we consider a complex of ${G}$-modules
\begin{equation*}
C = \big[\:
\nBlat/\ncomax^{n+1}\nBlat \xrightarrow{\:z^n\:} \nBlat/\ncomax^{n+1}\nBlat\:\big]
\end{equation*}
placed in degrees $0$ and $1$.
Hilbert's Theorem~90 shows that the ${G}$-module
$\nBlat/\ncomax^{n+1}\nBlat$ is a direct sum of copies of $K^s$.
Additive Theorem~90 implies that the hypercohomology
spectral sequence $\uH^j({G},\,C^i) \Rightarrow \uH^{i+j}({G},\,C)$ collapses
at the first page.
The same argument shows that $\uH^0(C)$ and $\uH^1(C)$ are direct sums
of copies of $K^s$ so the hypercohomology spectral sequence
$\uH^i({G},\,\uH^j(C)) \Rightarrow \uH^{i+j}({G},\,C)$ collapses at the second page.
Comparing the pages we obtain~\eqref{descent-locfree-reg}.%
\end{proof}

\begin{proof}[Proof of Proposition~\ref{descent}]%
The $\nBK$-module $\nBlat^{G}$ is locally free of finite type by Lemma~\ref{descent-locfree}.
So it is enough to show that the natural map
$\nBR{K^s}\otimes_{\nBK}\nBlat^{G} \to \nBlat$
is an isomorphism modulo $\ncomax$.
The natural morphism
$\nBlat^{G}/\ncomax\nBlat^{G} \to (\nBlat/\ncomax\nBlat)^{G}$
is an isomorphism by Lemma~\ref{descent-locfree}.
The claim then follows from Lemma~\ref{descent-termwise}.%
\end{proof}

\subsection{\texorpdfstring{$\nBKz$}{B\_K}-modules}

\begin{dfn}%
Let $M$ be a locally free $\nBKz$-module of finite type.
Pick a lattice $\nBlat \subset M$.
The \emph{$\nadic$-adic topology} on $M$ is
given by the fundamental system of open submodules
$\ncomax^n\nBlat$, $n\geqslant 0$.%
\end{dfn}

\begin{lem}%
The $\nadic$-adic topology is well-defined
and does not depend on the choice of a lattice.
Every $\nBKz$-linear morphism of locally free $\nBKz$-modules
of finite type is continuous in the $\nadic$-adic topology.%
\end{lem}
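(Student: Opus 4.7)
The plan is to reduce both statements to a single commensurability fact: any two lattices $\nBlat, \nBlat'$ in a locally free $\nBKz$-module $M$ of finite type satisfy $z^N \nBlat \subset \nBlat'$ and $z^N \nBlat' \subset \nBlat$ for some integer $N \geqslant 0$.

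To establish commensurability I would invoke the structural observation, already used in the proof of Proposition~\ref{fdmpure0}, that $\nBK$ is a finite product of discrete valuation rings and $\nBKz$ is the product of the corresponding fraction fields. In particular $M$ is free over $\nBKz$, so any $\nBKz$-basis generates a lattice, ensuring existence. Given two lattices $\nBlat$ and $\nBlat'$, each is finitely generated over $\nBK$ and spans $M$ after inverting $z$; hence each generator of $\nBlat'$ lies in $z^{-k}\nBlat$ for some $k$, and taking $N$ to be the maximum of the finitely many such $k$ gives $z^N \nBlat' \subset \nBlat$. The symmetric inclusion is obtained in the same way after enlarging $N$.

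From commensurability the two conclusions follow readily. Since $z$ is a uniformizer of each factor of $\nBK$, the ideal $\ncomax\nBK$ is generated by $z$, so $\ncomax^n \nBlat = z^n \nBlat$ for every $n \geqslant 0$ and every lattice $\nBlat$. The inclusion $z^N \nBlat' \subset \nBlat$ then yields $\ncomax^{n+N}\nBlat' \subset \ncomax^n \nBlat$, and the symmetric inclusion gives the converse; hence the fundamental systems of open submodules coming from $\nBlat$ and $\nBlat'$ are cofinal and define the same topology. For continuity of a $\nBKz$-linear morphism $f \colon M \to M'$, I would choose lattices $\nBlat \subset M$ and $\nBlat' \subset M'$, observe that $f(\nBlat)$ is a finitely generated $\nBK$-submodule of $M'$, and apply the same argument to produce $N \geqslant 0$ with $z^N f(\nBlat) \subset \nBlat'$; this gives $f(\ncomax^{n+N}\nBlat) \subset \ncomax^n \nBlat'$ for all $n$, which is continuity.

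There is no real obstacle: once one has commensurability of lattices, both claims are formal. The only point to check carefully is the identification $\ncomax^n \nBlat = z^n \nBlat$, which relies precisely on the product-of-DVRs description of $\nBK$ and on $z$ being a uniformizer in each factor.
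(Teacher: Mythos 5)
Your argument is correct and is exactly the expansion of the paper's one-line proof, which likewise reduces everything to the fact that $\nBK$ is a finite product of discrete valuation rings and $\nBKz$ is the product of their fraction fields. The commensurability of lattices and the identification $\ncomax^n\nBlat = z^n\nBlat$ are precisely the details the paper leaves implicit.
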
%
\begin{proof}%
Follows since
the ring $\nBK$ is a finite product of discrete valuation rings
and $\nBKz = \nBK[z^{-1}]$ is the product of the corresponding fields of fractions.%
\end{proof}


The notion of a semi-linear action of ${G}$
on a locally free $\nBRz{K^s}$-module of finite type
is defined in the same way as for $\nBR{K^s}$-modules.
%
The following proposition will be useful for averaging arguments.

\begin{prp}\label{actavg}%
Let $M$ be a locally free $\nBRz{K^s}$-module of finite type
equipped with a semi-\hspace{0pt}linear action of ${G}$
which is continuous in the $\nadic$-adic topology.
Then the stabilizer of every lattice $\nBlat \subset M$ is open.%
\end{prp}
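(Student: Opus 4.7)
The plan is to exhibit an open subgroup $H \subset G$ that preserves $\nBlat$. Since $\nBlat$ is a locally free $\nBR{K^s}$-module of finite type it is finitely generated; I would fix a set of generators $m_1,\dots,m_k$ of $\nBlat$ over $\nBR{K^s}$ and observe that $\nBlat$ is itself an open submodule of $M$ by construction of the $\nadic$-adic topology.

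Continuity of the action, applied to each orbit map $g\mapsto g\cdot m_i$, produces an open neighborhood $V_i$ of the identity in $G$ with $V_i\cdot m_i \subset \nBlat$. The Galois group $G = \Gal(K^s/K)$ is profinite, so each $V_i$ contains an open subgroup $H_i$; taking $H = \bigcap_{i=1}^k H_i$ yields an open subgroup of $G$ with $H\cdot m_i \subset \nBlat$ for every $i$. Semi-\hspace{0pt}linearity then promotes this pointwise condition to a global one: for $g\in H$ and $x\in \nBR{K^s}$ one has
\[
g(x\,m_i) \;=\; g(x)\cdot g(m_i)\;\in\;\nBR{K^s}\cdot \nBlat \;=\; \nBlat,
\]
so $g\cdot \nBlat \subset \nBlat$. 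Applying the same inclusion to $g^{-1}\in H$ forces the equality $g\cdot \nBlat = \nBlat$, so $H$ sits inside the stabilizer of $\nBlat$, which is therefore open.

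I expect no serious obstacle here: the argument is essentially mechanical, combining three standard ingredients — (i) finite generation reduces the condition to finitely many continuity inputs at the $m_i$, (ii) the profinite topology on $G$ converts open neighborhoods of $e$ into open subgroups, and (iii) semi-\hspace{0pt}linearity propagates invariance of the generators to invariance of their $\nBR{K^s}$-span. The one small point to be careful about is the use of $g^{-1}\in H$ to upgrade inclusion to equality, which requires $H$ to be a genuine subgroup rather than a mere neighborhood; this is precisely what motivates the passage to open subgroups in step~(ii).
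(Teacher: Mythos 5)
Your argument is correct and is essentially the paper's proof: fix generators of $\nBlat$, use continuity of the orbit maps together with openness of $\nBlat$ to get an open set of elements sending each generator into $\nBlat$, and let semi-linearity propagate this to all of $\nBlat$. The only cosmetic difference is that you shrink the neighborhoods to open subgroups before intersecting, whereas the paper simply notes that the stabilizer, being a subgroup containing an open neighborhood of the identity, is automatically open; both routes handle the inclusion-versus-equality point correctly.
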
%
\begin{proof}%
Let $x_1, \dotsc, x_n$ be the generators of $\nBlat$.
For every $i$ the map ${G} \to M$, $\gamma \mapsto \gamma x_i$ is continuous
and so the subset $U_i = \{ \gamma \mid \gamma x_i \in \nBlat \}$
is open.
The open subset $\bigcap_{i=1}^n U_i$ contains the identity element and
stabilizes $\nBlat$. Whence the result.%
\end{proof}
%

%

\section{Tate modules for pure isocrystals}\label{sec:tate}

Let $K$ be a field over~$\Fq$.
We shall define Tate modules for pure $\nBKz$-isocrystals.
Under a suitable condition on the field $K$
we shall prove that
the Tate module functor is fully faithful
and describe its essential image. 

\subsection{Tate modules}\label{ssec:tate}
Let $\nslope$ be a rational number.
We need to fix two objects:
\begin{enumerate}%
\renewcommand{\theenumi}{\roman{enumi}}%
\item\label{chooseksep}%
A separable closure $K^s$ of $K$.
Let $\Fqbar$ be the algebraic closure of~$\Fq$ in~$K^s$.

\item\label{choosechar}%
A pure and simple isocrystal $N$ of slope $\nslope$ over $\nBRz{\Fqbar}$.%
\end{enumerate}%
Such an isocrystal exists by Proposition~\ref{simple}.
In analogy with $K^s$ the isocrystal $N$ is unique up to a non-unique isomorphism.

\begin{dfn}\label{dfntate}%
For each pure isocrystal $M$ of slope $\nslope$ we set
\begin{equation*}
\nTate{M} = \Hom_{\nBRz{\Fqbar}\{\tau\}}(N, \:\nBRz{K^s} \otimes_{\nBKz} M).%
\end{equation*}%
We call this the \emph{Tate module} of $M$.%
\end{dfn}%

Equivalently one can define the Tate module by the formula
\begin{equation*}
\nTate{M} = \Hom(\nBRz{K^s}\otimes_{\nBRz{\Fqbar}} N, \:\nBRz{K^s} \otimes_{\nBKz} M)
\end{equation*}
where the $\Hom$~set is computed in the category of $\nBRz{K^s}$-isocrystals.
For our purposes it will be more convenient to work with the formula of Definition~\ref{dfntate}.

By Proposition~\ref{simple} the ring $\nDN = \End N$ is a central division algebra
over $\ncoef$ and its Hasse invariant is the residue class
of $-\nslope$ in $\bQ/\bZ$.
The Tate module $\nTate{M}$
carries a right action of $D$ by composition.

Our next task is to equip $\nTate{M}$ with a Galois action.
We identify $\Gal(\Fqbar/\Fq)$ with $\Zhat$ using the \emph{geometric} Frobenius as a generator.
Let $G_K = \Gal(K^s/K)$.
Restriction to $\Fqbar$ defines a homomorphism $\ord\colon G_K \to \Zhat$.

\begin{dfn}\label{dfnweil}%
The \emph{Weil group} $W_K$ is $\ord^{-1} \bZ$
equipped with the topology in which 
$\ord^{-1}\{0\}$ is an open subgroup
with its natural profinite topology.%
\end{dfn}

The $\nBKz$-algebra $\nBRz{K^s}$ carries an action of $G_K$ by functoriality.
Extending it by the identity on $M$ we obtain an action of $G_K$ on $\nBRz{K^s} \otimes_{\nBKz} M$.

\begin{dfn}\label{dfnweilact}%
Let $\gamma$ be an element of $W_K$.
For every $f \in \nTate{M}$
we define a map $\gamma(f)\colon N \to \nBRz{K^s}\otimes_{\nBKz} M$ by the formula
$x \mapsto \gamma\big(f(\tau^{\ord\gamma} x)\big).$%
\end{dfn}%

\begin{lem}\label{actwd}%
The map $\gamma(f)$ is a well-defined morphism
of left $\nBRz{\Fqbar}\{\tau\}$-modules.\end{lem}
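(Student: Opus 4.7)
The plan is to verify the three properties required of a morphism of left $\nBRz{\Fqbar}\{\tau\}$-modules: additivity, $\nBRz{\Fqbar}$-linearity, and commutation with the action of $\tau$. Additivity will be immediate from the additivity of each of the three ingredients $\tau^{\ord\gamma}$, $f$, and the $\gamma$-action.

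For $\nBRz{\Fqbar}$-linearity, the key preliminary step is to pin down how $\gamma$ acts on the subring $\nBRz{\Fqbar}\subset\nBRz{K^s}$. By functoriality this action comes from the image of $\gamma$ in $\Gal(\Fqbar/\Fq)$, and under the identification $\Gal(\Fqbar/\Fq)\cong\Zhat$ by the \emph{geometric} Frobenius (Definition~\ref{dfnweil}), this image is precisely the geometric Frobenius to the power $\ord\gamma=n$. Equivalently, $\gamma|_{\nBRz{\Fqbar}}=\nsigma^{-n}$. Combining this with the semi-linearity identity $\tau^n(ax)=\nsigma^n(a)\tau^n(x)$ on $N$, with the $\nBRz{\Fqbar}$-linearity of $f$, and with the fact that the $\gamma$-action on $\nBRz{K^s}\otimes_{\nBKz}M$ is $\nBRz{K^s}$-semi-linear, the power $\nsigma^n$ introduced by $\tau^n$ will be exactly cancelled by the power $\nsigma^{-n}$ produced by $\gamma$, leaving the desired identity $\gamma(f)(ax)=a\cdot\gamma(f)(x)$.

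For commutation with $\tau$, I would exploit the fact that the $G_K$-action on $\nBRz{K^s}\otimes_{\nBKz}M$ commutes with $\tau$: on this module $\tau$ acts as $\nsigma\otimes\tau_M$ and $G_K$ acts by functoriality on the first factor, and every ring endomorphism of $\nBRz{K^s}$ coming from $G_K$ commutes with the $q$-Frobenius $\nsigma$. Using in addition the $\tau$-equivariance of $f$, a short chain of equalities
\begin{equation*}
\gamma(f)(\tau x) \;=\; \gamma(f(\tau^{n+1}x)) \;=\; \gamma(\tau(f(\tau^n x))) \;=\; \tau(\gamma(f(\tau^n x))) \;=\; \tau(\gamma(f)(x))
\end{equation*}
will yield the required compatibility.

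The only real subtlety, and in fact the \emph{raison d'\^etre} of the exponent $\ord\gamma$ appearing in Definition~\ref{dfnweilact}, is the sign $\gamma|_{\nBRz{\Fqbar}}=\nsigma^{-n}$ coming from the choice of the geometric Frobenius as generator of $\Zhat$. Without this convention, $\gamma(f)$ would fail to be $\nBRz{\Fqbar}$-linear; with it, the two Frobenius powers cancel precisely, and both remaining verifications reduce to routine bookkeeping.
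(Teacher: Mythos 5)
Your proposal is correct and follows essentially the same route as the paper: the paper's proof likewise observes that $\gamma$ commutes with $\nsigma$ on $\nBRz{K^s}$ to get $\tau$-equivariance, and obtains $\nBRz{\Fqbar}$-linearity from the computation $\gamma(f)(ax)=\gamma(\nsigma^{\ord\gamma}(a))\cdot\gamma(f)(x)$ together with $\gamma(\alpha)=\alpha^{q^{-\ord\gamma}}$ on $\Fqbar$, i.e.\ exactly your cancellation of the two Frobenius powers. The only detail the paper adds that you omit is the remark that $\tau\colon N\to N$ is bijective (since $\Fqbar$ is perfect), which is needed to make sense of $\tau^{\ord\gamma}$ when $\ord\gamma<0$.
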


We thus get a left action of $W_K$ on $\nTate{M}$.
It commutes with the right action of $D$ by construction.

\begin{proof}[Proof of Lemma~\ref{actwd}]%
To make sense of the negative powers of $\tau$ note that the map
$\tau\colon N \to N$ is bijective since $\Fqbar$ is perfect.
%
The endomorphism $\sigma$ of $\nBRz{K^s}$ commutes with $\gamma$
by functoriality of $\nBRz{K^s}$. 
As a consequence $\gamma(f)$ commutes with the multiplication by $\tau$.

The key point of Definition~\ref{dfnweilact} is that
it produces a $\nBRz{\Fqbar}$-linear map. Let us explain this.
Let $a \in \nBRz{\Fqbar}$ and $x \in N$. Applying $\gamma(f)$ to $ax$
we get
\begin{equation*}
\gamma(f)(ax) = \gamma(f(\tau^{\ord\gamma} \cdot ax)) =
\gamma(f(\sigma^{\ord\gamma}(a) \cdot \tau^{\ord\gamma} x)) =
\gamma(\sigma^{\ord\gamma}(a)) \cdot \gamma(f)(x).
\end{equation*}
Consulting the definition of the homomorphism $\ord\colon W_K \to \bZ$
we see that $\gamma(\alpha) = \alpha^{q^{-\ord\gamma}}$
for all $\alpha\in\Fqbar$.
Hence
$\gamma(\sigma^{\ord\gamma}(a)) = a$
and the claim follows.%
\end{proof}




\begin{prp}\label{taterank}%
Let $r > 0$ be the denominator of $\nslope$ written in lowest terms.
Then for every pure isocrystal $M$ of slope $\nslope$ we have
\begin{equation*}
\rank_{\nDN} \nTate{M} = \tfrac{1}{r}\rank M.%
\end{equation*}%
\end{prp}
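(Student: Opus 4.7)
The plan is to pass to the separable closure and invoke Laumon's classification of pure isocrystals (Proposition~\ref{puredecomp}). First, I would observe that the base change $N_{K^s} := \nBRz{K^s}\otimes_{\nBRz{\Fqbar}} N$ is a pure isocrystal of slope $\nslope$ and rank $r$ over $\nBRz{K^s}$: purity is preserved because any lattice $T\subset N$ with $\tau^{r\nresdeg}T = z^s T$ produces such a lattice after base change, and rank is preserved because the ring map $\nBRz{\Fqbar}\to \nBRz{K^s}$ is flat. By Proposition~\ref{simpleiso} the isocrystal $N_{K^s}$ is then simple.

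Next, I would rewrite the Tate module using the adjunction between extension and restriction of scalars for the map of difference rings $\nBRz{\Fqbar}\to\nBRz{K^s}$:
\begin{equation*}
\nTate{M} = \Hom_{\nBRz{\Fqbar}\{\tau\}}(N,\,\nBRz{K^s}\otimes_{\nBKz} M) = \Hom_{\nBRz{K^s}\{\tau\}}(N_{K^s},\,\nBRz{K^s}\otimes_{\nBKz} M).
\end{equation*}
Set $n = \rank M$ and $M_{K^s} = \nBRz{K^s}\otimes_{\nBKz} M$. By Proposition~\ref{puredecomp} applied over the separably closed field $K^s$, the pure isocrystal $M_{K^s}$ decomposes into a direct sum of simple pure isocrystals of slope $\nslope$. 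Since $N_{K^s}$ is such an object of rank $r$, the unicity of Proposition~\ref{simple}(\ref{simple-dfn}) gives $M_{K^s}\cong N_{K^s}^{\,n/r}$ as $\nBRz{K^s}$-isocrystals. Hence
\begin{equation*}
\nTate{M} \cong \End_{\nBRz{K^s}\{\tau\}}(N_{K^s})^{n/r}
\end{equation*}
as right modules over $\End_{\nBRz{K^s}\{\tau\}}(N_{K^s})$, with the right $\nDN$-action corresponding to precomposition via the natural map $\nDN\to\End_{\nBRz{K^s}\{\tau\}}(N_{K^s})$.

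To conclude, I would verify that the natural map $\nDN \to \End_{\nBRz{K^s}\{\tau\}}(N_{K^s})$ is an isomorphism. It is injective because $N\hookrightarrow N_{K^s}$ is. On the target side, Proposition~\ref{purehom} yields $\dim_{\ncoef}\End_{\nBRz{K^s}\{\tau\}}(N_{K^s}) = r^2$, while Proposition~\ref{simple}(\ref{simple-end}) gives $\dim_{\ncoef}\nDN = r^2$ (as $\nDN$ is a central division algebra over $\ncoef$ of Hasse invariant $-\nslope$, whose denominator in lowest terms is $r$). Comparing dimensions the map is an isomorphism, so $\nTate{M}\cong \nDN^{n/r}$ as right $\nDN$-modules and $\rank_{\nDN}\nTate{M} = \tfrac{n}{r} = \tfrac{1}{r}\rank M$.

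The main technical point is the identification of the endomorphism rings before and after base change to~$K^s$; this is the only step that does not follow directly from the adjunction and the decomposition theorem, and it rests on the fine structure of the division algebras provided by Proposition~\ref{simple}(\ref{simple-end}) together with the dimension count of Proposition~\ref{purehom}.
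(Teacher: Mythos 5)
Your proof is correct and rests on the same essential inputs as the paper's: base change to $K^s$ and the dimension formula of Proposition~\ref{purehom}. The paper's own argument is more economical --- it computes $\dim_\ncoef \nDN = r^2$ and $\dim_\ncoef \nTate{M} = r\cdot\rank M$ directly from Proposition~\ref{purehom} and divides, using that a right module over the division algebra $\nDN$ is free --- whereas your explicit decomposition $M_{K^s}\cong N_{K^s}^{\oplus\, n/r}$ via Proposition~\ref{puredecomp} and the identification $\nDN\xrightarrow{\isosign}\End(N_{K^s})$ reach the same conclusion by a slightly longer route.
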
%
\begin{proof}%
The isocrystal $N$ has rank $r$ by Proposition~\ref{simple}
so Proposition~\ref{purehom} shows that $\dim_\ncoef \nDN = r^2$.
Now $\dim_\ncoef \nTate{M} = r \cdot \rank M$ by the same Proposition~\ref{purehom}
and the claim follows.%
\end{proof}

Next we shall 
describe the target category of the functor $M \mapsto \nTate{M}$.

\begin{dfn}%
A \emph{$(W_K,\nDN)$-representation} is a right $\nDN$-module $V$ of finite
rank equipped with a left action of $W_K$ which is 
continuous in the 
$\ncoef$-vector space topology.
A \emph{morphism} of $(W_K,\nDN)$-representations is a morphism of underlying $\nDN$-modules
which commutes with the action of $W_K$.%
\end{dfn}
 

\begin{dfn}%
A \emph{lattice} in a finite-dimensional $\ncoef$-vector space $V$
is a finitely generated $\ncoint$-submodule which spans $V$ over $\ncoef$.%
\end{dfn}


\begin{dfn}\label{dfnwdslope}%
A $(W_K,\nDN)$-representation $V$ is called \emph{$\nslope$-admissible}
if there are integers $s$ and $r$, $r > 0$, and a lattice $\Lambda\subset V$
such that $\nslope = \frac{s}{r}$ and
$\gamma^{r\nresdeg} \Lambda = \Lambda z^{s\ord\gamma}$ for all $\gamma\in W_K$.%
\end{dfn}

In contrast to pure isocrystals
the rational number $\frac{s}{r}$
may depend on the choices of $s$ and $r$.
The reason is that $\ord(W_K)$ may be the zero subgroup
in which case every $s$ works.


\begin{thm}\label{tate}%
The functor $M \mapsto \nTate{M}$ transforms pure isocrystals of slope $\nslope$
to $\nslope$-admissible $(W_K,\nDN)$-representations.
If $W_K$ is dense in $G_K$ then this functor
is an equivalence of categories.%
\end{thm}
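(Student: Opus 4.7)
The first task is to confirm that Tate modules land in $\nslope$-admissible $(W_K,\nDN)$-representations. I would pick lattices $\Lambda_N \subset N$ and $\nBlat \subset M$ realising purity (with $\nslope = s/r$, so $\tau^{r\nresdeg}\Lambda_N \subset z^s\Lambda_N$ and likewise for $\nBlat$) and define
\[
\Lambda = \{\,f \in \nTate{M} : f(\Lambda_N) \subset \nBR{K^s}\otimes_{\nBK} \nBlat\,\}.
\]
Finite generation of $\Lambda$ as an $\ncoint$-module and the spanning property over $\ncoef$ should follow from a rank count via Proposition~\ref{purehom} after base change to $K^s$. The equality $\gamma^{r\nresdeg}\Lambda = \Lambda z^{s\ord\gamma}$ is then a direct computation: apply an element of $\gamma^{r\nresdeg}\Lambda$ to $x \in \Lambda_N$, use $\tau$-equivariance and $\tau^{r\nresdeg\ord\gamma}\Lambda_N \subset z^{s\ord\gamma}\Lambda_N$, then observe that $\gamma$ fixes $z$ and stabilises $\nBR{K^s}\otimes\nBlat$. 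Continuity of the $W_K$-action is only an issue on the open subgroup $\ord^{-1}\{0\}$, where it simplifies to $\gamma(f) = \gamma \circ f$ and follows from Proposition~\ref{actavg}.

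For the equivalence of categories I would proceed by Morita reduction to slope zero followed by Galois descent. Let $\tilde N = \nBR{K^s}\otimes_{\nBR{\Fqbar}} N$: by Propositions~\ref{simple} and~\ref{simpleiso}, $\tilde N$ is the canonical simple pure isocrystal of slope $\nslope$ over $\nBR{K^s}$, so Proposition~\ref{puredecomp} identifies $M_s = \nBR{K^s}\otimes_{\nBKz} M$ with a direct sum of $a = \rank M / r$ copies of $\tilde N$. Via the Morita equivalence of Proposition~\ref{generalmorita} with twisting object $\tilde N$, this pure slope-$\nslope$ isocrystal over $\nBR{K^s}$ becomes a pure slope-$0$ isocrystal of rank $a$, which by Proposition~\ref{fdmpure0}, Theorem~\ref{wellknown}, and triviality of the \'etale site of $K^s$, is simply a free $\ncoint$-module of rank $a$. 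This module encodes $\nTate{M}$ as a right $\nDN$-module, and the descent datum from $M_s$ to $M$ translates into a continuous semilinear $G_K$-action on the $\nBR{K^s}$-isocrystal corresponding to $\nTate{M}$.

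For full faithfulness, a direct computation using $\gamma|_{\Fqbar} = \sigma^{-\ord\gamma}$ and the commutativity of $\tau$ with the functorial $\gamma$-action on $M_s$ shows that the $G_K$-action on $\nTate{M}$ coming from Galois descent restricts on $W_K \subset G_K$ to the Weil action of Definition~\ref{dfnweilact}; density of $W_K$ in $G_K$ together with continuity forces the full $G_K$-action to be determined by its restriction to $W_K$, and Proposition~\ref{descent} completes the argument. Essential surjectivity is the main obstacle: given a $\nslope$-admissible $(W_K,\nDN)$-representation $V$, the associated semilinear $W_K$-action on $\tilde N \otimes_\nDN V$ must extend uniquely by continuity and density to a continuous semilinear $G_K$-action, and the admissibility hypothesis should provide exactly the $\nadic$-adic boundedness needed for this extension. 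Granting this, Proposition~\ref{descent} produces a $\nBK$-lattice whose $\ncoef$-linearization is a pure $\nBKz$-isocrystal $M$ of slope $\nslope$ with $\nTate{M} \cong V$, and functoriality is a direct check.
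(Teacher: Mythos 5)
Your route is essentially the paper's: the same lattice $\Lambda = \{f \in \nTate{M} : f(\nlatN)\subset \nBR{K^s}\otimes_{\nBK}\nBlat\}$ for admissibility, and the same inverse functor --- twist $V$ by $N$ over $\nDN$ with the action $\gamma(v\otimes x)=\gamma(v)\otimes\tau^{-\ord\gamma}x$, extend the Weil action to the full Galois group, and descend via Proposition~\ref{descent}. One minor repair: finite generation of $\Lambda$ over $\ncoint$ is not a rank count, since an $\ncoint$-submodule of a finite-dimensional $\ncoef$-vector space need not be finitely generated (the whole space is not). The paper instead shows $\bigcap_{n}\Lambda\ncomax^{n}=\{f : f(\nlatN)=0\}=0$, i.e.\ $\Lambda$ is $\nadic$-adically separated, and then invokes Lemma~\ref{fincrit}.

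The genuine gap is in the step you yourself call the main obstacle. You assert that the semilinear $W_K$-action on $\nBRz{K^s}\otimes_{\nBRz{\Fqbar}} (V\otimes_{\nDN}N)$ ``must extend uniquely by continuity and density'' to $G_K$. Density does not suffice: the topology of Definition~\ref{dfnweil} on $W_K$ is strictly finer than the subspace topology induced from $G_K$ (the inclusion $W_K\hookrightarrow\nWCl$ is not a topological embedding when $\ord(W_K)\neq 0$), so a map continuous on $W_K$ has no a priori continuous extension to its closure. What actually makes the extension work is: (i) admissibility produces a $W_K$-stable lattice in $N(V)$, the identity $\gamma^{r\nresdeg}\Lambda=\Lambda z^{s\ord\gamma}$ cancelling against $\tau^{r\nresdeg}\nlatN=z^{s}\nlatN$ (Lemma~\ref{tate-weillat}); (ii) reducing modulo $\ncomax^{n}$ encodes the action by a continuous cocycle valued in $\GL_m$ of the finite-residue ring $\nBR{\Fqbar}/\ncomax^{n}$; (iii) that group is a filtered colimit of finite $\nWCl$-stable subgroups, so the restriction of the cocycle to a Frobenius lift $\gamma_1^{\bZ}$ factors through a finite quotient and therefore extends to the procyclic closure (Lemma~\ref{cocycle}). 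It is this finiteness, not continuity plus density, that closes the argument; without it your essential-surjectivity step does not go through. Once the extension is in place, your appeal to Proposition~\ref{descent} and the purity and rank bookkeeping proceed as in the paper.
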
%

\begin{rmk}%
The Weil group $W_K$ is dense in $G_K$ if and only if $K \cap \Fqbar$ is either finite or coincides with $\Fqbar$.
This condition holds for every field which is finitely generated over $\Fq$,
for every local field, and is preserved under perfect closure.%
\end{rmk}%

\begin{exa}%
In the case $\nslope = 0$ the algebra $\nDN$ coincides with $\ncoef$.
A~$(W_K,\ncoef)$-representation is $0$-admissible
if and only if it contains a $W_K$-stable lattice.
This happens if and only if the action of the subgroup $W_K \subset G_K$
extends to a continuous action of its closure.
So Theorem~\ref{tate} reproduces the classical equivalence of categories of pure $\nBKz$-isocrystals of slope~$0$ and
continuous $G_K$-representations in finite-dimensional $\ncoef$-vector spaces
for every field $K$ such that $W_K$ is dense in $G_K$.%
\end{exa}

%

\begin{exa}
Suppose that $\ncoef = \rF{\Fq}$ and set $K = \Fq(\!(\zeta)\!)$.
Upon the identification of the uniformizers $z$ and $\zeta$
local class field theory produces an isomorphism of topological groups
$W_K^{\textup{ab}} \xrightarrow{\isosign} \ncoef^\times$.
Composing it with the quotient morphism $W_K \twoheadrightarrow W_K^{\textup{ab}}$
we get a character $\rho_{\nArt}\colon W_K \twoheadrightarrow \ncoef^\times$.

This character defines a $(W_K,\ncoef)$-representation $V_{\nArt}$ of rank $1$.
Every arithmetic Frobenius element $\gamma \in W_K$
is mapped by $\rho_{\nArt}$ to a uniformizer of $F$,
and so $V_{\nArt}$ is $(-1)$-admissible.
By Theorem~\ref{tate} the representation $V_{\nArt}$
corresponds to a pure isocrystal $M_{\nArt}$ of slope~$-1$ and rank~$1$.
One can prove that
the isocrystal $M_{\nArt}$ is generated over $\nBKz$ by an element $e$
such that $\tau(e) = (1-\frac{\zeta}{z}) e$.
This gives a neat description of $\rho_{\nArt}$ as the
homomorphism associated to the Tate module $T(\nBKz e)$
with $\tau(e) = (1-\frac{\zeta}{z}) e$.
\end{exa}

\begin{rmk}\label{tatefisoc}%
Set $p = \textrm{char}\,\Fq$. Let us describe a $p$-adic analog
of Tate modules for isocrystals.
We need to assume that the field $K$ is perfect.
For each $\Fq$-algebra $R$ we denote by $W(R)$ the ring of $p$-typical
Witt vectors. 
Consider the rings
\begin{equation*}
\nQq = W(\Fq)[\tfrac{1}{p}], \quad
\nqBK = W(K), \quad
\nqBKz = W(K)[\tfrac{1}{p}].
\end{equation*}
The $q$-Frobenius of $K$ induces an automorphism $\nsigma\colon \nqBKz \to \nqBKz$. 

An \emph{$F$-isocrystal over $K$} is a $\nsigma$-module over $\nqBKz$,
compare \cite[Remark~2.10]{kedlaya-notes}.
The coefficient field of $F$-isocrystals is~$\nQq$
and the parameter $\nresdeg$ is equal to~$1$.
The notions of purity and slope 
are defined in the same way as for $\nFBKz{\ncoef}$-isocrystals. 
The classification theorem of Dieudonn\'e-Manin \cite[Theorem~3.2]{kedlaya-notes}%
\footnote{NB: Kedlaya's convention for the sign of Hasse invariant is opposite to the one of this paper.}
subsumes our Propositions~\ref{simple} and \ref{puredecomp}.
For each pure $F$-isocrystal $M$ of slope $\nslope$ we have
the Tate module
\begin{equation*}
\nTate{M} = \Hom_{\nqBRz{\Fqbar}\!\{\tau\}}(N,\:\nFBRz{\,\nQq}{K^s\!}\otimes_{\nqBKz} M).
\end{equation*}
Definition~\ref{dfnweilact} provides this with an action of $W_K$.
Theorem~\ref{tate} remains true. 
Its proof translates to the $p$-adic setting in a formal way:
It is enough to replace $\nBK$ with $\nqBK$, 
the uniformizer $z$ with $p$ and Theorem~\ref{wellknown}
with the parallel result for lisse $p$-adic sheaves on $\etsite{K}$.%
\end{rmk}

\subsection{Proof of Theorem~\ref{tate}}
\begin{lem}\label{tate-wdrep}%
For every pure isocrystal $M$ of slope $\nslope$
the Tate module $\nTate{M}$ is a $(W_K,\nDN)$-representation.%
\end{lem}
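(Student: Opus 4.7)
The plan is to verify three things: that $\nTate{M}$ is a right $\nDN$-module of finite rank, that the Weil group action is well-defined and $\nDN$-linear, and that this action is continuous in the $\ncoef$-vector space topology. The first assertion is Proposition~\ref{taterank}; the precomposition action of $\nDN = \End N$ on $\nTate{M}$ is well-defined since endomorphisms of $N$ are $\tau$-equivariant; and the Weil action is supplied by Definition~\ref{dfnweilact} together with Lemma~\ref{actwd}. Compatibility of the two actions is a direct calculation: for $\phi \in \nDN$ and $\gamma \in W_K$, both $\gamma(f \circ \phi)$ and $\gamma(f) \circ \phi$ send $x$ to $\gamma(f(\tau^{\ord\gamma}\phi(x))) = \gamma(f(\phi(\tau^{\ord\gamma}x)))$, using that $\phi$ commutes with $\tau$ on $N$.

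The substance of the lemma is continuity. Since $G_L := \ord^{-1}\{0\}$ is open in $W_K$ with its profinite topology, $W_K$ decomposes as a disjoint union of cosets of $G_L$, and each individual element of $W_K$ acts $\ncoef$-linearly, hence continuously, on the finite-dimensional $\ncoef$-vector space $\nTate{M}$. Hence it suffices to exhibit a $G_L$-stable $\ncoint$-lattice $\Lambda \subset \nTate{M}$ on which $G_L$ acts continuously in the $\nadic$-adic topology. For $\gamma \in G_L$ the formula of Definition~\ref{dfnweilact} simplifies to $\gamma(f) = \gamma \circ f$, so this is the natural Galois action on $\Hom$-objects.

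To construct $\Lambda$, I will reduce to slope zero via a Morita-style device. Set $M_s := \nBRz{K^s}\otimes_{\nBKz} M$ and $N_s := \nBRz{K^s}\otimes_{\nBRz{\Fqbar}} N$; both are pure of slope $\nslope$ over $\nBRz{K^s}$, and $H := \iHom(N_s, M_s)$ is pure of slope $0$ by Proposition~\ref{fdmhom}. By adjunction together with Lemma~\ref{dmhominv}, $\nTate{M} = H^\tau$, and the natural $G_L$-action on $H^\tau$ obtained from functoriality of $\nBRz{K^s}$ matches the action of Definition~\ref{dfnweilact}. Proposition~\ref{fdmpure0} yields a lattice $\nBlat \subset H$ with $\nBlat = \nlati{r\nresdeg}{\nBlat}$ for some $r > 0$. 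Since the $G_L$-action on $H$ is continuous in the $\nadic$-adic topology of any lattice, Proposition~\ref{actavg} shows that its stabilizer is open and therefore its orbit is finite; replacing $\nBlat$ by the sum of its $G_L$-orbit makes it $G_L$-stable. The identity $\nBlat = \nlati{r\nresdeg}{\nBlat}$ persists under this averaging because $\tau$ and $G_L$ commute on $\nBR{K^s}$ (the Frobenius $\sigma$ commutes with every element of $\Gal(K^s/K)$). Then $\bar H := \ngen{\bigcup_{i=0}^{r\nresdeg - 1} \tau^i \nBlat}$ is a $G_L$-stable $\sigma$-module over $\nBR{K^s}$ with $\bar H[z^{-1}] = H$, and Theorem~\ref{wellknown} applied to $R = K^s$ identifies $\Lambda := \bar H^\tau$ with the stalk of a lisse $\nadic$-adic sheaf, hence a finitely generated free $\ncoint$-module, with $\Lambda \otimes_{\ncoint} \ncoef = H^\tau = \nTate{M}$.

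It remains to verify that $G_L$ acts continuously on $\Lambda$ in the $\nadic$-adic topology. Since $\Lambda$ is finitely generated over $\ncoint$, each quotient $\Lambda/\ncomax^n \Lambda$ is finite, and the continuity of the $G_L$-action on $\bar H$ (inherited from the continuity of the Galois action on $\nBR{K^s}$) forces each such quotient to have open stabilizers, so the action factors through a finite quotient. The main obstacle I anticipate is the Galois averaging of the lattice $\nBlat$: one must verify that the purity identity $\nBlat = \nlati{r\nresdeg}{\nBlat}$ survives summation over the orbit, and this hinges precisely on $\tau$ commuting with the Galois action on $\nBR{K^s}$.
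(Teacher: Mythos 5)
Your argument is correct, and it reaches the conclusion by a route that differs from the paper's in its mechanics. Both proofs reduce to showing that $\nGalZero=\ord^{-1}\{0\}$ acts continuously, and both twist to slope~$0$ by forming an internal Hom against $N$ and then invoke Proposition~\ref{fdmpure0} and Theorem~\ref{wellknown}. The divergence is in how the integral structure is produced: the paper descends the Hom-isocrystal to the fixed field $L=(K^s)^{\nGalZero}$ (which contains $\Fqbar$), realizes it as a $\nsigma$-module over $\nBR{L}$, and then reads off both the lattice $\Lambda$ and the continuity of the $\Gal(K^s/L)$-action directly from the base-change compatibility of the lisse-sheaf equivalence of Theorem~\ref{wellknown} — continuity comes packaged with the statement that stalks of lisse sheaves are continuous $\pi_1$-representations. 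You instead stay over $K^s$ throughout, manufacture a $\nGalZero$-stable $\tau$-stable lattice by averaging a lattice over its finite $\nGalZero$-orbit (via Proposition~\ref{actavg}, whose hypothesis of $\nadic$-adic continuity of the semi-linear action on $H$ you correctly observe is automatic from functoriality of $\nBRz{K^s}$, and whose compatibility with $\tau$ you rightly flag as the crux), and then verify continuity by hand on the finite quotients $\Lambda/\ncomax^n\Lambda$. The paper's route is shorter because the descent does the bookkeeping; yours avoids descent entirely at the cost of the averaging step and an explicit check that $\Lambda\otimes_{\ncoint}\ncoef=H^\tau$, and it has the mild advantage of previewing the averaging technique that the paper itself deploys later in Lemmas~\ref{tate-weillat} and \ref{tate-descent}. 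I see no gap; the only points stated without full proof (injectivity of $\Lambda/\ncomax^n\Lambda\to\bar H/\ncomax^n\bar H$, and the identification $\bar H^\tau\otimes_{\ncoint}\ncoef=H^\tau$ via $(\nBRz{K^s})^{\nsigma}=\ncoef$) are standard and true.
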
%
\begin{proof}%
We need to show that the open subgroup ${\nGalZero} \subset W_K$ of elements of order $0$ 
acts continuously on $\nTate{M}$.
The fixed field $L = (K^s)^{\nGalZero}$
contains $\Fqbar$ by construction.
We have
$\nTate{M} = (\nBRz{K^s}\otimes_{\nBRz{L}} H)^\tau$
where
$H = \iHom(\nBRz{L}\otimes_{\nBRz{\Fqbar}} N,\:\nBRz{L}\otimes_{\nBKz} M)$.
The subgroup ${\nGalZero}$ acts on $\nTate{M}$ via $K^s$.

The isocrystal $H$ is pure of slope~$0$
and so arises from a $\nsigma$-module $\nBlat$ over $\nBR{L}$
by Proposition~\ref{fdmpure0}.
Theorem~\ref{wellknown} associates to $\nBlat$ a lisse $\nadic$-adic sheaf on the small \'etale site of $\Spec L$.
The equivalence of Theorem~\ref{wellknown} is compatible with the base change $L \to K^s$.
So the Galois representation corresponding to $\nBlat$ is
\begin{equation*}
\Lambda = (\nBR{K^s}\otimes_{\nBR{L}} \nBlat)^{\tau}
\end{equation*}
on which ${\nGalZero} = \Gal(K^s/L)$ acts via $K^s$.

Theorem~\ref{wellknown} implies that $\Lambda$ is a finitely generated free $\ncoint$-module
and that the action of ${\nGalZero}$ is continuous in the $\nadic$-adic topology.
Since $\nBlat$ is a lattice in $H$ it follows that $\Lambda$ is open in $\nTate{M}$,
and we get the result.%
\end{proof}

\pagebreak
\begin{lem}\label{fincrit}%
An $\ncoint$-submodule $\Lambda$ of a finite-\hspace{0pt}dimensional $\ncoef$-vector space $V$
is finitely generated if and only if it is $\nadic$-adically separated.%
\end{lem}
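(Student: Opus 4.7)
The forward implication is immediate: a finitely generated module over the Noetherian local ring $\ncoint$ is $\ncomax$-adically separated by Krull's intersection theorem, so $\bigcap_n \ncomax^n \Lambda = 0$. I would prove the reverse implication by contrapositive: assuming $\Lambda$ is not finitely generated, I will produce a nonzero element of $\bigcap_n \ncomax^n \Lambda$.

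First I would reduce to the case where $\Lambda$ spans $V$ over $\ncoef$, pick a basis $e_1,\dots,e_d \in \Lambda$ of $V$, and form the lattice $\Lambda_0 := \ncoint e_1\oplus\cdots\oplus\ncoint e_d \subseteq \Lambda$. Every finitely generated $\ncoint$-submodule of $V$ is contained in some $z^{-N}\Lambda_0$, so failure of finite generation yields, for every $k \geqslant 1$, an element $\lambda_k \in \Lambda$ with $\lambda_k \notin z^{-k+1}\Lambda_0$. Letting $m_k \geqslant k$ be the least integer with $\lambda_k \in z^{-m_k}\Lambda_0$, the element $y_k := z^{m_k}\lambda_k$ lies in $\Lambda_0 \setminus z\Lambda_0$ while simultaneously belonging to $\ncomax^{m_k}\Lambda$.

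The key step is a compactness argument. Since $\ncoef$ is a local field, its residue field is finite, so $\Lambda_0/z\Lambda_0$ is a finite set and $\Lambda_0 \cong \ncoint^d$ is compact in the $\ncomax$-adic topology. Pigeonhole on the nonzero reductions $\bar y_k \in \Lambda_0/z\Lambda_0$ extracts a subsequence along which they are constant, and compactness yields a further subsequence $y_{k_l} \to y \in \Lambda_0$ whose limit retains the common nonzero reduction modulo $z\Lambda_0$; in particular $y \ne 0$. For any fixed $m$, once $l$ is large we have both $y_{k_l} \in \ncomax^{m_{k_l}}\Lambda \subseteq \ncomax^m \Lambda$ and $y - y_{k_l} \in \ncomax^m \Lambda_0 \subseteq \ncomax^m \Lambda$, so $y \in \ncomax^m \Lambda$ for every $m$, contradicting separatedness. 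The principal obstacle is this compactness step, and the nonvanishing of the limit $y$ in particular depends essentially on finiteness of the residue field of $\ncoef$.
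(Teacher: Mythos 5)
Your argument is correct, but it takes a genuinely different route from the paper. The paper's proof is a two-line reduction: any finite-dimensional $\ncoint/\ncomax$-subspace of $\Lambda/\ncomax\Lambda$ lifts to a finitely generated (hence free, torsion-free over a DVR) submodule of $\Lambda$ of rank at most $\dim_\ncoef V$, so $\dim_{\ncoint/\ncomax}(\Lambda/\ncomax\Lambda)\leqslant\dim_\ncoef V$; one then invokes the Nakayama-type statement \stacks{031D} (complete base ring, separated module, finite reduction modulo the ideal imply finite generation). You instead run the contrapositive by hand: normalizing the non-finitely-generated $\Lambda$ against a reference lattice $\Lambda_0$ to produce elements $y_k\in(\Lambda_0\setminus z\Lambda_0)\cap\ncomax^{m_k}\Lambda$ with $m_k\to\infty$, and extracting a nonzero limit point $y\in\bigcap_m\ncomax^m\Lambda$. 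Both are sound. The paper's version is shorter, outsources the completeness input to a standard reference, and works over an arbitrary complete discrete valuation ring; yours is self-contained but leans on sequential compactness of $\ncoint^d$, i.e.\ on finiteness of the residue field --- harmless here since $\ncoef$ is a local field (and likewise in the $p$-adic setting of Remark~\ref{tatefisoc}), but less portable. A small simplification: the pigeonhole step is redundant, since along any $z$-adically convergent subsequence the reductions modulo $z\Lambda_0$ are eventually constant anyway, which already forces $y\notin z\Lambda_0$.
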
%
\begin{proof}%
Every finite-dimensional $\ncoint/\ncomax$-vector subspace
$\xbar{M} \subset \Lambda/\ncomax\Lambda$ lifts to a finitely generated $\ncoint$-submodule $M\subset\Lambda$.
The rank of $M$ is bounded by the dimension of~$V$.
Since $\rank_\ncoint M \geqslant \dim_{\ncoint/\ncomax} \xbar{M}$
it follows that $\Lambda/\ncomax\Lambda$ is finite.
Invoking \stacks{031D} we conclude that $\Lambda$ is finitely generated.%
\end{proof}

\begin{lem}\label{tate-wdslope}%
For every pure isocrystal $M$ of slope $\nslope$ 
the Tate module $\nTate{M}$ is $\nslope$-admissible.%
\end{lem}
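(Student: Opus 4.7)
The plan is to construct an explicit $\ncoint$-lattice $\Lambda \subset \nTate{M}$ and verify the admissibility identity by direct computation. Write $\nslope = \tfrac{s}{r}$ with $r > 0$ and use purity to choose lattices $\nBlat \subset M$ and $\nBlat_N \subset N$ with $\nlati{r\nresdeg}{\nBlat} = z^s\nBlat$ and $\nlati{r\nresdeg}{\nBlat_N} = z^s\nBlat_N$, respectively. Define
\begin{equation*}
\Lambda = \{\, f \in \nTate{M} : f(\nBlat_N) \subset \nBR{K^s}\otimes_{\nBK}\nBlat \,\}.
\end{equation*}
This is manifestly an $\ncoint$-submodule of $\nTate{M}$.

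The first task is to show $\Lambda$ is a lattice in $\nTate{M}$ as an $\ncoef$-vector space. By Lemma~\ref{fincrit} it suffices to check $\nadic$-adic separatedness and $\ncoef$-spanning. Separatedness follows from Lemma~\ref{complete}: if $f \in \bigcap_n z^n\Lambda$ then $f(\nBlat_N) \subset \bigcap_n z^n(\nBR{K^s}\otimes\nBlat) = 0$, and since $\nBlat_N$ generates $N$ over $\nBRz{\Fqbar}$ and $f$ is $\nBRz{\Fqbar}$-linear this forces $f = 0$. For spanning, I observe that $f(\nBlat_N)$ is a finitely generated $\nBR{\Fqbar}$-submodule of $\nBRz{K^s}\otimes_{\nBKz}M$ and is therefore contained in $z^{-n}(\nBR{K^s}\otimes\nBlat)$ for some $n$, whence $z^n f \in \Lambda$.

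The second task is to verify that $\gamma^{r\nresdeg}\Lambda = z^{s\ord\gamma}\Lambda$ for every $\gamma \in W_K$. Iterating Definition~\ref{dfnweilact} gives $\gamma^{r\nresdeg}(f)(x) = \gamma^{r\nresdeg}(f(\tau^{r\nresdeg\ord\gamma}x))$ for $x \in N$. The key intermediate claim is that $\tau^{r\nresdeg n}\nBlat_N \subset z^{sn}\nBlat_N$ for every integer $n$: the case $n \geqslant 0$ iterates the purity inclusion, and the case $n < 0$ uses that $\tau$ is bijective on $N$ and $\sigma$ is an automorphism of $\nBR{\Fqbar}$ fixing $z$, so that $\tau^{-r\nresdeg}$ maps the $\nBR{\Fqbar}$-span $z^s\nBlat_N$ of $\tau^{r\nresdeg}\nBlat_N$ back into $\nBlat_N$. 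Granting this, for $f \in \Lambda$ and $x \in \nBlat_N$ I get $f(\tau^{r\nresdeg\ord\gamma}x) \in z^{s\ord\gamma}(\nBR{K^s}\otimes\nBlat)$, and since $\gamma$ fixes $z$ and acts only on the $\nBR{K^s}$-factor (so it preserves $\nBR{K^s}\otimes\nBlat$), this yields $\gamma^{r\nresdeg}(f)(\nBlat_N) \subset z^{s\ord\gamma}(\nBR{K^s}\otimes\nBlat)$, hence $\gamma^{r\nresdeg}\Lambda \subset z^{s\ord\gamma}\Lambda$. The reverse inclusion follows by applying the same argument to $\gamma^{-1}$.

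The main obstacle is the lattice property of $\Lambda$, specifically the finite generation over $\ncoint$, which rests on the topological Lemma~\ref{fincrit}; the admissibility identity itself is then a direct unwinding of the Weil-group action formula combined with purity of $\nBlat_N$.
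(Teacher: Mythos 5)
Your proposal is correct and follows essentially the same route as the paper: the same lattice $\Lambda = \{f : f(\nBlat_N)\subset \nBR{K^s}\otimes_{\nBK}\nBlat\}$, finite generation via $\nadic$-adic separatedness and Lemma~\ref{fincrit}, and the same unwinding of the Weil-group action against the purity relation for $\nBlat_N$. The only (harmless) deviations are that you also impose the purity normalization on $\nBlat\subset M$, which is never used, and that you argue the admissibility identity by two inclusions where the paper writes a single chain of equalities.
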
%
\begin{proof}%
Since $N$ is pure of slope $\nslope$
there are integers $s$ and $r$, $r > 0$,
and a lattice $\nlatN \subset N$ such that $\nslope = \frac{s}{r}$
and $\tau^{r\nresdeg} \nlatN = z^s\nlatN$.
Pick a lattice $\nBlat \subset M$ and consider the $\ncoint$-submodule
\begin{equation*}
\Lambda = \{f \in \nTate{M} \mid f(\nlatN) \subset \nBR{K^s}\otimes_{\nBK}\nBlat \}.
\end{equation*}
%
Let us show that $\Lambda$ is finitely generated.
Note that
\begin{equation*}
\Lambda \ncomax^n = \{ f \in \nTate{M} \mid f(\nlatN) \subset \nBR{K^s}\otimes_{\nBK}  \ncomax^n \nBlat \}.
\end{equation*}
Since $\nBR{K^s}\otimes_{\nBK} \nBlat$ is $\nadic$-adically separated we conclude that
\begin{equation*}
\bigcap\nolimits_{n>0}\Lambda \ncomax^n = \{ f \in \nTate{M} \mid f(\nlatN) = 0 \}.
\end{equation*}
Now $\nlatN$ is an $\nBR{\Fqbar}$-lattice in $N$ and so the right hand side of this equation
is zero. In other words $\Lambda$ is $\nadic$-adically separated.
As $\nTate{M}$ is a finite-\hspace{0pt}dimensional $\ncoef$-vector space
Lemma~\ref{fincrit} shows that $\Lambda$ is finitely generated over $\ncoint$.

The $\nBR{\Fqbar}$-module $\nlatN$ is finitely generated.
The fact that $\nBlat$ is a lattice in $M$ then implies that $\Lambda$ is a lattice in $\nTate{M}$.
It remains to prove that $\Lambda$ satisfies the condition of Definition~\ref{dfnwdslope}.
Let $f \in \nTate{M}$ and let $\gamma \in W_K$. Set $n =\ord\gamma$.
The identity $\tau^{r\nresdeg}\nlatN = z^s \nlatN$ implies that
\begin{equation*}
(\gamma^{r\nresdeg} f z^{-s n})(\nlatN) =
\gamma(f(\tau^{r\nresdeg n} z^{-s n} \nlatN)) = 
\gamma(f(\nlatN))
\end{equation*}
Now $\gamma$ is an automorphism of $\nBR{K^s}\otimes_{\nBK}\nBlat$
and so $\gamma^{r\nresdeg} f z^{-sn}$ belongs to $\Lambda$ if and only if $f$ does.
In other words $\gamma^{r\nresdeg} \Lambda = \Lambda z^{s\ord\gamma}$.
The result follows since $\nslope = \frac{s}{r}$ by construction.%
\end{proof}

Recall that an \emph{order} in $\nDN$ is a subring which is at the same time a lattice.
According to Theorems 12.8 and 13.3 of \cite[Chapter~3]{reiner}
the $\ncoef$-algebra $\nDN$ contains a unique maximal order. 

\begin{dfn}%
We denote this maximal order by $\nOD$.%
\end{dfn}


\begin{lem}\label{admequiv}
In Definition~\ref{dfnwdslope} one is free to assume
that the lattice $\Lambda$ is an $\nOD$-submodule
and is stable under $\nGalZero = \ord^{-1}\{0\}$.%
\end{lem}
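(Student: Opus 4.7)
The plan is to refine a given lattice $\Lambda\subset V$ satisfying Definition~\ref{dfnwdslope} in two stages, preserving the admissibility relation $\gamma^{r\nresdeg}\Lambda = \Lambda\, z^{s\ord\gamma}$ throughout.

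First, replace $\Lambda$ by $\Lambda\cdot\nOD$. Since $\nOD$ is a lattice in $\nDN$, this is again a finitely generated $\ncoint$-submodule that spans $V$ over $\ncoef$ and, tautologically, is a right $\nOD$-submodule. The admissibility relation passes to $\Lambda\cdot\nOD$ because the right $\nDN$-action commutes with the left $W_K$-action (an implicit part of the structure of a $(W_K,\nDN)$-representation, directly verified for Tate modules) and because $z\in\ncoef$ lies in the center of $\nDN$. So we may assume henceforth that $\Lambda$ is a right $\nOD$-submodule.

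Second, average over $\nGalZero$. By continuity of the Galois action and openness of $\Lambda$ in the $\nadic$-adic topology on $V$ (the argument of Proposition~\ref{actavg} applied to a generating set of $\Lambda$), the stabilizer of $\Lambda$ in $\nGalZero$ is an open subgroup, hence of finite index in the profinite group $\nGalZero$. Therefore the $\nGalZero$-orbit of $\Lambda$ is finite, and
\begin{equation*}
\Lambda' \;=\; \sum_{\delta\in\nGalZero}\delta\Lambda
\end{equation*}
is a finitely generated $\ncoint$-submodule spanning $V$, stable under $\nGalZero$ by construction, and still right $\nOD$-stable because the two actions commute.

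Finally, one verifies that $\Lambda'$ satisfies the admissibility relation. Normality of $\nGalZero$ in $W_K$ lets one rewrite $\gamma^{r\nresdeg}\delta = \delta'\gamma^{r\nresdeg}$ with $\delta' = \gamma^{r\nresdeg}\delta\gamma^{-r\nresdeg}\in\nGalZero$, and conjugation by $\gamma^{r\nresdeg}$ is a bijection of $\nGalZero$ with itself; this re-indexing exchanges the sum defining $\gamma^{r\nresdeg}\Lambda'$ with the sum defining $\Lambda'\, z^{s\ord\gamma}$. The main, if mild, obstacle is this bookkeeping: the original admissibility condition does not imply direct $\nGalZero$-invariance of $\Lambda$ (only invariance under the subgroup of $(r\nresdeg)$-th powers from $\nGalZero$), so the averaging is genuinely necessary, and one must track the interplay of the left Galois action, the right $\nDN$-action, and the conjugation action of $W_K$ on $\nGalZero$.
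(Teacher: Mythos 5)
Your proof is correct and follows essentially the same route as the paper: average over $\nGalZero$ using openness of the stabilizer (hence finiteness of the orbit) together with normality of $\nGalZero$ in $W_K$, and saturate under $\nOD$ using the $\nDN$-linearity of the $W_K$-action. The only difference is that you perform the two refinement steps in the opposite order (saturate first, then average), which is immaterial since the two operations preserve each other's output.
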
%
\begin{proof}%
Let $s$ and $r$ be the integers appearing in Definition~\ref{dfnwdslope}.
We have
\begin{equation}\label{admcond}\tag{$\star$}
\gamma^{r\nresdeg} \Lambda = \Lambda z^{s\ord\gamma}
\end{equation}
for every $\gamma\in W_K$.

As the action of $W_K$ on $V$ is continuous the lattice $\Lambda$
is stabilized by an open subgroup of $G_0$.
Such a subgroup has finite index since $G_0$ is compact.
Hence the $\ncoint$-module
$\Lambda' = \sum_{g\in\nGalZero} g\Lambda$
is finitely generated, and so is a lattice in $V$.
The fact that $G_0$ is normal in $W_K$ implies that
for each $\gamma \in W_K$ we have
$\gamma \Lambda' = \sum_{g\in\nGalZero} g \gamma\Lambda$.
As a consequence $\Lambda'$ satisfies \eqref{admcond}.
Moreover, $\Lambda'$ is $\nGalZero$-stable by construction.


Let $\Lambda''$ be the $\nOD$-submodule of $V$ generated by $\Lambda'$.
This is a lattice since $\nOD$ is finitely generated as an $\ncoint$-module.
As the action of $W_K$ on $V$ is $\nDN$-linear it follows that
$\Lambda''$ is $\nGalZero$-stable and satisfies \eqref{admcond}.
Whence the result.%
\end{proof}%

\begin{dfn}\label{tate-revfunct}%
To every $(W_K,\nDN)$-representation $V$ we attach a $\nBRz{\Fqbar}$-isocrystal
\begin{equation*}
N(V) = V\otimes_{\nDN} N
\end{equation*}
equipped with the following action of $W_K$:
\begin{equation*}
\gamma(v \otimes x) = \gamma(v) \otimes \tau^{-\ord\gamma} x, \quad
(\gamma,v,x) \in W_K \times V \times N.
\end{equation*}
\end{dfn}

The group $G_K$ 
preserves the subring $\nBRz{\Fqbar} \subset \nBRz{K^s}$.
So it makes sense to speak of a semi-linear action of $G_K$
and its subgroups on $\nBRz{\Fqbar}$-modules.
The action of $W_K$ on $N(V)$ is semi-linear in this sense.

\begin{lem}\label{tate-weillat}%
For every $\nslope$-admissible $(W_K,\nDN)$-representation $V$ the isocrystal $N(V)$ admits a $W_K$-stable lattice.
Moreover the action of $W_K$ on $N(V)$ is continuous in the $\nadic$-adic topology.%
\end{lem}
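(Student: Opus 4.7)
The plan is to construct an explicit $W_K$-stable lattice and then verify continuity by a standard argument. By Lemma~\ref{admequiv} I may assume that $\Lambda\subset V$ is an $\nOD$-stable, $\nGalZero$-stable lattice with $\gamma^{r\nresdeg}\Lambda=\Lambda z^{s\ord\gamma}$ for all $\gamma\in W_K$, where $\nslope=s/r$. Fix an $\nBR{\Fqbar}$-lattice $\nlatN\subset N$ with $\tau^{r\nresdeg}\nlatN=z^s\nlatN$, and after replacing it by $\nOD\cdot\nlatN$ assume $\nlatN$ is $\nOD$-stable; this stays a lattice because $\nOD$ is finitely generated over $\ncoint$ and its elements commute with $\tau$, so the relation $\tau^{r\nresdeg}\nlatN=z^s\nlatN$ is preserved.

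Via the identification $N(V)=V\otimes_{\nDN}N=\Lambda\otimes_{\nOD}N$, let $\bar U_0\subset N(V)$ be the image of the natural map $\Lambda\otimes_{\nOD}\nlatN\to N(V)$. This image is finitely generated as $\nBR{\Fqbar}$-module (as a quotient of $\Lambda\otimes_{\ncoint}\nlatN$) and torsion-free inside $N(V)$; since $\nBR{\Fqbar}$ is a finite product of DVRs and $\bar U_0[z^{-1}]=N(V)$, it is a lattice. The key computation is that $\gamma^{r\nresdeg}$ stabilizes $\bar U_0$ for every $\gamma\in W_K$: on a simple tensor $v\otimes x$ with $v\in\Lambda$, $x\in\nlatN$, the admissibility condition gives $\gamma^{r\nresdeg}(v)=z^{s\ord\gamma}v'$ with $v'\in\Lambda$ and the slope condition on $N$ gives $\tau^{-r\nresdeg\ord\gamma}(x)=z^{-s\ord\gamma}x'$ with $x'\in\nlatN$, and the central factors $z^{\pm s\ord\gamma}$ cancel across the tensor product over $\nDN$ since $z\in\ncoef=Z(\nDN)$. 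The subgroup $\nGalZero$ stabilizes $\bar U_0$ outright because it preserves $\Lambda$ and $\tau^{-\ord h}=\mathrm{id}$.

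If $\ord(W_K)=0$ then $W_K=\nGalZero$ and $\bar U_0$ is already the desired lattice. Otherwise pick $\gamma_0\in W_K$ with $\ord\gamma_0$ generating $\ord(W_K)$ and set $U=\sum_{i=0}^{r\nresdeg-1}\gamma_0^i(\bar U_0)$. The previous step gives $\gamma_0U=U$ by cyclic rotation, and for $h\in\nGalZero$ the normality of $\nGalZero$ in $W_K$ gives $h\gamma_0^i(\bar U_0)=\gamma_0^i(\gamma_0^{-i}h\gamma_0^i)(\bar U_0)=\gamma_0^i(\bar U_0)$, so $U$ is $W_K$-stable. Each summand is finitely generated over $\nBR{\Fqbar}$ (the semilinear action of $\gamma_0$ restricts to an automorphism of $\nBR{\Fqbar}$), hence $U$ is again a lattice.

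For continuity, the family $\{\ncomax^nU\}_{n\geqslant 0}$ is a fundamental system of $W_K$-stable open neighborhoods of zero since $z$ is fixed by $W_K$. Writing $m_0=\sum_j v_j\otimes x_j$ with $v_j\in\Lambda$, $x_j\in\nlatN$, an element $h\in\nGalZero$ acts by $h(m_0)=\sum_j h(v_j)\otimes x_j$; continuity of the $\nGalZero$-action on $V$ yields an open subgroup fixing every $v_j$ modulo $\ncomax^n\Lambda$, hence fixing $m_0$ modulo $\ncomax^nU$. Since $\nGalZero$ is open in $W_K$, this upgrades to continuity of the $W_K$-action. The main technical point is the cancellation of the central factors $z^{\pm s\ord\gamma}$ in the stability computation; once that is in place everything else is formal.
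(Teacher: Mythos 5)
Your proof is correct and follows essentially the same route as the paper's: the lattice $\Lambda\otimes_{\nOD}\nlatN$ via Lemma~\ref{admequiv}, the cancellation of the central factors $z^{\pm s\ord\gamma}$ across the tensor product over $\nDN$ to get stability under $\gamma^{r\nresdeg}$, averaging over powers of a Frobenius lift $\gamma_0$, and continuity read off from the action on $\Lambda/\ncomax^n\Lambda$. Your extra step of replacing $\nlatN$ by $\nOD\cdot\nlatN$ so that the tensor product over $\nOD$ makes sense is a small but legitimate refinement of a point the paper leaves implicit.
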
%
\begin{proof}%
Since the isocrystal $N$ is pure of slope~$\nslope$
there are integers $s$ and $r$, $r > 0$,
and a lattice $\nlatN \subset N$ such that
$\nslope = \frac{s}{r}$ and
$\tau^{r\nresdeg}\nlatN = z^s \nlatN$.

The representation $V$ is $\nslope$-admissible,
so multiplying $s$ and $r$ by a common factor
we are free to assume that there is a lattice $\Lambda \subset V$
such that $\gamma^{r\nresdeg} \Lambda = \Lambda z^{s\ord\gamma}$ for all $\gamma\in W_K$.
In view of Lemma~\ref{admequiv} we are also free to assume
that $\Lambda$ is an $\nOD$-submodule and that it is stable under
$\nGalZero = \ord^{-1}\{0\}$.

The $\nBR{\Fqbar}$-lattice $\nBlat = \Lambda\otimes_{\nOD}\nlatN$ in $N(V)$ is stable under $\nGalZero$.
For each $n > 0$ the group $\nGalZero$ acts on the quotient
$\nBlat/\ncomax^n\nBlat = (\Lambda/\ncomax^n\Lambda) \otimes_{\nOD} \nlatN$
through the factor $\Lambda/\ncomax^n\Lambda$.
Since the action of $\nGalZero$ on $\Lambda$ is continuous
it follows that the $\nGalZero$-stabilizer of every element of $\nBlat/\ncomax^n\nBlat$
is open.
Hence the action of $W_K$ is continuous in the $\nadic$-adic topology on $N(V)$.

Next we claim that $\gamma^{r\nresdeg}\nBlat = \nBlat$ for all $\gamma \in W_K$.
By assumption for every pair $(v,x) \in \Lambda\times\nlatN$
there is a pair $(w,y) \in \Lambda\times\nlatN$
such that
\begin{equation*}
\gamma^{r\nresdeg} v = w z^{s\ord\gamma}, \quad
\gamma^{-r\nresdeg\ord\gamma} x =  z^{-s\ord\gamma} y.
\end{equation*}
Therefore
\begin{equation*}
\gamma^{r\nresdeg}(v \otimes x) = 
w z^{s\ord\gamma} \otimes z^{-s\ord\gamma} y = w\otimes y.
\end{equation*}
We conclude that $\gamma^{r\nresdeg}\nBlat \subset \nBlat$.
This inclusion is an equality since $\gamma^{-r\nresdeg}\nBlat \subset \nBlat$
by the same argument.

Pick an element $\gamma_1 \in W_K$ such that $\ord(\gamma_1)$ generates $\ord(W_K)$
and set
\begin{equation*}
\navg{\nBlat} = \sum_{i\in\bZ} \gamma_1^i \nBlat.
\end{equation*}
This is an $\nBR{\Fqbar}$-lattice since $\gamma_1^{r\nresdeg n} \nBlat = \nBlat$ for every $n \in \bZ$.
By construction $\navg{\nBlat}$ is stable
under $\gamma_1^{\bZ}$ and $\nGalZero$.
Since $W_K = \gamma_1^{\bZ} \nGalZero$
we conclude that $\navg{\nBlat}$ is $W_K$-stable.%
\end{proof}

\begin{dfn}
We denote by $\nWInv$ the 
fixed field of the Weil group $W_K$
and set $\nWCl = \Gal(K^s/\nWInv)$.
\end{dfn}

By construction $\nWCl$ is the closure of $W_K$ in $G_K$.
When $\ord(W_K)$ is not zero the inclusion $W_K\hookrightarrow \nWCl$
induces a topology on $W_K$
which is coarser than the topology of Definition~\ref{dfnweil}.

Our next step is to show that the action of $W_K$ on $N(V)$
extends in a canonical way to an action of $\nWCl$. To this end
we shall use cocycles with values in non-commutative groups.

Let $H$ be a not necessarily commutative group equipped with
an action of $W_K$. Given an element $h \in H$ and an element $\gamma\in W_K$
we denote the result of application of $\gamma$ to $h$ by $\app{\gamma}{h}$.
%
Recall that a \emph{cocycle} $a\colon W_K \to H$
is a map that satisfies the identity
\begin{equation*}
a(\gamma_1 \gamma_2) = a(\gamma_1) \cdot \app{\gamma_1}{a(\gamma_2)}, \quad \gamma_1, \gamma_2 \in W_K.
\end{equation*}
We shall use the same terminology with respect to the group $\nWCl$.

\begin{lem}\label{cocycle}
Let $H$ be a discrete group equipped with a continuous action of the compact group $\nWCl$.
Suppose that
\begin{enumerate}
\item
$H$ is a filtered colimit of finite $\nWCl$-stable subgroups,

\item
the subgroup $\nGalZero\subset\nWCl$ acts trivially on $H$.
\end{enumerate}
Then for every continuous cocylce $a\colon W_K \to H$
there is a unique continuous cocycle $\abar\colon \nWCl \to H$
such that $\abar|_{W_K} = a$.
\end{lem}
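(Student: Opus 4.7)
My plan has three parts: (i) uniqueness follows at once from density of $W_K$ in $\nWCl$ and discreteness of $H$; (ii) I would reduce existence to the case where $H$ is finite, using hypothesis (1); (iii) for finite $H$, I would exhibit the extension as the inverse of the projection from the closure of the graph of $a$ inside the compact semidirect product $H\rtimes\nWCl$.

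Two continuous extensions agree on a closed subset of $\nWCl$ which contains the dense subset $W_K$, so they coincide. For the reduction in (ii), the cocycle identity and hypothesis (2) show that $a|_{\nGalZero}$ is a continuous group homomorphism to $H$, hence has open kernel $V\subset\nGalZero$ and finite image. Assume $\ord(W_K)\ne 0$; otherwise $W_K=\nGalZero=\nWCl$ and the statement is vacuous. Fix $\gamma_1\in W_K$ with $\ord(\gamma_1)=1$. By hypothesis (1), $a(\gamma_1)$ lies in a finite $\nWCl$-stable subgroup $H_*\subset H$; iterating the cocycle identity then places every $a(\gamma_1^n)$ inside $H_*$, and a final application of (1) together with $a(\gamma_1^n g)=a(\gamma_1^n)\cdot\app{\gamma_1^n}{a(g)}$ yields a finite $\nWCl$-stable subgroup $H_0\subset H$ with $a(W_K)\subset H_0$; we may then replace $H$ by $H_0$.

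The crux is now to construct an open subgroup $U\subset G_K$ with $a(W_K\cap U)=\{1\}$. Since $H$ is finite, the action of $\nWCl$ on $H$ factors through a finite quotient; let $d$ be the order of the image of $\gamma_1$ there, put $\beta:=a(\gamma_1^d)$, and let $e$ be the order of $\beta$, so that the cocycle identity (with $\gamma_1^d$ acting trivially on $H$) yields $a(\gamma_1^{nde})=\beta^{ne}=1$ for all $n\in\bZ$. The fixed field $(K^s)^V$ is a finite Galois extension of $K\Fqbar$, hence generated over $K\Fqbar$ by elements each algebraic over a finite subextension of $K$; therefore $(K^s)^V\subset L_0\cdot\Fqbar$ for some finite Galois $L_0/K$. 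After replacing $de$ by a multiple divisible by $[L_0:K]$, we may further assume $\gamma_1^{de}$ fixes $L_0$. Set $U:=\Gal(K^s/L_0)\cap\ord^{-1}(de\Zhat)$. Then $U$ is open in $G_K$, contains $\gamma_1^{de}$, satisfies $U\cap\nGalZero\subset V$ and $\ord(U)\subset de\Zhat$; and for any $\delta\in W_K\cap U$ the decomposition $\delta=\gamma_1^{nde}g$ with $n=\ord(\delta)/(de)$ forces $g=\gamma_1^{-nde}\delta\in U\cap\nGalZero\subset V$, so $a(\delta)=\beta^{ne}\cdot a(g)=1$. To finish, form the compact topological group $H\rtimes\nWCl$ and the continuous homomorphism $\tilde a\colon W_K\to H\rtimes\nWCl$, $\gamma\mapsto(a(\gamma),\gamma)$; let $\Gamma$ be the closure of its image. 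The projection $\Gamma\to\nWCl$ is surjective by density of $W_K$, and its kernel $\Gamma\cap(H\times\{1\})$ is trivial, because any limit $(h,1)$ of a net $(a(\gamma_\alpha),\gamma_\alpha)$ with $\gamma_\alpha\to 1$ in $\nWCl$ must have $\gamma_\alpha\in U\cap W_K$ eventually, whence $h=\lim a(\gamma_\alpha)=1$. A continuous bijection of compact Hausdorff groups is a homeomorphism, so $\Gamma\to\nWCl$ is a topological isomorphism; composing its inverse with $H\rtimes\nWCl\to H$ gives a continuous cocycle $\abar\colon\nWCl\to H$ that restricts to $a$ on $W_K$. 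The principal obstacle is the construction of $U$, which demands a careful balancing of the Galois topology on $\nGalZero$ against the behavior of $a$ on the (non-compact) cyclic part of $W_K$ generated by $\gamma_1$.
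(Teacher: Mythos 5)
Your argument is correct (after one small repair) and reaches the conclusion by a genuinely different route from the paper. The paper uses the same decomposition $W_K=\nGalZero\rtimes\gamma_1^{\bZ}$, $\nWCl=\nGalZero\rtimes\overline{\gamma_1^{\bZ}}$, and the same finiteness input (your $d,\beta,e$ appear there as the integer $N$ with $a(\gamma_1^{N\bZ})=1$), but instead of your graph-closure device it extends $a$ by an explicit formula: it observes that $a|_{\gamma_1^{\bZ}}$ descends to a cocycle on the finite quotient $\gamma_1^{\bZ}/\gamma_1^{N\bZ}$, pulls this back to the procyclic closure of $\gamma_1^{\bZ}$, and sets $\abar(g\gamma)=a(g)\,a_{\overline{\gamma_1^{\bZ}}}(\gamma)$; continuity is then immediate because $\nGalZero\times\overline{\gamma_1^{\bZ}}\to\nWCl$ is a homeomorphism, and the cocycle identity for $\abar$ follows from density. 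Your approach buys a cleaner conceptual picture --- a cocycle is a continuous homomorphic section of $H\rtimes\nWCl\to\nWCl$, and you extend it by closing the graph --- but at the cost of the one genuinely delicate step, which the paper's formula sidesteps: you must exhibit a subgroup $U$ that is open for the \emph{Galois} topology of $G_K$ (not merely for the finer Weil topology, in which $V$ and $\gamma_1^{de\bZ}$ are already open) with $a(W_K\cap U)=\{1\}$, since otherwise the kernel of $\Gamma\to\nWCl$ need not be trivial. Your descent of $(K^s)^V$ into $L_0\Fqbar$ with $L_0/K$ finite Galois does exactly this and is correct. The repair: $\ord(W_K)$ is in general a proper subgroup $m\bZ\subset\bZ$ (namely when $K\cap\Fqbar=\bF_{q^m}$), so no $\gamma_1$ with $\ord(\gamma_1)=1$ need exist; choose $\gamma_1$ so that $\ord(\gamma_1)$ generates $\ord(W_K)$, as the paper does, and replace $de\Zhat$ by $mde\Zhat$ and $n=\ord(\delta)/(de)$ by $n=\ord(\delta)/(mde)$ throughout. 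With that adjustment everything goes through verbatim.
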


Here the action of $W_K$ on $H$ is induced by the inclusion $W_K \hookrightarrow\nWCl$.
We avoid speaking of extension of $a$ to $\nWCl$ since the inclusion
$W_K \hookrightarrow \nWCl$ is not a topological embedding in general.

\begin{proof}[Proof of Lemma~\ref{cocycle}]%
Without loss of generality we assume that $\ord(W_K) \ne 0$.
Pick an element $\gamma_1 \in W_K$ such that $\ord(\gamma_1)$ generates the subgroup $\ord(W_K) \subset\bZ$.
The choice of $\gamma_1$ identifies the group $W_K$ with a topological semidirect product
$\nGalZero \rtimes(\nZ)$
and the group $\nWCl$ with a topological semidirect product
$\nGalZero \rtimes \nZCl$ where
$\nZCl$ is the procyclic subgroup generated by $\gamma_1$.

Let $a\colon W_K \to H$ be a continuous cocycle.
First we shall prove that the restriction of $a$ to $\nZ$ extends to $\nZCl$.
Since $H$ is a filtered colimit of finite $\nWCl$-stable subgroups it follows
that the image $a(\nZ)$ is contained in a finite $\nWCl$-stable subgroup $H' \subset H$.
As $H'$ is finite a sufficiently large subgroup
$\gamma_1^{N\bZ} \subset \nZ$ acts on $H'$ trivially.
Enlarging $N$ if necessary we are free to assume that
$a(\gamma_1^{N\bZ}) = 1$. The cocycle identity then implies
that $a(\gamma_1^n)$ depends only on the residue class of $n$
modulo $N$. In other words the cocycle $a$ arises from
a cocycle $a_0\colon \gamma_1^\bZ/\gamma_1^{N\bZ} \to H'$.
This $a_0$ induces a cocycle $a_\nZCl\colon \nZCl \to H$
by composition with the quotient morphism
$\nZCl \twoheadrightarrow \gamma_1^{\bZ}/\gamma_1^{N\bZ}$
and the inclusion $H' \hookrightarrow H$.

Next we define a map $\abar\colon \nGalZero \rtimes \nZCl \to H$
by the formula $g \gamma \mapsto a(g) a_\nZCl(\gamma)$,
$g \in \nGalZero$, $\gamma\in\nZCl$.
This map is continuous since the restriction of $a$ to $\nGalZero$
is continuous and $a_{\nZCl}$ is continious.
By assumption $\nGalZero$ acts trivially on $H$
so the cocycle identity implies that $\abar$ extends the map~$a$.
As $W_K = \nGalZero \rtimes \nZ$ is dense in $\nWCl = \nGalZero\rtimes \nZCl$
we conclude that the map $\abar$ is a cocycle.
The density property also implies that $\abar$ is the unique
extension of~$a$.
\end{proof}

\begin{lem}\label{tate-aext}%
For every $\nslope$-admissible $(W_K,\nDN)$-representation $V$
the action of $W_K$ on $N(V)$ extends uniquely to a continuous semi-\hspace{0pt}linear
action of $\nWCl$.%
\end{lem}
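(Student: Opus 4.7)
The plan is to reduce the extension problem to a cocycle problem and apply Lemma~\ref{cocycle} modulo each power of $\ncomax$. First I would fix a $W_K$-stable $\nBR{\Fqbar}$-lattice $\nBlat \subset N(V)$ provided by Lemma~\ref{tate-weillat}; since any continuous semi-linear action of $\nWCl$ on $N(V)$ is determined by its restriction to such a lattice, it suffices to extend the action on $\nBlat$. The module $N(V) = V \otimes_{\nDN} N$ is isomorphic to $N^t$ with $t = \rank_{\nDN} V$ (using $V \cong \nDN^t$ as a right $\nDN$-module); combined with the construction of $N$ in Proposition~\ref{simple}, this shows that $N(V)$ is free of rank $rt$ over $\nBRz{\Fqbar}$, so that $\nBlat$ is free of constant rank~$d = rt$ over the finite product of complete discrete valuation rings $\nBR{\Fqbar}$ (compare the proof of Lemma~\ref{bRidem}). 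Fix a free basis $e_1,\ldots,e_d$ of $\nBlat$ and encode the semi-linear $W_K$-action as a continuous cocycle
\begin{equation*}
a\colon W_K \to \GL_d(\nBR{\Fqbar}), \qquad \gamma(e_i) = \sum\nolimits_j a_{ji}(\gamma)\,e_j,
\end{equation*}
where the identity $a(\gamma_1\gamma_2) = a(\gamma_1)\cdot\app{\gamma_1}{a(\gamma_2)}$ expresses semi-linearity and $W_K$ acts on $\GL_d(\nBR{\Fqbar})$ entrywise through its action on $\nBR{\Fqbar}$.

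Next I would reduce modulo $\ncomax^n$ and invoke Lemma~\ref{cocycle}. Set $H_n = \GL_d(\nBR{\Fqbar}/\ncomax^n)$. Every element of $\nBR{\Fqbar}/\ncomax^n$ has its $\Fqbar$-coefficients in some finite subfield $\bF_{q^N} \subset \Fqbar$, and $\nBR{\bF_{q^N}}/\ncomax^n$ is a finite ring, so
\begin{equation*}
H_n \;=\; \bigcup_{N\geqslant 1}\GL_d\bigl(\nBR{\bF_{q^N}}/\ncomax^n\bigr)
\end{equation*}
is a filtered union of finite subgroups. Each such subgroup is $\nWCl$-stable because $\nWCl$ acts on $\Fqbar$ through $\ord\colon \nWCl \to \Zhat = \Gal(\Fqbar/\Fq)$ and $\Zhat$ preserves every $\bF_{q^N}$; moreover $\nGalZero = \ker\ord$ acts trivially on $H_n$. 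The reduction $a_n\colon W_K \to H_n$ is continuous because the stabilizers of the $e_i\bmod\ncomax^n$ in $W_K$ are open by continuity of the $W_K$-action on $\nBlat$. Lemma~\ref{cocycle} then supplies a unique continuous extension $\abar_n\colon \nWCl \to H_n$ of $a_n$.

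Uniqueness in Lemma~\ref{cocycle} forces the $\abar_n$ to be compatible under the reduction maps $H_{n+1}\to H_n$, producing a continuous cocycle $\abar\colon \nWCl \to \GL_d(\nBR{\Fqbar}) = \mlim_n H_n$ that extends $a$. The formula $\gamma(e_i) = \sum_j \abar_{ji}(\gamma)\,e_j$ then defines a continuous semi-linear action of $\nWCl$ on $\nBlat$; inverting $z$ yields the desired action on $N(V)$. Uniqueness of the extension is immediate: two continuous semi-linear extensions must agree on the dense subgroup $W_K$, and continuity together with the Hausdorffness of the $\ncomax$-adic topology on $N(V)$ then forces equality on all of $\nWCl$. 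The main obstacle is verifying that $H_n$ satisfies the hypotheses of Lemma~\ref{cocycle}; this rests on the observation that $\nBR{\Fqbar}/\ncomax^n$ is the inductive limit of the finite subrings $\nBR{\bF_{q^N}}/\ncomax^n$ and that the Galois action on $\Fqbar$ factors through $\Zhat$, preserving each layer with $\nGalZero$ acting trivially.
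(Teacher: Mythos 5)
Your proposal is correct and follows essentially the same route as the paper: fix the $W_K$-stable lattice from Lemma~\ref{tate-weillat}, encode the semi-linear action modulo $\ncomax^n$ as a continuous cocycle with values in $\GL_m$ of the reduction, verify that this group is a filtered colimit of finite $\nWCl$-stable subgroups (coming from finite subfields of $\Fqbar$) on which $\nGalZero$ acts trivially, and apply Lemma~\ref{cocycle} for each $n$ before passing to the limit. The only cosmetic difference is that you make explicit the freeness of the lattice and the compatibility of the extensions under reduction, which the paper leaves implicit.
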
%
\begin{proof}%
Let $\nBlat \subset N(V)$ be the $W_K$-stable lattice constructed in Lemma~\ref{tate-weillat}.
We shall study the action of $W_K$ on the quotients $\nBlat/\naideal\nBlat$
where $\naideal$ runs over nonzero proper ideals of $\ncoint$.

Fix an isomorphism $\nBlat/\naideal\nBlat \xrightarrow{\isosign} (\naring{\Fqbar})^{m}$.
The resulting action of $W_K$ on the free module $(\naring{\Fqbar})^{m}$ is encoded
by a map $a\colon W_K \to \GL_m(\naring{\Fqbar})$ such that
\begin{equation*}
\gamma\cdot\begin{pmatrix}
x_1 \\ \vdots \\ x_m
\end{pmatrix} = 
a(\gamma) \cdot \begin{pmatrix} \gamma(x_1) \\ \vdots \\ \gamma(x_m) \end{pmatrix}, \quad
\gamma \in W_K, \:
x_1, \dotsc, x_m \in \naring{\Fqbar}.
\end{equation*}
This is a cocycle with respect to the coordinatewise action of $W_K$ on $\GL_m(\naring{\Fqbar})$.
Lemma~\ref{tate-weillat} implies that the cocycle $a$ is continuous.
Now
\begin{enumerate}
\item
the group $\nGalZero$ acts trivially on $\GL_m(\naring{\Fqbar})$,

\item
the group $\GL_m(\naring{\Fqbar})$ is a filtered colimit of finite
$\nWCl$-stable subgroups $\GL_m(\naring{k})$ where $k$ runs over finite extensions of $\Fq$.
\end{enumerate}
Hence Lemma~\ref{cocycle} shows that the cocycle $a$
extends uniquely to the group $\nWCl$.
In other words the action of $W_K$ on $\nBlat/\naideal\nBlat$
extends uniquely to a continuous action of $\nWCl$.
Applying this argument for every $I = \ncomax^n$ we conclude that
the action of $W_K$ on $\nBlat$ extends uniquely
to a continuous action of $\nWCl$.%
\end{proof}

Given a $\nslope$-admissible $(W_K,\nDN)$-representation $V$
we equip $\nBRz{K^s}\otimes_{\nBRz{\Fqbar}} N(V)$
with a diagonal action of $W_K$ and set
\begin{equation*}
M(V) = (\nBRz{K^s}\otimes_{\nBRz{\Fqbar}} N(V) )^{W_K}.
\end{equation*}
The structure of a left $\nBRz{K^s}\{\tau\}$-module on
$\nBRz{K^s}\otimes_{\nBRz{\Fqbar}} N(V)$
descends to a structure of
a left $\nBRz{\nWInv}\{\tau\}$-module on $M(V)$.

\begin{lem}\label{tate-fontiso}%
For every $\nslope$-admissible $(W_K,\nDN)$-representation $V$
the natural morphism
$\nBRz{K^s}\otimes_{\nBRz{\nWInv}} M(V) \to \nBRz{K^s}\otimes_{\nBRz{\Fqbar}} N(V)$
is a $W_K$-equivariant isomorphism of left $\nBRz{K^s}\{\tau\}$-modules.%
\end{lem}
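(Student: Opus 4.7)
The strategy is to reduce this to a lattice-level descent statement and invoke Proposition~\ref{descent} with the base field $\nWInv$ in place of $K$. Recall that $K^s$ is simultaneously a separable closure of $\nWInv$ and that $\nWCl = \Gal(K^s/\nWInv)$ by definition.

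First I would set up the lattice. By Lemma~\ref{tate-weillat} the $W_K$-action on $N(V)$ is continuous in the $\nadic$-adic topology and preserves some $\nBR{\Fqbar}$-lattice $\nBlat \subset N(V)$; by Lemma~\ref{tate-aext} it extends uniquely to a continuous semi-linear action of $\nWCl$ on $\nBlat$ (hence on $N(V)$). Set $L = \nBR{K^s}\otimes_{\nBR{\Fqbar}} \nBlat$ and equip it with the diagonal $\nWCl$-action, combining the extended action on $\nBlat$ with the functorial action on $\nBR{K^s}$. This action is semi-linear over $\nBR{K^s}$, preserves the lattice $L$ itself, and is continuous in the $\nadic$-adic topology because its mod~$\ncomax^n$ reductions factor through the Galois action of a finite quotient (continuity on the $\nBR{K^s}$ factor reduces to Hilbert~90 as in the proof of Lemma~\ref{tate-weillat}).

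Next I would apply Proposition~\ref{descent} to $L$ with the role of $K$ played by $\nWInv$; this yields a natural isomorphism
\begin{equation*}
\nBR{K^s}\otimes_{\nBR{\nWInv}} L^{\nWCl} \xrightarrow{\isosign} L.
\end{equation*}
Inverting $z$ converts this into
\begin{equation*}
\nBRz{K^s}\otimes_{\nBRz{\nWInv}} L^{\nWCl}[z^{-1}] \xrightarrow{\isosign} L[z^{-1}] = \nBRz{K^s}\otimes_{\nBRz{\Fqbar}} N(V).
\end{equation*}
To finish I need to identify $L^{\nWCl}[z^{-1}]$ with $M(V)$. On the one hand, since $W_K$ is dense in $\nWCl$ and the action on the $\nadic$-adically complete module $L$ is continuous, one has $L^{\nWCl} = L^{W_K}$; on the other hand, every element of $M(V) \subset \nBRz{K^s}\otimes_{\nBRz{\Fqbar}} N(V)$ lies in $z^{-n}L$ for some $n \geqslant 0$, and multiplication by $z$ commutes with $\nWCl$, so the $\nWCl$-invariants of $M(V)$ scale into $L^{\nWCl}$. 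This proves $M(V) = L^{\nWCl}[z^{-1}]$.

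Finally, the resulting isomorphism $\nBRz{K^s}\otimes_{\nBRz{\nWInv}} M(V) \xrightarrow{\isosign} \nBRz{K^s}\otimes_{\nBRz{\Fqbar}} N(V)$ is manifestly $W_K$-equivariant and $\tau$-linear by functoriality of its construction. I expect the main technical point to be checking that the $\nWCl$-action on $L$ is genuinely continuous and semi-linear — once this is in hand, Proposition~\ref{descent} does all the work. The only subtlety is that the $\nWCl$-action on $N(V)$ produced by Lemma~\ref{tate-aext} is defined abstractly via cocycle extension, so one has to verify that its tensor with the functorial action on $\nBR{K^s}$ still stabilizes the lattice $L$; this amounts to the observation that on each quotient $L/\ncomax^n L$ the $\nWCl$-action factors through a finite quotient, matching the continuous cocycle constructed in the proof of Lemma~\ref{tate-aext}.
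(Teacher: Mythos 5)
Your proposal is correct and follows essentially the same route as the paper: extend the $W_K$-action to $\nWCl$ via Lemma~\ref{tate-aext}, use density to identify $M(V)$ with the $\nWCl$-invariants, produce a $\nWCl$-stable lattice, and conclude by Proposition~\ref{descent} applied over $\nWInv$. The only cosmetic difference is that you build the stable lattice as $\nBR{K^s}\otimes_{\nBR{\Fqbar}}\nBlat$ from the stable lattice of Lemma~\ref{tate-weillat}, whereas the paper takes an arbitrary lattice in $\nBRz{K^s}\otimes_{\nBRz{\Fqbar}}N(V)$ and averages it using Proposition~\ref{actavg}; both work.
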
%
\begin{proof}%
Lemma~\ref{tate-aext} implies that the action of $W_K$
on $\nBRz{K^s}\otimes_{\nBRz{\Fqbar}} N(V)$ extends to a continuous semi-linear
action of $\nWCl$. As $W_K$ is dense in $\nWCl$ we conclude that
\begin{equation*}
M(V) = (\nBRz{K^s}\otimes_{\nBRz{\Fqbar}} N(V))^{\nWCl}.
\end{equation*}
Pick a lattice $\nBlat\subset \nBRz{K^s}\otimes_{\nBRz{\Fqbar}} N(V)$.
Its stabilizer is open by Proposition~\ref{actavg} so taking an average over
the action of $\nWCl$ we are free to assume that $\nBlat$ is $\nWCl$-stable.
Then $M(V) = \ncoef\otimes_{\ncoint}(\nBlat^{\nWCl})$ and 
the result follows from Proposition~\ref{descent}.%
\end{proof}

\begin{lem}\label{tate-descent}%
Let $M$ be a left $\nBKz\{\tau\}$-module.
If the base change of $M$ to $\nBRz{K^s}$ is a pure isocrystal of slope $\nslope$
then $M$ is a pure $\nBKz$-isocrystal of slope $\nslope$.%
\end{lem}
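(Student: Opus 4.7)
\emph{Plan.} The statement breaks into three claims: (a) $M$ is finitely generated and locally free over $\nBKz$; (b) the structure morphism $\nsigma^*M \to M$ is an isomorphism; (c) $M$ admits a lattice $\nBlat$ with $\nlati{r\nresdeg}{\nBlat} = z^s\nBlat$ for some integers $r>0$ and $s$ with $\nslope = s/r$. I would begin by observing that $\nBKz \to \nBRz{K^s}$ is faithfully flat: both rings are finite products of fields (since $\nBK$ and $\nBR{K^s}$ are finite products of DVRs by Lemma~\ref{bRidem}), and every factor of $\nBKz$ is reached by some factor of $\nBRz{K^s}$ via the Galois action on connected components. Claims (a) and (b) are then routine faithfully flat descent from the known properties of the base change $M' = \nBRz{K^s}\otimes_{\nBKz}M$: finite generation of $M$ follows because any finite generating set of $M'$ can be rewritten using elements of $M$, local freeness is automatic over a product of fields, and the structure morphism is an isomorphism since its base change is one.

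For (c) I would pass through a Galois descent argument based on Proposition~\ref{descent}. The Galois group $G_K = \Gal(K^s/K)$ acts on $M'$ via its action on $\nBRz{K^s}$, continuously in the $\nadic$-adic topology. Pick a lattice $\nBlat' \subset M'$ witnessing the purity of $M'$. By Proposition~\ref{actavg} its $G_K$-stabilizer is open, hence of finite index, so $\nBlat'' = \sum_{g} g(\nBlat')$ is a $G_K$-stable $\nBR{K^s}$-lattice in $M'$ obtained from finitely many translates. Since the Frobenius $\nsigma$ on $\nBRz{K^s}$ commutes with the $G_K$-action, so do $\tau$ and $G_K$ on $M'$. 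Consequently each translate satisfies $\nlati{r\nresdeg}{g\nBlat'} = z^s\,g\nBlat'$, and the sum inherits the pure lattice identity $\nlati{r\nresdeg}{\nBlat''} = z^s\nBlat''$.

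Applying Proposition~\ref{descent} to $\nBlat''$ produces a finitely generated locally free $\nBK$-module $\nBlat := (\nBlat'')^{G_K}$ with $\nBR{K^s}\otimes_{\nBK}\nBlat \xrightarrow{\isosign} \nBlat''$. A key intermediate point is to show $(M')^{G_K} = M$ so that $\nBlat$ actually sits in $M$. For this I would fix any $\nBK$-lattice $\nBlat_M \subset M$ and apply Proposition~\ref{descent} to the $G_K$-stable lattice $\nBR{K^s}\otimes_{\nBK}\nBlat_M \subset M'$: the inclusion $\nBlat_M \subset (\nBR{K^s}\otimes\nBlat_M)^{G_K}$ is between two $\nBK$-submodules whose base changes to $\nBR{K^s}$ agree, hence is an equality by faithful flatness. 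Taking the increasing union over $z^{-n}\nBlat_M$ (with $z\in\nBK$ being $G_K$-fixed) yields $(M')^{G_K} = M$. Therefore $\nBlat \subset M$; that $\nBKz\cdot\nBlat = M$ follows since $\nBRz{K^s}\otimes_{\nBKz}(\nBKz\cdot\nBlat) = \nBlat''[z^{-1}] = M'$ and $\nBKz \to \nBRz{K^s}$ is faithfully flat. Finally, $\nlati{r\nresdeg}{\nBlat}$ and $z^s\nBlat$ are $\nBK$-submodules of $M$ whose base changes to $\nBR{K^s}$ both equal $\nlati{r\nresdeg}{\nBlat''} = z^s\nBlat''$, so they coincide by one more application of faithful flatness.

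The main obstacle is the Galois descent identification $(M')^{G_K} = M$. Proposition~\ref{descent} is formulated for $\nBR{K^s}$-modules with a distinguished lattice, whereas $M'$ is only an $\nBRz{K^s}$-module; one is forced to apply the descent on an auxiliary $\nBK$-lattice of $M$ and then propagate the equality to $M'$ itself by inverting $z$ and taking a union. Once this is done, the rest of the argument is pure faithful flatness bookkeeping.
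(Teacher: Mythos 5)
Your proposal is correct and follows essentially the same route as the paper: faithfully flat descent for the isocrystal structure, then averaging a pure lattice over $G_K$ via Proposition~\ref{actavg}, descending it with Proposition~\ref{descent}, and verifying the purity identity by base change. You spell out two points the paper leaves implicit (that the averaged lattice remains pure because $\tau$ commutes with the Galois action, and that $(\nBRz{K^s}\otimes_{\nBKz}M)^{G_K}=M$ so the descended lattice lands in $M$), but the underlying argument is the same.
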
%
\begin{proof}%
The ring $\nBRz{K^s}$ is faithfully flat over $\nBKz$.
Invoking faithfully flat descent we conclude that $M$ is a $\nBKz$-isocrystal.

By assumption there is a lattice $\nBlat \subset \nBRz{K^s}\otimes_{\nBKz} M$
and integers $s$ and $r$, $r > 0$, such that
$\nslope = \frac{s}{r}$ and $z^s\nBlat = \nlati{r\nresdeg}{\nBlat}$.
We equip $\nBRz{K^s}\otimes_{\nBKz} M$ with an action of $G_K$ via $\nBRz{K^s}$.
The stabilizer of $\nBlat$ is open by Proposition~\ref{actavg}.
Averaging over the action of $G_K$ we are free to assume that
$\nBlat$ is $G_K$-stable.

The natural map $\nBR{K^s}\otimes_{\nBK} \nBlat^G \to \nBlat$
is an isomorphism by Proposition~\ref{descent}.
Since the ring
$\nBR{K^s}$ is faithfully flat over $\nBK$
it follows that $\nBlat^G$ is a lattice in~$M$.
Taking the base change to $\nBR{K^s}$ we conclude
that $\nlati{r\nresdeg}{\nBlat^G} = 
\nlati{r\nresdeg}{\nBlat^G} + z^s \nBlat^G = z^s\nBlat^G$.
Therefore $M$ is pure of slope $\frac{s}{r} = \nslope$.%
\end{proof}

\begin{lem}\label{tate-fontcrys}
For every $\nslope$-admissible $(W_K,\nDN)$-representation $V$
the module $M(V)$ is a pure $\nBRz{\nWInv}$-isocrystal of slope $\nslope$.%
\end{lem}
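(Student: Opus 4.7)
The plan is to reduce the claim to a base change calculation over $\nBRz{K^s}$ together with a descent argument that mimics Lemma~\ref{tate-descent}.

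First I would verify that $N(V)$ is a pure $\nBRz{\Fqbar}$-isocrystal of slope $\nslope$. Since $\nDN$ is a division ring, $V$ is free as a right $\nDN$-module of some rank $m$. Choosing a basis identifies $V \otimes_{\nDN} N$ with $N^{\oplus m}$ as an $\nBRz{\Fqbar}$-isocrystal (the $W_K$-action of Definition~\ref{tate-revfunct} is separate from the $\tau$-action on $N(V)$, which is induced from $\tau$ on $N$ and is well-defined because elements of $\nDN = \End N$ commute with $\tau$). Hence $N(V)$ is pure of slope~$\nslope$ by Proposition~\ref{simple}, and consequently so is the base change $\nBRz{K^s} \otimes_{\nBRz{\Fqbar}} N(V)$: any lattice $\nBlat \subset N(V)$ witnessing purity pulls back to such a lattice.

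Next, Lemma~\ref{tate-fontiso} yields a $W_K$-equivariant isomorphism of left $\nBRz{K^s}\{\tau\}$-modules
\begin{equation*}
\nBRz{K^s}\otimes_{\nBRz{\nWInv}} M(V) \xrightarrow{\isosign} \nBRz{K^s}\otimes_{\nBRz{\Fqbar}} N(V),
\end{equation*}
so the right-hand side realizes the base change of $M(V)$ to $\nBRz{K^s}$ as a pure $\nBRz{K^s}$-isocrystal of slope~$\nslope$. From this I would invoke the analog of Lemma~\ref{tate-descent} with $(K, G_K)$ replaced by $(\nWInv, \nWCl)$: $K^s$ is a separable closure of $\nWInv$ since every element of $K^s$ is separable over $K$, hence over $\nWInv$; $\nBRz{K^s}$ is faithfully flat over $\nBRz{\nWInv}$, so $M(V)$ is an $\nBRz{\nWInv}$-isocrystal by faithfully flat descent. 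Pick a lattice $\nBlat \subset \nBRz{K^s} \otimes_{\nBRz{\nWInv}} M(V)$ with $\nlati{r\nresdeg}{\nBlat} = z^s\nBlat$ for suitable integers $s$, $r > 0$ with $\nslope = \tfrac{s}{r}$. Equip the base change with its semilinear action of $\nWCl$; by Proposition~\ref{actavg}, the stabilizer of $\nBlat$ is open, and averaging over the finite quotient produces a $\nWCl$-stable lattice satisfying the same purity relation. Proposition~\ref{descent} (applied to $K^s/\nWInv$) then gives $\nBR{K^s}\otimes_{\nBR{\nWInv}} \nBlat^{\nWCl} \xrightarrow{\isosign} \nBlat$, and faithful flatness transports the identity $\nlati{r\nresdeg}{\nBlat} = z^s \nBlat$ down to the lattice $\nBlat^{\nWCl} \subset M(V)$, proving that $M(V)$ is pure of slope $\nslope$.

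The only nontrivial input is continuity of the action of $\nWCl$ on $\nBRz{K^s} \otimes_{\nBRz{\Fqbar}} N(V)$ in the $\nadic$-adic topology, which is exactly what Lemma~\ref{tate-aext} provides; once that is in hand the argument is a direct rerun of Lemma~\ref{tate-descent} in the slightly different setup, so the main subtlety is simply to check that every ingredient of that earlier descent argument applies equally well to the closed subgroup $\nWCl \subset G_K$ and its fixed field $\nWInv$.
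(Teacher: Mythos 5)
Your proposal is correct and follows the paper's proof exactly: the paper likewise observes that $\nBRz{K^s}\otimes_{\nBRz{\Fqbar}} N(V)$ is pure of slope $\nslope$ by construction, transports this through the isomorphism of Lemma~\ref{tate-fontiso}, and concludes by applying Lemma~\ref{tate-descent} with $K$ replaced by $\nWInv$. Your extra remarks (freeness of $V$ over the division ring $\nDN$ giving $N(V)\cong N^{\oplus m}$, and the check that $K^s$ is a separable closure of $\nWInv$) simply make explicit what the paper leaves implicit.
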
%
\begin{proof}%
The isocrystal $\nBRz{K^s}\otimes_{\nBRz{\Fqbar}} N(V)$ is pure of slope $\nslope$
by construction. So it follows from Lemma~\ref{tate-fontiso} that the base
change of $M(V)$ to $\nBRz{K^s}$ is a pure isocrystal of slope $\nslope$.
Applying Lemma~\ref{tate-descent} with $K$ replaced by $\nWInv$ we get the result.%
\end{proof}

Let $M$ be a pure isocrystal of slope $\nslope$.
Consider the evaluation morphism
\begin{equation*}
\nev\colon \nBRz{K^s}\otimes_{\nBRz{\Fqbar}} N(\nTate{M}) \to \nBRz{K^s}\otimes_{\nBKz} M, \quad
\alpha \otimes (f \otimes x) \mapsto \alpha f(x).
\end{equation*}
\begin{lem}%
The evaluation morphism is $W_K$-equivariant with respect to the diagonal action
on the source and the action via $\nBRz{K^s}$ on the target.%
\end{lem}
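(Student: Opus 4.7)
The plan is a direct unwinding of the two actions involved, using the formulas in Definitions~\ref{dfnweilact} and \ref{tate-revfunct}. The statement has no real content beyond compatibility of signs, so there will be no serious obstacle — but one must be careful that the shift $\tau^{\pm\ord\gamma}$ on the source cancels against the corresponding shift inside the Weil action on $\nTate{M}$.

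First I would verify that $\nev$ is well-defined: every $f \in \nTate{M}$ is $\nBRz{\Fqbar}$-linear and compatible with $\tau$, and the right action of $\nDN$ on $\nTate{M}$ is by precomposition while the left action of $\nDN$ on $N$ is by evaluation, so the formula $\alpha\otimes f\otimes x \mapsto \alpha f(x)$ descends both past the tensor over $\nBRz{\Fqbar}$ and past the tensor over $\nDN$.

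Next I would fix $\gamma \in W_K$, $\alpha \in \nBRz{K^s}$, $f\in\nTate{M}$, and $x\in N$, and compute. By Definition~\ref{tate-revfunct} the diagonal action sends
\begin{equation*}
\alpha \otimes (f \otimes x) \longmapsto \gamma(\alpha) \otimes \bigl(\gamma(f) \otimes \tau^{-\ord\gamma} x\bigr),
\end{equation*}
so applying $\nev$ gives $\gamma(\alpha) \cdot \gamma(f)\bigl(\tau^{-\ord\gamma} x\bigr)$. Invoking Definition~\ref{dfnweilact} with $y = \tau^{-\ord\gamma} x$, we get $\gamma(f)(\tau^{-\ord\gamma} x) = \gamma\bigl(f(\tau^{\ord\gamma}\tau^{-\ord\gamma} x)\bigr) = \gamma(f(x))$, so the shifts cancel and the whole expression equals $\gamma(\alpha)\cdot\gamma(f(x))$.

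On the other hand the action on the target is via $\nBRz{K^s}$ and is therefore semi-linear, so $\gamma(\nev(\alpha \otimes f\otimes x)) = \gamma(\alpha\cdot f(x)) = \gamma(\alpha)\cdot\gamma(f(x))$. The two expressions agree, which is the desired $W_K$-equivariance. The main (very mild) subtlety is tracking that the sign convention in Definition~\ref{tate-revfunct} puts $\tau^{-\ord\gamma}$ on the $N$-factor precisely so as to cancel the $\tau^{\ord\gamma}$ inside Definition~\ref{dfnweilact} — this is exactly why those two definitions were chosen the way they were.
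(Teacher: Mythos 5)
Your computation is correct and is essentially identical to the paper's proof: both unwind the diagonal action via Definition~\ref{tate-revfunct}, use Definition~\ref{dfnweilact} to see that $\gamma(f)(\tau^{-\ord\gamma}x)=\gamma(f(x))$, and compare with the semi-linear action on the target. The extra remark on well-definedness of $\nev$ is fine but not needed for the equivariance claim.
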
%
\begin{proof}%
Recall that $N(\nTate{M}) = \nTate{M}\otimes_{\nDN} N$.
Let $f \otimes x$ be an element of this tensor product
and let $\alpha \in \nBRz{K^s}$. Pick an element $\gamma\in W_K$.
By definition
\begin{equation*}
\gamma(\alpha\otimes(f\otimes x)) = \gamma(\alpha) \otimes (\gamma(f) \otimes \tau^{-\ord\gamma} x).
\end{equation*}
Now $\gamma(f)(\tau^{-\ord\gamma} x) = \gamma(f(x))$ by Definition~\ref{dfnweilact}
and so
\begin{equation*}
\nev(\gamma(\alpha \otimes (f \otimes x))) =
\gamma(\alpha) \cdot \gamma(f(x)) = \gamma(\nev(\alpha\otimes(f\otimes x)))
\end{equation*}
as desired.
\end{proof}

Passing to the invariants of the evaluation morphism we obtain a morphism
\begin{equation*}
e\colon M(\nTate{M}) \to \nBRz{\nWInv}\otimes_{\nBKz} M.
\end{equation*}

\begin{lem}\label{tate-eiso}%
For every pure $\nBKz$-isocrystal $M$ of slope $\nslope$
the natural morphism
$e\colon M(\nTate{M}) \xrightarrow{\isosign} \nBRz{\nWInv}\otimes_{\nBKz} M$
is an isomorphism.%
\end{lem}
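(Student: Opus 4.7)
The plan is to first show that the $W_K$-equivariant evaluation morphism
\begin{equation*}
\nev\colon \nBRz{K^s}\otimes_{\nBRz{\Fqbar}} N(\nTate{M})\to \nBRz{K^s}\otimes_{\nBKz} M
\end{equation*}
is itself an isomorphism; the claim for $e$ will then follow from Lemma~\ref{tate-fontiso} applied with $V = \nTate{M}$, since that lemma identifies the base change of $e$ along $\nBRz{\nWInv}\to\nBRz{K^s}$ with $\nev$, and the latter ring map is faithfully flat.

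To analyse $\nev$ I would set $N' = \nBRz{K^s}\otimes_{\nBRz{\Fqbar}} N$ and $M' = \nBRz{K^s}\otimes_{\nBKz} M$, both of which are pure $\nBRz{K^s}$-isocrystals of slope $\nslope$. The tensor-hom adjunction along $\nBRz{\Fqbar}\{\tau\}\to\nBRz{K^s}\{\tau\}$ rewrites
\begin{equation*}
\nTate{M}=\Hom_{\nBRz{\Fqbar}\{\tau\}}(N,\,M')=\Hom_{\nBRz{K^s}\{\tau\}}(N',\,M'),
\end{equation*}
and unwinds $\nev$ into the classical $\Hom$-tensor evaluation $\Hom(N',\,M')\otimes_{D'}N'\to M'$, $f\otimes y\mapsto f(y)$, where $D' = \End_{\nBRz{K^s}\{\tau\}}(N')$. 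The same adjunction exhibits a ring embedding $\nDN\hookrightarrow D'$ via base change of endomorphisms; since both sides are central division algebras over $\ncoef$ of the same dimension $r^2$ (Proposition~\ref{purehom}, with $r$ the denominator of $\nslope$ in lowest terms, together with Proposition~\ref{simple}), this embedding is an isomorphism.

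Now $N'$ has rank $r$, so it is simple by Proposition~\ref{simpleiso}. Since $K^s$ is separably closed, Proposition~\ref{puredecomp} gives a decomposition $M'\cong (N')^{\oplus m}$ for some $m\geqslant 0$. Under this decomposition the evaluation map $\Hom(N',\,M')\otimes_{\nDN} N'\to M'$ becomes the obvious isomorphism $\nDN^{\oplus m}\otimes_{\nDN} N'\xrightarrow{\isosign}(N')^{\oplus m}$; this is the familiar Morita-theoretic fact that evaluation is an isomorphism on direct sums of copies of a simple object whose endomorphism ring is a division algebra. Hence $\nev$ is an isomorphism.

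The main obstacle is justifying the descent from $\nev$ to $e$: one needs $\nBRz{\nWInv}\to\nBRz{K^s}$ to be faithfully flat. After unwinding Definition~\ref{defetring} and inverting $z$ this reduces, via the product decomposition of Lemma~\ref{bRidem}, to the classical fact that $L[[z]]\to L'[[z]]$ is faithfully flat for any field extension $L\subset L'$, applied to $L=\nWInv$ and $L'=K^s$.
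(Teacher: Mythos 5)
Your proof is correct and takes essentially the same route as the paper: the core step (showing the evaluation morphism $\nev$ is an isomorphism by decomposing $\nBRz{K^s}\otimes_{\nBKz}M$ into copies of the base change of $N$ via Proposition~\ref{puredecomp} and comparing dimensions of the relevant Hom-spaces via Propositions~\ref{purehom} and \ref{simple}) is the paper's argument. The only difference is in how the claim about $e$ is then extracted: the paper passes to $W_K$-invariants of the equivariant isomorphism $\nev$, while you invoke Lemma~\ref{tate-fontiso} together with faithfully flat descent along $\nBRz{\nWInv}\to\nBRz{K^s}$; both deductions are valid.
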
%
\begin{proof}%
We shall prove that the evaluation morphism is an isomorphism.
Set $N^s = \nBRz{K^s}\otimes_{\nBRz{\Fqbar}} N(\nTate{M})$
and
$M^s = \nBRz{K^s}\otimes_{\nBKz} M$.
The evaluation morphism
is natural not only in $M$ but also in $M^s$.
Proposition~\ref{puredecomp} shows that $M^s$
is a direct sum of copies of $N^s$. So it is enough
to treat the case $M^s = N^s$.
Recall that $\nDN = \Hom_{\nBRz{\Fqbar}\{\tau\}}(N,N)$.
The natural map
$\nDN \to \Hom_{\nBRz{\Fqbar}\{\tau\}}(N,\,N^s)$
is injective.
Proposition~\ref{purehom} implies that its source and target
are of the same dimension over $\ncoef$.
Hence this map is an isomorphism.
As a consequence the evaluation map is an isomorphism
in the case $M^s = N^s$, and the result follows.%
\end{proof}

Let $V$ be a $(W_K,\nDN)$-representation.
Recall that
\begin{equation*}
N(V) = V\otimes_{\nDN} N, \quad
M(V) = \big(\nBRz{K^s}\otimes_{\nBRz{\Fqbar}} N(V)\big)^{W_K}.
\end{equation*}
\begin{lem}\label{tate-mubij}%
For every $(W_K,\nDN)$-representation $V$
the morphism
\begin{equation*}
\ncoev\colon V \xrightarrow{\isosign} \Hom_{\nBRz{\Fqbar}\{\tau\}}(N,\:\nBRz{K^s}\otimes_{\nBRz{\Fqbar}} N(V)), \quad
v \mapsto \big(x \mapsto 1 \otimes (v \otimes x)\big)
\end{equation*}
is a bijection.%
\end{lem}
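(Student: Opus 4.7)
The plan is to establish bijectivity by verifying that $\ncoev$ is $\ncoef$-linear and injective, and then observing that its source and target have the same finite $\ncoef$-dimension. Since $\nDN$ is a division algebra by Proposition~\ref{simple}, the right $\nDN$-module $V$ is free, say $V \cong \nDN^n$, and under this identification $N(V) = V \otimes_{\nDN} N$ becomes $N^n$ as a left $\nBRz{\Fqbar}\{\tau\}$-module.

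For injectivity I would first observe that $\nBRz{K^s}$ is a free (hence faithfully flat) $\nBRz{\Fqbar}$-module: both rings decompose as finite products indexed by the factors of $\Fqbar \otimes_{\Fq} \ncons$, each factor of $\nBRz{\Fqbar}$ is the Laurent-series field $\rF{\Fqbar}$, and the corresponding factor of $\nBRz{K^s}$ is a $\rF{\Fqbar}$-vector space. Consequently the canonical map $N(V) \to \nBRz{K^s}\otimes_{\nBRz{\Fqbar}} N(V)$ is injective. If $\ncoev(v) = 0$, then $v \otimes x = 0$ in $N(V) \cong N^n$ for every $x \in N$; writing $v = (a_1,\dotsc,a_n)$ with $a_i \in \nDN$, each $a_i$ annihilates all of $N$, and since $\nDN$ is a division algebra with $N \neq 0$ this forces $v = 0$.

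For the dimension count, Proposition~\ref{simple} gives $\dim_\ncoef \nDN = r^2$ where $r$ is the denominator of $\nslope$ in lowest terms, so $\dim_\ncoef V = n r^2$. On the other side, the scalar-extension adjunction yields
\begin{equation*}
\Hom_{\nBRz{\Fqbar}\{\tau\}}(N,\,\nBRz{K^s}\otimes_{\nBRz{\Fqbar}} N(V)) \;=\; \Hom_{\nBRz{K^s}\{\tau\}}\bigl(\nBRz{K^s}\otimes_{\nBRz{\Fqbar}} N,\;\nBRz{K^s}\otimes_{\nBRz{\Fqbar}} N(V)\bigr),
\end{equation*}
and both arguments of the right-hand Hom are pure $\nBRz{K^s}$-isocrystals of slope $\nslope$ with respective ranks $r$ and $rn$. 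Proposition~\ref{purehom}, applicable over the separably closed field $K^s$, then gives $\ncoef$-dimension $r \cdot rn = nr^2 = \dim_\ncoef V$. An injective $\ncoef$-linear map of $\ncoef$-vector spaces of equal finite dimension is bijective, completing the argument.

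The main technical input is the freeness of $\nBRz{K^s}$ over $\nBRz{\Fqbar}$, which underlies injectivity; otherwise the lemma is essentially a packaging of Propositions~\ref{simple} and~\ref{purehom} once one has the canonical identification $\nDN^n \otimes_{\nDN} N = N^n$.
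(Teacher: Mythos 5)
Your argument is correct and is essentially the paper's: the paper reduces by naturality in $V$ to the case $V=\nDN$ (your free decomposition $V\cong\nDN^n$), and the ``clear'' base case is exactly your injectivity-plus-dimension-count via Proposition~\ref{purehom}, the injectivity resting as you say on $\nBRz{K^s}$ being faithfully flat over $\nBRz{\Fqbar}$. No gaps.
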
%
\begin{proof}%
Note that $\ncoev$ is natural in $V$ as a right $\nDN$-module.
The claim then follows from the case $V = \nDN$ which is clear.%
\end{proof}

The Tate module of the isocrystal $M(V)$ is
\begin{equation*}
\nTate{M(V)} = \Hom_{\nBRz{\Fqbar}\{\tau\}}(N,\:\nBRz{K^s}\otimes_{\nBRz{\nWInv}}\!M(V))
\end{equation*}
So the canonical morphism
$\nBRz{K^s}\otimes_{\nBRz{\nWInv}} M(V) \to \nBRz{K^s}\otimes_{\nBRz{\Fqbar}} N(V)$
induces a morphism
\begin{equation*}
\nTate{M(V)} \to \Hom_{\nBRz{\Fqbar\{\tau\}}}(N, \:\nBRz{K^s}\otimes_{\nBRz{\Fqbar}} N(V)).
\end{equation*}
Taking its composition with $\ncoev^{-1}$ we get a natural morphism
\begin{equation*}
g\colon \nTate{M(V)} \to V.%
\end{equation*}

\begin{lem}\label{tate-fiso}%
For every $\nslope$-admissible $(W_K,\nDN)$-representation $V$
the natural morphism $g\colon \nTate{M(V)} \xrightarrow{\isosign} V$ is an isomorphism
of $(W_K,\nDN)$-representations.%
\end{lem}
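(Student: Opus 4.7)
The plan is to observe that the morphism $g$ is literally built as a composition of two maps, each of which has already been shown to be an isomorphism by earlier lemmas. The essential work is then to check that $g$ is also $W_K$-equivariant; both the bijectivity and the equivariance reduce to formal diagram chases once the component maps are pinned down.

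First I would factor $g$ explicitly. By construction, $g$ is the composition of
\begin{equation*}
\nTate{M(V)} = \Hom_{\nBRz{\Fqbar}\{\tau\}}(N,\,\nBRz{K^s}\otimes_{\nBRz{\nWInv}} M(V)) \longrightarrow \Hom_{\nBRz{\Fqbar}\{\tau\}}(N,\,\nBRz{K^s}\otimes_{\nBRz{\Fqbar}} N(V))
\end{equation*}
induced by the natural map $\nBRz{K^s}\otimes_{\nBRz{\nWInv}} M(V) \to \nBRz{K^s}\otimes_{\nBRz{\Fqbar}} N(V)$, followed by the inverse of the map $\ncoev$ from Lemma~\ref{tate-mubij}. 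The first of these is an isomorphism because Lemma~\ref{tate-fontiso} identifies the underlying morphism of left $\nBRz{K^s}\{\tau\}$-modules as an isomorphism, and the second is an isomorphism by Lemma~\ref{tate-mubij}. Thus $g$ is a bijection of right $\nDN$-modules.

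It remains to verify $W_K$-equivariance. The natural map $\nBRz{K^s}\otimes_{\nBRz{\nWInv}} M(V) \to \nBRz{K^s}\otimes_{\nBRz{\Fqbar}} N(V)$ is $W_K$-equivariant: on the source $W_K$ acts only through $\nBRz{K^s}$ (as $M(V)$ consists of $W_K$-invariants), and the image of an element of $M(V)$ is fixed by $W_K$ in the diagonal action on the target. Applying Definition~\ref{dfnweilact} on both sides shows that the induced map on Hom~sets intertwines the two Weil actions. The remaining point is to check that $\ncoev$ is $W_K$-equivariant, which is a short direct computation: for $\gamma \in W_K$ and $v \in V$,
\begin{equation*}
\gamma(\ncoev(v))(x) = \gamma\bigl(1 \otimes (v \otimes \tau^{\ord\gamma}x)\bigr) = 1 \otimes (\gamma(v) \otimes \tau^{-\ord\gamma}\tau^{\ord\gamma} x) = \ncoev(\gamma v)(x),
\end{equation*}
using the definition of the $W_K$-action on $N(V)$ from Definition~\ref{tate-revfunct} and the fact that $\gamma$ acts trivially on $1 \in \nBRz{K^s}$.

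I do not expect any substantive obstacle here; the one place that could hide an inconsistency is the sign convention for $\ord\gamma$ in Definition~\ref{dfnweilact} versus Definition~\ref{tate-revfunct}, so the main step to double-check is the cancellation $\tau^{-\ord\gamma}\tau^{\ord\gamma}$ in the equivariance calculation for $\ncoev$.
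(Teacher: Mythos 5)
Your proposal is correct and follows essentially the same route as the paper: bijectivity from Lemma~\ref{tate-fontiso} together with Lemma~\ref{tate-mubij}, and $W_K$-equivariance by the computation $\gamma\bigl(1\otimes(v\otimes\tau^{\ord\gamma}x)\bigr)=1\otimes(\gamma(v)\otimes x)$, which is exactly the cancellation the paper performs via Definition~\ref{tate-revfunct}. The sign conventions do cancel as you suspected, so there is nothing further to check.
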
%
\begin{proof}%
The canonical map
$\nBRz{K^s}\otimes_{\nBRz{\nWInv}} M(V) \xrightarrow{\isosign} \nBRz{K^s}\otimes_{\nBRz{\Fqbar}} N(V)$
is an isomorphism by Lemma~\ref{tate-fontiso}.
Hence $g$ is a bijection.
Let us show that $g$ is $W_K$-equivariant.
Consulting Definition~\ref{dfnweilact}
we conclude that
induced action of $W_K$ on the Hom set
\begin{equation*}
\Hom_{\nBRz{\Fqbar}\{\tau\}}(N, \: \nBRz{K^s}\otimes_{\nBRz{\Fqbar}} N(V)).
\end{equation*}
has the following description.
Let $f\colon N \to \nBRz{K^s}\otimes_{\nBRz{\Fqbar}} N(V)$ be a morphism
and let $\gamma \in W_K$.
Then $\gamma(f)$ is defined by the formula
\begin{equation*}
x \mapsto \gamma\big(f(\tau^{\ord\gamma} x)\big).
\end{equation*}
By Lemma~\ref{tate-mubij} there is a unique element $v \in V$
such that $f(x) = 1 \otimes (v \otimes x)$ for all $x \in N$.
Hence
\begin{equation*}
\gamma(f)(x) = 1 \otimes \gamma(v \otimes \tau^{\ord\gamma} x).
\end{equation*}
By Definition~\ref{tate-revfunct} we have
$\gamma(v \otimes \tau^{\ord\gamma} x) = \gamma(v) \otimes x$
and so $g$ is $W_K$-equivariant.
In the same way one shows that $g$ is $\nDN$-linear, and the claim follows.%
\end{proof}

\begin{proof}[Proof of Theorem~\ref{tate}]
For every pure isocrystal $M$ of slope $\nslope$
the Tate module $\nTate{M}$ is a $\nslope$-admissible
$(W_K,\nDN)$-representation by Lemma~\ref{tate-wdslope}.

Assume that $W_K$ is dense in $G_K$.
Then $\nWCl = G_K$ and $\nWInv = K$.
The natural morphism $e\colon M(\nTate{M}) \to M$ is an isomorphism by Lemma~\ref{tate-eiso}.
Conversely, for every $\nslope$-admissible $(W_K,\nDN)$-representation $V$
the left $\nBKz\{\tau\}$-module $M(V)$ is a pure isocrystal of slope $\nslope$
by Lemma~\ref{tate-fontcrys}.
Lemma~\ref{tate-fiso} shows that the natural morphism $g\colon \nTate{M(V)} \to V$
is an isomorphism, and the result follows.%
\end{proof}

\subsection{Admissibility for rank~1}

Consider a $(W_K,\nDN)$-representation $V$ of rank $1$ over $\nDN$.
Let us explain how to test the $\nslope$-admissibilty condition for $V$ 
without a recourse to lattices.

Let $\nDval\colon \nDN^\times \to \bQ$ be the unique
extension of the normalized $\nadic$-adic valuation of $\ncoef$.
Pick a nonzero element $v \in V$ and
let $\rho\colon W_K \to \nDN^\times$ be the homomorphism
defined by the rule $v \cdot \rho(\gamma) = \gamma\cdot v$.
By construction the homomorphism
\begin{equation*}
\ord_V\colon W_K \to \bQ, \quad
\gamma \mapsto \nresdeg \cdot \nDval(\rho(\gamma))
\end{equation*}
is independent of the choice of~$v$.

\begin{prp}\label{rankoneadm}%
The representation $V$ is $\nslope$-admissible if and only if
\begin{equation*}
\ord_V(\gamma) = \nslope \cdot \ord\gamma
\end{equation*}
for all $\gamma\in W_K$.%
\end{prp}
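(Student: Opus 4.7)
The plan is to reduce to $\nOD$-stable lattices, classify them (they form a one-parameter family), and translate the admissibility condition into an equality of valuations.

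First, by Lemma~\ref{admequiv} I may test admissibility using lattices $\Lambda \subset V$ that are $\nOD$-submodules. Since $V$ is free of rank one over $\nDN$ with generator $v$, every such lattice has the form $\Lambda = v J$ for a nonzero fractional right $\nOD$-ideal $J \subset \nDN$. The maximal order $\nOD$ in the local central division algebra $\nDN$ is a noncommutative discrete valuation ring whose maximal two-sided ideal is generated by a uniformizer $\pi_D$ satisfying $\pi_D \nOD = \nOD \pi_D$; indeed, the relation $\nDval(xy) = \nDval(x) + \nDval(y)$ implies that $a \nOD a^{-1} = \nOD$ for every $a \in \nDN^\times$. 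Consequently every nonzero fractional right $\nOD$-ideal equals $\pi_D^n \nOD$ for some $n \in \bZ$, so $\Lambda = v \pi_D^n \nOD$.

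Next I fix integers $s$ and $r$ with $r > 0$ and $\nslope = s/r$, and examine the condition $\gamma^{r\nresdeg}\Lambda = \Lambda z^{s\ord\gamma}$. Using that $z$ is central in $\nDN$ and that $\pi_D \nOD = \nOD \pi_D$, both sides factor through $v \pi_D^n$, so the condition reduces to $\rho(\gamma)^{r\nresdeg} \nOD = z^{s\ord\gamma} \nOD$, independent of $n$. As $\nOD$ is local with unit group $\{x : \nDval(x) = 0\}$ and $\nDval(z) = 1$, this translates to the numerical identity
\begin{equation*}
r \nresdeg \cdot \nDval(\rho(\gamma)) = s \cdot \ord\gamma
\end{equation*}
for every $\gamma \in W_K$.

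Finally, if $V$ is $\nslope$-admissible, dividing this identity by $r$ yields $\ord_V(\gamma) = \nslope \cdot \ord\gamma$. For the converse I write $\nslope = s/r$ in lowest terms with $r > 0$, choose $\Lambda = v \nOD$, and multiply the hypothesis by $r$ to recover the admissibility equation for this choice of data. No serious obstacle is expected; the main (and only) technical input is the structure theory of maximal orders in local central division algebras, which is standard.
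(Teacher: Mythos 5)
Your proof is correct and follows essentially the same route as the paper: reduce via Lemma~\ref{admequiv} to $\nOD$-stable lattices, observe these are all of the form $v'\cdot\nOD$ (you make explicit, via $\pi_D^n\nOD=\nOD\pi_D^n$, the one-parameter classification that the paper leaves implicit), and translate the lattice equation into the valuation identity using $\nOD=\{0\}\cup\{x:\nDval(x)\geqslant 0\}$. No gaps.
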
%
\begin{proof}%
Write $\nslope = \frac{s}{r}$, $r > 0$,
and consider the lattice $\Lambda = v \cdot \nOD$.
We call the triple $(s,r,\Lambda)$ admissible if it
satisfies the condition of Definition~\ref{dfnwdslope}.

For each $\gamma \in W_K$ we have
$\gamma^{r\nresdeg} \Lambda = v \cdot \rho(\gamma)^{r\nresdeg} \nOD$.
So the triple $(s,r,\Lambda)$ is admissible
if and only if $\rho(\gamma)^{r\nresdeg}\nOD = \nOD z^{s\ord\gamma}$
for all $\gamma$.
Now
\begin{equation*}
\nOD = \{0\} \cup \{\,x \in\nDN^\times\mid\nDval(x)\geqslant 0\,\}
\end{equation*}
by Theorem 12.8 of \cite[Chapter~3]{reiner}.
Hence the admissibility condition holds
if and only if $r\nresdeg\,\nDval(\rho(\gamma)) = s \ord\gamma$ for all $\gamma$,
or in other words $\ord_V = \nslope \cdot \ord$.

At the same time Lemma~\ref{admequiv} shows that $V$ is $\nslope$-admissible
if and only if the condition of Definition~\ref{dfnwdslope} holds for
a lattice of the form $v \cdot \nOD$, $v \in V\backslash\{0\}$.
Comparing with the discussion above we get the result.%
\end{proof}


\section{Formal Drinfeld motives}

Fix a \emph{perfect} field $\nKyu$ over $\Fq$.
We shall prove that Tate modules of $\nBRz{\nKyu}$-isocrystals generalize
a construction of J.-K.~Yu \cite[Section~2]{yu}.

\subsection{Definition and properties}\label{sec:fdm}


\begin{dfn}%
We denote by
$\rtauF{\nKyu}$ the skew field of Laurent series in $\tau^{-1}$
with finite principal parts.
The multiplication in $\rtauF{\nKyu}$ is twisted by the $q$-Frobenius:
for all $\alpha \in \nKyu$ one has $\alpha \tau^{-1} = \tau^{-1} \alpha^q$.
The subring of power series in $\tau^{-1}$ is denoted by $\rtauO{\nKyu}$.%
\end{dfn}

The $\tau^{-1}$-adic topology on $\rtauF{\nKyu}$ is
given by the ring of definition $\rtauO{\nKyu}$ and the ideal of definition $\rtauO{\nKyu}\tau^{-1}$.
This induces a canonical topology on every left $\rtauF{\nKyu}$-vector space of dimension $1$.

\begin{dfn}%
A \emph{formal Drinfeld motive} over $\nKyu$ is
a left $\rtauF{\nKyu}\otimes_{\Fq}\ncoef$-module $\nV$ such that
\begin{itemize}
\item $\dim_{\rtauF{\nKyu}} \nV = 1$,

\item the action of $\ncoef$ on $\nV$ is continuous in the $\tau^{-1}$-adic topology.%
\end{itemize}%
A \emph{morphism} of formal Drinfeld motives is a morphism of left $\rtauF{\nKyu}\otimes_{\Fq} \ncoef$-modules.%
\end{dfn}

\begin{dfn}%
For each continuous morphism $\nphi\colon\ncoef\to\rtauF{\nKyu}$ 
we denote by $\nV(\nphi)$ the formal Drinfeld motive $\rtauF{\nKyu}$
with the action
\begin{equation*}
(\alpha\otimes x) \cdot \nv = \alpha \nv \nphi(x)
\end{equation*}
for all
$(\alpha,x,\nv) \in  \rtauF{\nKyu}\times\ncoef\times\nV(\nphi)$.
We say that $\nV(\nphi)$ is \emph{defined by $\nphi$}.
This construction gives all the formal Drinfeld motives up to isomorphism.
\end{dfn}%

Formal Drinfeld motives are related to formal $\ncoint$-modules of Drinfeld \cite[\S1]{drinfeld-ell},
see \cite[Section 2.3]{zywina-satotate}.
We shall use them to bridge J.-K.~Yu's construction of $\infty$-adic
Galois representations \cite{yu} and the theory of isocrystals.

As the name suggests, one can attach a formal Drinfeld motive to a Drinfeld module,
see the proof of Theorem~\ref{tateyu}.
The word ``motive'' is used to emphasise the fact that this construction is contravariant.%


Let
$\nvaltau\colon\rtauF{\nKyu}^\times \to \bZ$ be the normalized $\tau^{-1}$-adic valuation. 

\begin{lem}\label{laurentord}%
Let $\nV$ be a formal Drinfeld motive.
Fix a nonzero element $\nv\in\nV$ 
and let
$\nphi\colon\ncoef\to\rtauF{\nKyu}$
be the map such that
$(\nphi(x)\otimes 1) \cdot \nv = (1\otimes x) \cdot \nv$ for all $x\in\ncoef$.
Then
the map $\ncoef^\times \to \bZ$, $x \mapsto \nvaltau(\nphi(x))$
is a positive integer multiple of the normalized $\nadic$-adic valuation
and is independent of the choice of $\nv$.%
%
\end{lem}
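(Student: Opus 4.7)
The plan is to identify $\phi$ as a continuous ring homomorphism $\nKyu$-algebra and then show that the composition $\nvaltau\circ\nphi\colon \ncoef^\times \to \bZ$ is a positive multiple of the normalized $\nadic$-adic valuation by a standard local-field argument.

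First I would verify that $\nphi$ is a well-defined ring homomorphism. Since $V$ has dimension~$1$ over $\rtauF{\nKyu}$ the nonzero element $\nv$ is a basis, so the equation $(\nphi(x)\otimes 1)\cdot\nv = (1\otimes x)\cdot\nv$ determines $\nphi(x)\in\rtauF{\nKyu}$ uniquely. Additivity and multiplicativity then follow from the corresponding properties of the module action, using that $(1\otimes x)$ and $(\alpha\otimes 1)$ commute in $\rtauF{\nKyu}\otimes_{\Fq}\ncoef$. As $\nphi(1)=1$ and $\ncoef$ is a field, $\nphi$ is injective, hence $\nphi(\ncoef^\times)\subset\rtauF{\nKyu}^\times$.

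Next I would note that $\nvaltau$ is a group homomorphism $\rtauF{\nKyu}^\times\to\bZ$: for two nonzero series with leading terms $a_N\tau^{-N}$ and $b_M\tau^{-M}$ the leading term of their product is $a_N b_M^{q^{-N}}\tau^{-N-M}$, which is nonzero because $\nKyu$ is perfect and $a_N,b_M\neq 0$. So $\psi = \nvaltau\circ\nphi\colon\ncoef^\times\to\bZ$ is a group homomorphism. Continuity of $\nphi$ in the respective adic topologies means that for every $n\geqslant 0$ there is $m\geqslant 0$ with $\nphi(\ncomax^m)\subset\tau^{-n}\rtauO{\nKyu}$, so $\psi(x)\to+\infty$ as $\nval_{\ncomax}(x)\to+\infty$. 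In particular for a uniformizer $\pi\in\ncoef$ the values $\psi(\pi^n)=n\psi(\pi)$ tend to $+\infty$, forcing $\psi(\pi)>0$. Moreover $\psi$ is bounded on the compact group $\ncoint^\times$ (a neighbourhood of $1$ maps into $\rtauO{\nKyu}^\times$, hence to~$0$, and $\ncoint^\times$ is a finite union of such cosets), so $\psi(\ncoint^\times)$ is a finite subgroup of $\bZ$, i.e.\ zero. The usual decomposition $\ncoef^\times = \pi^{\bZ}\times\ncoint^\times$ then yields $\psi = \psi(\pi)\cdot\nval_{\ncomax}$, a positive integer multiple of the normalized $\nadic$-adic valuation.

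For the independence of the choice of $\nv$, any other nonzero $\nv'\in V$ has the form $\nv'=\mu\nv$ with $\mu\in\rtauF{\nKyu}^\times$; the corresponding map $\nphi'$ then satisfies $\nphi'(x)\mu = \mu\nphi(x)$, that is $\nphi'(x)=\mu\nphi(x)\mu^{-1}$, and $\nvaltau(\mu\nphi(x)\mu^{-1})=\nvaltau(\nphi(x))$ since $\nvaltau$ is a homomorphism to an abelian group.

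The only real point requiring care is the step where continuity is converted into ``$\psi(\ncoint^\times)=0$''. The most robust formulation uses only that $\nphi$ sends some $\ncomax$-adic neighbourhood of $1$ into $\rtauO{\nKyu}^\times$ (which has $\nvaltau=0$), together with the fact that $\ncoint^\times$ is covered by finitely many translates of such a neighbourhood; with this in hand the remaining computation is routine.
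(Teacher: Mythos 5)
Your proof is correct and follows essentially the same route as the paper's: continuity of $\nphi$ forces $\nvaltau\circ\nphi$ to be strictly positive on $\ncomax\setminus\{0\}$, a finite-subgroup-of-$\bZ$ argument kills it on $\ncoint^\times$, and conjugation-invariance of $\nvaltau$ gives independence of $\nv$. The only (harmless) variation is in the units step: the paper factors $\ncoint^\times = \ncons^\times(1+\ncomax)$ and uses finiteness of $\ncons^\times$, whereas you quotient by the open subgroup $1+\ncomax^m$ on which $\psi$ vanishes.
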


\begin{dfn}%
The number $\nht = \frac{1}{\nresdeg}\cdot \nvaltau(\nphi(z))$ is called the \emph{height} of $\nV$.
In a moment we shall see that it is always an integer.%
\end{dfn}


\begin{proof}[Proof of Lemma~\ref{laurentord}]%
Since the action of $\ncoef$ on $\nV$ is continuous in the $\tau^{-1}$-adic topology
it follows that $\nphi$ is continuous.
In particular
$\nphi(\ncomax^n)$ is contained in $\rtauO{\nKyu}\tau^{-1}$
for some $n \gg 0$.
This forces the $\tau^{-1}$-adic valuation of every nonzero element
of $\nphi(\ncomax)$ to be positive.

Let $\ncons$ be the algebraic closure of $\Fq$ in $\ncoef$.
The subgroup $\nvaltau(\phi(\ncons^\times)) \subset \bZ$
is finite and so is equal to $\{0\}$.
Hence $\nvaltau(\nphi(\ncoint^\times)) = \{0\}$.
We conclude that the map $x \mapsto \nvaltau(\nphi(x))$ is a positive
integer multiple of the normalized $\nadic$-adic valuation.
A different choice of $\nv$ changes $\nphi$ by conjugation.
This does not affect the value of $\nvaltau$, so the result follows.%
\end{proof}

\begin{prp}\label{laurentdm}%
Let $\nV$ be a formal Drinfeld motive of height $\nht$.%
\begin{enumerate}%
\item\label{laurentdm-struct}%
The $\nKyu\otimes_{\Fq} \ncoef$-module structure on $\nV$ extends uniquely
to a continuous $\nBRz{\nKyu}$-module structure.

\item\label{laurentdm-dm}%
Multiplication by $\tau$ makes $\nV$ into
a pure isocrystal of rank~$\nht$ and slope $-\frac{1}{\nht}$.
In particular the height $\nht$ is a positivie integer.%

\item\label{laurentdm-top}%
The $\nadic$-adic topology on $\nV$ coincides with the $\tau^{-1}$-adic topology.%
\end{enumerate}%
\end{prp}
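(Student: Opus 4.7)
The plan is to fix a nonzero basis vector $v\in\nV$ and use it to identify $\nV$ with $\rtauF{\nKyu}$ via $v\mapsto 1$; then $\rtauF{\nKyu}$ acts on $\nV$ by left multiplication while $\ncoef$ acts on the right via the continuous ring homomorphism $\nphi\colon\ncoef\to\rtauF{\nKyu}$ defining $\nV=\nV(\nphi)$. By Lemma~\ref{laurentord}, $\nphi(\ncomax)\subset\tau^{-1}\rtauO{\nKyu}$ and $\nvaltau(\nphi(z))=\nht\nresdeg$, so we may write $\nphi(z)=\tau^{-\nht\nresdeg}u$ with $u\in\rtauO{\nKyu}^\times$.

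For part~(1), for each fixed $w\in\nV$ the map $\nKyu\otimes_{\Fq}\ncoint\to\nV$ given by $\beta\mapsto\beta\cdot w$ is continuous from the $\nadic$-adic topology on the source to the $\tau^{-1}$-adic topology on the target: since $\nvaltau$ is multiplicative on $\rtauF{\nKyu}$ and vanishes on $\nKyu$, continuity of $\nphi$ forces $\nKyu\otimes_{\Fq}\ncomax^n$ to be mapped into $\tau^{-N(n)}\rtauO{\nKyu}$ with $N(n)\to\infty$. Completeness of $\nV$ in the $\tau^{-1}$-adic topology then allows this map to be extended uniquely by continuity to $\nB=\nBR{\nKyu}$, yielding a continuous $\nB$-action on $\nV$ which restricts to the original $\nKyu\otimes_{\Fq}\ncoint$-action. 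Inverting $z$ gives the $\nBKz$-module structure, unique by density.

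For part~(2), I take the lattice $\nBlat=\rtauO{\nKyu}\cdot v=\rtauO{\nKyu}\subset\rtauF{\nKyu}=\nV$. This is a $\nB$-submodule because $\nKyu$ acts by left multiplication as a subring of $\rtauO{\nKyu}$, and $\ncoint$ acts by right multiplication by $\nphi(\ncoint)\subset\rtauO{\nKyu}$. A direct calculation in $\rtauF{\nKyu}$, using perfectness of $\nKyu$ through the commutation rule $\tau\alpha=\alpha^q\tau$ and the formula $\nphi(z^{-1})=u^{-1}\tau^{\nht\nresdeg}$, establishes
\begin{equation*}
\tau^{\nht\nresdeg}\nBlat=\{w\in\rtauF{\nKyu}:\nvaltau(w)\geq-\nht\nresdeg\}=z^{-1}\nBlat,
\end{equation*}
which exhibits $\nV$ as a pure isocrystal of slope $-\tfrac{1}{\nht}$ once the integrality of $\nht$ is known. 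To pin down the rank and the integrality of $\nht$, I reduce to the case $\nresdeg=1$ via Proposition~\ref{traditio}: the reduction $\bar\nphi\colon\ncons\to\nKyu$ of $\nphi|_{\ncons}$ modulo $\tau^{-1}\rtauO{\nKyu}$ is a field embedding, providing the $\ncons$-algebra structure on $\nKyu$ required by that proposition. In the reduced case the quotient $\nBlat/z\nBlat=\bigoplus_{i=0}^{\nht-1}\nKyu\tau^{-i}$ is manifestly $\nKyu$-free of rank $\nht$, so $\nBlat$ is $\nB$-free of rank $\nht$; consequently $\nht\in\bZ_{>0}$ and $\rank_{\nBKz}\nV=\nht$.

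For part~(3), the $\nadic$-adic and $\tau^{-1}$-adic topologies on $\nV$ have as fundamental systems of neighborhoods of zero $z^n\nBlat=\{w:\nvaltau(w)\geq n\nht\nresdeg\}$ and $\tau^{-m}\rtauO{\nKyu}=\{w:\nvaltau(w)\geq m\}$ respectively; both families are cofinal in $\bZ_{\geq 0}$, so the topologies coincide. The main obstacle is establishing the freeness and rank of $\nBlat$ (hence the integrality of $\nht$) in part~(2); reduction to $\nresdeg=1$ via Proposition~\ref{traditio} is the cleanest route but requires carefully tracking the compatibility of the lattice and its subquotients under that equivalence.
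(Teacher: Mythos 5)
Your setup (fix a generator $v$, take the lattice $\nBlat=\rtauO{\nKyu}\,v$, observe $z^n\nBlat=\tau^{-n\nht\nresdeg}\nBlat$, extend the $\ncoint$-action by completeness, and read off purity and the coincidence of topologies from this identity) is exactly the right skeleton, and parts (1) and (3) are fine as sketched. The gap is where you yourself locate the main obstacle: the finiteness, freeness and rank of $\nBlat$ over $\nBR{\nKyu}$ (equivalently the integrality of $\nht$). Your proposed route via Proposition~\ref{traditio} does not work as stated. That proposition is an equivalence between categories of \emph{isocrystals}, so invoking it presupposes that $\nV$ is already a locally free $\nBRz{\nKyu}$-module of finite type with bijective structure morphism --- which is precisely the statement you are trying to establish. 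Moreover, even granting that, the functor $\nkProj$ does not obviously carry the formal Drinfeld motive structure (a one-dimensional $\rtauF{\nKyu}$-module) to a height-$\nht$ object of the $\ncons$-linear theory: $\nkProj\nV$ is only a direct summand of $\nV$ cut out by an idempotent of $\nBR{\nKyu}$, and identifying its lattice and its $\tauo=\tau^{\nresdeg}$-structure with something of the shape $\rtauO{\nKo}$ is the same bookkeeping you are trying to avoid. So "reduction to $\nresdeg=1$" is either circular or leaves the hard work untouched.

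The direct argument closes the gap without any reduction. From $z\nBlat=\tau^{-\nht\nresdeg}\nBlat$ one gets $\dim_K\nBlat/z\nBlat=\nht\nresdeg=\nvaltau(\nphi(z))<\infty$, and $\nBlat=\mlim_n\nBlat/z^n\nBlat$ because the $z^n\nBlat$ are the $\tau^{-1}$-adic neighborhoods; hence $\nBlat$ is finitely generated over $\nBR{\nKyu}$ by \stacks{031D}. Since $\nBRz{\nKyu}$ is a finite product of fields, $\nV$ is locally free of finite type, and the $\tau$-semilinearity over all of $\nBRz{\nKyu}$ (hence the isocrystal property) follows by continuity and density of $\nKyu\otimes_{\Fq}\ncoef$. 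Then $\nBR{\nKyu}$ is a finite product of discrete valuation rings and the rank is constant (Corollary~\ref{bRrank}), so $\nBlat$ is free; comparing $\dim_K\nBlat/z\nBlat=\nht\nresdeg$ with $\dim_K\nBR{\nKyu}/z=\nresdeg$ gives $\rank_{\nBR{\nKyu}}\nBlat=\nht$, which forces $\nht\in\bZ_{>0}$ and makes the identity $\tau^{\nht\nresdeg}\nBlat=z^{-1}\nBlat$ a legitimate witness of purity of slope $-\tfrac{1}{\nht}$. This is the route the paper takes; your direct computations in parts (1)--(3) then go through verbatim.
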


\begin{proof}%
The unicity part of \eqref{laurentdm-struct}
follows since $\nKyu\otimes_{\Fq} \ncoef$ is dense in $\nBRz{\nKyu}$.
To prove the existence
pick a left $\rtauO{\nKyu}$-submodule $\naV\subset\nV$ of rank $1$.
Lemma~\ref{laurentord} implies that
for all $n \in \bZ$
we have $z^n\naV = \tau^{-n\nht\nresdeg}\naV$.
Hence $\naV$ is a $\nKyu\otimes_{\Fq}\ncoint$-module
which is complete with respect to the ideal $\nKyu\otimes_{\Fq} \ncomax$.
Now $\nBR{\nKyu}$ is the completion of $\nKyu\otimes_{\Fq}\ncoint$ at $\nKyu\otimes_{\Fq}\ncomax$
and so the $\nKyu\otimes_{\Fq}\ncoint$-module structure on $\naV$
extends uniquely to a structure of a continuous $\nBR{\nKyu}$-module.
The natural map $\naV[z^{-1}] \to \nV$ is a bijection since
$\nht > 0$.
Thus $\nV$ acquires a structure of a continuous $\nBRz{\nKyu}$-module
compatible with $\naV$.
We get the existence part of \eqref{laurentdm-struct}.

The quotient $\naV/z\naV$ is of finite dimension over $\nKyu$
and so is finitely generated over $\nBR{\nKyu}/z$.
The fact that $\naV = \mlim_{n>0} \naV/z^n\naV$
then implies that $\naV$ is finitely generated over $\nBR{\nKyu}$
\stacks{031D}.
As $\nBRz{\nKyu}$ is a finite product of fields it follows
that $\nV$ is a locally free $\nBRz{\nKyu}$-module of finite type
and $\naV$ is a lattice in $\nV$.

The identity $\tau \cdot (x \nv) = \nsigma(x) \cdot \tau \nv$ holds
for all $x \in \nKyu\otimes_{\Fq}\ncoef$ and $\nv \in \nV$.
Since the multiplication by $\tau$ is continuous and $\nKyu\otimes_{\Fq}\ncoef$
is dense in $\nBRz{\nKyu}$ we conclude that this identity holds
for every $x\in\nBRz{\nKyu}$.
The multiplication by $\tau$ is bijective and so $\nV$ is an isocrystal.
 
The $\nBRz{\nKyu}$-module
$\nV$ is free of finite rank by Corollary~\ref{bRrank}.
Now $\nBR{\nKyu}$ is a finite product of discrete valuation rings
and $\nBRz{\nKyu}$ is the product of the corresponding fraction fields.
It follows that the lattice $\naV$ is a free $\nBR{\nKyu}$-module.
As a consequence $\naV/z\naV$ is a free $\nBR{\nKyu}/z$-module.
Since
$\dim_K\nBR{\nKyu}/z = \nresdeg$
and $\dim_K\naV/z\naV = \nht\nresdeg$
we conclude that $\naV$ is a free $\nBR{\nKyu}$-module of rank $\nht$.
The equality
$\tau^{\nht\nresdeg}\naV = z^{-1}\naV$
shows that the isocrystal $\nV$ is pure of slope $-\frac{1}{\nht}$
and we get \eqref{laurentdm-dm}.
Property \eqref{laurentdm-top} follows since
$z^n\naV = \tau^{-n\nht\nresdeg}\naV$ as observed above.%
\end{proof}

\begin{prp}\label{laurentmod}%
Let $M$ be a pure isocrystal of rank $\nht > 0$ and slope $-\frac{1}{\nht}$.%
\begin{enumerate}
\item\label{laurentmod-alg}%
The left $\nKyu\{\tau\}$-module structure on $M$ extends uniquely
to a structure of a continous left $\rtauF{\nKyu}$-module.%

\item\label{laurentmod-mod}%
Together with the action of $\ncoef\subset \nBRz{\nKyu}$ this makes
$M$ into a formal Drinfeld motive of height $\nht$.%

\item\label{laurentmod-top}%
The $\tau^{-1}$-adic topology on $M$ coincides with the $\nadic$-adic
topology.%
\end{enumerate}%
\end{prp}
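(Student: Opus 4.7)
The strategy is to reverse the construction of Proposition~\ref{laurentdm}. Since $\nKyu$ is perfect, $\nsigma$ is an automorphism of $\nBRz{\nKyu}$; combined with the bijectivity of $\taulin_M$ this forces the additive map $\tau\colon M \to M$ to be bijective, and so the left $\nKyu\{\tau\}$-module structure on $M$ extends uniquely to an action of the skew Laurent polynomial ring $\nKyu\{\tau,\tau^{-1}\}$. The substance of (1) lies in extending this further to a continuous action of $\rtauF{\nKyu}$.

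By Definition~\ref{dfnpure} there is a lattice $\nBlat_0 \subset M$ with $\nlati{\nht\nresdeg}{\nBlat_0} = z^{-1}\nBlat_0$. Since $\tau^{\nht\nresdeg}$ is $\nsigma^{\nht\nresdeg}$-semilinear and $\nsigma$ is an automorphism, the image $\tau^{\nht\nresdeg}\nBlat_0$ is already $\nBR{\nKyu}$-stable and hence equal to $z^{-1}\nBlat_0$; applying $\tau^{-\nht\nresdeg}$ to both sides then gives $\tau^{-\nht\nresdeg}\nBlat_0 = z\nBlat_0$. To upgrade this to genuine $\tau^{-1}$-stability I replace $\nBlat_0$ with $\nBlat = \sum_{i=0}^{\nht\nresdeg-1}\tau^{-i}\nBlat_0$, which is still a lattice and now satisfies both $\tau^{-1}\nBlat \subset \nBlat$ and $\tau^{-\nht\nresdeg}\nBlat = z\nBlat$. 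Iterating yields $\tau^{-n}\nBlat \subset z^{\lfloor n/\nht\nresdeg\rfloor}\nBlat$ for all $n\geqslant 0$, so any formal sum $\sum_{n\geqslant 0} a_n \tau^{-n}$ with $a_n \in \nKyu$ converges $\ncomax$-adically on $\nBlat$ thanks to the completeness supplied by Lemma~\ref{complete}. This defines the required $\rtauO{\nKyu}$-action on $\nBlat$; since $M = \nBlat[z^{-1}]$ and the elements of $\ncoef \subset \nBRz{\nKyu}$ commute with $\tau$ (as $\nsigma$ fixes $z$), the action extends to all of $M$, and it is the unique continuous extension by density of $\nKyu\{\tau,\tau^{-1}\}$ in $\rtauF{\nKyu}$.

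Combining with the $\ncoef$-action via $\nBRz{\nKyu}$ makes $M$ a left $\rtauF{\nKyu}\otimes_{\Fq}\ncoef$-module. To prove $\dim_{\rtauF{\nKyu}} M = 1$, pick $e \in \nBlat \setminus \tau^{-1}\nBlat$; the filtration $\nBlat \supset \tau^{-1}\nBlat \supset \cdots \supset \tau^{-\nht\nresdeg}\nBlat = z\nBlat$ has total $\nKyu$-codimension $\nht\nresdeg$, and no step can collapse (otherwise $\nBlat = z\nBlat$), so each is of codimension exactly one and the images of $e, \tau^{-1}e, \ldots, \tau^{-(\nht\nresdeg-1)}e$ form a $\nKyu$-basis of $\nBlat/z\nBlat$. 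A successive approximation modulo powers of $z$, using the perfectness of $\nKyu$ to extract the $q^{\nht\nresdeg}$-th roots needed to conjugate correction terms past $\nsigma^{-\nht\nresdeg}$, then produces an expansion $m = \sum_n b_n \tau^{-n} e$ with $b_n \in \nKyu$ for each $m \in \nBlat$; this shows $\nBlat = \rtauO{\nKyu}\cdot e$ and hence $M = \rtauF{\nKyu}\cdot e$. The identity $\tau^{-n\nht\nresdeg}\nBlat = z^n\nBlat$ makes the $\tau^{-1}$-adic and $\ncomax$-adic fundamental systems on $M$ mutually cofinal, which gives both the topology coincidence of (3) and the continuity of the $\ncoef$-action required by (2). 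Finally, writing $z\cdot e = \phi(z)\cdot e$ for the associated map $\phi\colon \ncoef \to \rtauF{\nKyu}$, the equality $z\nBlat = \tau^{-\nht\nresdeg}\nBlat$ pins down $\nvaltau(\phi(z)) = \nht\nresdeg$ and hence the height equals $\nht$. The principal technical obstacle is the surjection $\rtauO{\nKyu}\cdot e \twoheadrightarrow \nBlat$; the remaining assertions are fairly formal consequences of purity together with the invertibility of $\tau$ afforded by the perfectness hypothesis.
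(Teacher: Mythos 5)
Your overall strategy is the paper's: since $\nKyu$ is perfect, $\tau$ is bijective, so the action extends to $\nKyu\{\tau,\tau^{-1}\}$; one then produces a $\tau^{-1}$-stable lattice $\nBlat$ with $\tau^{-\nht\nresdeg}\nBlat = z\nBlat$, shows it is free of rank one over $\rtauO{\nKyu}$ by reduction modulo the filtration and successive approximation (the paper phrases this via noncommutative Nakayama), and concludes by completeness and density. The freeness argument, the cofinality of the two filtrations, and the height computation are all sound and match the paper's.

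The gap is at the very first step: you assert that Definition~\ref{dfnpure} directly supplies a lattice $\nBlat_0$ with $\nlati{\nht\nresdeg}{\nBlat_0} = z^{-1}\nBlat_0$. The definition only guarantees a lattice with $\nlati{r\nresdeg}{\nBlat_0} = z^{s}\nBlat_0$ for \emph{some} pair $(s,r)$ with $\frac{s}{r} = -\frac{1}{\nht}$, e.g.\ $(s,r)=(-2,2\nht)$. Passing to the normalized pair $(s,r)=(-1,\nht)$ is genuinely where the hypothesis $\rank M = \nht$ enters, and it is not formal: in the paper it is the content of Proposition~\ref{embed} (applied to $M^*$, which is pure of slope $+\frac{1}{\nht}$, and then dualized), whose proof needs Lemma~\ref{nilpcrit} to bound the nilpotency degree of $\tau$ on $\navi{n}/z\navi{n}$ by its $K$-dimension. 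Without this normalization the identity $\tau^{-\nht\nresdeg}\nBlat = z\nBlat$ is unproved, and everything downstream of it --- the codimension-one filtration, the freeness of $\nBlat$ over $\rtauO{\nKyu}$, the coincidence of topologies, and the computation $\nvaltau(\nphi(z)) = \nht\nresdeg$ --- has no foundation. The fix is simply to insert the appeal to Proposition~\ref{embed} for $M^*$ (or reprove its normalization step inline); the rest of your argument then goes through.
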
%
\begin{proof}%
The field $\nKyu$ is perfect and so the multiplication by $\tau$ is bijective on $M$.
Hence the left $K\{\tau\}$-module structure on $M$ extends uniquely
to a left module structure over the subalgebra
of Laurent polynomials $\nKyu\{\tau,\tau^{-1}\} \subset \rtauF{\nKyu}$.
The unicity part of \eqref{laurentmod-alg} follows since
$\nKyu\{\tau,\tau^{-1}\}$ is dense in $\rtauF{\nKyu}$.
Let us prove the existence.
The isocrystal $M^*$ is pure of rank $\nht$ and slope~$\frac{1}{\nht}$.
Applying Proposition~\ref{embed} to $M^*$ and dualizing
we conclude that there is
an increasing family of lattices $\{\nBlat_n\}_{n\in\bZ}$ in $M$
such that 
\begin{enumerate}
\renewcommand{\theenumi}{\roman{enumi}}%
\item\label{laurentmor-shift}%
$\nBlat_{n+1} = \tau\nBlat_n$,

\item\label{laurentmor-slope}%
$\nBlat_{n+\nht\nresdeg} = z^{-1}\nBlat_n$.

\item\label{laurentmor-dim}%
$\dim_\nKyu \nBlat_n/\nBlat_{n-1} = 1$.%
\end{enumerate}
Set $\nBlat = \nBlat_0$.
Property~\eqref{laurentmor-shift} shows that $\nBlat$
is closed under the action of $\tau^{-1}$.
It follows from property~\eqref{laurentmor-slope} that
the lattices $\nBlat_{-n} = \tau^{-n}\nBlat$, $n\geqslant 0$,
form an $\nadic$-adic fundamental system.
Now $\nBlat$ is $\nadic$-adically complete
and so the structure of a left $\nKyu\{\tau^{-1}\}$-module
on it extends uniquely to a structure of 
a topological left $\rtauO{\nKyu}$-module.

It follows from Property \eqref{laurentmor-dim} that
$\dim_{\nKyu}\nBlat/\tau^{-n}\nBlat = n$ for each $n > 0$.
In particular there is a $\rtauO{\nKyu}$-linear morphism 
$f\colon \rtauO{\nKyu} \to \nBlat$ which reduces to an isomorphism modulo $\tau^{-1}$.
The quotient $\nBlat/\tau^{-n}\nBlat$ is finitely generated over $\rtauO{\nKyu}$
so noncommutative Nakayama's lemma implies that $f$ is surjective modulo $\tau^{-n}$.
Comparing the dimensions of the source and the target we conclude
that $f$ is an isomorphism modulo $\tau^{-n}$.
As $\nBlat$ is $\tau^{-1}$-adically complete it follows that $f$ is an isomorphism.
Hence $\nBlat$ is a free $\rtauO{\nKyu}$-module of rank $1$.

Property~\eqref{laurentmor-slope} implies that 
the multiplication by $z$ on $\nBlat$ coincides with
the multiplication by an element $\alpha\in\rtauO{\nKyu}$
of $\tau^{-1}$-adic valuation $\nht\nresdeg$.
As $\nBlat[z^{-1}] = M$ we conclude that the structure
of a left $\rtauO{\nKyu}$-module on $\nBlat$
extends uniquely to
a structure of a left $\rtauF{\nKyu}$-module on $M$.
We get the existence part of \eqref{laurentmod-alg}.


The action of the subfield $\ncoef\subset\nBRz{\nKyu}$ on $M$
commutes with the action of a dense subalgebra $\nKyu\{\tau,\tau^{-1}\}\subset\rtauF{\nKyu}$
and so with the whole $\rtauF{\nKyu}$.
Therefore the isocrystal $M$ acquires a structure of a formal Drinfeld motive.
This motive is of height $\nht$ since $\tau^{-\nht\nresdeg}\nBlat = z\nBlat$.
We thus obtain property~\eqref{laurentmod-mod}.
Property~\eqref{laurentmod-top} follows since
the $\tau^{-1}$-adic topology on $\nBlat$ coincides
with the $\nadic$-adic toplogy by construction.%
\end{proof}

\begin{dfn}%
Let $\nV$ be a formal Drinfeld motive.
We denote by $M(\nV)$ the isocrystal associated to $\nV$ by Proposition~\ref{laurentdm}.%
\end{dfn}

\begin{thm}\label{laurentmor}%
The construction $\nV \mapsto M(\nV)$ is a fully faithful
functor from the category of formal Drinfeld motives to the category of isocrystals.
An isocrystal of rank $r>0$ is in the essential image of this
functor if and only if it is pure of slope~$-\frac{1}{r}$.%
\end{thm}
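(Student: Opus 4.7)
The plan is to deduce Theorem~\ref{laurentmor} directly from Propositions~\ref{laurentdm} and~\ref{laurentmod}, which already provide inverse constructions on the level of objects. The remaining content is functoriality and full faithfulness.

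First I would check that the assignment $\nV\mapsto M(\nV)$ is functorial. A morphism $f\colon \nV\to\nV'$ of formal Drinfeld motives is a morphism of $\rtauF{\nKyu}\otimes_{\Fq}\ncoef$-modules. In particular it is $\nKyu\otimes_{\Fq}\ncoef$-linear and commutes with $\tau$. Since the $\nBRz{\nKyu}$-module structure on a formal Drinfeld motive is by Proposition~\ref{laurentdm}\,\eqref{laurentdm-struct} the unique continuous extension of the $\nKyu\otimes_{\Fq}\ncoef$-module structure, and since any morphism between rank-one $\rtauF{\nKyu}$-vector spaces is continuous in the canonical $\tau^{-1}$-adic topology (which agrees with the $\nadic$-adic topology by Proposition~\ref{laurentdm}\,\eqref{laurentdm-top}), the map $f$ is automatically continuous for the $\nadic$-adic topology and therefore $\nBRz{\nKyu}$-linear. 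Together with $\tau$-equivariance this makes $f$ a morphism of isocrystals $M(\nV)\to M(\nV')$.

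For full faithfulness I would argue in the reverse direction. Let $\nV$, $\nV'$ be formal Drinfeld motives and $g\colon M(\nV)\to M(\nV')$ a morphism of isocrystals. Being a morphism of isocrystals, $g$ is $\nBRz{\nKyu}$-linear, so in particular $\ncoef$-linear, and it commutes with $\tau$. Because the field $\nKyu$ is perfect, $\tau$ is bijective on both isocrystals, hence $g$ also commutes with $\tau^{-1}$; thus $g$ is $\nKyu\{\tau,\tau^{-1}\}$-linear. By Proposition~\ref{laurentmod}\,\eqref{laurentmod-top} the $\tau^{-1}$-adic topology on $M(\nV)$ and $M(\nV')$ agrees with the $\nadic$-adic topology, for which any $\nBRz{\nKyu}$-linear map between locally free modules is continuous. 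The subring $\nKyu\{\tau,\tau^{-1}\}$ is dense in $\rtauF{\nKyu}$ in the $\tau^{-1}$-adic topology, so $g$ is in fact $\rtauF{\nKyu}$-linear; combined with its $\ncoef$-linearity this shows that $g$ is a morphism of formal Drinfeld motives. By construction the two correspondences are mutually inverse on morphism sets, which gives full faithfulness.

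Finally, for the essential image: Proposition~\ref{laurentdm}\,\eqref{laurentdm-dm} shows that $M(\nV)$ is pure of rank $\nht$ and slope $-\frac{1}{\nht}$, so the image is contained in the subcategory of pure isocrystals of rank $r>0$ and slope $-\frac{1}{r}$. Conversely, given such an isocrystal $M$, Proposition~\ref{laurentmod} endows its underlying abelian group with the structure of a formal Drinfeld motive $\nV$ of height $r$; by unicity of the continuous $\nBRz{\nKyu}$-structure extending the $\nKyu\otimes_{\Fq}\ncoef$-action, the isocrystal $M(\nV)$ coincides with $M$. The main technical point that needs care is the passage from $\nKyu\{\tau,\tau^{-1}\}$-linearity to $\rtauF{\nKyu}$-linearity in the full faithfulness step, which is why matching the two topologies via Proposition~\ref{laurentmod}\,\eqref{laurentmod-top} is essential; everything else is a bookkeeping consequence of Propositions~\ref{laurentdm} and~\ref{laurentmod}.
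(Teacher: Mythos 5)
Your proposal is correct and follows essentially the same route as the paper: functoriality and fullness both come from the coincidence of the $\nadic$-adic and $\tau^{-1}$-adic topologies together with density of $\nKyu\otimes_{\Fq}\ncoef$ in $\nBRz{\nKyu}$ and of $\nKyu\{\tau,\tau^{-1}\}$ in $\rtauF{\nKyu}$, and the essential image is identified exactly as you do via Propositions~\ref{laurentdm} and~\ref{laurentmod} and their unicity clauses.
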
%

So a formal Drinfeld motive of height $\nht$
is just a different way to represent a pure $\nBRz{\nKyu}$-isocrystal
of rank $\nht$ and slope $-\frac{1}{\nht}$.
It~follows that the isomorphism classes of such isocrystals (of arbitrary rank $\nht$)
are in bijection with conjugacy classes of continuous homomorphisms $\ncoef\to\rtauF{\nKyu}$.

\begin{proof}[Proof of Theorem~\ref{laurentmor}]%
A morphism of formal Drinfeld motives
$f\colon \nV \to \nW$ 
is $\nKyu\otimes_{\Fq} \ncoef$-linear and continuous in the $\tau^{-1}$-adic topology.
As $\nKyu\otimes_{\Fq} \ncoef$ is
dense in $\nBRz{\nKyu}$ it follows that $f$ is $\nBRz{\nKyu}$-linear.
Moreover $f$ commutes with the left multiplication by $\tau$ and so
gives
a morphism of left $\nBRz{\nKyu}\{\tau\}$-modules $M(\nV) \to M(\nW)$.

Conversely let $\nV$ and $\nW$ be formal Drinfeld motives and
let $f\colon M(\nV) \to M(\nW)$ be a morphism of left $\nBRz{\nKyu}\{\tau\}$-modules.
The morphism $f$ is $\nKyu\{\tau,\tau^{-1}\}$-linear
and continuous in the $\nadic$-adic topology.
By Proposition~\ref{laurentdm} the $\nadic$-adic topology coincides with the
$\tau^{-1}$-adic topology.
Since $\nKyu\{\tau,\tau^{-1}\}\subset\rtauF{\nKyu}$ is dense
it follows that $f\colon \nV \to \nW$ is $\rtauF{\nKyu}$-linear.

Finally let $M$ be a pure isocrystal of rank $r> 0$ and slope $-\frac{1}{r}$.
Proposition~\ref{laurentmod} equips $M$ with a structure of a formal Drinfeld motive~$V$.
The unicity part of Proposition~\ref{laurentmod} implies that $M = M(V)$ and we get the result.%
\end{proof}

\begin{prp}\label{laurentiso}%
Let $\nV$ be a formal Drinfeld motive
and let $\nKyualt$ be a perfect field over $\nKyu$.
Set $\nV_\nKyualt = \rtauF{\nKyualt}\otimes_{\rtauF{\nKyu}} \nV$.
\begin{enumerate}
\item\label{laurentiso-welldef}
The inclusion $M(\nV) \hookrightarrow M(\nV_\nKyualt)$ is $\nBRz{\nKyu}$-linear.

\item\label{laurentiso-iso}
The induced map $\nBRz{\nKyualt}\otimes_{\nBRz{\nKyu}} M(\nV) \xrightarrow{\isosign} M(\nV_\nKyualt)$
is an isomorphism of $\nBRz{\nKyualt}$-isocrystals.%
\end{enumerate}%
\end{prp}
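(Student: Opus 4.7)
The plan for part~\eqref{laurentiso-welldef} is to invoke the uniqueness assertion of Proposition~\ref{laurentdm}\eqref{laurentdm-struct}. The map $\iota\colon\nV\to\nV_\nKyualt$, $v\mapsto 1\otimes v$, is injective because $\nV$ is a free left $\rtauF{\nKyu}$-module of rank one; it is $\nKyu\otimes_{\Fq}\ncoef$-linear and continuous for the $\tau^{-1}$-adic topologies, since it sends an $\rtauO{\nKyu}$-generator of a lattice $\naV\subset\nV$ to an $\rtauO{\nKyualt}$-generator of a lattice in $\nV_\nKyualt$. The $\nBRz{\nKyualt}$-module structure on $M(\nV_\nKyualt)$ restricts through the continuous map $\nBRz{\nKyu}\to\nBRz{\nKyualt}$ to a continuous $\nBRz{\nKyu}$-module structure on the subspace $\nV$, and this structure extends the $\nKyu\otimes_{\Fq}\ncoef$-action. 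By the uniqueness in Proposition~\ref{laurentdm}\eqref{laurentdm-struct} it must agree with the canonical $\nBRz{\nKyu}$-structure of $M(\nV)$, which is precisely \eqref{laurentiso-welldef}.

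For part~\eqref{laurentiso-iso}, the induced map
\[
\phi\colon\nBRz{\nKyualt}\otimes_{\nBRz{\nKyu}}M(\nV)\to M(\nV_\nKyualt)
\]
is $\nBRz{\nKyualt}$-linear and commutes with $\tau$ by construction. The first step is to argue that the source is itself a pure isocrystal of slope $-\tfrac{1}{\nht}$ over $\nBRz{\nKyualt}$: pick a lattice $\naV\subset M(\nV)$ realizing the purity of $M(\nV)$, so that $\langle\tau^{\nht\nresdeg}\naV\rangle=z^{-1}\naV$. Then $\nBR{\nKyualt}\otimes_{\nBR{\nKyu}}\naV$ is a lattice in the source, and using $\tau(a\otimes m)=\sigma(a)\otimes\tau m$ together with the fact that $\{\tau^{\nht\nresdeg}m:m\in\naV\}$ generates $z^{-1}\naV$ over $\nBR{\nKyu}$, one checks the same purity relation after base change. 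By Theorem~\ref{laurentmor}, the source is therefore of the form $M(\nV')$ for some formal Drinfeld motive $\nV'$ over $\nKyualt$, and $\phi$ corresponds to a morphism $\nV'\to\nV_\nKyualt$ of formal Drinfeld motives.

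This latter morphism is nonzero: for any $\rtauF{\nKyu}$-generator $v_0$ of $\nV$ one has $\phi(1\otimes v_0)=\iota(v_0)\ne 0$. Any nonzero map between two left $\rtauF{\nKyualt}$-modules of dimension one is automatically an isomorphism, since both are simple. Hence the corresponding $\phi$ is an isomorphism of $\nBRz{\nKyualt}$-isocrystals by another application of Theorem~\ref{laurentmor}. The one step requiring genuine verification is the preservation of purity under base change at the start of part~\eqref{laurentiso-iso}; after that, the whole proposition reduces to the equivalence furnished by Theorem~\ref{laurentmor} combined with the fact that nonzero morphisms between one-dimensional $\rtauF{\nKyualt}$-modules are invertible.
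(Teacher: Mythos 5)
Your proof is correct and follows essentially the same route as the paper: part (1) by continuity and density of $K\otimes_{\Fq}F$ in $\nBRz{K}$ (phrased via the uniqueness clause of Proposition~\ref{laurentdm}), and part (2) by checking that both sides are pure of slope $-\frac{1}{\nht}$ and rank $\nht$ and that a nonzero map between simple objects is an isomorphism. The only cosmetic difference is that you detour through Theorem~\ref{laurentmor} and Schur's lemma for one-dimensional $\rtauF{L}$-modules, whereas the paper invokes Proposition~\ref{simpleiso} directly on the isocrystal side; the purity-of-the-base-change verification you flag is indeed the substantive step, and your lattice argument for it is exactly what the paper implicitly uses.
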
%
\begin{proof}%
\eqref{laurentiso-welldef}
The inclusion is $\nKyu\otimes_{\Fq} F$-linear and $\tau^{-1}$-adically continuous.
The claim follows since the subring $\nKyu\otimes_{\Fq} F$ is dense in $\nBRz{\nKyu}$.
\eqref{laurentiso-iso}
Let $\nht$ be the height of $V$.
The height of $\nV_\nKyualt$ is $\nht$ by construction.
Proposition~\ref{laurentdm} shows that
the isocrystals $\nBRz{\nKyualt}\otimes_{\nBRz{\nKyu}} M(\nV)$
and $M(\nV_\nKyualt)$ are pure of slope $-\frac{1}{\nht}$
and rank $\nht$.
Hence they are simple by Proposition~\ref{simpleiso}.
The morphism in question is nonzero and so is an isomorphism.%
\end{proof}

\begin{prp}\label{laurentgal}%
Let $\nKyualt$ be a Galois extension of $\nKyu$ 
and let $G$ be the Galois group.
Equip $\nBRz{\nKyualt}$ with an action of $G$ by functoriality
and $\rtauF{\nKyualt}$ with the coefficientwise action.
Then 
for every formal Drinfeld motive $\nV$
the natural isomorphism
\begin{equation*}
\nBRz{\nKyualt}\otimes_{\nBRz{\nKyu}} M(\nV) \xrightarrow{\isosign} M(\rtauF{\nKyualt}\otimes_{\rtauF{\nKyu}} \nV)
\end{equation*}
is $G$-equivariant.%
\end{prp}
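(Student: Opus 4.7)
The plan is to verify $G$-equivariance at the level of a dense $\nBRz{\nKyualt}$-generating subset. Recall from Proposition~\ref{laurentiso}~(2) that the displayed isomorphism is obtained by $\nBRz{\nKyualt}$-linear extension of the inclusion $i\colon M(\nV) \hookrightarrow M(\nV_\nKyualt)$ which, at the level of the underlying modules $\nV$ and $\nV_\nKyualt = \rtauF{\nKyualt}\otimes_{\rtauF{\nKyu}}\nV$, sends $v \mapsto 1 \otimes v$. On the left hand side, $g \in G$ acts by $g(a\otimes m) = g(a) \otimes m$; on the right hand side, the $G$-action inherited from the coefficientwise action on $\rtauF{\nKyualt}$ is given by $g(\alpha \otimes v) = g(\alpha) \otimes v$.

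First I would establish that the $G$-action on the right hand side is $\nBRz{\nKyualt}$-semi-linear and continuous in the $\nadic$-adic topology (which by Proposition~\ref{laurentdm} coincides with the $\tau^{-1}$-adic topology). Continuity is immediate since $g$ preserves the filtration $\{\rtauO{\nKyualt}\tau^{-n}\}_{n\geqslant 0}$. Semi-linearity is verified directly over the dense subring $\nKyualt \otimes_{\Fq} \ncoef \subset \nBRz{\nKyualt}$: for $\beta \in \nKyualt$, $y \in \ncoef$, $\alpha \in \rtauF{\nKyualt}$, $v \in \nV$,
\begin{equation*}
g\bigl((\beta \otimes y)\cdot(\alpha\otimes v)\bigr) = g(\beta\alpha \otimes yv) = g(\beta)g(\alpha) \otimes yv = g(\beta \otimes y)\cdot g(\alpha \otimes v).
\end{equation*}
Continuity of both multiplication and the $G$-action, together with density of $\nKyualt \otimes_{\Fq} \ncoef$ in $\nBRz{\nKyualt}$, promotes this to semi-linearity for the full $\nBRz{\nKyualt}$-structure. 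The LHS is $\nBRz{\nKyualt}$-semi-linear by construction.

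Since the isomorphism is $\nBRz{\nKyualt}$-linear, to check $G$-equivariance it suffices to verify it on any $\nBRz{\nKyualt}$-generating subset. A natural choice is $1 \otimes M(\nV) \subset \nBRz{\nKyualt}\otimes_{\nBRz{\nKyu}} M(\nV)$, which is pointwise $G$-fixed since $G$ fixes $1$. Under the isomorphism the element $1 \otimes m$ maps to $i(m) = 1 \otimes m \in \nV_\nKyualt$, and this element is likewise $G$-fixed because $G$ acts on the first tensor factor $\rtauF{\nKyualt}$ and fixes $1$. Hence the map is $G$-equivariant on a generating set. Combining this with $\nBRz{\nKyualt}$-linearity of the map and semi-linearity of the two $G$-actions, the equivariance propagates to all of the LHS.

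The only genuinely technical point is the semi-linearity of the $G$-action on $M(\nV_\nKyualt)$ over the full ring $\nBRz{\nKyualt}$, rather than just over $\nKyualt\otimes_{\Fq}\ncoef$; this I would handle by density and continuity, invoking Proposition~\ref{laurentdm} to identify the $\tau^{-1}$-adic and $\nadic$-adic topologies. Once that is in hand the rest is the observation that the distinguished lift $i(m) = 1 \otimes m$ is manifestly $G$-invariant.
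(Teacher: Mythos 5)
Your argument is correct and is essentially the paper's own proof, which simply records the statement as ``a formal consequence of Proposition~\ref{laurentiso}'': you have unwound that formality into its two ingredients, namely that the map is the $\nBRz{\nKyualt}$-linear extension of the inclusion on the $G$-fixed generating set $1\otimes M(\nV)$, and that the $G$-action on the target is semi-linear over the full completed ring, which you correctly obtain by density and continuity via Proposition~\ref{laurentdm}. Nothing is missing.
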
%
\begin{proof}%
This is a formal consequence of Proposition~\ref{laurentiso}.%
\end{proof}

\subsection{Tate modules}
Fix an integer $\nht > 0$ and
an algebraic closure $\nKyual$ of $\nKyu$.
Let $\Fqbar$ be the algebraic closure of $\Fq$ in $\nKyual$.

\begin{dfn}\label{dfntauind}%
For each automorphism $\gamma$ of $\nKyual$ we denote by $\ntauind{\gamma}$
the induced automorphism of $\rtauF{\nKyual}$.%
\end{dfn}

Pick a formal Drinfeld motive $\nYuW$ of height $\nht$ over $\Fqbar$ and set $\nDN = \End \nYuW$.

\begin{dfn}\label{dfnformaltate}%
Let $\nYuV$ be a formal Drinfeld motive of height $\nht$ over $\nKyu$
and let
$\nYuVal = \rtauF{\nKyual}\otimes_{\rtauF{\nKyu}} \nYuV$.
The \emph{Tate module} of $\nYuV$ is the right $\nDN$-module
\begin{equation*}
\nTate{\nV} = \Hom_{\rtauF{\Fqbar}\otimes_{\Fq}\ncoef}(\nYuW, \;\nYuVal)
\end{equation*}
equipped with the following left action of the Weil group $W_K$:
%
For each $\gamma\in W_K$ and $f \in \nTate{\nYuV}$
we define $\gamma(f)$ by the formula $\nfnel \mapsto (\ntauind{\gamma}\otimes\textrm{id}_{\nYuV})(f(\tau^{\ord\gamma}\nfnel))$.%
\end{dfn}

\begin{lem}\label{formalN}%
The $\nBRz{\Fqbar}$-isocrystal $N = M(\nYuW)$ is pure and simple of slope $-\frac{1}{\nht}$.%
\end{lem}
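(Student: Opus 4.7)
The plan is to read off purity, slope and rank directly from Proposition~\ref{laurentdm} and then invoke Proposition~\ref{simpleiso} to get simplicity. There is essentially no computation to do: the work has already been done in the preceding subsection.

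First, since $\nYuW$ is a formal Drinfeld motive of height $\nht$ over $\Fqbar$, Proposition~\ref{laurentdm}\eqref{laurentdm-dm} applies with $\nKyu$ replaced by $\Fqbar$ (which is perfect). This tells us that $N = M(\nYuW)$, equipped with multiplication by $\tau$, is a pure $\nBRz{\Fqbar}$-isocrystal of slope $-\frac{1}{\nht}$ and rank $\nht$. This already handles the purity and slope assertions of the lemma.

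Second, I would observe that the slope $-\frac{1}{\nht}$ is written in lowest terms, since $\gcd(1,\nht) = 1$, so the denominator in lowest terms is $\nht$. Because the rank of $N$ equals this denominator, Proposition~\ref{simpleiso} applies and forces $N$ to be simple.

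There is no genuine obstacle here: Proposition~\ref{laurentdm} has already done the substantive work of identifying the slope and the rank of $M(\nYuW)$, and Proposition~\ref{simpleiso} then reduces simplicity to a numerical coincidence. The only thing to check is that $\Fqbar$ meets the hypotheses of Proposition~\ref{laurentdm}, namely perfectness, which is immediate.
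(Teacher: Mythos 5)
Your proposal is correct and follows essentially the same route as the paper: the paper cites Theorem~\ref{laurentmor} for purity, slope and rank (which itself rests on Proposition~\ref{laurentdm}, the result you cite directly), and then invokes Proposition~\ref{simpleiso} exactly as you do. The only difference is the slightly more direct citation, which changes nothing of substance.
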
%
\begin{proof}%
Theorem~\ref{laurentmor} shows that $N$ is pure of rank $\nht$ and slope $-\frac{1}{\nht}$.
The fact that $N$ is simple then follows by Proposition~\ref{simpleiso}.%
\end{proof}

\begin{lem}\label{formalD}%
The ring $\nDN$ coincides with $\End N$ and is a central division algebra
of invariant $\frac{1}{\nht}$ over $\ncoef$.%
\end{lem}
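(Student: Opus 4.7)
The plan is to deduce both assertions directly from results already in place, with essentially no additional work required.

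First I would use Theorem~\ref{laurentmor}, which says that the functor $\nV \mapsto M(\nV)$ from formal Drinfeld motives to isocrystals is fully faithful. Applied to $\nV = \nYuW$, this yields a ring isomorphism
\begin{equation*}
\nDN = \End \nYuW \xrightarrow{\isosign} \End M(\nYuW) = \End N,
\end{equation*}
which is the first assertion.

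Next I would invoke Lemma~\ref{formalN} to conclude that $N$ is a pure and simple $\nBRz{\Fqbar}$-isocrystal of slope $-\frac{1}{\nht}$. Since $\Fqbar$ is (separably) closed, Proposition~\ref{simple}~\eqref{simple-dfn} then identifies $N$ up to isomorphism with the canonical simple isocrystal $M_{-1/\nht}$ of that slope. Appealing to Proposition~\ref{simple}~\eqref{simple-end}, the endomorphism ring $\End M_{-1/\nht}$ is a central division algebra over $\ncoef$ whose Hasse invariant is the residue class of $-(-\frac{1}{\nht}) = \frac{1}{\nht}$ in $\bQ/\bZ$. Combining these identifications gives the second assertion.

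There is no real obstacle here: the entire argument is an assembly of Theorem~\ref{laurentmor}, Lemma~\ref{formalN}, and Proposition~\ref{simple}~\eqref{simple-end}. The only point deserving a line of comment is that the version of $\End$ used in the definition $\nDN = \End \nYuW$ (morphisms of $\rtauF{\Fqbar}\otimes_{\Fq}\ncoef$-modules, i.e., morphisms of formal Drinfeld motives) is precisely the $\Hom$-set to which Theorem~\ref{laurentmor} applies, so no compatibility issue arises.
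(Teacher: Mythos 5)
Your proposal is correct and follows essentially the same route as the paper: the paper's proof likewise cites Theorem~\ref{laurentmor} (full faithfulness of $\nV\mapsto M(\nV)$) to get $\nDN = \End N$, and then, since $N$ is pure and simple of slope $-\frac{1}{\nht}$ by Lemma~\ref{formalN}, invokes Proposition~\ref{simple} to obtain the central division algebra structure with Hasse invariant $\frac{1}{\nht}$. Your extra remark identifying $N$ with the canonical simple isocrystal via the unicity part of Proposition~\ref{simple} is harmless but not needed, since part~\eqref{simple-end} already applies to any pure simple isocrystal of that slope via that same unicity.
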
%
\begin{proof}%
Theorem~\ref{laurentmor} shows that $\nDN = \End N$. Since $N$ is pure and simple
the claim on the structure of $\nDN$ follows from Proposition~\ref{simple}.%
\end{proof}

In view of Lemma~\ref{formalN} the pair $(\nKyual, N)$ defines a Tate module functor
for pure $\nBRz{\nKyu}$-isocrystals of slope $-\frac{1}{\nht}$. 
The isocrystal $M = M(\nV)$ is pure of slope $-\frac{1}{\nht}$ by Theorem~\ref{laurentmor}.
So the Tate module of $M$ is
\begin{equation*}
\nTate{M} = \Hom_{\nBRz{\Fqbar}\{\tau\}}(N,\;\nBRz{\nKyual}\otimes_{\nBRz{\nKyu}} M).
\end{equation*}
Set $\nYuWal = \rtauF{\nKyual}\otimes_{\rtauF{\Fqbar}} \nYuW$.
Consider the composition of isomorphisms
\begin{equation*}
\begin{array}{rll}
\nTate{\nYuV}
&\xrightarrow{\isosign}\;\;\Hom(\nYuWal, \,\nYuVal)
&\textrm{(adjunction),} \\
&\xrightarrow{\isosign}\;\;\Hom(M(\nYuWal), \,M(\nYuVal))
&\textrm{(Theorem~\ref{laurentmor}),} \\
&\xrightarrow{\isosign}\;\;\Hom(\nBRz{\nKyual}\otimes_{\nBRz{\Fqbar}} N, \,\nBRz{\nKyual}\otimes_{\nBRz{\nKyu}} M)
&\textrm{(Proposition~\ref{laurentiso}),} \\
&\xrightarrow{\isosign}\;\;\nTate{M}
&\textrm{(adjunction).}
\end{array}
\end{equation*}

\begin{prp}\label{formalcomp}%
The resulting isomorphism $\nTate{\nYuV} \xrightarrow{\isosign} \nTate{M}$ is $W_K$-equivariant and $\nDN$-linear.%
\end{prp}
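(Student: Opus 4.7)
The strategy is to verify $\nDN$-linearity and $W_K$-equivariance separately by tracing the two actions through each of the four isomorphisms in the composition, using the naturality of adjunction, the fullness of $M(-)$ from Theorem~\ref{laurentmor}, and the Galois-equivariance statement of Proposition~\ref{laurentgal}.

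For $\nDN$-linearity, observe that each of the four isomorphisms is a $\Hom$-identification that is natural in its first argument, and by Lemma~\ref{formalD} the endomorphism ring $\nDN = \End \nYuW$ is canonically identified with $\End N$ via the functor $M(-)$ and Proposition~\ref{laurentiso}. The right $\nDN$-action on each side is given by precomposition with these endomorphisms, and the four isomorphisms carry one to the other by construction; so the composite is $\nDN$-linear formally.

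For $W_K$-equivariance, the plan is to fix $\gamma \in W_K$ and check that the prescribed action of $\gamma$ on $\nTate{\nYuV}$ (Definition~\ref{dfnformaltate}) corresponds to its action on $\nTate{M}$ (Definition~\ref{dfnweilact}) under each of the four steps. Both actions have the same combinatorial shape $f \mapsto \gamma \circ f \circ \tau^{\ord\gamma}$; what must be matched are the two ingredients separately. The twist by $\tau^{\ord\gamma}$ is preserved by adjunction and by the functor $M(-)$ because $M(-)$ sends the multiplication by $\tau$ on a formal Drinfeld motive to the action of $\tau$ on the associated isocrystal (that is exactly how $M(-)$ is set up in Proposition~\ref{laurentdm}). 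For the ``outer'' $\gamma$-action, the content is that the coefficientwise action of $\gamma$ on $\rtauF{\nKyual}$ (extended as $\ntauind{\gamma}$ in Definition~\ref{dfntauind}) corresponds under the isomorphism $M(\rtauF{\nKyual}\otimes_{\rtauF{\nKyu}}\nYuV) \cong \nBRz{\nKyual}\otimes_{\nBRz{\nKyu}} M$ to the functorial action of $\gamma$ on $\nBRz{\nKyual}$; this is precisely the content of Proposition~\ref{laurentgal}. The analogous statement on the $N$-side follows by applying the same proposition to $\nYuW$ over $\Fqbar$ inside $\nKyual$.

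Putting these compatibilities together gives a diagram-chase verification that the composite isomorphism intertwines the two $W_K$-actions. The only delicate point, and the one requiring care rather than new input, is separating the ``internal'' twist by $\tau^{\ord\gamma}$ from the ``external'' Galois action when passing through the adjunction isomorphism $\Hom_{\rtauF{\Fqbar}\otimes_{\Fq}\ncoef}(\nYuW,\,\nYuVal) \xrightarrow{\isosign} \Hom(\nYuWal,\,\nYuVal)$: a morphism $f$ on the left corresponds to the unique $\rtauF{\nKyual}$-linear extension $\tilde f$ on the right, and one must check that both definitions of the $W_K$-action produce the same $\tilde{\gamma(f)}$. This is immediate once one notes that $\ntauind{\gamma}$ restricts to the identity on $\rtauF{\Fqbar}$ when $\gamma \in W_K$ acts on $\Fqbar$ by $\alpha \mapsto \alpha^{q^{-\ord\gamma}}$ combined with the corresponding shift, matching the convention used in Lemma~\ref{actwd}. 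No additional input beyond the results cited above is required, so no serious obstacle is expected.
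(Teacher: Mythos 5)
Your argument is correct and follows essentially the same route as the paper: the paper's proof simply cites Proposition~\ref{laurentgal} for the Galois-equivariance of the isomorphism $\nBRz{\nKyual}\otimes_{\nBKz} M \xrightarrow{\isosign} M(\nYuVal)$, compares the two definitions of the $W_K$-action, and declares the $\nDN$-linearity clear. Your write-up is a more detailed unwinding of exactly that comparison (your parenthetical about $\ntauind{\gamma}$ ``restricting to the identity'' on $\rtauF{\Fqbar}$ is literally false for $\ord\gamma\neq 0$, but your immediately following qualification --- that it is the composite with the $\tau^{\ord\gamma}$-shift that is $\rtauF{\Fqbar}$-linear, as in Lemma~\ref{actwd} --- is the correct mechanism).
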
%
\begin{proof}%
By Proposition~\ref{laurentgal}
the isomorphism
$\nBRz{\nKyual}\otimes_{\nBKz} M \xrightarrow{\isosign} M(\nYuVal)$
is $\Gal(\nKyual/\nKyu)$-equivariant.
Comparing the actions of $W_K$ of $\nTate{\nYuV}$ and $\nTate{M}$
we conclude that the isomorphism $\nTate{\nYuV}\xrightarrow{\isosign}\nTate{M}$ is $W_K$-equivariant.
The $\nDN$-linearity is clear.
\end{proof}


\begin{thm}\label{formaltate}%
The functor $\nYuV \mapsto \nTate{\nYuV}$ 
transforms formal Drinfeld motives of height $\nht$ 
to $(-\frac{1}{\nht})$-admissible $(W_K,\nDN)$-representations of rank~$1$ over $\nDN$.
This functor is an equivalence of categories if $W_K$ is dense in $G_K$.%
\end{thm}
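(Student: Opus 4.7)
The plan is to deduce this theorem by composing the equivalence of Theorem~\ref{laurentmor} with Theorem~\ref{tate}, using the compatibility of Proposition~\ref{formalcomp}. The whole proof is essentially bookkeeping: the real content has already been proved.

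First I would observe that Theorem~\ref{laurentmor}, specialised to height $\nht$, gives an equivalence $\nYuV \mapsto M(\nYuV)$ between formal Drinfeld motives of height $\nht$ over $\nKyu$ and pure $\nBRz{\nKyu}$-isocrystals of rank $\nht$ and slope $-\frac{1}{\nht}$. Meanwhile Lemmas~\ref{formalN} and~\ref{formalD} justify using the pair $(\nKyual, N) = (\nKyual, M(\nYuW))$ to define the Tate module functor of Section~\ref{ssec:tate} with $\nslope = -\frac{1}{\nht}$, and Proposition~\ref{formalcomp} supplies a natural $W_K$-equivariant, $\nDN$-linear isomorphism $\nTate{\nYuV} \xrightarrow{\isosign} \nTate{M(\nYuV)}$. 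Thus the functor $\nYuV \mapsto \nTate{\nYuV}$ factors as the composition
\begin{equation*}
\nYuV \;\longmapsto\; M(\nYuV) \;\longmapsto\; \nTate{M(\nYuV)}.
\end{equation*}

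For the first claim, Theorem~\ref{tate} shows that the second arrow sends pure isocrystals of slope $-\frac{1}{\nht}$ to $(-\frac{1}{\nht})$-admissible $(W_K,\nDN)$-representations. Since $M(\nYuV)$ has rank $\nht$ by Proposition~\ref{laurentdm}, Proposition~\ref{taterank} gives
\begin{equation*}
\rank_{\nDN}\nTate{\nYuV} \;=\; \rank_{\nDN}\nTate{M(\nYuV)} \;=\; \tfrac{1}{\nht}\cdot \nht \;=\; 1,
\end{equation*}
which settles the target category part of the statement.

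For the equivalence under the hypothesis that $W_K$ is dense in $G_K$, I would combine the two equivalences. Theorem~\ref{tate} produces an equivalence between all pure isocrystals of slope $-\frac{1}{\nht}$ and all $(-\frac{1}{\nht})$-admissible $(W_K,\nDN)$-representations; under this equivalence the rank formula of Proposition~\ref{taterank} matches isocrystals of rank $\nht$ with representations of rank $1$ over $\nDN$. Restricting to these two full subcategories and composing with the equivalence of Theorem~\ref{laurentmor} yields an equivalence between formal Drinfeld motives of height $\nht$ and $(-\frac{1}{\nht})$-admissible $(W_K,\nDN)$-representations of rank $1$, as required. There is no genuine obstacle; the only delicate point is to ensure, via Proposition~\ref{formalcomp}, that the composite functor thus constructed actually agrees with $\nYuV \mapsto \nTate{\nYuV}$ as a functor of $(W_K,\nDN)$-representations, which is precisely what that proposition records.
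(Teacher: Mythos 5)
Your proposal is correct and follows essentially the same route as the paper: the author likewise combines Theorem~\ref{laurentmor} and Theorem~\ref{tate} via the compatibility of Proposition~\ref{formalcomp}, with Proposition~\ref{taterank} matching rank $\nht$ isocrystals to $\nDN$-rank $1$ representations. Your write-up merely spells out the bookkeeping the paper leaves implicit.
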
%
The meaning of the admissibility condition 
is clarified by Proposition~\ref{rankoneadm}.%
\begin{proof}[Proof of Theorem~\ref{formaltate}]%
Let $M$ be a pure $\nBRz{\nKyu}$-isocrystal of slope $\nslope = -\frac{1}{\nht}$.
By Proposition~\ref{taterank} the rank of $M$ is equal to $\nht$ if and only if $\dim_{\nDN} \nTate{M} = 1$.
In view of Proposition~\ref{formalcomp} the claim follows by
Theorems~\ref{laurentmor} and \ref{tate}.%
\end{proof}

Suppose that $W_K$ is dense in $G_K$ and
let $\nDval\colon D^\times \to \bQ$ be the unique
extension of the normalized valuation of $\ncoef$.
As an offshot of Theorem~\ref{formaltate} we get a natural bijection 
between the conjugacy classes of the following continuous homomorphisms:
\begin{itemize}
\item
$\rho\colon W_K \to D^\times$ such that
$\nDval(\rho(\gamma)) = -\frac{1}{\nht\nresdeg} \cdot \ord(\gamma)$ for all $\gamma \in W_K$,

\item
$\varphi\colon \ncoef \to \rtauF{K}$ such that
$\nvaltau(\varphi(x)) = \nht\nresdeg \cdot \nval(x)$
for all $x \in\ncoef$.
\end{itemize}
This describes a class of representations of $W_K$
purely in terms of $K$.

\subsection{Comparison}\label{ss:yucomp}
As before we fix $\nht > 0$,
an algebraic closure $\nKyual$ of $\nKyu$
and denote by $\Fqbar$ the algebraic closure of $\Fq$ in $\nKyual$.
Pick a continuous morphism $\iota\colon \ncoef \to \rtauF{\Fqbar}$
such that $\nvaltau(\iota(z)) = \nht\nresdeg$.
Let $\nCentN$ be the centralizer of $\iota(\ncoef)$ in $\rtauF{\Fqbar}$.
We shall see in a moment that $\nCentN$ is a central division algebra
of invariant $-\frac{1}{r}$ over $\ncoef$.

Let $\nphi\colon\ncoef\to\rtauF{\nKyu}$ be a continuous morphism
such that $\nvaltau(\nphi(z)) = \nht\nresdeg$.
Following J.-K.~Yu \cite[Construction~2.5]{yu} we consider the set
\begin{equation*}
Y_\iota = \{ \nyel \in \rtauF{\nKyual}^\times \mid \textrm{ for all } x \in \ncoef \textrm{ one has } \iota(x) = \nyel \nphi(x) \nyel^{-1} \}
\end{equation*}
The Weil group $W_K$ acts on $Y_\iota$ by the rule
$\gamma \cdot \nyel = \tau^{\ord\gamma} \ntauind{\gamma}(\nyel)$
where $\ntauind{\gamma}$ is as in Definition~\ref{dfntauind}.
The centralizer $\nCentN^\times$ acts on $Y_\iota$ by the left multiplication.%

Let $\nYuW$ be the formal Drinfeld motive over $\Fqbar$ defined by $\iota$.
As before we denote by $\nDN$ the ring $\End\nYuW$. 

\begin{lem}%
The map
$\nCentN^\nop \shortisosign \nDN$,
$\neCentN \mapsto (\nfnel \mapsto \nfnel\neCentN)$
is an isomorphism.\qed%
\end{lem}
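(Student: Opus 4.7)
The plan is to unwind the definition of $\nYuW = \nV(\iota)$ concretely and then check that every endomorphism has the claimed form. By construction $\nYuW$ is the abelian group $\rtauF{\Fqbar}$, on which $\rtauF{\Fqbar}$ acts on the left by multiplication in $\rtauF{\Fqbar}$, and $\ncoef$ acts through $\iota$ by the rule $x\cdot \nfnel = \nfnel\cdot\iota(x)$ (right multiplication in $\rtauF{\Fqbar}$).

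First I would check that the map in the statement is well-defined. Given $\neCentN\in\nCentN$, the map $f_{\neCentN}\colon \nfnel\mapsto\nfnel\neCentN$ is automatically left $\rtauF{\Fqbar}$-linear, and the identity $\iota(x)\neCentN = \neCentN\iota(x)$, which holds precisely because $\neCentN$ centralizes $\iota(\ncoef)$, says that $f_{\neCentN}$ commutes with the action of $\ncoef$. Hence $f_{\neCentN}\in\nDN$. A direct computation $(f_{\neCentN_1}\circ f_{\neCentN_2})(\nfnel) = \nfnel\neCentN_2\neCentN_1$ shows that $\neCentN\mapsto f_{\neCentN}$ reverses multiplication, so the map $\nCentN^{\nop}\to\nDN$ is a ring homomorphism.

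For surjectivity, let $f\in\nDN$. Since $f$ is left $\rtauF{\Fqbar}$-linear and $\nYuW = \rtauF{\Fqbar}\cdot 1$, we must have $f(\nfnel) = f(\nfnel\cdot 1) = \nfnel\cdot f(1)$, so $f$ is right multiplication by $\neCentN \coloneqq f(1)\in\rtauF{\Fqbar}$. The commutation with the $\ncoef$-action gives $\nfnel\iota(x)\neCentN = f(\nfnel\iota(x)) = f(\nfnel)\iota(x) = \nfnel\neCentN\iota(x)$ for all $\nfnel,x$, hence $\iota(x)\neCentN = \neCentN\iota(x)$, i.e.\ $\neCentN\in\nCentN$. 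Injectivity is immediate from $f_{\neCentN}(1) = \neCentN$.

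There is no serious obstacle: the argument is an essentially formal bimodule computation using that $\nYuW$ is cyclic of rank one over $\rtauF{\Fqbar}$. The only point that deserves a remark is that a morphism of formal Drinfeld motives is by definition merely a module morphism, with no continuity assumption imposed, so the continuity condition in the definition of a formal Drinfeld motive plays no role in identifying $\nDN$; it was only used to guarantee that $\nYuW$ is a formal Drinfeld motive in the first place (which is ensured by the hypothesis $\nvaltau(\iota(z)) = \nht\nresdeg > 0$).
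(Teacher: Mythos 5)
Your argument is correct and is exactly the routine bimodule computation the paper has in mind when it states this lemma with no proof (the \qed is attached directly to the statement): left $\rtauF{\Fqbar}$-linearity on the rank-one module forces $f(\nfnel)=\nfnel f(1)$, equivariance for the $\ncoef$-action via $\iota$ forces $f(1)\in\nCentN$, and the map is anti-multiplicative, whence the isomorphism from $\nCentN^{\nop}$. Your side remark that continuity plays no role in identifying $\nDN$ is also accurate, since morphisms of formal Drinfeld motives are defined purely module-theoretically.
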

In the following we identify $\nCentN^\nop$ with $\nDN$ via this map.
Lemma~\ref{formalD} implies that $\nCentN$ is a central division algebra of invariant $-\frac{1}{\nht}$ over $\ncoef$.
Consider the formal Drinfeld motive $\nYuV$ defined by $\nphi$.
The Tate module of $\nYuV$ is
\begin{equation*}
\nTate{\nYuV} = \Hom_{\rtauF{\Fqbar}\otimes_{\Fq} \ncoef}(\nYuW, \:\nYuVal)
\end{equation*}
where $\nYuVal = \rtauF{\nKyual}\otimes_{\rtauF{\nKyu}} \nV$.
Theorem~\ref{formaltate} shows that
$\dim_{\nDN} \nTate{\nYuV} = 1$.

\begin{thm}\label{tateyu-formal}%
The map $Y_\iota \xrightarrow{\isosign} \nTate{\nYuV}\backslash\{0\}$,
$\nyel \mapsto (\nfnel \mapsto \nfnel\nyel)$,
is an isomorphism of $W_K$-sets
and transforms the left action of $\nCentN^\times$ 
to the right action of $\nDN^\times = (\nCentN^{\textup{op}})^\times$.%
\end{thm}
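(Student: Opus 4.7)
The plan is to unpack the $\Hom$-set $\nTate{\nYuV}=\Hom_{\rtauF{\Fqbar}\otimes_{\Fq}\ncoef}(\nYuW,\nYuVal)$ and verify each claim by direct computation. Both $\nYuW$ and $\nYuVal$ are free left modules of rank one over $\rtauF{\Fqbar}$ and $\rtauF{\nKyual}$ respectively, so a morphism $f$ is determined by $\nyel:=f(1)\in\rtauF{\nKyual}$ via $f(\nfnel)=\nfnel\nyel$. The $\ncoef$-linearity of $f$, combined with the defining actions $x\cdot w=w\iota(x)$ on $\nYuW$ and $x\cdot v=v\nphi(x)$ on $\nYuVal$, translates into the relation $\iota(x)\nyel=\nyel\nphi(x)$ for all $x\in\ncoef$. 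For $\nyel\ne 0$, the skew-field property of $\rtauF{\nKyual}$ renders this equivalent to $\nyel\in Y_\iota$, establishing a bijection $Y_\iota\xrightarrow{\isosign}\nTate{\nYuV}\backslash\{0\}$ by $\nyel\mapsto(\nfnel\mapsto\nfnel\nyel)$.

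For $W_K$-equivariance the central identity is $\tau^n\ntauind{\gamma}(\nfnel)=\nfnel\tau^n$ for every $\nfnel\in\rtauF{\Fqbar}$, where $n=\ord\gamma$; it follows from the Ore relation $\tau\alpha=\alpha^q\tau$ and the geometric Frobenius convention $\gamma(\alpha)=\alpha^{q^{-n}}$ on $\Fqbar$ already used in the proof of Lemma~\ref{actwd}. Under the identification $\nYuVal\cong\rtauF{\nKyual}$ the operator $\ntauind{\gamma}\otimes\textrm{id}_{\nYuV}$ reduces to $\ntauind{\gamma}$, since $\ntauind{\gamma}$ fixes $\rtauF{\nKyu}$ whenever $\gamma\in W_K\subset G_K$. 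For $f$ corresponding to $\nyel$, Definition~\ref{dfnformaltate} then yields
\[
\gamma(f)(\nfnel)=\ntauind{\gamma}\bigl(f(\tau^n\nfnel)\bigr)=\ntauind{\gamma}(\tau^n\nfnel\nyel)=\tau^n\ntauind{\gamma}(\nfnel)\ntauind{\gamma}(\nyel)=\nfnel\cdot\tau^n\ntauind{\gamma}(\nyel),
\]
so $\gamma(f)$ corresponds to $\tau^n\ntauind{\gamma}(\nyel)=\gamma\cdot\nyel$. The same identity, combined with $\ntauind{\gamma}(\nphi(x))=\nphi(x)$, shows that $\gamma\cdot\nyel$ lies in $Y_\iota$ whenever $\nyel$ does, so the action is well-posed on both sides.

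For the action intertwining, the identification $\nCentN^\nop\xrightarrow{\isosign}\nDN$ maps $\neCentN$ to the endomorphism $\nfnel\mapsto\nfnel\neCentN$ of $\nYuW$. The right $\nDN$-action on $\nTate{\nYuV}$ is precomposition, so if $f$ corresponds to $\nyel$ then $(f\cdot\neCentN)(\nfnel)=f(\nfnel\neCentN)=\nfnel\neCentN\nyel$, which means $f\cdot\neCentN$ corresponds to $\neCentN\nyel\in Y_\iota$. Hence the bijection transports the left multiplication action of $\nCentN^\times$ on $Y_\iota$ to the right $\nDN^\times$-action on $\nTate{\nYuV}$, completing the theorem. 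The only subtle point in the argument is the commutation identity $\tau^n\ntauind{\gamma}(\nfnel)=\nfnel\tau^n$ on $\rtauF{\Fqbar}$; everything else is a formal calculation with Laurent series once the definitions are spelt out.
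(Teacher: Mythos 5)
Your proposal is correct and follows essentially the same route as the paper: identify a morphism $f$ with $\nyel=f(1)$, observe that $\ncoef$-linearity is exactly the intertwining relation defining $Y_\iota$, and verify $W_K$-equivariance via the commutation identity $\ntauind{\gamma}(\tau^n\nfnel\nyel)=\tau^n\ntauind{\gamma}(\nfnel)\ntauind{\gamma}(\nyel)=\nfnel\,\tau^n\ntauind{\gamma}(\nyel)$ coming from $\gamma$ acting as the $q^{-n}$-Frobenius on $\Fqbar$. You merely spell out the bijectivity and $\nDN$-compatibility steps that the paper compresses into ``bijective since $\dim_{\rtauF{\Fqbar}}\nYuW=1$'' and ``by construction''.
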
%
\begin{proof}%
This map is bijective since $\dim_{\rtauF{\Fqbar}} \nYuW = 1$
and is compatibile with the actions of $\nCentN^\times$ and $\nDN^\times$
by construction.
It remains to check the $W_K$-equivariance.
Pick an automorphism $\gamma \in W_K$ and set $n = \ord\gamma$.
Let $f$ be an element of $\nTate{\nYuV}$ corresponding to $u\in Y_\iota$.
By Definition~\ref{dfntate} the morphism $\gamma(f)\colon \nYuW \to \nYuVal$
is given by the formula
$\nfnel \mapsto \ntauind{\gamma}(\tau^n\nfnel\nyel)$.
Now $\gamma$ acts as the $q^{-n}$-Frobenius on $\Fqbar$ and so
\begin{equation*}
\ntauind{\gamma}(\tau^n \nfnel \nyel) = \tau^n \ntauind{\gamma}(\nfnel) \ntauind{\gamma}(\nyel) = \nfnel \tau^n \ntauind{\gamma}(\nyel).
\end{equation*}
Hence $\gamma(f)$ corresponds to the element $\tau^n \ntauind{\gamma}(\nyel) = \gamma \cdot \nyel$ of $Y_\iota$.%
\end{proof}

\section{Isocrystals over a discrete valuation ring}\label{sec:isocdvr}
Let $K$ be a field over $\Fq$ equipped with a nontrivial 
valuation $\nval\colon K^\times \to \bZ$.
Let $\nOK$ be the ring of integers of $\nval$ and let $\nresf$ be the residue field.
We set
\begin{equation*}
\nBl = K\otimes_{\nOK} \nB, \quad
\nBzl = K\otimes_{\nOK} \nBz.
\end{equation*}
The endomorphisms $\nsigma$ of $\nB$, $\nBz$ extend uniquely to $\nBl$, $\nBzl$.

\begin{dfn}%
A \emph{$\nBzl$-isocrystal} is a $\nsigma$-module over $\nBzl$.%
\end{dfn}

\subsection{Base change results}

In \cite{dejong} de~Jong demonstrated
that the base change functor
from $F$-isocrystals over $\nOK$ to $F$-isocrystals over $K$
is fully faithful.
He observed that this functor factorizes through an intermediate category, that of overconvergent $F$-isocrystals,
and proved the full faithfulness of the first resulting functor.
It was later shown by Kedlaya that the second functor is also fully faithful~\cite[Theorem~5.1]{kedlaya-ff}.

Our aim is to prove a related result for $\nBz$-isocrystals.
We consider the following factorization of
the base change functor:
\begin{equation*}
\nBz\textrm{-isocrystals} \Rightarrow
\nBzl\textrm{-isocrystals} \Rightarrow
\nBKz\textrm{-isocrystals}.
\end{equation*}
We show that the first functor is fully faithful
and that the second one is fully faithful
on $\nBzl$-isocrystals $M$ such that $\nBKz\otimes_{\nBzl} M$ is pure.
The latter result will be useful on its own,
see Propositions~\ref{modcmp}, \ref{metagood} and Lemma~\ref{main}.
Example~\ref{bzlmixed} shows that for this result the assumption of purity is essential.
%

%

In his PhD thesis Watson proved that
the base change functor from $\nBz$-isocrystals
to $\nBKz$-isocrystals is fully
faithful even in the mixed case 
\cite[Theorem~4.1, p.~15]{watson}.
For our purposes we shall rely on a weak form
of Watson's theorem provided by Proposition~\ref{dmloc}.

Let us begin with the base change functor $\nBzl \Rightarrow \nBKz$.
As the first step we introduce the notion of purity for $\nBzl$-isocrystals.

\begin{dfn}\label{defabunlat}%
Let $M$ be a locally free $\nBzl$-module of finite type.
A \emph{lattice} $\nBlat$ in $M$ is a locally free $\nBl$-submodule
of finite type which spans $M$ over $\nBzl$.%
\end{dfn}

\begin{dfn}%
A $\nBzl$-isocrystal $M$ is \emph{pure}
if there are integers $s$ and $r$, $r > 0$, and a lattice $\nBlat$ in $M$ such that 
$z^s\nBlat$ is generated by $\tau^{r\nresdeg}\nBlat$ over $\nBl$.
The rational number $\frac{s}{r}$ is called the \emph{slope} of $M$.%
\end{dfn}

%

\begin{lem}\label{abunlatshift}%
Let $M$ be a $\nBzl$-isocrystal and $\nBlat$ a lattice in $M$.
Then for every $n\geqslant 0$ the $\nBl$-submodule of $M$ generated
by $\tau^n \nBlat$ is a lattice.%
\end{lem}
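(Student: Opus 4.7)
The plan is to mimic the proof of Lemma~\ref{latshift}, which established the analogous statement for $\nBKz$-isocrystals. First I would reduce to the case $n = 1$ by induction: for any lattice $\nBlat' \subset M$, the $\nBl$-module generated by $\tau^{n+1}\nBlat$ coincides with the one generated by $\tau \cdot \ngen{\tau^n\nBlat}$, because $\tau$ is $\nsigma$-semilinear and $\nsigma$ is an endomorphism of $\nBl$.

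The heart of the argument is the fact that the structure morphism $\taulin\colon \nsigma^* M \xrightarrow{\isosign} M$ is an isomorphism of $\nBzl$-modules by definition of an isocrystal. Since $\nBlat$ is locally free of finite type over $\nBl$, the $\nBl$-module $\nsigma^* \nBlat = \nBl\otimes_{\nBl,\nsigma}\nBlat$ is also locally free of finite type, and it is an $\nBl$-submodule of $\nsigma^* M = \nBzl\otimes_{\nBzl,\nsigma} M$ because $\nBzl$ is flat over $\nBl$ (it is obtained by inverting $z$). The restriction of $\taulin$ to $\nsigma^* \nBlat$ is $\nBl$-linear, injective (since $\taulin$ is), and has image exactly $\ngen{\tau\nBlat}$ by construction. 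Hence $\ngen{\tau\nBlat}$ is isomorphic to $\nsigma^*\nBlat$ as an $\nBl$-module, so it is locally free of finite type.

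It remains to see that $\ngen{\tau\nBlat}$ spans $M$ over $\nBzl$. Since $\nBlat$ spans $M$ over $\nBzl$, the $\nBzl$-module $\nBzl\otimes_{\nBl}\nsigma^*\nBlat$ equals $\nsigma^* M$, so applying $\taulin$ we deduce that $\nBzl\cdot\ngen{\tau\nBlat} = \taulin(\nsigma^* M) = M$. I do not expect any genuine obstacle here; the only thing to verify carefully is that $\nsigma$ really does restrict to $\nBl$, which follows from the fact that the $q$-Frobenius preserves $\nOK$ (any element of nonnegative valuation has $q$-th power of nonnegative valuation), a point already recorded in the text.
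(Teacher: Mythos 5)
Your proof is correct and is essentially the paper's argument: the paper proves Lemma~\ref{abunlatshift} by declaring it ``same as the proof of Lemma~\ref{latshift}'', and that proof is exactly your reduction to $n=1$ plus the observation that $\ngen{\tau\nBlat} = \taulin(\nsigma^*\nBlat)$ is a lattice because the structure morphism is an isomorphism. Your extra care about flatness of $\nBzl$ over $\nBl$ and about $\nsigma$ restricting to $\nBl$ is sound but not needed beyond what the paper already records.
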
%
\begin{proof}%
Same as the proof of Lemma~\ref{latshift}.%
\end{proof}

\begin{lem}\label{abunlat}%
Let $M$ be a locally free $\nBzl$-module of finite type and
$\nBKlat$ a lattice in $\nBKz\otimes_{\nBzl} M$.
Then $\nBlat = \nBKlat \cap M$
is the unique lattice in $M$
which generates $\nBKlat$ over $\nBK$.%
\end{lem}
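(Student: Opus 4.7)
The plan is to exploit the fact that $\nBK$ is the $\nadic$-adic completion of $\nBl$: since $\nadic$ is generated by $z$, both $\nBl/z^n\nBl$ and $\nBK/z^n\nBK$ identify canonically with $K\otimes_{\Fq}\ncoint/\ncomax^n$, and $\nBK$ is $z$-adically complete by construction. Two consequences will be used throughout. First, the morphism $\nBl\to\nBK$ is flat, and for any finitely generated $z$-power torsion $\nBl$-module $T$ the natural map $T\to\nBK\otimes_{\nBl}T$ is an isomorphism (this reduces via $T=\colim T[z^k]$ to the identifications $\nBl/z^k\nBl = \nBK/z^k\nBK$). Second, the intersection identity $\nBK\cap\nBzl = \nBl$ holds inside $\nBKz$: an element of $\nBK$ lying in $\nBzl = \nBl[z^{-1}]$ has no $z$-denominator and bounded $\nOK$-denominator, hence belongs to $\nBl$. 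In particular the natural map $M\hookrightarrow \nBKz\otimes_{\nBzl}M$ is injective and $\nBlat := \nBKlat\cap M$ is a well-defined $\nBl$-submodule of $M$.

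To show $\nBlat$ is a lattice with $\nBK\cdot\nBlat = \nBKlat$, I would reduce to the free case by working locally on $\Spec\nBzl$ and assuming $M = \nBzl^n$. Choose $N\geqslant 0$ with $z^N\nBK^n\subset\nBKlat\subset z^{-N}\nBK^n$; intersecting coordinatewise with $\nBzl^n$ and invoking $\nBK\cap\nBzl = \nBl$ produces the sandwich $z^N\nBl^n\subset\nBlat\subset z^{-N}\nBl^n$. The equality $\nBK\cdot\nBlat = \nBKlat$ then follows by approximation: via $\nBl/z^k\nBl = \nBK/z^k\nBK$, any $\nBK$-generator $e$ of $\nBKlat$ can be written $e = e' + z^k x$ with $e'\in M$ and $x\in\nBKz\otimes_{\nBzl}M$, and for $k$ large enough both $e'$ and $z^k x$ lie in $\nBKlat$, so $e'\in\nBlat$; a telescoping argument using the $z$-adic completeness of $\nBKlat$ yields $\nBKlat\subset\nBK\cdot\nBlat$, and the reverse inclusion is clear. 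Local freeness of $\nBlat$ over $\nBl$ then follows by faithfully flat descent from the local freeness of $\nBKlat = \nBK\otimes_{\nBl}\nBlat$ along $\nBl\to\nBK$.

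For uniqueness, any lattice $\nBlat'\subset M$ with $\nBK\cdot\nBlat' = \nBKlat$ is contained in $\nBKlat\cap M = \nBlat$ tautologically. For the reverse inclusion, the sandwich forces $\nBlat/\nBlat'$ to be a finitely generated $z$-power torsion $\nBl$-module, so by the first paragraph the natural map $\nBlat/\nBlat'\to\nBK\otimes_{\nBl}(\nBlat/\nBlat')$ is an isomorphism; but the target equals $\nBK\cdot\nBlat/\nBK\cdot\nBlat' = 0$, forcing $\nBlat = \nBlat'$. The main obstacle will be the local freeness step: the surrounding arguments are essentially $z$-adic bookkeeping, whereas local freeness of $\nBlat$ requires either a direct structural analysis of $\nBl$-submodules of finite free $\nBl$-modules or a careful formulation of faithfully flat descent along the non-Noetherian-looking map $\nBl\to\nBK$, and the cleanest route appears to be the latter.
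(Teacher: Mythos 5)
Your argument is, in substance, a by-hand proof of the Beauville--Laszlo glueing theorem for the ring $\nBl$ and the element $z$, which is precisely what the paper invokes (as \stacks{0BP2}): the completion of $\nBl$ at $z\nBl$ is $\nBK$, the localization at $z$ is $\nBzl$, and $\nBlat$ is the glueing of $M$ and $\nBKlat$ along $\nBKz\otimes_{\nBzl}M$. Your preliminary identities ($\nBl/z^k\nBl = \nBK/z^k\nBK$, flatness of the completion, $\nBK\cap\nBzl=\nBl$), the sandwich, the approximation argument for $\nBK\cdot\nBlat=\nBKlat$, and the uniqueness argument via finitely generated $z$-power torsion modules are all sound. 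The reduction to $M$ free is the one bookkeeping step that does not quite work as stated: $\nBzl$ is a finite product of Dedekind domains, so a locally free module of finite type need not be free, and ``working locally on $\Spec\nBzl$'' is not meaningful for a statement about $\nBl$-submodules. This is patchable --- sandwich $\nBKlat$ against the $\nBK$-span of any finitely generated $\nBl$-submodule generating $M$ over $\nBzl$, instead of against $\nBK^n$.

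The genuine gap is the step you yourself flag as the main obstacle: local freeness of $\nBlat$ cannot be obtained by descent along $\nBl\to\nBK$ alone, because that map is flat but \emph{not} faithfully flat. Both rings are Noetherian, so that is not the issue; the issue is that $\Spec\nBK\to\Spec\nBl$ hits only the primes containing $z$ and the generic points (e.g.\ for $\ncoef=\rF{\Fq}$ and $K=\Fq(\!(\zeta)\!)$ the target has infinitely many maximal ideals such as $(z-\zeta)$ that are invisible to $\nBK$), so a module can fail to be locally free away from $V(z)$ without $\nBK$ noticing. What does work, and what the paper uses, is fpqc descent along the two-element cover $\{\Spec\nBzl,\Spec\nBK\}$ of $\Spec\nBl$, using that \emph{both} base changes $M$ and $\nBKlat$ are locally free. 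Alternatively, and more cheaply given your sandwich: $\nBl$ is a finite product of principal ideal domains (Lemma~\ref{pid}, whose proof is independent of the present statement), so $\nBlat$ is finitely generated by the sandwich and torsion-free because $M$ is, and finitely generated torsion-free modules over such a ring are automatically locally free --- no descent needed.
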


\begin{proof}
The completion of the ring $\nBl$ at the ideal $z\nBl$ is $\nBK$
and $\nBl[z^{-1}] = \nBzl$.
Applying Beauville-Laszlo glueing theorem \stacks{0BP2} to the ring $\nBl$ and the element $z$
we conclude that $\nBlat$ is
the unique $\nBl$-submodule which generates $M$ over $\nBzl$ and $\nBKlat$ over $\nBK$.
Since the schemes $\Spec\nBzl$ and $\Spec\nBK$ form an fpqc covering of $\Spec\nBl$
it follows that $\nBlat$ is a locally free $\nBl$-module of finite type.%
\end{proof}

%
\begin{prp}\label{abunpure}%
Let $\nslope$ be a rational number.
For every $\nBzl$-isocrystal $M$ the following are equivalent:
\begin{enumerate}
\item
The $\nBzl$-isocrystal $M$ is pure of slope $\nslope$.

\item
The $\nBKz$-isocrystal $M_K = \nBKz\otimes_{\nBzl} M$ is pure of slope $\nslope$.%
\end{enumerate}%
\end{prp}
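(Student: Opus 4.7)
The plan is to deduce the equivalence as a direct application of Lemma~\ref{abunlat}, which gives a bijective correspondence between lattices in $M$ (over $\nBl$) and lattices in $M_K$ (over $\nBK$) via the operations $\nBlat \mapsto \nBK \otimes_{\nBl} \nBlat$ and $\nBKlat \mapsto \nBKlat \cap M$. Under this correspondence I expect the purity relation for a lattice to pass cleanly back and forth, because it concerns the $\nBl$-submodule generated by $\tau^{r\nresdeg}\nBlat$ and base change of a generated submodule is the submodule generated by the base change.

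For the direction $(1) \Rightarrow (2)$, I would take a lattice $\nBlat \subset M$ with $z^{s}\nBlat = \langle\tau^{r\nresdeg}\nBlat\rangle_{\nBl}$ and define $\nBKlat = \nBK \otimes_{\nBl} \nBlat$. Since $\nBl$ is noetherian (being a completion of a noetherian ring, localized), $\nBl \to \nBK$ is flat, so the natural map $\nBK \otimes_{\nBl} \nBlat \hookrightarrow \nBK \otimes_{\nBl} M = M_K$ is injective, giving a lattice in $M_K$. Applying $\nBK \otimes_{\nBl} -$ to the purity relation yields $z^s \nBKlat = \langle\tau^{r\nresdeg}\nBKlat\rangle_{\nBK}$, so $M_K$ is pure of slope $\nslope$.

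For the direction $(2) \Rightarrow (1)$, I would start with a lattice $\nBKlat \subset M_K$ satisfying $z^s \nBKlat = \langle\tau^{r\nresdeg}\nBKlat\rangle_{\nBK}$ and use Lemma~\ref{abunlat} to produce the unique lattice $\nBlat = \nBKlat \cap M$ generating $\nBKlat$ over $\nBK$. Set $\nBlat' = \langle\tau^{r\nresdeg}\nBlat\rangle_{\nBl}$, which is a lattice in $M$ by Lemma~\ref{abunlatshift}. Then
\begin{equation*}
\nBK \otimes_{\nBl} \nBlat' = \langle \tau^{r\nresdeg}(\nBK \otimes_{\nBl}\nBlat)\rangle_{\nBK} = \langle \tau^{r\nresdeg}\nBKlat\rangle_{\nBK} = z^s\nBKlat,
\end{equation*}
and at the same time $\nBK \otimes_{\nBl}(z^s \nBlat) = z^s \nBKlat$. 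Both $\nBlat'$ and $z^s\nBlat$ are thus lattices in $M$ generating the same lattice $z^s\nBKlat$ in $M_K$, so by the uniqueness clause of Lemma~\ref{abunlat} they coincide. This yields the purity relation $z^s\nBlat = \langle\tau^{r\nresdeg}\nBlat\rangle_{\nBl}$ in $M$.

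There is no serious obstacle, as the content is entirely contained in Lemma~\ref{abunlat}; the only subtlety worth checking is that base change of the generated submodule $\langle\tau^{r\nresdeg}\nBlat\rangle_{\nBl}$ under $\nBl \to \nBK$ is $\langle\tau^{r\nresdeg}\nBKlat\rangle_{\nBK}$, which is immediate because tensor product commutes with the formation of the image of a generating map $\nBl^{\oplus I} \to M$.
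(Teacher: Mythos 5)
Your proof is correct and follows essentially the same route as the paper: the forward direction by base change of a lattice, and the converse by intersecting with $M$ and invoking the uniqueness clause of Lemma~\ref{abunlat} to identify $\langle\tau^{r\nresdeg}\nBlat\rangle$ with $z^s\nBlat$. The extra verifications you include (flatness of $\nBl\to\nBK$, compatibility of base change with generated submodules) are correct and merely spell out what the paper leaves implicit.
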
%
\begin{proof}%
The implication (1) $\Rightarrow$ (2) follows since the base change
of a lattice $\nBlat \subset M$ to $\nBK$ is a lattice in $M_K$.
Let us prove the implication (2) $\Rightarrow$ (1).
By assumption there are integers $s$ and $r$, $r>0$, and a lattice $\nBKlat$ in $M_K$
such that $\nslope = \frac{s}{r}$ and $z^s \nBKlat = \nlati{r\nresdeg}{\nBKlat}$.
The intersection $\nBlat = \nBKlat \cap M$ is a lattice in $M$ by Lemma~\ref{abunlat}.
Let $\nBlat'$ be the $\nBl$-submodule of $M$ generated by $\tau^{r\nresdeg}{\nBlat}$.
This is a lattice in $M$ which generates $z^s\nBKlat$ over $\nBK$.
The unicity part of Lemma~\ref{abunlat} implies that $z^s \nBlat = \nBlat'$.
Hence $M$ is pure of slope~$\nslope$.%
\end{proof}

\begin{prp}\label{abunpure0}
For every $\nBzl$-isocrystal $M$ the following are equivalent:
\begin{enumerate}
\item
The isocrystal $M$ arises from a $\nsigma$-module over $\nBl$.

\item
The isocrystal $M$ is pure of slope~$0$.%
\end{enumerate}%
\end{prp}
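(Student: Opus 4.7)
The plan is to imitate Proposition~\ref{fdmpure0} but transfer from the field case $\nBKz$ via the faithfully flat glueing that was already used in Lemma~\ref{abunlat}.

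The direction (1)$\Rightarrow$(2) is immediate. If $M$ arises from a $\nsigma$-module $\nBlat$ over $\nBl$, then $\nBlat$ itself is a lattice in $M$ and the structure isomorphism $\taulin\colon\nsigma^\ast\nBlat\xrightarrow{\isosign}\nBlat$ means that $\tau\nBlat$ generates $\nBlat$; taking $s=0$, $r=1$ in the definition of purity gives $\nlati{\nresdeg}{\nBlat}=\nBlat = z^0\nBlat$, so $M$ is pure of slope~$0$.

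For (2)$\Rightarrow$(1), first I would base change to $\nBKz$. Set $M_K=\nBKz\otimes_{\nBzl}M$. Since $\nBl\to\nBK$ is injective (as $\nBK$ is a completion of the flat extension $\nBl$ at the principal ideal $z\nBl$), the natural map $M\to M_K$ is injective. By Proposition~\ref{abunpure} the $\nBKz$-isocrystal $M_K$ is pure of slope~$0$. Proposition~\ref{fdmpure0} then furnishes a lattice $\nBKlat\subset M_K$ which is a $\nsigma$-module over $\nBK$, i.e.\ $\tau\nBKlat\subset\nBKlat$ and the structure morphism $\nsigma^\ast\nBKlat\to\nBKlat$ is an isomorphism.

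The candidate $\nsigma$-stable lattice inside $M$ is $\nBlat=\nBKlat\cap M$. By Lemma~\ref{abunlat}, $\nBlat$ is a lattice in $M$, generates $\nBKlat$ over $\nBK$, and generates $M$ over $\nBzl$. Since $\tau$ preserves both $M$ (it is the structure map of $M$) and $\nBKlat$, it preserves their intersection $\nBlat$. Thus the structure morphism $\taulin_M$ restricts to an $\nBl$-linear morphism $a\colon\nsigma^\ast\nBlat\to\nBlat$. The only nontrivial step is to verify that $a$ is an isomorphism; this I would check by fpqc descent. The base change of $a$ to $\nBzl$ is the structure morphism of $M$ and the base change to $\nBK$ is the structure morphism of $\nBKlat$, both of which are isomorphisms by construction. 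Since $\{\Spec\nBzl,\Spec\nBK\}$ is an fpqc covering of $\Spec\nBl$ (as already invoked in the proof of Lemma~\ref{abunlat}), the morphism $a$ is itself an isomorphism, and $M$ arises from the $\nsigma$-module $\nBlat$ over $\nBl$. The main obstacle is purely bookkeeping: ensuring that the intersection $\nBKlat\cap M$ really is locally free of finite type, which is precisely what Lemma~\ref{abunlat} was set up to provide, so no genuine difficulty arises.
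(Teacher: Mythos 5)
Your proof is correct and follows essentially the same route as the paper: reduce to the field case via Propositions~\ref{abunpure} and \ref{fdmpure0}, take the intersection lattice $\nBKlat\cap M$ via Lemma~\ref{abunlat}, and conclude by glueing over $\{\Spec\nBzl,\Spec\nBK\}$. The only cosmetic difference is at the last step, where the paper invokes the uniqueness clause of Lemma~\ref{abunlat} to show $\ngen{\tau\nBlat}=\nBlat$, while you descend the isomorphism property of the structure morphism along the same fpqc covering.
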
%
\begin{proof}%
Assume that $M$ is pure of slope~$0$.
By Proposition~\ref{fdmpure0} 
the $\nBKz$-isocrystal $M_K = \nBKz\otimes_{\nBzl} M$
arises from a $\nsigma$-module $\nBKlat$ over $\nBK$.
The intersection $\nBlat = \nBKlat \cap M$ is a lattice in $M$ by Lemma~\ref{abunlat}.
Let $\nBlat'$ be the $\nBl$-submodule of $M$ generated by $\tau \nBlat$.
This is a lattice in $M$ which generates $\nBKlat$ over $\nBK$.
The unicity part of Lemma~\ref{abunlat} implies that $\nBlat = \nBlat'$.
In other words $\nBlat$ is generated by $\tau\nBlat$ and so is a $\nsigma$-module
over $\nBl$.%
\end{proof}

We shall deduce the base change result for $\nBzl$-isocrystals from a sequence of lemmas.
Let $\nsigma$ be the $q$-Frobenius endomorphism of $\nOK$ and $K$.

\begin{lem}\label{fundestim}%
For each left $\nOK\{\tau\}$-module $M$
which is finitely generated free over~$\nOK$
we have
$\Hom(M, \,\unitob{\nOK}) = \Hom(K\otimes_{\nOK}M, \:\unitob{K})$.%
\end{lem}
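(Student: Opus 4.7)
The plan is to unwind what the equality really says. The natural map of the statement is the injection
\[
\Hom_{\nOK\{\tau\}}(M,\unitob{\nOK}) \hookrightarrow \Hom_{K\{\tau\}}(K\otimes_{\nOK}M,\unitob{K})
\]
which sends $f$ to its $K$-linear extension. Injectivity is automatic (since $M \hookrightarrow K\otimes_\nOK M$ because $M$ is $\nOK$-free), so the whole content is surjectivity: given a $\tau$-equivariant $K$-linear map $g\colon K\otimes_{\nOK}M\to K$, its restriction $f=g|_M\colon M\to K$ lands in $\nOK$.

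First I would observe that $f(M)\subset K$ is a finitely generated $\nOK$-submodule, because $M$ is finitely generated over $\nOK$. Every finitely generated $\nOK$-submodule of $K$ is contained in $\pi^{-n}\nOK$ for some $n\geqslant 0$, where $\pi$ is a uniformizer of $\nOK$. So the values of $f$ are \emph{bounded below} for the valuation $\nval$: there exists $n$ with $\nval(f(m'))\geqslant -n$ for all $m'\in M$.

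Next I would exploit $\tau$-equivariance. The identity $f(\tau m)=\nsigma(f(m))=f(m)^q$ iterates to $f(\tau^k m)=f(m)^{q^k}$. Since each $\tau^k m$ lies in $M$ (the action of $\tau$ preserves $M$ by definition of an $\nOK\{\tau\}$-module structure), we have
\[
\nval(f(\tau^k m)) = q^{k}\,\nval(f(m)) \geqslant -n
\]
for every $k\geqslant 0$. If $\nval(f(m))<0$ for some $m$, the left-hand side tends to $-\infty$ as $k\to\infty$, contradicting the lower bound. Therefore $\nval(f(m))\geqslant 0$ for all $m\in M$, i.e.\ $f(M)\subset\nOK$, as required.

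There is no real obstacle: the argument is a three-line combination of the fact that a finitely generated $\nOK$-submodule of $K$ is bounded together with the geometric growth of $q$-power valuations forced by $\tau$-equivariance. The only thing one has to check is that $M$ is genuinely stable under $\tau$, which is immediate from the definition of a left $\nOK\{\tau\}$-module structure on $M$.
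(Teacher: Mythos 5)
Your argument is correct. The restriction map is the right one to study, the boundedness of the finitely generated $\nOK$-submodule $f(M)\subset K$ is valid, $\tau$ does preserve $M$, and the geometric growth $\nval(f(\tau^k m))=q^k\,\nval(f(m))$ against a fixed lower bound forces $\nval(f(m))\geqslant 0$. The paper reaches the same conclusion by a different packaging: choosing a basis $m_1,\dotsc,m_n$ with $\tau(m_i)=\sum_j u_{ij}m_j$, it identifies $\Hom(M,\unitob{\nOK})$ and $\Hom(K\otimes_{\nOK}M,\unitob{K})$ with the $\nOK$-points and $K$-points of the finite $\nOK$-algebra $\nOK[x_1,\dotsc,x_n]/(x_i^q-\sum_j u_{ij}x_j)$, and then invokes integrality together with the fact that a discrete valuation ring is integrally closed in its fraction field. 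Your valuation estimate is essentially the standard proof of that integral-closedness unwound in this specific situation, so the two arguments are equivalent in substance; yours is more elementary and self-contained, while the paper's phrasing makes visible that the $\tau$-invariant functionals form a finite $\nOK$-scheme, which is the viewpoint used elsewhere (e.g.\ in the Katz/Drinfeld equivalence with étale sheaves). Either proof is acceptable.
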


\begin{proof}%
Pick an $\nOK$-basis $m_1, \dotsc, m_n$ of $M$.
Let $I$ be the ideal of the ring $S = \nOK[x_1,\dotsc,x_n]$ generated by the elements
\begin{equation*}
x_i^q - \sum_{j=1}^n u_{ij} x_j, \quad
i \in \{1,\dotsc,n\}
\end{equation*}
where the coefficients $u_{ij} \in \nOK$ are defined by the formulas
$\tau(m_i) = \sum_{j=1}^n u_{ij} m_j$.
By construction
\begin{align*}
\Hom_{\nOK\textup{-alg}}(S/I, \,K) &= \Hom(K\otimes_{\nOK}M, \,\unitob{K}),\\
\Hom_{\nOK\textup{-alg}}(S/I, \,\nOK) &= \Hom(M, \,\unitob{\nOK}).
\end{align*}
The result follows since the ring homomorphism $\nOK \to S/I$ is finite.%
\end{proof}

\pagebreak
\begin{lem}\label{fundestimz}%
For every left $\nB\{\tau\}$-module $\nBlat$
which is locally free of finite type over $\nB$
we have 
$\Hom(\nBlat, \,\unitob{\nB}) = \Hom(\nBlat, \:\unitob{\nBK})$.%
\end{lem}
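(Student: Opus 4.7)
The plan is to show that the inclusion $\nB \hookrightarrow \nBK$ induces a bijection
\[ \Hom_{\nB\{\tau\}}(\nBlat,\,\unitob{\nB}) \xrightarrow{\isosign} \Hom_{\nB\{\tau\}}(\nBlat,\,\unitob{\nBK}). \]
Injectivity is immediate, so the only task is to show that every $\nB\{\tau\}$-linear map $f\colon \nBlat \to \nBK$ takes values in $\nB$. Using that $\nB = \lim_n \nB/\ncomax^n$ and $\nBK = \lim_n \nBK/\ncomax^n$, with quotients $\nB/\ncomax^n = \nOK \otimes_{\Fq} \ncoint/\ncomax^n$ and $\nBK/\ncomax^n = K \otimes_{\Fq} \ncoint/\ncomax^n$, the statement reduces by adjunction to showing, at each level $n > 0$, the bijectivity of
\[ \Hom_{(\nB/\ncomax^n)\{\tau\}}(\nBlat/\ncomax^n\nBlat,\, \nB/\ncomax^n) \to \Hom_{(\nB/\ncomax^n)\{\tau\}}(\nBlat/\ncomax^n\nBlat,\, \nBK/\ncomax^n). \]

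At finite level I would fix an $\Fq$-basis $e_1, \dotsc, e_N$ of the finite-dimensional $\Fq$-vector space $\ncoint/\ncomax^n$, producing direct sum decompositions $\nB/\ncomax^n = \bigoplus_i \nOK\,e_i$ and $\nBK/\ncomax^n = \bigoplus_i K\,e_i$ of $\nOK$- and $K$-modules respectively. A $\nB$-linear map $\bar f\colon \nBlat/\ncomax^n\nBlat \to \nBK/\ncomax^n$ then splits into $\nOK$-linear components $f_i\colon \nBlat/\ncomax^n\nBlat \to K$ via $\bar f(v) = \sum_i f_i(v)\,e_i$. Since $\nsigma$ on $\nBK/\ncomax^n$ acts as the $q$-Frobenius on $K$ tensored with the identity on $\ncoint/\ncomax^n$, the $\tau$-equivariance of $\bar f$ decomposes coordinatewise as $\nsigma \circ f_i = f_i \circ \tau$, so each $f_i$ becomes a morphism of left $\nOK\{\tau\}$-modules from $\nBlat/\ncomax^n\nBlat$ into $\unitob{K}$.

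I would then invoke Lemma~\ref{fundestim} applied to the $\nOK\{\tau\}$-module $\nBlat/\ncomax^n\nBlat$ to force each $f_i$ to take values in $\nOK$, yielding $\bar f(v) \in \bigoplus_i \nOK\,e_i = \nB/\ncomax^n$ as required. The hypothesis of Lemma~\ref{fundestim} is satisfied because $\nBlat/\ncomax^n\nBlat$ is locally free of finite type over the finite free $\nOK$-algebra $\nB/\ncomax^n$, hence finitely presented and flat over the discrete valuation ring $\nOK$, hence finitely generated free. No serious obstacle should arise; the only substantive conceptual point is that the $\nsigma$-action is trivial on $\ncoint$, which is precisely what lets the termwise decomposition respect the $\tau$-structure and reduce the $\nB$-theoretic statement directly to its $\nOK$-theoretic predecessor.
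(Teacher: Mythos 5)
Your proposal is correct and follows the same route as the paper: reduce modulo $\ncomax^n$ using the $\nadic$-adic completeness of $\unitob{\nB}$ and $\unitob{\nBK}$, then settle each finite level by Lemma~\ref{fundestim}. The paper leaves the finite-level step as an immediate consequence of that lemma, whereas you spell out the coordinatewise decomposition over an $\Fq$-basis of $\ncoint/\ncomax^n$ (valid because $\nsigma$ is the identity on $\ncoint$) and the freeness of $\nBlat/\ncomax^n\nBlat$ over the DVR $\nOK$; these are exactly the details the paper implicitly relies on.
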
%
\begin{proof}%
Lemma~\ref{fundestim} implies that for all $n$ the natural map
\begin{equation*}
\Hom(\nBlat, \:\unitob{\nB}/\ncomax^n\unitob{\nB}) \xrightarrow{\isosign}
\Hom(\nBlat, \:\unitob{\nBK}/\ncomax^n\unitob{\nBK})
\end{equation*}
is a bijection.
The result follows since $\unitob{\nB}$ and $\unitob{\nBK}$
are $\nadic$-adically complete.%
\end{proof}

\begin{lem}\label{pid}%
The ring $\nBl$ is a finite product of principal ideal domains.%
\end{lem}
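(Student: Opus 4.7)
My plan is to identify $\nB$ concretely as a power-series ring over a finite \'etale $R$-algebra, then combine Auslander--Buchsbaum with a Beauville--Laszlo Mayer--Vietoris to show the Picard group of each component vanishes.

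Fixing a uniformizer $z \in \ncoint$ gives $\ncoint \cong \ncons[[z]]$. Writing $R' = R \otimes_{\Fq} \ncons$, this yields
\begin{equation*}
R \otimes_{\Fq} \ncoint/\ncomax^n \;\cong\; R'[z]/(z^n),
\end{equation*}
and passing to the inverse limit in $n$ gives a canonical isomorphism $\nB \cong R'[[z]]$. Since $\ncons/\Fq$ is finite separable, $R'$ is a finite \'etale $R$-algebra; the decomposition $R' = \prod_\ell D_\ell$ into connected components expresses $R'$ as a finite product of Dedekind domains $D_\ell$, each finite and \'etale over $R$, so that
\begin{equation*}
\nBl \;=\; \prod_\ell D_\ell[[z]][\pi^{-1}],
\end{equation*}
where $\pi$ is a uniformizer of $R$. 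It thus suffices to show that each factor $D[[z]][\pi^{-1}]$ is a PID.

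The power-series ring $D[[z]]$ is a two-dimensional Noetherian regular ring: locally at each closed point $(\mathfrak{m}_j, z)$ its maximal ideal is generated by $\pi_j$ and $z$, so Auslander--Buchsbaum makes this localization a UFD. Inverting $\pi$ drops the dimension to one and removes exactly the primes $(\mathfrak{m}_j)$ and $(\mathfrak{m}_j, z)$, leaving a Noetherian one-dimensional normal domain, that is, a Dedekind domain. To upgrade from Dedekind to PID, I would compute the Picard group via the Beauville--Laszlo pullback square $\nBl = \nBK \times_{\nBKz} \nBzl$: the Mayer--Vietoris sequence for Picard groups together with $\operatorname{Pic}(\nBK) = 0$ (since $\nBK$ is a product of DVRs) and the surjectivity of $\nBK^\times \times \nBzl^\times \to \nBKz^\times$ (because $z \in \nBzl^\times$ and $\nBKz^\times = z^{\bZ} \cdot \nBK^\times$ in each field factor of $\nBKz$) forces $\operatorname{Pic}(\nBl) = 0$.

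The most delicate step will be the Mayer--Vietoris argument: I will need to verify that the Beauville--Laszlo square admits the appropriate exact sequence for Picard groups, and carry out the units computation factor by factor. The other ingredients (the structure of $R'$ as a product of Dedekind domains and the Auslander--Buchsbaum application to $D[[z]]$) are routine. A notable case where these complications disappear entirely is when $R$ is Henselian, since then each $D_\ell$ is itself a DVR and $D_\ell[[z]][\pi^{-1}]$ is directly a PID by the Auslander--Buchsbaum argument, without any recourse to Mayer--Vietoris.
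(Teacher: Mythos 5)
Your reduction of $\nBl$ to the factors $D_\ell[[z]][\pi^{-1}]$ and your verification that each such factor is a Dedekind domain are correct, and you are right that the components $D_\ell$ are in general only semilocal Dedekind domains rather than DVRs. The gap is in the final step, the upgrade from Dedekind to principal. The units--Picard Mayer--Vietoris sequence attached to the Beauville--Laszlo square has the shape
\begin{equation*}
1 \to \nBl^{\times} \to \nBK^{\times}\times(\nBzl)^{\times} \to \nBKz^{\times} \to \operatorname{Pic}(\nBl) \to \operatorname{Pic}(\nBK)\times\operatorname{Pic}(\nBzl) \to \operatorname{Pic}(\nBKz),
\end{equation*}
so your (correct) surjectivity of the units map together with $\operatorname{Pic}(\nBK)=0$ only yields an injection $\operatorname{Pic}(\nBl)\hookrightarrow\operatorname{Pic}(\nBzl)$, where $\nBzl=\nBl[z^{-1}]$. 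You never show that $\operatorname{Pic}(\nBzl)$ vanishes, and this is not an easier statement: $\nBzl$ is a further localization of the Dedekind ring $\nBl$, so by Nagata's localization theorem its class group is a quotient of that of $\nBl$; proving it trivial is essentially the lemma itself. As written, the argument is circular at exactly the point that matters.

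The repair is simpler than any Mayer--Vietoris and is in substance what the paper does: prove that $D[[z]]$ itself is factorial and then localize. Since $z$ lies in the Jacobson radical of $D[[z]]$, the map $\operatorname{Pic}(D[[z]])\to\operatorname{Pic}(D)$ is injective, and $\operatorname{Pic}(D)=0$ because $D$ is semilocal; as $D[[z]]$ is a regular Noetherian domain, its divisor class group coincides with its Picard group, so $D[[z]]$ is a UFD. Hence every height-one prime of $D[[z]]$ is principal, and every nonzero prime of $D_\ell[[z]][\pi^{-1}]$ is extended from such a prime (the only height-two primes of $D[[z]]$ are the maximal ideals $(\fm_j,z)$, all of which contain $\pi$), so each factor of $\nBl$ is a Dedekind domain with principal primes, i.e.\ a PID. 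This is exactly the paper's proof, which decomposes $\nB$ into two-dimensional regular pieces, invokes Auslander--Buchsbaum to get factoriality, and notes that the nonzero primes downstairs come from height-one primes upstairs; it is also precisely the argument you yourself give in your closing remark about the Henselian case. In other words, the Auslander--Buchsbaum/factoriality ingredient you relegated to a special case is the whole argument (with the semilocal case handled by $\operatorname{Pic}(\text{semilocal})=0$), and the Picard-group Mayer--Vietoris both fails to supply the needed input and is unnecessary.
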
%
\begin{proof}%
The ring $\nB$ is a product of finitely many $\nOK$-algebras $A_i$
which are regular local rings of dimension~$2$.
Every localization $\nboun{A}_i = K\otimes_{\nOK} A_i$
is a Dedekind domain since it is regular of dimension $1$.
Thus every ideal of $\nboun{A}_i$ decomposes into a product of prime ideals.
The nonzero prime ideals of $\nboun{A}_i$ come from the prime ideals of $A_i$ of height~$1$.
We claim that all such ideals are principal.
Indeed, the regular local ring $A_i$ is a unique factorization domain \stacks{0AG0},
and the prime ideals of height~$1$ in such a ring are principal \stacks{0AFT}.%
\end{proof}

\begin{lem}\label{fundestim3}%
For every $\nsigma$-module $\nBlat$ over $\nBl$ we have 
$\nBlat^{\tau} = (\nBK\otimes_{\nBl} \nBlat)^{\tau}$.%
\end{lem}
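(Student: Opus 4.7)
The plan is to translate both sides into $\Hom$-sets via duality, produce a $\tau$-stable $\nB$-lattice inside $\nBlat^*$, and then invoke Lemma~\ref{fundestimz}. Combining Lemmas~\ref{doubledual}, \ref{fdmhomtensor}, and \ref{dmhominv} with the tensor-Hom adjunction identifies $\nBlat^\tau = \Hom_{\nBl\{\tau\}}(\nBlat^*, \unitob{\nBl})$, and the same reasoning applied to the $\sigma$-module $\nBK\otimes_{\nBl}\nBlat$ over $\nBK$, combined with the fact that the dual commutes with base change and the extension-of-scalars adjunction, gives $(\nBK\otimes_{\nBl}\nBlat)^\tau = \Hom_{\nBl\{\tau\}}(\nBlat^*, \unitob{\nBK})$. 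Under these identifications the natural inclusion corresponds to post-composition with $\unitob{\nBl}\hookrightarrow\unitob{\nBK}$, so it suffices to show that every $\nBl\{\tau\}$-linear map $f\colon\nBlat^*\to\unitob{\nBK}$ takes values in $\unitob{\nBl}$.

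The crucial step is to produce a $\tau$-stable $\nB$-lattice in $\nBlat^*$. Since $\nBl$ is a finite product of principal ideal domains by Lemma~\ref{pid}, $\nBlat^*$ is free over each component of $\nBl$; pick an $\nB$-submodule $\nBlat'_0\subset\nBlat^*$ that is locally free of finite type over $\nB$ and satisfies $\nBl\cdot\nBlat'_0=\nBlat^*$, for example the $\nB$-span of a component-wise basis of $\nBlat^*$. Since $\tau(\nBlat'_0)$ is finitely generated and $\nBlat^* = \bigcup_{N\geqslant 0}\pi^{-N}\nBlat'_0$ (where $\pi\in\nOK$ is a uniformizer), there exists $N\geqslant 0$ with $\tau(\nBlat'_0)\subset\pi^{-N}\nBlat'_0$. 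Set $\nBlat_0 = \pi^N\nBlat'_0$; using that $\sigma(\pi)=\pi^q$ and that $\tau$ is $\sigma$-semi-linear,
\begin{equation*}
\tau(\nBlat_0) = \sigma(\pi^N)\tau(\nBlat'_0) = \pi^{qN}\tau(\nBlat'_0) \subset \pi^{(q-1)N}\nBlat'_0 \subset \pi^N\nBlat'_0 = \nBlat_0,
\end{equation*}
where the last inclusion uses $q\geqslant 2$. So $\nBlat_0$ is a $\tau$-stable $\nB$-lattice in $\nBlat^*$ that remains locally free of finite type over $\nB$.

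To conclude, applying Lemma~\ref{fundestimz} to the left $\nB\{\tau\}$-module $\nBlat_0$ shows that the restriction $f|_{\nBlat_0}\colon\nBlat_0\to\unitob{\nBK}$ factors through $\unitob{\nB}$, i.e., $f(\nBlat_0)\subset\nB$. Given any $m\in\nBlat^*$, writing $m=\pi^{-M}m_0$ with $m_0\in\nBlat_0$ yields $f(m)=\pi^{-M}f(m_0)\in\pi^{-M}\nB\subset\nBl$, so $f(\nBlat^*)\subset\nBl$. The only non-formal ingredient is the construction of $\nBlat_0$, which works precisely because $\sigma(\pi)=\pi^q$ with $q\geqslant 2$ is strong enough to contract the denominators introduced by $\tau$.
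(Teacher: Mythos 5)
Your proposal is correct and follows essentially the same route as the paper: dualize to reduce the statement to $\Hom$-sets into the unit object, use Lemma~\ref{pid} to find a locally free $\nB$-sublattice of $\nBlat^*$, rescale by a power of the uniformizer to make it $\tau$-stable, and apply Lemma~\ref{fundestimz}. The only difference is cosmetic: you spell out explicitly why the rescaling works (via $\sigma(\pi)=\pi^q$ with $q\geqslant 2$), which the paper leaves implicit, and the paper packages the final comparison as a commutative square of inclusions of $\Hom$-sets rather than chasing an individual morphism $f$.
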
%
\begin{proof}%
It follows from Lemma~\ref{pid} that there is a locally free $\nB$-submodule of finite type
$P \subset \nBlat^*$ which generates $\nBlat^*$ over $\nBl$.
Multiplying $P$ by a high enough power of a uniformizer $\zeta\in\nOK$
we reduce to the situation where $P$ is a left $\nB\{\tau\}$-submodule. 
Consider the diagram of natural inclusions
\begin{equation*}
\xymatrix{
\Hom(\nBlat^*, \:\unitob{\nBl}) \ar[r]
& \Hom(\nBlat^*, \:\unitob{\nBK}) \\
\Hom(P, \:\unitob{\nB}) \ar[u] \ar[r]
& \Hom(P, \:\unitob{\nBK}) \ar[u].
}
\end{equation*}
The right inclusion is an equality since $P$ generates $\nBlat^*$ over $\nBl$.
The bottom inclusion is an equality by
Lemma~\ref{fundestimz}.
We therefore have a natural bijection
\begin{equation*}
\Hom(\nBlat^*, \,\unitob{\nBl}) \xrightarrow{\:\isosign\:}
\Hom(\nBlat^*, \,\unitob{\nBK}).
\end{equation*}
Lemma~\ref{dmhominv} identifies
the left hand side with
$(\nBlat^{**})^{\tau}$ and the right hand side with
$(\nBK \otimes_{\nBl} \nBlat^{**})^{\tau}$.
We get the result since
$\nBlat \cong \nBlat^{**}$ by Lemma~\ref{doubledual}.%
\end{proof}


\begin{thm}\label{bzlff}%
The base change functor $M \mapsto \nBKz\otimes_{\nBzl} M$
from the category of pure $\nBzl$-isocrystals
to the category of $\nBKz$-isocrystals is fully faithful.%
\end{thm}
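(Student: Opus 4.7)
I would reduce Theorem~\ref{bzlff} to the already-proven Lemma~\ref{fundestim3}. For pure $\nBzl$-isocrystals $M$ and $N$, the identification $\Hom_{\nBzl\{\tau\}}(M,N) = \iHom(M,N)^\tau$ (a verbatim copy of Lemma~\ref{dmhominv}) together with the base-change isomorphism $\iHom(M,N) \otimes_{\nBzl} \nBKz \xrightarrow{\isosign} \iHom(M_K,N_K)$ (valid because $M$ is locally free of finite type) reduces the claim to showing that for $H = \iHom(M,N)$,
\[
H^\tau \xrightarrow{\isosign} \bigl(\nBKz \otimes_{\nBzl} H\bigr)^\tau.
\]
The same tensor-of-lattices argument as in Proposition~\ref{fdmtensor} shows that $H$ is a pure $\nBzl$-isocrystal whose slope is $\mathrm{slope}(N) - \mathrm{slope}(M)$, which splits the argument according to whether this slope is zero or not.

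If the slope of $H$ is nonzero, both sides will vanish. The right-hand side $(H_K)^\tau$ is zero by Proposition~\ref{abunpure} (purity is preserved under the base change to $\nBKz$) combined with Proposition~\ref{slopehom} applied to the unit object and $H_K$ over the field $K$. For the left-hand side, I use that $H$ is locally free over $\nBzl$, hence flat, together with the injection $\nBzl \hookrightarrow \nBKz$ (obtained from the flatness of $\nOK \to K$, which propagates through the $\nadic$-adic completion step, followed by inverting $z$), to deduce that $H \hookrightarrow H_K$ is injective; this forces $H^\tau \subset (H_K)^\tau = 0$.

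In the remaining slope-zero case, Proposition~\ref{abunpure0} provides a $\sigma$-module $P$ over $\nBl$ with $H = \nBzl \otimes_{\nBl} P = P[z^{-1}]$. Because $z \in \ncoint \subset \nBl$ is $\tau$-fixed, inverting $z$ commutes with taking $\tau$-invariants: any element of $P[z^{-1}]^\tau$ has the form $z^{-n}p$ with $p \in P^\tau$, whence $H^\tau = P^\tau \otimes_{\ncoint} \ncoef$; the same reasoning applied to $\nBK \otimes_{\nBl} P$ in place of $P$ gives $(H_K)^\tau = (\nBK \otimes_{\nBl} P)^\tau \otimes_{\ncoint} \ncoef$. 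At this point Lemma~\ref{fundestim3} supplies the crucial equality $P^\tau = (\nBK \otimes_{\nBl} P)^\tau$, and the two invariant spaces agree. The main obstacle is precisely Lemma~\ref{fundestim3}, which has already been overcome using the Beauville--Laszlo-type argument and the structure of $\nBl$ as a product of principal ideal domains; everything remaining is formal manipulation with purity, lattices, and the $\tau$-invariance of the scalar $z$.
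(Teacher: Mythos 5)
Your proposal is correct and follows essentially the same route as the paper: reduce via $\iHom$ and Lemma~\ref{dmhominv} to comparing $H^\tau$ with $(H_K)^\tau$, kill the unequal-slope case with Proposition~\ref{slopehom}, and in the slope-zero case descend to a $\nsigma$-module over $\nBl$ via Proposition~\ref{abunpure0} and conclude with Lemma~\ref{fundestim3}. The only differences (deducing purity of $H$ by a direct lattice argument rather than through Proposition~\ref{abunpure}, and spelling out that inverting the $\tau$-fixed element $z$ commutes with taking $\tau$-invariants) are cosmetic refinements of steps the paper leaves implicit.
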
%
\begin{proof}
Let $M$, $N$ be pure $\nBzl$-isocrystals and let $H = \iHom(M,N)$.
We denote by $H_K$ the isocrystal $\nBKz\otimes_{\nBzl} H$.
In view of Lemma~\ref{dmhominv}
it is enough to prove that the inclusion $H^\tau \subset (H_K)^\tau$ is an equality.
If the slopes of $M$ and $N$ are different then $(H_K)^\tau = 0$ by Proposition~\ref{slopehom}.
Assume that the slopes of $M$ and $N$ coincide.
Then $H$ is pure of slope~$0$ and so arises from a $\nsigma$-module
$\nBlat$ over $\nBl$ by Proposition~\ref{abunpure0}.
Lemma~\ref{fundestim3} shows that
$\nBlat^\tau = (\nBK\otimes_{\nBl}\nBlat)^\tau$
and the result follows.%
\end{proof}

\begin{exa}\label{bzlmixed}%
This result does not extend to mixed $\nBzl$-isocrystals.
Suppose that $\ncoef = \rF{\Fq}$.
Pick an element $\alpha \in K^\times$ such that $\nval(\alpha) < 0$.
Let $M$ be an isocrystal with a $\nBzl$-basis $e_0$, $e_1$
on which $\tau$ acts as follows:
\begin{equation*}
\tau(e_0) = z e_0, \quad \tau(e_1) = \alpha e_0 + e_1.
\end{equation*}
This isocrystal is not pure since the $\nBKz$-isocrystal
$\bm\hat{M} = \nBKz\otimes_{\nBzl} M$
has a sub-isocrystal $\nBKz e_0$ of slope $1$
and the quotient is of slope~$0$.

We claim that the inclusion
$M^\tau \subset \bm\hat{M}^\tau$
is proper.
The equation $x = \alpha + z \sigma(x)$ has a solution
\begin{equation*}
x = \sum_{n\geqslant 0} \alpha^{q^n} z^n
\end{equation*}
in $\nBKz$.
Hence the vector $e = x e_0 + e_1$ belongs to $\bm\hat{M}^\tau$.
However the valuation of the coefficients of $x$ is not bounded from below and so $e \not\in M$.
\end{exa}

\begin{rmk}%
Let $\nBz^\dagger \subset \rF{K}$ be the subring of series
which converge on a nonempty punctured open disk around~$0$.
This is the $z$-adic counterpart of the ring $\Gamma_{\textup{con}}[p^{-1}]$ from the theory
of overconvergent $F$-isocrystals \cite[~\S2]{kedlaya-ff}.
By analogy with Kedlaya's theorem \cite{kedlaya-ff} one may
ask whether the base change functor from $\nBz^\dagger$-isocrystals
to $\nBKz$-isocrystals is fully faithful.
The example above shows that the answer to this question is negative:
The series $x = \sum_{n\geqslant 0} \alpha^{q^n} z^n$
has radius of convergence~$0$ whenever $\nval(\alpha) < 0$.
\end{rmk}

Finally we shall prove that the base change functor
from $\nBz$-isocrystals to $\nBzl$-isocrystals is fully faithful
and combine this result with the theorem above.

\begin{prp}\label{bzff}%
The functor $M\mapsto \nBzl\otimes_{\nBz} M$
from the category of $\nBz$-isocrystals
to the category of $\nBzl$-isocrystals
is fully faithful.%
\end{prp}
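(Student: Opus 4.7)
By Lemmas~\ref{fdmhomtensor} and~\ref{dmhominv} together with the fact that $\iHom$ commutes with base change for locally free modules of finite type, it suffices to show that
\begin{equation*}
H^\tau = (\nBzl \otimes_{\nBz} H)^\tau
\end{equation*}
for every $\nBz$-isocrystal $H$. Let $\zeta \in \nOK$ be a uniformizer. Since $H$ is locally free over $\nBz$ and $\nBz$ is $\zeta$-torsion-free (being the $\ncomax$-adic completion of the $\nOK$-flat ring $\nOK\otimes_{\Fq}\ncoint$, followed by localization at $z$), the natural map $H \to \nBzl \otimes_{\nBz} H$ is an inclusion and identifies the target with $H[\zeta^{-1}]$. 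So the inclusion $H^\tau \subset (H[\zeta^{-1}])^\tau$ is automatic, and we need only prove the reverse containment.

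Let $h \in (H[\zeta^{-1}])^\tau$ and let $n \geqslant 0$ be the smallest integer with $h' := \zeta^n h \in H$. I claim $n = 0$. Observe that $\nsigma$ restricted to $\nOK\subset\nBz$ is the $q$-Frobenius, so $\nsigma(\zeta) = \zeta^q$. Using $\tau(h) = h$ we compute
\begin{equation*}
\tau(h') = \nsigma(\zeta^n)\tau(h) = \zeta^{nq} h = \zeta^{n(q-1)} h'.
\end{equation*}
If $n \geqslant 1$ then $n(q-1) \geqslant 1$, so $\tau(h') \in \zeta H$.

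Now consider the reduction $\xbar{H} = H/\zeta H$. The quotient $\nBz/\zeta\nBz$ is naturally $\nBRz{\nresf}$ (reducing the completion and the localization modulo $\zeta$), and because $\nsigma$ sends $\zeta$ to $\zeta^q$ the endomorphism $\nsigma$ descends and $\xbar{H}$ becomes a $\nBRz{\nresf}$-isocrystal. The multiplication-by-$\tau$ map on any isocrystal is injective: it is the composite of the $\nsigma$-linear map $x \mapsto 1\otimes x$ with the structure isomorphism, and the first map is injective because $\nsigma$ is injective on $\nBRz{\nresf}$ (indeed $\nresf$ is a field). Therefore from $\tau(h') \in \zeta H$ and the injectivity of $\tau$ on $\xbar{H}$ we deduce $h' \in \zeta H$, contradicting the minimality of $n$. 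Hence $n = 0$ and $h \in H$, completing the proof.

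The main subtlety is the verification that reduction modulo $\zeta$ carries $\nBz$-isocrystals to $\nBRz{\nresf}$-isocrystals and, relatedly, that $\tau$ is injective on the reduced module; both are straightforward but use that $\nsigma(\zeta)$ lies in $\zeta\nBz$ and that $\xbar{H}$ remains a $\sigma$-module because $\nsigma^* H\to H$ is an isomorphism of locally free modules. The bookkeeping with $\zeta$-torsion-freeness of $\nBz$ is the only other place where one must be careful; everything else is a short ``clear denominators and reduce'' argument.
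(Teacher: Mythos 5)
Your argument is correct and is essentially the paper's own proof: reduce to computing $\tau$-invariants of $H=\iHom(M,N)$, clear denominators with a uniformizer $\zeta$, use $\tau(\zeta^n x)=\zeta^{(q-1)n}\zeta^n x$, and conclude from the injectivity of $\tau$ on $\nBRz{\nresf}\otimes_{\nBz}H$ (since $\nsigma$ is injective on $\nBRz{\nresf}=\nBz/\zeta\nBz$) that one can divide by $\zeta$. The only difference is cosmetic: you take $n$ minimal and derive a contradiction, whereas the paper runs a descending induction on $n$.
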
%
\begin{proof}%
Let $M$ and $N$ be $\nBz$-isocrystals and set $H = \iHom(M,\,N)$.
According to Lemma~\ref{dmhominv} we have
\begin{equation*}
H^\tau = \Hom(M,\,N), \quad
(\nMzl{H})^\tau =
\Hom(\nMzl{M},\:\nMzl{N}).
\end{equation*}
We shall prove that $H^\tau = (\nMzl{H})^\tau$.
Pick an element $x \in (\nMzl{H})^\tau$.
Let $\zeta\in K$ be a uniformizer
and $n\geqslant 0$ an integer such that $\zeta^n x \in H$.
We have
\begin{equation*}
\tau(\zeta^n x) = \zeta^{q n} x = \zeta^{(q-1)n}\cdot \zeta^n x.
\end{equation*}
So if $n > 0$ then
$\tau$ acts by zero on the image of $\zeta^n x$
in $\nBRz{\nresf}\otimes_{\nBz} H$.
As $\nsigma$ is injective on $\nBRz{\nresf}$
it follows that
$\zeta^n x$ belongs to the kernel of the reduction map
$H \to \nBRz{\nresf}\otimes_{\nBz} H$.
This kernel is $\zeta H$ since
$\nBRz{\nresf} = \nBz/\zeta\nBz$.
Dividing by $\zeta$
we conclude that $\zeta^{n-1} x \in H$.
An induction over $n$ implies that $x \in H$.%
\end{proof}
\begin{prp}\label{dmloc}%
The functor $M \mapsto \nBKz\otimes_{\nBz} M$
from the category of pure $\nBz$\nobreakdash-isocrystals 
to the category of $\nBKz$-isocrystals is fully faithful.%
\end{prp}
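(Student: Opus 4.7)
The plan is to obtain Proposition~\ref{dmloc} as a formal corollary of the two base change results already proved in this subsection, Proposition~\ref{bzff} and Theorem~\ref{bzlff}, by factoring the functor in question through the intermediate category of $\nBzl$-isocrystals.

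More precisely, I would factor the base change as
\begin{equation*}
M \;\longmapsto\; \nBzl\otimes_{\nBz} M \;\longmapsto\; \nBKz\otimes_{\nBzl}(\nBzl\otimes_{\nBz} M) \;=\; \nBKz\otimes_{\nBz} M.
\end{equation*}
Proposition~\ref{bzff} already shows that the first arrow is fully faithful on the entire category of $\nBz$-isocrystals, with no purity hypothesis needed. So it suffices to check that when $M$ is a pure $\nBz$-isocrystal of slope $\lambda$, the $\nBzl$-isocrystal $\nBzl\otimes_{\nBz} M$ is pure of the same slope, because then Theorem~\ref{bzlff} applies to the second arrow and gives full faithfulness of the composition.

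For the purity transfer, pick a lattice $\nBlat\subset M$ (in the sense of Definition~\ref{defblat}) and integers $s$ and $r>0$ with $\lambda = s/r$ such that $\nlati{r\nresdeg}{\nBlat}=z^s\nBlat$, whose existence is guaranteed by Definition~\ref{dfnpure}. The $\nBl$-module $\nBl\otimes_{\nB}\nBlat$ is locally free of finite type and generates $\nBzl\otimes_{\nBz} M$ over $\nBzl$, so it is a lattice in the sense of Definition~\ref{defabunlat}. The relation $\nlati{r\nresdeg}{\nBlat}=z^s\nBlat$ is preserved by base change along $\nB\to\nBl$ since the action of $\tau$ is compatible with extension of scalars, so the base changed lattice witnesses that $\nBzl\otimes_{\nBz} M$ is pure of slope $\lambda$. (Equivalently, one could first base change $\nBlat$ along $\nB\to\nBK$ to conclude that $\nBKz\otimes_{\nBz} M$ is pure of slope $\lambda$ in the sense of Section~\ref{sec:isocfield}, and then invoke Proposition~\ref{abunpure} to descend the purity to $\nBzl\otimes_{\nBz} M$.)

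There is no real obstacle here: the argument is a bookkeeping exercise once one observes that neither of the two preceding full faithfulness results really relies on a discrete valuation structure beyond the fact already built into the difference rings $\nBz$, $\nBzl$, $\nBKz$. The only subtlety is to confirm that the notion of purity used in Definition~\ref{dfnpure} (for arbitrary $\Fq$-algebras, hence in particular for $R=\nOK$) is compatible via base change with the notion of purity for $\nBzl$-isocrystals introduced earlier in the section, but this is immediate from the definitions.
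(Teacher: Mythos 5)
Your proposal is correct and is essentially the paper's own proof, which reads in full: ``Combine Proposition~\ref{bzff} and Theorem~\ref{bzlff}.'' The only addition is your explicit verification that purity passes from $\nBz$-isocrystals to their base change over $\nBzl$ (needed to invoke Theorem~\ref{bzlff}), which the paper leaves implicit but which you handle correctly, either directly via base change of the witnessing lattice or via Proposition~\ref{abunpure}.
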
%
\begin{proof}%
Combine Proposition~\ref{bzff} and Theorem~\ref{bzlff}.%
\end{proof}

\subsection{Pure isocrystals}

\begin{lem}\label{bunlat}%
Let $M$ be a locally free $\nBz$-module of finite type and
$\nBllat$ a lattice in $\nBzl\otimes_{\nBz} M$.
Then $\nBlat = \nBllat \cap M$ is the unique lattice in $M$
which generates $\nBllat$ over~$\nBl$.%
\end{lem}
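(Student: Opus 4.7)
The plan is to mimic the proof of Lemma~\ref{abunlat}, swapping the role of $z$ for that of a uniformizer $\zeta\in\nOK$. I would set $\nBlat:=\nBllat\cap M$ inside the ambient module $\nBzl\otimes_{\nBz}M$, which contains both $M$ and $\nBllat$, and then verify in turn the spanning properties, local freeness, and uniqueness.

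The spanning relations $\nBlat\otimes_{\nB}\nBz = M$ and $\nBlat\otimes_{\nB}\nBl = \nBllat$ are routine. Given $m\in M$, the fact that $\nBllat$ is a lattice in $\nBzl\otimes_{\nBz}M=\nBllat[z^{-1}]$ yields $z^n m\in\nBllat$ for some $n\geqslant 0$, hence $z^n m\in\nBlat$. Dually, a finite $\nBl$-generating set of $\nBllat\subset K\otimes_{\nOK}M$ has uniformly bounded $\zeta$-denominators, so $\zeta^N\nBllat\subset M\cap\nBllat=\nBlat$ for some $N$, whence $\nBllat=\nBlat[\zeta^{-1}]$.

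The substantive step is proving that $\nBlat$ is locally free of finite type over $\nB$. The key observation is that $\nB$ decomposes as a finite product of two-dimensional regular local rings, in each of which $z$ and $\zeta$ form a system of local parameters. It therefore suffices to verify the claim after restricting to a single such local factor $A$. The punctured spectrum $U=\Spec A\setminus\{\fm\}$ is covered by the two affine opens $\Spec A[z^{-1}]$ and $\Spec A[\zeta^{-1}]$, and the compatible pair $(M,\nBllat)$ descends to a vector bundle $\cF$ on $U$. The standard extension theorem for vector bundles on the punctured spectrum of a two-dimensional regular local ring (equivalently, Auslander-Buchsbaum applied to the reflexive module $j_\ast\cF$, where $j\colon U\hookrightarrow\Spec A$) identifies $\Gamma(U,\cF)=\nBllat\cap M$ with a finitely generated free $A$-module.

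Uniqueness is immediate from the uniqueness of this extension: any $\nB$-lattice $\nBlat'\subset M$ with $\nBlat'\otimes_\nB\nBl=\nBllat$ induces the same vector bundle on $\Spec\nB\setminus V(z,\zeta)$ as $\nBlat$, so $\nBlat'=\nBlat$. The main obstacle will be the structural claim that $\nB$ factors into two-dimensional regular local rings with local parameters $(z,\zeta)$; in the ramified case this requires a short case-analysis according to $\nresdeg$ and the embeddings of $\ncons$ into $K$, but it does not affect the core argument.
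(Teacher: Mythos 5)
Your proof is correct and takes essentially the same route as the paper: the paper likewise reduces to the fact that $\nB$ is a finite product of two-dimensional regular local rings whose punctured spectra are covered by $\Spec\nBz$ and $\Spec\nBl$, and then invokes the extension-and-uniqueness theorem for vector bundles on the punctured spectrum (citing Proposition~2.1(b) of Pink's \emph{The isogeny conjecture for $A$-motives}) exactly where you invoke reflexivity of $j_\ast\cF$ and Auslander--Buchsbaum. No gaps.
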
%
\begin{proof}%
The ring $\nB$ has the following properties:
\begin{itemize}
\item $\nB$ is a finite product regular local rings of dimension~$2$.

\item
The set of maximal ideals of $\nB$ is
the complement of $\Spec\nBl \cup \Spec\nBz$ in $\Spec\nB$.%
\end{itemize}
Applying \cite[Proposition~2.1~(b)]{pink-isogeny} we conclude that
$\nBlat = \nBllat \cap M$ is a locally free $\nB$-module of finite type
and that it spans $M$ over $\nBz$ and $\nBllat$ over $\nBl$.
The unicity of $\nBlat$ follows since
the restriction functor from the category of quasi-coherent sheaves
on $\Spec\nB$ to the category of quasi-coherent sheaves on $\Spec\nBl \cup \Spec\nBz$
is fully faithful on the subcategory of locally free sheaves of finite type.%
\end{proof}


\pagebreak
\begin{prp}\label{dmredpure}%
Let $\nslope$ be a rational number.
For every $\nBz$-isocrystal $M$ the following are equivalent:
\begin{enumerate}
\item
The isocrystal $M$ is pure of slope $\nslope$.

\item
The isocrystal $M_K = \nBKz\otimes_{\nBz} M$ is pure of slope~$\nslope$.
\end{enumerate}
\end{prp}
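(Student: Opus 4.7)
The plan is to mimic the proof of Proposition~\ref{abunpure}, pushing the faithfully flat descent one step further, from $\nBl$ down to $\nB$. The implication (1)~$\Rightarrow$~(2) is the easy direction: if $\nBlat\subset M$ is a lattice with $\nlati{r\nresdeg}{\nBlat}=z^s\nBlat$, then the base change $\nBK\otimes_{\nB}\nBlat$ is a lattice in $M_K$ satisfying the same identity, so $M_K$ is pure of slope $\nslope=\frac{s}{r}$.

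For the converse I would factor the base change $\nBz\to\nBKz$ through the intermediate ring $\nBzl$ and use Proposition~\ref{abunpure} to obtain purity of the $\nBzl$-isocrystal $M_l=\nBzl\otimes_{\nBz}M$. Thus there exist integers $s$ and $r$ with $r>0$ and $\frac{s}{r}=\nslope$, together with a lattice $\nBllat\subset M_l$ such that $\nlati{r\nresdeg}{\nBllat}=z^s\nBllat$. By Lemma~\ref{bunlat}, the intersection $\nBlat=\nBllat\cap M$ is the unique lattice in $M$ that generates $\nBllat$ over $\nBl$.

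It remains to verify that $\nBlat$ witnesses purity of $M$. Consider the two $\nB$-submodules $\nlati{r\nresdeg}{\nBlat}=\ngen{\tau^{r\nresdeg}\nBlat}$ and $z^s\nBlat$. Both are lattices in $M$: the first one because Lemma~\ref{latshift} applies to $\nBz$-isocrystals (the proof only uses invertibility of the structure morphism and the flatness of $\nsigma^*$, not the fact that the base ring is a field); the second one trivially. Upon base change to $\nBl$ both yield the same lattice $\nlati{r\nresdeg}{\nBllat}=z^s\nBllat$ inside $M_l$. By the unicity part of Lemma~\ref{bunlat} applied to this common lattice, we conclude $\nlati{r\nresdeg}{\nBlat}=z^s\nBlat$, so $M$ is pure of slope $\nslope$.

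The only delicate point is ensuring that Lemma~\ref{latshift} really does transfer to $\nBz$-isocrystals: its statement in the paper is phrased for isocrystals over a field, but the argument $\nlati{}{\nBlat}=\taulin(\nsigma^*\nBlat)$ together with local-freeness of $\nsigma^*\nBlat$ and bijectivity of $\taulin$ goes through unchanged here. Everything else is a formal glueing argument built on Lemma~\ref{bunlat} and Proposition~\ref{abunpure}.
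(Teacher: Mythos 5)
Your proof is correct and follows essentially the same route as the paper's: reduce to the $\nBzl$-isocrystal via Proposition~\ref{abunpure}, intersect the resulting lattice with $M$ using Lemma~\ref{bunlat}, and conclude by the unicity part of that lemma. Your one worry is unfounded: Lemma~\ref{latshift} is stated for $\nBRz{R}$-isocrystals over an arbitrary $\Fq$-algebra $R$, so it applies verbatim with $R=\nOK$ and no transfer argument is needed.
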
%
The corresponding result for $F$-isocrystals 
follows from Grothendieck's specialization theorem for Newton polygons
\cite[Theorem~2.3.1]{katz-slopes}.%
\begin{proof}[Proof of Proposition~\ref{dmredpure}]%
Assume that $M_K$ is pure of slope $\nslope$.
Proposition~\ref{abunpure} shows that the $\nBzl$-isocrystal $\nMzl{M}$
is pure of slope $\nslope$.
So there are integers $s$ and $r$, $r > 0$,
and a lattice $\nBllat$ in $\nMzl{M}$
such that $\frac{s}{r} = \nslope$ and $z^s \nBllat$ is generated by $\tau^{r\nresdeg}\nBllat$.
The intersection $\nBlat = \nBllat \cap M$ is a lattice by Lemma~\ref{bunlat}.
The unicity part of this lemma implies that 
$z^s\nBlat = \nlati{r\nresdeg}{\nBlat}$
and the claim follows.%
\end{proof}

\begin{prp}\label{dmpure0}%
For every $\nBz$-isocrystal $M$
the following are equivalent:
\begin{enumerate}
\item The isocrystal $M$ arises from a $\nsigma$-module over $\nB$.

\item The isocrystal $M$ is pure of slope~$0$.%
\end{enumerate}
\end{prp}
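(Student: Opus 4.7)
The implication (1) $\Rightarrow$ (2) is immediate: if $M$ arises from a $\nsigma$-module $\nBlat$ over $\nB$, then $\nBlat$ is a lattice in $M$ with $\nlati{}{\nBlat} = \nBlat$, hence $\nlati{\nresdeg}{\nBlat} = z^0\nBlat$, and Definition~\ref{dfnpure} with $s = 0$, $r = 1$ shows that $M$ is pure of slope $0$.

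For the converse, assume $M$ is pure of slope $0$. Propositions~\ref{dmredpure} and \ref{abunpure} together imply that $\nMzl{M}$ is pure of slope $0$ as a $\nBzl$-isocrystal, so Proposition~\ref{abunpure0} supplies a $\nsigma$-module $\nBllat$ over $\nBl$ with $\nBllat\otimes_{\nBl}\nBzl = \nMzl{M}$. Set $\nBlat = \nBllat \cap M$ inside $\nMzl{M}$; by Lemma~\ref{bunlat}, $\nBlat$ is the unique lattice in $M$ generating $\nBllat$ over $\nBl$. Since $\tau(\nBllat) \subset \nBllat$ and $\tau(M) \subset M$, the operator $\tau$ sends $\nBlat$ into itself. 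It therefore remains to prove that the structure morphism $\taulin\colon \nsigma^*\nBlat \to \nBlat$ is an isomorphism, for then $\nBlat$ is a $\nsigma$-module over $\nB$ whose base change to $\nBz$ recovers $M$.

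The source and target of $\taulin$ are locally free $\nB$-modules of the same rank. By the proof of Lemma~\ref{pid}, $\nB$ decomposes as a finite product of two-dimensional regular local rings, each a unique factorization domain in which all line bundles are trivial, so $\det\taulin$ is represented by an element of each factor. The base change of $\taulin$ to $\nBz$ is the structure morphism of the isocrystal $M$ and hence invertible, so $\det\taulin$ becomes a unit after inverting $z$; the base change to $\nBl$ is the structure morphism of the $\nsigma$-module $\nBllat$, so $\det\taulin$ becomes a unit after inverting a uniformizer of $\nOK$. On each factor this forces $\det\taulin$ to lie outside every height-one prime, and in a unique factorization domain such an element must be a unit. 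Thus $\taulin$ is an isomorphism, completing the proof. The principal obstacle is precisely this last step: a codimension-two extension of an isomorphism from the punctured spectrum to the whole of $\Spec\nB$, handled via the unique factorization property of the local rings constituting $\nB$.
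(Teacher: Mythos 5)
Your argument is correct and follows the paper's route almost exactly: reduce to the $\nBzl$-isocrystal $\nMzl{M}$, apply Proposition~\ref{abunpure0} to get a $\nsigma$-module $\nBllat$ over $\nBl$, and take $\nBlat = \nBllat \cap M$ via Lemma~\ref{bunlat}. The only divergence is the final step. The paper does not verify $\tau$-stability and invertibility of $\det\taulin$ by hand; instead it observes that $\nlati{}{\nBlat}$ is itself a lattice in $M$ generating $\ngen{\tau\nBllat} = \nBllat$ over $\nBl$, so the uniqueness clause of Lemma~\ref{bunlat} forces $\nlati{}{\nBlat} = \nBlat$ outright, whence the structure morphism is a surjection of locally free $\nB$-modules of equal rank and hence an isomorphism. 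Your determinant argument (unit after inverting $z$, unit after inverting a uniformizer of $\nOK$, and every height-one prime of each regular local factor misses one of the two, so $\det\taulin$ avoids all height-one primes and is a unit) is a valid and somewhat more explicit way of performing the same codimension-two extension that Lemma~\ref{bunlat} packages; it costs a few extra lines but makes the mechanism transparent, and as you note the UFD property is not even essential there, since Krull's principal ideal theorem already places any nonzero nonunit inside a height-one prime.
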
%
\begin{proof}%
Assume that the isocrystal $M$ is pure of slope~$0$.
Proposition~\ref{abunpure0} shows that the $\nBzl$-isocrystal $\nMzl{M}$
arises from a $\nsigma$-module $\nBllat$ over $\nBl$.
The intersection $\nBlat = \nBllat \cap M$ is a lattice by Lemma~\ref{bunlat}.
The unicity part of this lemma shows that $\nBlat = \nlati{}{\nBlat}$.
Hence $\nBlat$ is a $\nsigma$-module over $\nB$.%
\end{proof}

Let $\nresf$ be the residue field of $\nOK$.
\begin{lem}\label{dmpurepre}
If $\nOK$ is henselian then the functor
$M \mapsto \nBRz{\nresf}\otimes_{\nBz} M$
is an equivalence of categories of pure $\nBz$-isocrystals of slope~$0$
and pure $\nBRz{\nresf}$-isocrystals of slope~$0$.%
\end{lem}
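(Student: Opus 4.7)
The plan is to reduce the claim, via Propositions~\ref{dmpure0} and~\ref{fdmpure0} combined with Theorem~\ref{wellknown}, to the equivalence of small \'etale sites of $\Spec\nOK$ and $\Spec\nresf$, which is available since $\nOK$ is henselian.

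First I would apply Proposition~\ref{dmpure0} to write every pure $\nBz$-isocrystal of slope~$0$ as $\nBlat\otimes_{\nB}\nBz$ for a $\nsigma$-module $\nBlat$ over $\nB$, and Proposition~\ref{fdmpure0} (with $K=\nresf$) to write every pure $\nBRz{\nresf}$-isocrystal of slope~$0$ as $\roM\otimes_{\ncoint}\ncoef$ for a $\nsigma$-module $\roM$ over $\nBR{\nresf}$. A direct calculation identifies the functor of the lemma with $\nBlat\otimes_{\nB}\nBz\mapsto(\nBR{\nresf}\otimes_{\nB}\nBlat)\otimes_{\ncoint}\ncoef$. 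Theorem~\ref{wellknown}, applied to both $\nB$ and $\nBR{\nresf}$, then identifies the $\nsigma$-module categories with the categories of lisse $\nadic$-adic sheaves with free stalks on $\etsite{\nOK}$ and $\etsite{\nresf}$ respectively, and translates the base change $\nBlat\mapsto\nBR{\nresf}\otimes_{\nB}\nBlat$ into pullback along the closed immersion $\Spec\nresf\hookrightarrow\Spec\nOK$.

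Since $\nOK$ is henselian, reduction is an equivalence of categories of finite \'etale algebras, hence of small \'etale sites; the pullback therefore induces an equivalence of categories of lisse $\nadic$-adic sheaves preserving the free-stalks condition, because geometric stalks at corresponding geometric points agree. Combined with the previous identifications this gives essential surjectivity of the functor of the lemma: an arbitrary $\roM$ lifts to a $\nsigma$-module $\nBlat$ over $\nB$, and $\nBlat\otimes_{\nB}\nBz$ is a preimage.

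For full faithfulness, set $H=\iHom_{\nB}(\nBlat_1,\nBlat_2)$, a $\nsigma$-module over $\nB$. Because $\nsigma(z)=z$ and $H$ is $\ncoint$-flat, one checks that
\begin{equation*}
(H\otimes_{\ncoint}\ncoef)^{\tau}=H^{\tau}\otimes_{\ncoint}\ncoef;
\end{equation*}
combining this with Lemmas~\ref{dmhominv} and~\ref{fdmhomtensor} gives $\Hom_{\nBz\{\tau\}}(\nBlat_1\otimes\nBz,\nBlat_2\otimes\nBz)=H^{\tau}\otimes_{\ncoint}\ncoef$, and analogously the $\Hom$ on the $\nBRz{\nresf}$ side equals $(\nBR{\nresf}\otimes_{\nB}H)^{\tau}\otimes_{\ncoint}\ncoef$. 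By Theorem~\ref{wellknown} both $H^{\tau}$ and $(\nBR{\nresf}\otimes_{\nB}H)^{\tau}$ compute the group of global sections of the lisse $\nadic$-adic sheaf associated to $H$, and the henselian \'etale-site equivalence identifies these global-sections groups, completing the argument. The main obstacle is the classical fact that the small \'etale sites of $\Spec\nOK$ and $\Spec\nresf$ coincide for henselian $\nOK$, together with the care needed to check that the free-stalks condition of Theorem~\ref{wellknown} transports correctly under this equivalence; everything else is formal.
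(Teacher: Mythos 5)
Your proposal is correct and follows essentially the same route as the paper: reduce via Proposition~\ref{dmpure0} (resp.\ Proposition~\ref{fdmpure0}) to $\nsigma$-modules over $\nB$ (resp.\ $\nBR{\nresf}$), invoke Theorem~\ref{wellknown} together with the henselian invariance of the small \'etale site to get the equivalence on integral models, and deduce full faithfulness by passing to $\tau$-invariants of the internal Hom, which commute with inverting $z$. The only difference is that you spell out the identity $(H\otimes_{\ncoint}\ncoef)^{\tau}=H^{\tau}\otimes_{\ncoint}\ncoef$ explicitly, which the paper leaves implicit.
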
%
\begin{proof}%
As $\nOK$ is henselian
the pullback functor from lisse $\nadic$-adic sheaves on $\etsite{\nOK}$
to lisse $\nadic$-adic sheaves on $\etsite{\nresf}$ is an equivalence of categories.
So Theorem~\ref{wellknown} shows that the functor
$\nBlat\mapsto \nBR{\nresf}\otimes_{\nB} \nBlat$
is an equivalence of categories of $\nsigma$-modules over $\nB$ and
$\nsigma$-modules over $\nBR{\nresf}$.

By Proposition~\ref{fdmpure0} every pure $\nBRz{\nresf}$-isocrystal of slope~$0$
arises from a $\nsigma$-module over $\nBR{\nresf}$.
We thus get essential surjectivity.
It remains to prove full faithfulness.
Let $M$, $N$ be pure $\nBz$-isocrystals of slope~$0$.
Then $H = \iHom(M,N)$ is also pure of slope~$0$
and so arises from a $\nsigma$-module $\nBlat$ over $\nB$ by Proposition~\ref{dmpure0}.
The claim follows since the reduction map $\nBlat^\tau \to (\nBR{\nresf}\otimes_{\nB} \nBlat)^\tau$ is an isomorphism.%
\end{proof}

The following result is related to a constancy theorem of Katz for $F$-isocrystals \cite[Theorem~2.7.1]{katz-slopes}.%
\begin{thm}\label{dmpure}%
If $\nOK$ is henselian
then the functor $M \mapsto \nBRz{\nresf}\otimes_{\nBz} M$
from pure $\nBz$-isocrystals to pure $\nBRz{\nresf}$-isocrystals
is fully faithful.%
\end{thm}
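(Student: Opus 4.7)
The plan is to reduce the full-faithfulness assertion to the slope-zero equivalence of Lemma~\ref{dmpurepre}, via a standard $\Hom$-via-$\iHom$ argument together with Proposition~\ref{slopehom} for the case of distinct slopes.

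First I pass from a pair $(M,N)$ of pure $\nBz$-isocrystals of slopes $\nslope$, $\nslopealt$ to the single pure $\nBz$-isocrystal $H = \iHom(M,N)$, which is pure of slope $\nslopealt - \nslope$ by Proposition~\ref{fdmhom}. Because $\iHom$ commutes with base change (Lemma~\ref{fdmhomtensor}) and with the formation of $\tau$-invariants (Lemma~\ref{dmhominv}), the reduction $H_k := \nBRz{\nresf}\otimes_{\nBz} H$ is naturally identified with $\iHom(M_k, N_k)$, and the map on $\Hom$-sets whose bijectivity we want becomes the reduction map $H^\tau \to (H_k)^\tau$. Thus the theorem reduces to showing that for every pure $\nBz$-isocrystal $H$ the reduction $H^\tau \to (H_k)^\tau$ is a bijection.

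I then split according to the slope of $H$. If this slope is nonzero, both sides vanish: Proposition~\ref{slopehom}, applied with $R = \nOK$ (reduced since it is a discrete valuation ring) and with $R = \nresf$ respectively, gives $\Hom(\unit, H) = 0$ and $\Hom(\unit_k, H_k) = 0$, which translate via Lemma~\ref{dmhominv} to $H^\tau = 0 = (H_k)^\tau$. If on the other hand the slope of $H$ is zero, then $H$ is a pure $\nBz$-isocrystal of slope zero, and its reduction $H_k$ is again pure of slope zero because any lattice $\nBlat \subset H$ realizing purity reduces to a lattice in $H_k$ satisfying the same identity. Lemma~\ref{dmpurepre} then asserts that the reduction functor is an equivalence of categories on pure slope-zero isocrystals; applied to the morphism set $\Hom(\unit, H)$, this yields exactly the desired bijection $H^\tau \shortisosign (H_k)^\tau$.

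I do not foresee any serious obstacle, since the real work---the use of the henselian hypothesis through the equivalence of lisse $\nadic$-adic sheaves on $\etsite{\nOK}$ and on $\etsite{\nresf}$---has already been carried out in Lemma~\ref{dmpurepre}. The only mild points to verify are the compatibility of $\iHom$ with reduction, the preservation of purity and slope under reduction, and the verification of the reducedness hypothesis in the appeal to Proposition~\ref{slopehom}; all of these are routine.
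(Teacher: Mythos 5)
Your proposal is correct and follows essentially the same route as the paper: after the standard reduction to $H=\iHom(M,N)$ and its $\tau$-invariants, the distinct-slope case is killed by Proposition~\ref{slopehom} (over the reduced rings $\nOK$ and $\nresf$) and the equal-slope case is handled by Lemma~\ref{dmpurepre}, exactly as in the paper. The only cosmetic difference is that you perform the $\iHom$ reduction before splitting into cases, whereas the paper splits first; the ingredients and the logic are the same.
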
%
\begin{proof}
%
Let $M$, $N$ be pure $\nBz$-isocrystals.
If the slopes of $M$, $N$ are different then
Proposition~\ref{slopehom} shows that
\begin{equation*}
\Hom(M, \:N) = 0, \quad
\Hom(\nBRz{\nresf}\otimes_{\nBz} M, \:\nBRz{\nresf}\otimes_{\nBz} N) = 0
\end{equation*}
If $M$ and $N$ have the same slope
then the isocrystal $H = \iHom(N,M)$ is pure of slope~$0$.
The reduction map $H^\tau \to (\nBRz{\nresf}\otimes_{\nBz} H)^\tau$
is an isomorphism by Lemma~\ref{dmpurepre}. So we get the result.%
%
%
%
\end{proof}
\begin{exa}\label{exa-dmpure}%
The assumption of purity in Theorem~\ref{dmpure} is essential. 
Suppose that $\ncoef = \rF{\Fq}$ and $K = \Fq(\!(\zeta)\!)$.
Let $\nval\colon K^\times \to \bZ$ be the $\zeta$-adic valuation.
Then $\nBz = \rF{\Fq[[\zeta]]}$.
Let $M$ be an isocrystal with a $\nBz$-basis $e_0, e_1$ on which $\tau$
acts as follows:
\begin{equation*}
\tau(e_0) = e_0, \quad
\tau(e_1) = z e_1 -\zeta e_0.
\end{equation*}
The isocrystal $M$ is mixed and has slopes $0$ and $1$.
Let $i\colon \unit \to M$ be the embedding which sends $1$ to $e_0$.
This clearly splits modulo $\zeta$. However $i$ is not split over $\nBz$.

Indeed let $s\colon M \to \unit$ be a splitting and set $x = s(e_1)$.
The identity $s(\tau e_1) = \tau s(e_1)$ implies that $x$ satisfies
the equation $\nsigma(x) = z x -\zeta$. Hence the coefficients of $x$
at negative powers of $z$ are zero
and the coefficient at $z^0$ is a $q$-th root of $-\zeta$ which
does not exist in $K$.%
\end{exa}

\subsection{Models}\label{ss:red}


\begin{dfn}%
Let $M$ be a $\nBKz$-isocrystal
and let $\nVarB$ be one of the rings $\nBz$, $\nBzl$.
A \emph{$\nVarB$-model of $M$} is
is a left $\nVarB\{\tau\}$-submodule $\roM\subset M$ such that
\begin{itemize}
\item $\roM$ is a $\nVarB$-isocrystal,

\item the natural map $\nBKz\otimes_{\nVarB}\roM \to M$ is an isomorphism.%
\end{itemize}%
We say that $M$ \emph{has good reduction over $\nOK$} if it admits a $\nBz$-model.
\end{dfn}
\begin{conv}%
We do not mention $\nOK$ when its choice is clear from the context.%
\end{conv}

%
%

\begin{exa}%
The $\infty$-adic isocrystal coming from a Drinfeld module $E$
has a $\nBzl$-model
whenever the residual characteristic of $E$ is finite,
see Proposition~\ref{mloctaubun}.
\end{exa}

%

Models of pure isocrystals have a N\'eron property:

\begin{prp}\label{dmmoduniq}%
Every pure isocrystal admits at most one $\nBz$-model.
If pure isocrystals $M$, $N$ have good reduction then every morphism
$M \to N$ arises from a unique morphism of their $\nBz$-models.%
\end{prp}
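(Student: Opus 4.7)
The plan is to reduce both claims to the full faithfulness statement of Proposition~\ref{dmloc}. The key observation is that any $\nBz$-model of a pure isocrystal is itself pure of the same slope: if $\roM \subset M$ is a $\nBz$-model, then $\nBKz \otimes_{\nBz} \roM \xrightarrow{\isosign} M$ is pure, so Proposition~\ref{dmredpure} implies that $\roM$ is pure of the same slope. Thus Proposition~\ref{dmloc} applies to morphisms between models of pure isocrystals.

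For the second claim, suppose $M$, $N$ are pure isocrystals with $\nBz$-models $\roM$, $\roN$. Both $\roM$ and $\roN$ are pure by the observation above. Given a morphism $f\colon M \to N$, the identification $M = \nBKz \otimes_{\nBz} \roM$ and $N = \nBKz \otimes_{\nBz} \roN$ turns $f$ into a morphism between the base changes of $\roM$ and $\roN$. Proposition~\ref{dmloc} then produces a unique morphism of $\nBz$-models $\roM \to \roN$ inducing $f$.

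For the first claim, let $\roM$ and $\roM'$ be two $\nBz$-models of a pure isocrystal $M$. Applying the second claim to the identity $\textup{id}_M\colon M \to M$ yields a unique morphism $\alpha\colon \roM \to \roM'$ whose base change to $\nBKz$ is the identity. Translated back into $M$, this says that $\alpha$, composed with the inclusion $\roM' \hookrightarrow M$, coincides with the inclusion $\roM \hookrightarrow M$; hence $\roM \subset \roM'$ as submodules of $M$. The symmetric argument gives $\roM' \subset \roM$, so $\roM = \roM'$.

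There is no real obstacle here once one has Propositions~\ref{dmredpure} and~\ref{dmloc} in hand: the only subtlety is the bookkeeping that turns the abstract isomorphism produced by full faithfulness into an actual equality of submodules, which is immediate from the unicity part of Proposition~\ref{dmloc} applied to $\textup{id}_M$.
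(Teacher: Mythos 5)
Your proof is correct and follows essentially the same route as the paper: observe via Proposition~\ref{dmredpure} that any $\nBz$-model of a pure isocrystal is itself pure, then invoke the full faithfulness of Proposition~\ref{dmloc}. The extra bookkeeping you supply for turning the isomorphism of models into an equality of submodules of $M$ is exactly what the paper leaves implicit.
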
%
\begin{proof}
A $\nBz$-model is a pure $\nBz$-isocrystal by Proposition~\ref{dmredpure}
so the claim follows from Proposition~\ref{dmloc}.%
\end{proof}


\begin{prp}\label{motmoduniq}%
Every pure isocrystal admits at most one $\nBzl$-model.
If pure isocrystals $M$, $N$ 
have $\nBzl$-models
then every morphism $M \to N$
arises from a unique morphism of their $\nBzl$-models.%
\end{prp}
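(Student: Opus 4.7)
The proof should mirror the argument for Proposition~\ref{dmmoduniq} one floor higher, replacing the pair (Proposition~\ref{dmredpure}, Proposition~\ref{dmloc}) with its $\nBzl$-analogue (Proposition~\ref{abunpure}, Theorem~\ref{bzlff}).

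More precisely, the plan is as follows. Let $M$ be a pure $\nBKz$-isocrystal and let $\roM \subset M$ be a $\nBzl$-model. By definition $\roM$ is a $\nBzl$-isocrystal such that $\nBKz \otimes_{\nBzl} \roM \xrightarrow{\isosign} M$. Since $M$ is pure, Proposition~\ref{abunpure} tells us that $\roM$ is itself a pure $\nBzl$-isocrystal, of the same slope as $M$. Hence every $\nBzl$-model of a pure isocrystal is a pure $\nBzl$-isocrystal.

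Now for the unicity and morphism-lifting statements, let $M$, $N$ be pure $\nBKz$-isocrystals admitting $\nBzl$-models $\roM$, $\roN$. By the previous paragraph $\roM$ and $\roN$ are pure $\nBzl$-isocrystals. Theorem~\ref{bzlff} says precisely that the base change functor from pure $\nBzl$-isocrystals to $\nBKz$-isocrystals is fully faithful. Therefore the natural map
\begin{equation*}
\Hom_{\nBzl\{\tau\}}(\roM,\,\roN) \xrightarrow{\isosign} \Hom_{\nBKz\{\tau\}}(M,\,N)
\end{equation*}
is a bijection, which directly gives the second assertion of the proposition.

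To deduce unicity, suppose $\roM$ and $\roM'$ are two $\nBzl$-models of the same pure $\nBKz$-isocrystal $M$. Applying the bijection above (with $N = M$, taking $\roN = \roM'$ and $\roN = \roM$ respectively) to the identity morphism $\textrm{id}_M$, we obtain mutually inverse morphisms $\roM \to \roM'$ and $\roM' \to \roM$ which base change to $\textrm{id}_M$; their compositions restrict to $\textrm{id}_{\roM}$ and $\textrm{id}_{\roM'}$ by the faithfulness part. Hence $\roM$ and $\roM'$ are canonically identified inside $M$. There is no real obstacle here: the work was already done in Theorem~\ref{bzlff} and Proposition~\ref{abunpure}, and this proposition is essentially a formal corollary.
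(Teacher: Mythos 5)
Your proposal is correct and is exactly the paper's argument: Proposition~\ref{abunpure} shows a $\nBzl$-model of a pure isocrystal is a pure $\nBzl$-isocrystal, and then Theorem~\ref{bzlff} gives full faithfulness, from which both unicity of the model and the lifting of morphisms follow formally. Nothing to add.
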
%
\begin{proof}%
A $\nBzl$-model is a pure $\nBzl$-isocrystal by Proposition~\ref{abunpure}
so the claim follows from Theorem~\ref{bzlff}.%
\end{proof}

The next result shows that the two types of models are compatible.
This will be used in the proof of the good reduction criterion, see Proposition~\ref{metagood}.

\begin{prp}\label{modcmp}%
Let $M$ be a pure isocrystal 
which admits a $\nBzl$-model $\nMloc{M}$.
Suppose that $M$ also admits a $\nBz$-model $\roM$.
Then $\nMloc{M} = K\otimes_{\nOK} \roM$.%
\end{prp}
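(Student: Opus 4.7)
The plan is to exhibit $K\otimes_{\nOK}\roM$ as a second $\nBzl$-model of $M$ and then invoke the uniqueness of such models (Proposition~\ref{motmoduniq}), since $M$ is pure by assumption.

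First I would identify $K\otimes_{\nOK}\roM$ with $\nBzl\otimes_{\nBz}\roM$ by using the equality $\nBzl = K\otimes_{\nOK}\nBz$. This makes it manifest that $K\otimes_{\nOK}\roM$ is a $\nBzl$-module, locally free of finite type because $\roM$ is locally free of finite type over $\nBz$. Its structure morphism is the base change to $\nBzl$ of the structure morphism of $\roM$, so it is an isomorphism; hence $K\otimes_{\nOK}\roM$ is a $\nBzl$-isocrystal.

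Next I would embed $K\otimes_{\nOK}\roM$ into $M$ as a $\nBzl\{\tau\}$-submodule. The $\nBz$-module $\roM$ is $\nOK$-torsion-free, being locally free over $\nBz$ which is $\nOK$-flat, so the natural map $\roM\to K\otimes_{\nOK}\roM$ is injective. The inclusion $\roM\hookrightarrow M$ is $\nBz\{\tau\}$-linear, and since $M$ is a $K$-vector space through its $\nBKz$-structure, it extends to a $\nBzl\{\tau\}$-linear map $K\otimes_{\nOK}\roM\to M$. Injectivity of this extension follows from flatness of $K$ over $\nOK$ applied to the inclusion $\roM\hookrightarrow M$. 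Moreover, the induced map on $\nBKz$-base-change is
\begin{equation*}
\nBKz\otimes_{\nBzl}(\nBzl\otimes_{\nBz}\roM) = \nBKz\otimes_{\nBz}\roM \xrightarrow{\isosign} M,
\end{equation*}
an isomorphism by the hypothesis that $\roM$ is a $\nBz$-model of $M$. Therefore $K\otimes_{\nOK}\roM$ is a $\nBzl$-model of $M$.

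Finally, since the $\nBKz$-isocrystal $M$ is pure, Proposition~\ref{motmoduniq} gives uniqueness of the $\nBzl$-model inside $M$. The two $\nBzl$-models $\nMloc{M}$ and $K\otimes_{\nOK}\roM$ must therefore coincide as submodules of $M$. The only non-formal point is the uniqueness statement itself, which rests on the full faithfulness Theorem~\ref{bzlff} combined with the observation that the unique isomorphism between two models of $M$ provided by that theorem is forced to agree with the identity on $M$ via the model identifications, hence induces an equality of subsets.
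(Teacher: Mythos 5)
Your proof is correct and follows exactly the paper's argument: the paper's proof is the one-line observation that $K\otimes_{\nOK}\roM$ is itself a $\nBzl$-model of $M$, so the conclusion follows from the uniqueness of $\nBzl$-models for pure isocrystals (Proposition~\ref{motmoduniq}). You have simply filled in the routine verifications that the paper leaves implicit.
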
%
\begin{proof}%
Indeed $K\otimes_{\nOK}\roM$ is a $\nBzl$-model of $M$
so the claim follows by unicity of $\nBzl$-models
(Proposition~\ref{motmoduniq}).%
\end{proof}

The main result of this section is that
the reduction behaviour of pure isocrystals is controlled
by their Tate modules:

\begin{thm}\label{tatered}%
For every pure $\nBKz$-isocrystal $M$ the following are equivalent:
\begin{enumerate}%
\item\label{tatered-goodred}%
The isocrystal $M$ has good reduction over $\nOK$.

\item\label{tatered-unram}%
The Tate module $\nTate{M}$ is unramified at the valuation $\nval\colon K^\times \to \bZ$.
\end{enumerate}%
\end{thm}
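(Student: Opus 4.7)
The plan is to reduce the theorem to the slope $0$ case, where it becomes the classical equivalence between lisse $\nadic$-adic sheaves on $\etsite{\nOK}$ and unramified Galois representations.

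First, I would construct an auxiliary pure $\nBz$-isocrystal $\roN_0$ of slope $\nslope$ by adapting the proof of Proposition~\ref{pureexist}: if $\nslope = s/r$ in lowest terms, the left $\nBz\{\tau\}$-module $\nBz\{\tau\}/\nBz\{\tau\}(\tau^{r\nresdeg}-z^s)$ is pure of slope $\nslope$ over $\nBz$, and setting $N_0 = \nBKz\otimes_{\nBz}\roN_0$ gives a pure $\nBKz$-isocrystal of slope $\nslope$. Since $N_0 \ne 0$ on every component of $\Spec\nBKz$ (finitely many by Lemma~\ref{bRidem}), Proposition~\ref{generalmorita} applied componentwise furnishes an equivalence between $\nBKz$-isocrystals and right-$\iHom(N_0,N_0)$-isocrystals via the functor $M\mapsto\iHom(N_0,M)$, which sends pure slope $\nslope$ objects to pure slope $0$ objects. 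The analogous equivalence over $\nBz$ then yields that $M$ admits a $\nBz$-model if and only if $\iHom(N_0,M)$ does, the two models corresponding through $\iHom(\roN_0,-)$ and its Morita inverse. On the Tate-module side, the adjunction
\begin{equation*}
\big(\nBRz{K^s}\otimes_{\nBKz}\iHom(N_0,M)\big)^\tau \;\cong\; \Hom_{\nBRz{K^s}\{\tau\}}\!\big(\nBRz{K^s}\otimes_{\nBKz} N_0,\;\nBRz{K^s}\otimes_{\nBKz} M\big)
\end{equation*}
combined with a decomposition $\nBRz{K^s}\otimes_{\nBKz} N_0 \cong \big(\nBRz{K^s}\otimes_{\nBRz{\Fqbar}} N\big)^{\oplus k}$ supplied by Proposition~\ref{puredecomp} identifies the slope-$0$ Tate module of $\iHom(N_0,M)$ with $\nTate{M}^{\oplus k}$; a direct inspection of Definition~\ref{dfnweilact} shows that the Weil-group action on $\nTate{M}$ and the natural $G_K$-action on the slope-$0$ Tate module agree on the inertia subgroup, where $\ord\gamma = 0$ and the twist $\tau^{\ord\gamma}$ is trivial. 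Hence $\nTate{M}$ is unramified if and only if the slope-$0$ Tate module of $\iHom(N_0,M)$ is.

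It remains to treat the slope $0$ case. By Proposition~\ref{fdmpure0}, every pure slope $0$ $\nBKz$-isocrystal $M$ arises from a $\nsigma$-module $\nBlat$ over $\nBK$, and by Proposition~\ref{dmpure0} a $\nBz$-model of $M$ is the same data as a $\nsigma$-module over $\nBR{\nOK}$ extending $\nBlat$. Applying Theorem~\ref{wellknown} to both $K$ and $\nOK$ converts this into the question of extending a lisse $\nadic$-adic sheaf with free stalks from $\etsite{K}$ to $\etsite{\nOK}$. For the discrete valuation ring $\nOK$ this extension exists precisely when the continuous $G_K$-representation corresponding to the sheaf on $\etsite{K}$ factors through $G_K/I$, which is exactly the condition that $\nTate{M}$ be unramified at $\nval$; this is the classical N\'eron-Ogg-Shafarevich-type criterion for lisse sheaves on a trait.

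The main obstacle will be the compatibility bookkeeping in the Morita-reduction step, in particular verifying that $\iHom$ commutes with the base change $\nBz\to\nBKz$ on the relevant locally free modules so that $\nBz$-models transport cleanly, that Proposition~\ref{generalmorita} applies componentwise on the possibly disconnected $\Spec\nBKz$, and that the identification of the two Galois actions on inertia is canonical. A minor reduction to the henselian case, using Proposition~\ref{dmmoduniq} and faithfully flat descent along $\nOK\to\nOK^h$, may be needed to cleanly invoke the classical statement that lisse sheaves on the trait extend precisely when the associated Galois representation is unramified.
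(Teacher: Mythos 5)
Your plan is essentially the paper's own proof: twist by an auxiliary pure isocrystal of the same slope coming from the construction of Proposition~\ref{pureexist}, compare the two Tate modules on inertia via a decomposition supplied by Proposition~\ref{puredecomp}, settle the slope-$0$ case through Proposition~\ref{fdmpure0}, Proposition~\ref{dmpure0}, Theorem~\ref{wellknown} and the fundamental-group statement for a discrete valuation ring (Lemma~\ref{fundsurj}), and pass back from the slope-$0$ object to $M$ by Morita theory (Proposition~\ref{generalmorita}); this is exactly the combination of Lemma~\ref{tatered0}, Proposition~\ref{moritagood} and the chain of equivalences in the paper's proof of Theorem~\ref{tatered}.

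The one place where your sketch has a genuine gap is the backward direction of the model-transport claim. That a $\nBz$-model of $M$ yields one of $H=\iHom(N_0,M)$ is immediate; but given a $\nBz$-model $\ro{H}$ of $H$, you cannot apply the Morita inverse over $\nBz$ until you know that the left module structure of $H$ over the endomorphism isocrystal of the twisting object (the $S$ of Proposition~\ref{generalmorita}) restricts to a module structure over its integral model on $\ro{H}$. This is not a matter of $\iHom$ commuting with flat base change; it is exactly where the paper invokes the N\'eron property of models (Proposition~\ref{dmmoduniq}), which rests on the full faithfulness of the base change $\nBz\to\nBKz$ on pure isocrystals (Proposition~\ref{dmloc}, i.e.\ Theorem~\ref{bzlff} and Proposition~\ref{bzff}). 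With that input your step becomes the paper's Proposition~\ref{moritagood}; without it the ``if and only if'' is unsubstantiated. Two lesser points: choose the decomposition of Proposition~\ref{puredecomp} over $\nBRz{\Fqbar}$ (possible since your $N_0$ visibly descends to $\nBRz{\Fq}$) and only then base change to $K^s$, otherwise the identification of $\nTate{H}$ with $\nTate{M}^{\oplus k}$ need not be inertia-equivariant; and no henselian reduction is needed in the slope-$0$ step, since Lemma~\ref{fundsurj} holds for an arbitrary discrete valuation ring, whereas descent of models along $\nOK\to\nOK^h$ would itself require a separate argument because the henselization is not finite over $\nOK$.
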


A convenient feature of this theorem is that it works for every discrete valuation:
The ring of integers $\nOK$ need not be $v$-adically
complete and its residue field may be infinite.
%
We shall deduce Theorem~\ref{tatered} from a few intermediate results.%
\begin{lem}\label{fundsurj}%
Let $\xbar{x}$ be a geometric point of $\Spec K$.
Then the morphism of \'etale fundamental groups
$\pi_1(\Spec K, \xbar{x}) \to \pi_1(\Spec\nOK, \xbar{x})$
is surjective and its kernel is the normal closure of any inertia
subgroup at the valuation $\nval\colon K^\times \to \bZ$.%
\end{lem}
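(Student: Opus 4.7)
The plan is to split the claim into surjectivity and a kernel computation.

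For surjectivity I will invoke the general fact that for a dense open subscheme $U$ of a normal integral Noetherian scheme $X$, the induced map $\pi_1(U, \xbar{x}) \to \pi_1(X, \xbar{x})$ is surjective, see \stacks{0BQM}. In our setting $\Spec\nOK$ is integral normal and Noetherian as the spectrum of a discrete valuation ring, and $\Spec K$ is its generic point.

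For the kernel I will identify $\pi_1(\Spec\nOK, \xbar{x})$ with $\Gal(K^{\mathrm{nr}}/K)$, where $K^{\mathrm{nr}} \subset K^s$ is the compositum of all finite separable subextensions $L/K$ such that the integral closure of $\nOK$ in $L$ is étale over $\nOK$. This is the standard Galois-\hspace{0pt}theoretic description: $\pi_1(\Spec\nOK)$ classifies finite étale covers of $\Spec\nOK$, which by taking generic fibers and integral closures correspond precisely to such extensions of $K$. Under this identification the kernel of $\pi_1(\Spec K) \to \pi_1(\Spec\nOK)$ becomes $\Gal(K^s/K^{\mathrm{nr}})$, and it remains to prove that $K^{\mathrm{nr}} = (K^s)^N$, where $N$ is the normal closure in $G_K$ of an inertia subgroup at $\nval$.

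Fix an extension $w$ of $\nval$ to $K^s$ and let $I_w \subset G_K$ be its inertia subgroup. As $w$ ranges over extensions of $\nval$ to $K^s$, the subgroups $I_w$ form a single $G_K$-conjugacy class, so the normal closure $N$ of $I_w$ does not depend on the choice of $w$. A finite subextension $L = (K^s)^H \subset K^s$ is unramified over $\nOK$ if and only if $I_w \subset H$ for every $w$, equivalently $N \subset H$. Intersecting over all such $H$ yields $\Gal(K^s/K^{\mathrm{nr}}) = N$, which gives the desired description of the kernel.

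The main obstacle I anticipate is setting up the theory of inertia and unramified extensions cleanly for the (possibly non-complete) DVR $\nOK$, whose residue field may be infinite and imperfect: specifically, verifying that the integral closure of $\nOK$ in $L$ being étale over $\nOK$ is equivalent, at each prime lying over the maximal ideal, to the corresponding inertia acting trivially on $L$. This reduces to standard facts about étale morphisms of Dedekind schemes but needs care given the level of generality.
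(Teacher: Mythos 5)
Your argument is correct and is essentially the same as the paper's, which simply cites \cite[Corollary~6.17]{lenstra} or \stacks{0BQM}: what you have written is the standard proof of that cited result (surjectivity for the generic point of a normal Noetherian scheme, plus the identification of the kernel with the closed normal subgroup generated by inertia, via the equivalence ``trivial inertia at $\fq$ $\Leftrightarrow$ \'etale at $\fq$'', which does hold for imperfect residue fields with the convention $I_w = \ker(D_w \to \Aut(\kappa(w)/\kappa(v)))$). The only point to make explicit is that ``normal closure'' must be read as the \emph{closed} normal closure, since your final intersection over open subgroups produces the topological closure of the abstract normal closure.
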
%
\begin{proof}%
See \cite[Corollary~6.17]{lenstra} or \stacks{0BQM}.%
\end{proof}

\begin{lem}\label{tatered0}
For every pure $\nBKz$-isocrystal $M$ of slope~$0$ the following are equivalent:%
\begin{enumerate}%
\item\label{tatered0-mod}%
The isocrystal $M$ has good reduction over $\nOK$.

\item\label{tatered0-unram}%
The Tate module $\nTate{M}$ is unramified at the valuation $\nval\colon K^\times \to \bZ$.
\end{enumerate}
\end{lem}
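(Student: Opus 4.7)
The plan is to identify the Tate module with the stalk of a lisse sheaf and translate the problem into an extension problem for lisse \'etale sheaves on $\Spec\nOK$.

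First I would set up the dictionary. By Proposition~\ref{fdmpure0} the pure slope-$0$ isocrystal $M$ arises from a $\nsigma$-module $\nBlat$ over $\nBK$, and by Theorem~\ref{wellknown} applied to $R=K$ this corresponds to a lisse $\nadic$-adic sheaf $\cF$ on $\etsite{K}$ with free stalks. Picking the geometric point $\Spec K^s\to\Spec K$, the stalk of $\cF$ is $(\nBR{K^s}\otimes_{\nBK}\nBlat)^\tau$, which upon inverting $z$ gives exactly $\nTate{M}=(\nBRz{K^s}\otimes_{\nBKz}M)^\tau$ with its $G_K$-action as in Remark~\ref{introtate0}. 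In particular $\nTate{M}$ is unramified at $\nval$ if and only if the representation of $\pi_1(\Spec K,\overline x)$ attached to $\cF$ factors through $\pi_1(\Spec\nOK,\overline x)$, which by Lemma~\ref{fundsurj} is equivalent to $\cF$ extending to a lisse $\nadic$-adic sheaf on $\etsite{\nOK}$.

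For the implication \eqref{tatered0-mod}$\Rightarrow$\eqref{tatered0-unram}, assume $M$ has a $\nBz$-model $\roM$. By Proposition~\ref{dmredpure} the $\nBz$-isocrystal $\roM$ is pure of slope~$0$, so Proposition~\ref{dmpure0} provides a $\nsigma$-module $\roS$ over $\nBz$, and (since locally freeness propagates) I would further take its restriction to the noetherian ring $\nB$ via Lemma~\ref{bunlat} applied with the lattice $\nBlat\subset M_K=M$ from above, producing a $\nsigma$-module $\xtilde\nBlat$ over $\nB$ with $\nBK\otimes_\nB\xtilde\nBlat\cong\nBlat$. Applying Theorem~\ref{wellknown} to $R=\nOK$ gives a lisse $\nadic$-adic sheaf on $\etsite{\nOK}$ whose pullback along $\Spec K\to\Spec\nOK$ is $\cF$ (here the base-change compatibility in Theorem~\ref{wellknown} is essential). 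Hence $\cF$ extends, so $\nTate{M}$ is unramified.

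For the converse \eqref{tatered0-unram}$\Rightarrow$\eqref{tatered0-mod}, assume $\nTate{M}$ is unramified at $\nval$. Then by the discussion above $\cF$ is the restriction to $\etsite{K}$ of some lisse $\nadic$-adic sheaf $\widetilde\cF$ on $\etsite{\nOK}$. All stalks of $\widetilde\cF$ are free since its geometric generic stalk $\nTate{M}$ is free (one may check freeness by pulling back to a geometric point over $\Spec K$). Theorem~\ref{wellknown} then produces a $\nsigma$-module $\xtilde\nBlat$ over $\nB$ whose base change to $\nBK$ is $\nBlat$. Inverting $z$ gives a $\nBz$-isocrystal $\roM$, and by construction $\nBKz\otimes_{\nBz}\roM\cong M$ as left $\nBKz\{\tau\}$-modules, so $\roM$ is a $\nBz$-model of $M$.

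The only genuine technical point I expect is ensuring the stalks freeness hypothesis in Theorem~\ref{wellknown} and the compatibility with base change $\nOK\to K$ are handled cleanly in both directions; everything else is a direct translation through the equivalences of categories already established.
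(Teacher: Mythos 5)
Your proposal is correct and follows essentially the same route as the paper: Proposition~\ref{fdmpure0} to realize $M$ as a $\nsigma$-module over $\nBK$, Theorem~\ref{wellknown} to pass to lisse $\nadic$-adic sheaves, and Lemma~\ref{fundsurj} to translate unramifiedness into extendability over $\etsite{\nOK}$, with Proposition~\ref{dmpure0} supplying the $\nsigma$-module over $\nB$ in the good-reduction direction. The only cosmetic difference is your detour through Lemma~\ref{bunlat} (Proposition~\ref{dmpure0} already hands you the $\nsigma$-module over $\nB$ directly), and your explicit check that stalk-freeness of the extended sheaf propagates from the generic geometric point is a point the paper leaves implicit.
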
%
\begin{proof}%
Let $K^s$ be a separable closure of $K$ and set $G_K = \Gal(K^s/K)$.
The Tate module of $M$ is
$\nTate{M} = (\nBRz{K^s} \otimes_{\nBKz} M)^\tau$
on which the Weil group $W_K$ acts via $\nBRz{K^s}$.
By Proposition~\ref{fdmpure0} the isocrystal $M$ arises
from a $\nsigma$-module $\nBlat$ over $\nBK$.
The $\ncoint$-module
$(\nBR{K^s}\otimes_{\nBK} \nBlat)^\tau$ is a lattice in $\nTate{M}$.
Since each inertia subgroup of $G_K$ at $\nval$ is contained in $W_K$
we conclude that $\nTate{M}$ is unramified at $\nval$ if and only if
$(\nBR{K^s}\otimes_{\nBK} \nBlat)^\tau$ is unramified at $\nval$.

Set $\xbar{x} = \Spec K^s$.
Lemma~\ref{fundsurj} implies that a lisse $\nadic$-adic sheaf $\cF$ over $\etsite{K}$
arises from $\etsite{\nOK}$ if and only if the $G_K$-representation
$\cF_{\xbar{x}}$ is unramified at $\nval$.
In view of Theorem~\ref{wellknown} it follows that 
$(\nBR{K^s}\otimes_{\nBK} \nBlat)^\tau$ is unramified at $\nval$
if and only if $\nBlat$ arises from a $\nsigma$-module over $\nBR{\nOK}$.
Therefore \eqref{tatered0-unram} implies \eqref{tatered0-mod}.

Conversely, assume that $M$ admits a $\nBz$-model $\roM$.
Proposition~\ref{dmpure0} shows that $\roM$ arises from a $\nsigma$-module
$\rolat$ over $\nBR{\nOK}$.
Theorem~\ref{wellknown} and Lemma~\ref{fundsurj} imply that the $G_K$-representation
$(\nBR{K^s}\otimes_{\nBR{\nOK}} \rolat)^\tau$ is unramified at $\nval$.
Therefore $\nTate{M} = (\nBRz{K^s}\otimes_{\nBR{\nOK}} \rolat)^\tau$
is unramified at $\nval$.%
\end{proof}

\begin{prp}\label{moritagood}%
Let $M$, $N$ be pure $\nBKz$-isocrystals.
If $N$ and $N\otimes M$ have good reduction then so does $M$.%
\end{prp}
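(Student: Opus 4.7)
The plan is to invert the twisting functor $L \mapsto N \notim{\nBKz} L$ via the Morita equivalence of Proposition~\ref{generalmorita}. We may assume $N \ne 0$ (the zero case is degenerate). Fix a $\nBz$-model $\tilde{N}$ of $N$ and a $\nBz$-model $\tilde{P}$ of $N \notim{\nBKz} M$. Taking duals at the lattice level gives a $\nBz$-model $\tilde{N}^*$ of $N^*$, and setting $\tilde{S} := \tilde{N}^* \notim{\nBz} \tilde{N}$ produces a $\nBz$-model of $S := \iHom(N,N) \cong N^* \notim{\nBKz} N$. The composition law on $S$ restricts to make $\tilde{S}$ into an associative $\nBz$-algebra in the category of $\nBz$-isocrystals, and $\tilde{N}$ into a left $\tilde{S}$-$\nBz$-isocrystal in the sense of Definition~\ref{dfnisomod}.

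Next I would lift the $S$-action on $N \notim{\nBKz} M$ to an $\tilde{S}$-action on $\tilde{P}$. The action morphism
\begin{equation*}
S \notim{\nBKz} (N \notim{\nBKz} M) \to N \notim{\nBKz} M
\end{equation*}
is a morphism of pure $\nBKz$-isocrystals of equal slope (since $S$ is pure of slope $0$); its source admits the $\nBz$-model $\tilde{S} \notim{\nBz} \tilde{P}$ and its target admits $\tilde{P}$. By the full faithfulness of base change on pure $\nBz$-isocrystals (Proposition~\ref{dmloc}), the action lifts uniquely to $\tilde{S} \notim{\nBz} \tilde{P} \to \tilde{P}$, and the same full faithfulness forces the associativity and unit diagrams to commute over $\nBz$, since they commute after base change to $\nBKz$. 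I would then set $\tilde{M} := \tilde{N}^* \otimes_{\tilde{S}} \tilde{P}$, equipped with the induced $\tau$-action. Base changing to $\nBKz$ and applying Morita (componentwise on the finitely many components of $\Spec\nBKz$) yields $\nBKz \otimes_{\nBz} \tilde{M} \cong N^* \otimes_S (N \notim{\nBKz} M) \cong M$, so $\tilde{M}$ is at least a candidate $\nBz$-model of $M$.

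The main obstacle is to check that $\tilde{M}$ is a genuine $\nBz$-isocrystal, that is, locally free of finite type over $\nBz$ with iso structure morphism. For this, I would invoke Morita theory at the $\nBz$-level: on any faithfully flat cover of $\Spec\nBz$ over which $\tilde{N}$ trivializes as $\nBz^r$, the algebra $\tilde{S}$ becomes the matrix algebra $M_r(\nBz)$ and $\tilde{P}$, being a locally free $\nBz$-module of rank $r \cdot \rank M$ with a left $M_r(\nBz)$-action, takes the form $(\nBz^r)^{\rank M}$. Classical Morita equivalence for matrix algebras then identifies $\tilde{N}^* \otimes_{\tilde{S}} \tilde{P}$ with a locally free $\nBz$-module of rank $\rank M$, and fpqc descent globalizes the conclusion. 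The structure morphism of $\tilde{M}$ is an isomorphism because those of $\tilde{N}^*$ and $\tilde{P}$ are. This makes $\tilde{M}$ the desired $\nBz$-model of $M$.
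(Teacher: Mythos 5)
Your proposal is correct and follows essentially the same route as the paper: fix $\nBz$-models of $N$ and $N\otimes M$, descend the $S=\iHom(N,N)$-module structure to these models using the full faithfulness of base change for pure isocrystals (Proposition~\ref{dmloc}, packaged in the paper as Proposition~\ref{dmmoduniq}), then untwist by Morita over $\nBz$ and base change back to recover $M$ from $\nBKz\otimes_{\nBz}\tilde{M}$. The only difference is cosmetic: your hands-on trivializing-cover/matrix-algebra/descent argument for the $\nBz$-level Morita step is already supplied by Proposition~\ref{generalmorita} applied with $R=\nOK$ (whose spectrum is connected), which is precisely what the paper invokes.
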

\begin{proof}%
We denote by $H$ the isocrystal $N \otimes M$.
Let $\ro{N}$ and $\ro{H}$ be the $\nBz$-models of $N$ and $H$.
Set $S = \iHom(N,N)$ and $\ro{S} = \iHom(\ro{N},\ro{N})$.

The isocrystal $H$ is a left $S$-isocrystal in the sense of Definition~\ref{dfnisomod}.
The isocrystals $S$ and $H$ are pure so
Proposition~\ref{dmmoduniq} shows that the structure of a left $S$-isocrystal on $H$
restricts to a structure of a left $\ro{S}$-isocrystal on $\ro{H}$.
Invoking Proposition~\ref{generalmorita} we conclude that
there is a $\nBz$-isocrystal $\ro{M}$ such that
$\ro{H}$ is isomorphic to $\ro{N}\otimes\ro{M}$ as a left $\ro{S}$-isocrystal.
So the left $S$-isocrystal $H$
is isomorphic to $N \otimes (\nBKz\otimes_{\nBz} \ro{M})$.
Applying Proposition~\ref{generalmorita} one more time
we deduce that $M$ is isomorphic to $\nBKz\otimes_{\nBz}\ro{M}$.%
\end{proof}

\begin{proof}[Proof of Theorem~\ref{tatered}]%
We denote by $\nslope$ the slope of $M$.
Fix a separable closure $K^s$ of $K$.
Let $\Fqbar \subset K^s$ be the algebraic closure of $\Fq$.
Pick a pure and simple $\nBRz{\Fqbar}$-isocrystal $N$ of slope $\nslope$.
The pair $(K^s, N)$ defines a Tate module functor. 

In addition to $N$ we shall use a nonzero pure $\nBRz{\Fq}$-isocrystal $P$ of slope~$\nslope$.
This exists by Proposition~\ref{pureexist}.
The $\nBKz$-isocrystal
$H = \iHom(\nBKz\otimes_{\nBRz{\Fq}}P, \,M)$
is pure of slope~$0$.
We claim that the following properties are equivalent:
\begin{enumerate}%
\renewcommand{\theenumi}{\roman{enumi}}%
\item\label{tatered-isounram}
the representation $T(M)$ is unramified at $\nval$,

\item\label{tatered-hunram}
the representation $T(H)$ is unramified at $\nval$,

\item\label{tatered-hgood}
the isocrystal $H$ has good reduction over $\nOK$,

\item\label{tatered-isogood}
the isocrystal $M$ has good reduction over $\nOK$.%
\end{enumerate}
Let $L$ be the subfield of $K^s$ generated by $K$ and $\Fqbar$.
We have
\begin{equation*}
\nTate{M} = \big(\nBRz{K^s} \otimes_{\nBRz{L}} \iHom(\nBRz{L}\otimes_{\nBRz{\Fqbar}}N, \,M)\big)^\tau.
\end{equation*}
The group $\nGalZero = \ord^{-1}\{0\}$ acts on $\nTate{M}$ via $K^s$.
Similarly
$\nTate{H} = (\nBRz{K^s}\otimes_{\nBKz} H)^\tau$
on which $G_0$ acts via $K^s$.

\eqref{tatered-isounram} $\Leftrightarrow$ \eqref{tatered-hunram}
Proposition~\ref{puredecomp} shows that for a suitable $m > 0$
there is an isomorphism
$\nBRz{\Fqbar} \otimes_{\nBRz{\Fq}} P \cong N^{\oplus m}$.
This induces an isomorphism of $G_0$-representations
$\nTate{M}^{\oplus m} \cong \nTate{H}$.
The claim follows since every inertia subgroup of $\Gal(K^s/K)$ is contained in $G_0$.

\eqref{tatered-hunram} $\Leftrightarrow$ \eqref{tatered-hgood}
Follows by Lemma~\ref{tatered0} since $H$ is pure of slope~$0$.

\eqref{tatered-hgood} $\Leftrightarrow$ \eqref{tatered-isogood}
The isocrystal $P_K = \nBKz\otimes_{\nBRz{\Fq}} P$ is pure and has a $\nBz$-model
$\nBz\otimes_{\nBRz{\Fq}} P$.
Hence the isocrystal $Q = (P_K)^*$ is pure and has good reduction.
By construction $H = Q \otimes M$.
Applying Proposition~\ref{moritagood} to $Q$ and $M$
we get the result.%
\end{proof}

\subsection{Hartl-Pink theory}\label{ss:hp}

We assume that $\ncoef = \rF{\Fq}$.
For expository purposes we shall consider an arbitrary valuation $\nval\colon K^\times \to \bR$
even though all our results will be stated for a discrete $\nval$.
We denote by $|x| = q^{-\nval(x)}$ the induced norm on $K$.
The field $K$ is assumed to be complete with respect to~$|\cdot|$.

\begin{dfn}\label{defdk}%
We denote by $\nDK$ the ring of Laurent series
\begin{equation*}
\sum_{n \in \bZ} \alpha_n z^n, \quad \alpha_n\in K
\end{equation*}
which converge on the punctured open unit disk: 
$\lim_{n \to \pm\infty} |\alpha_n|\varepsilon^n = 0$ 
for all real numbers $\varepsilon \in (0,1)$. 
We equip $\nDK$ with an endomorphism $\nsigma$ given by the formula
$\nsigma\left(\sum_n \alpha_n z^n \right) =
\sum_n \alpha_n^q z^n$.%
\end{dfn}

\begin{dfn}%
A $\nsigma$-module over $\nDK$ is called a \emph{$\sigma$-bundle}.%
\end{dfn}

The theory of Hartl and Pink \cite{hartl-pink}
classifies $\sigma$-bundles for every algebraically closed field $K$.

\begin{dfn}\label{dfnsimplebun}%
Let $\nslope = \frac{s}{r}$ be a rational number written in lowest terms.
We denote by $\cM_\nslope$
the $\sigma$-bundle 
with a $\nDK$-basis $e_1, \dotsc, e_r$ on which $\tau$
acts as follows:
\begin{equation*}
e_1 \xrightarrow{\,\tau\,}
e_2 \xrightarrow{\,\tau\,}
\dotsc \xrightarrow{\,\tau\,}
e_r \xrightarrow{\,\tau\,}
z^s e_1.%
\end{equation*}%
\end{dfn}

The $\sigma$-bundle
$\cM_\nslope$ is isomorphic to the $\sigma$-bundle $\cF_{-s,r}$ of \cite[Section 8]{hartl-pink}.
Note that Hartl and Pink use the geometric convention for the signs of slopes:
The $\sigma$-bundle $\cM_\nslope$ is stable of weight $-\nslope$ in their terminology.

The ring $\nDK$ can be defined in a way which does not involve a choice of a
uniformizer $z$. However the isomorphism type of
$\cM_\nslope$ depends on this choice when the field $K$ does not contain an agebraic closure of $\Fq$.

Our criterion of good reduction hinges on the following property of $\cM_\nslope$:

\begin{prp}\label{buninv}%
If the valuation $\nval$ is discrete then
\begin{equation*}
\dim_F \cM_\nslope^{\tau} = \left\{\hspace{-0.4em}%
\begin{array}{ll}
1, &\nslope = 0, \\
0, &\nslope\ne 0.
\end{array}\right.
\end{equation*}
\end{prp}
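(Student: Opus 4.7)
The plan is to reduce both cases to scalar problems on the Laurent coefficients of elements of $\nDK$, and then exploit the discreteness of $\nval$.

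For $\nslope = 0$ one has $\cM_0 = \nDK$ with $\tau = \sigma$, so a $\tau$-invariant is a series $f = \sum_n \alpha_n z^n \in \nDK$ with $\alpha_n^q = \alpha_n$, forcing $\alpha_n \in \Fq$. Since $|\alpha_n|\in\{0,1\}$, the convergence at $n \to -\infty$ built into the definition of $\nDK$ forces $\alpha_n = 0$ for $n \ll 0$, while at $n \to +\infty$ no condition appears. Hence $\cM_0^{\tau} = \rF{\Fq} = F$, one-dimensional over~$F$.

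For $\nslope = s/r \ne 0$ in lowest terms, the cyclic action of $\tau$ on $e_1,\dotsc,e_r$ collapses the invariance of $\sum_i f_i e_i$ to the single scalar equation $f_1 = z^s\sigma^r(f_1)$ in $\nDK$, the other coefficients being given by $f_i = \sigma^{i-1}(f_1)$ for $i > 1$. Writing $f_1 = \sum_n \alpha_n z^n$ converts this into the recursion $\alpha_m = \alpha_{m-s}^{q^r}$ for every $m \in \bZ$. Iterating in both directions yields, for every $k \geq 0$,
\begin{equation*}
\alpha_{m_0} = \alpha_{m_0-ks}^{q^{kr}} \qquad\text{and}\qquad \alpha_{m_0+ks} = \alpha_{m_0}^{q^{kr}}.
\end{equation*}

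Suppose $\alpha_{m_0} \ne 0$. The first identity gives $\nval(\alpha_{m_0-ks}) = \nval(\alpha_{m_0})/q^{kr}$, and here discreteness enters: these numbers must lie in $\bZ$ for every $k$, so the integer $\nval(\alpha_{m_0})$ is divisible by arbitrarily large powers of $q$ and must equal zero. The second identity then forces $|\alpha_{m_0+ks}| = 1$ for every $k\in\bZ$. Since $s\ne0$ the arithmetic progression $\{m_0+ks\}_{k\in\bZ}$ contains a subsequence tending to $-\infty$; on it $|\alpha_n|\varepsilon^n = \varepsilon^n$ diverges for any $\varepsilon\in(0,1)$, contradicting $f_1 \in \nDK$. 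Hence $f_1 = 0$ and $\cM_\nslope^{\tau} = 0$.

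The step I expect to be the most delicate is the valuation argument in the last paragraph: it is the interaction between forward iteration (which uses discreteness to pin $\nval(\alpha_{m_0})$ at zero) and backward iteration (which propagates the resulting unit norm into the half of the series where $\nDK$-convergence fails) that closes the proof. Without discreteness the first step breaks down, which is precisely why the proposition is restricted to discrete valuations.
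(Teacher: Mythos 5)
Your proof is correct, and its engine is the same as the paper's: discreteness of $\nval$ forces every nonzero coefficient in a $\tau$-invariant to be a unit (because its valuation is divisible by arbitrarily high powers of $q$), and the convergence requirement at $n\to-\infty$ in the definition of $\nDK$ then kills the whole series when $\nslope\ne 0$; the slope-$0$ case is handled identically in both arguments. The only real divergence is the preliminary reduction: the paper passes from $\cM_\nslope$ to the rank-one bundle $\cM_s$ via restriction of scalars along $[r]\colon \sum_n\alpha_n z^n\mapsto\sum_n\alpha_n z^{rn}$, using the Hartl--Pink isomorphism $[r]_*\cM_s\cong\cM_\nslope$ and the equality $([r]_*\cM_s)^\tau=\cM_s^\tau$, which yields the recursion $\alpha_{n+ms}=\alpha_n^{q^m}$; you instead solve the $r\times r$ system directly, using the cyclic shape of $\tau$ to express $f_i=\nsigma^{i-1}(f_1)$ and collapse everything to the single equation $f_1=z^s\nsigma^r(f_1)$, i.e.\ $\alpha_m=\alpha_{m-s}^{q^r}$. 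Your reduction is more self-contained (no appeal to the $[r]_*$ identification), at the cost of being specific to the explicit basis of $\cM_\nslope$; after that point the two proofs coincide. One cosmetic remark: the unit-norm conclusion at the indices $m_0-ks$, $k\geqslant 0$, comes from your first identity rather than the second, but both directions of the progression are indeed covered, so the contradiction with convergence at $-\infty$ stands.
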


In contrast if $K$ is algebraically closed 
and $\nslope < 0$
then the $\ncoef$-vector space $\cM_\nslope^{\tau}$ is infinite-\hspace{0pt}dimensional
(see \cite[Proposition 8.4]{hartl-pink}; note the change of sign for $\nslope$).
The case $\nslope < 0$ is exactly the one appearing in 
the good reduction criterion.

\begin{rmk}\label{flop}%
Suppose that the valuation $\nval$ is discrete.
Proposition~\ref{buninv} implies that 
the graded algebra $P_{K,F,z}$ of Fargues-Fontaine~\cite[Section~7.3]{lacourbe}
consists of $F$ in degree~$0$.
It follows that the Fargues-Fontaine curve $X_{K,F} = \Proj(P_{K,F,z})$ is empty.%
\end{rmk}

\begin{rmk}%
Proposition~\ref{buninv} is also true for valuations $\nval$ satisfying the
condition of Hartl's theorem \cite[Theorem~2.5.3]{hartl-annals}
on the admissibility of Hodge-Pink isocrystals.
\end{rmk}

\begin{proof}[Proof of Proposition \ref{buninv}]%
Write $\nslope  = \frac{s}{r}$ in lowest terms.
Let $[r]\colon \nDK \to \nDK$ be the homomorphism defined by the formula
\begin{equation*}
[r]\Big(\sum_n \alpha_n z^n\Big) = \sum_n \alpha_n z^{rn}.
\end{equation*}
This homomorphism commutes with $\nsigma$ and so defines 
a functor of restriction of scalars $[r]_\ast$
on left $\nDK\{\tau\}$-modules.
Observe that $[r]_\ast \cM_s \cong \cM_\nslope$
\cite[the first paragraph of Section 8]{hartl-pink}.
Now
$([r]_\ast \cM_s)^{\tau} = \cM_s^{\tau}$
so we can assume that $\nslope = s$.

The elements of $\cM_s^{\tau}$ are solutions of the equation
\begin{equation*}
\sum_n \alpha_n z^n = \sum_n \alpha_n^q z^{n+s}.
\end{equation*}
Hence for all $n, m \in \bZ$ we have $\alpha_{n + m s} = \alpha_n^{q^m}$.
If $s = 0$ then clearly $\alpha_n \in \Fq$ and the convergence condition
implies that $\alpha_n = 0$ for all $n \ll 0$.
Assume that $s \ne 0$.
Then for all $n \in \bZ$ and all $m > 0$ the $q^m$-th root of $\alpha_n$
belongs to $K$. As $K$ is discretely valued we conclude that
$|\alpha_n|$ is either $0$ or $1$ and moreover $|\alpha_{n + s}| = |\alpha_n|$.
The convergence condition then implies that
$\alpha_n = 0$.%
\end{proof}

Next we relate $\sigma$-bundles to isocrystals.
The coefficient field $\ncoef$ is $\rF{\Fq}$ by assumption
and so $\nBKz = \rF{K}$ is the ring of Laurent series with finite principal parts.
The subring $\nBz \subset \rF{K}$
consists of the series with integral coefficients.
All such series converge on the punctured open unit disk. 
We thus have a natural inclusions $\nBz \subset \nDK$ which is compatible with
the action of $\nsigma$.

%

\begin{thm}\label{hpzero}%
If the valuation $\nval$ is discrete then
the functor $M \mapsto \nDK\otimes_{\nBz} M$
is fully faithful on the subcategory of pure isocrystals.%
\end{thm}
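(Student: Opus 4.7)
The plan is to reduce fully faithfulness, via Lemma~\ref{dmhominv} and Proposition~\ref{fdmhom}, to showing $H^\tau = (\nDK \otimes_{\nBz} H)^\tau$ for every pure $\nBz$-isocrystal $H$; setting $H = \iHom(M,N)$, the two $\Hom$ sets of interest are identified with $H^\tau$ and $(\nDK \otimes_{\nBz} H)^\tau$ respectively. The inclusion $H^\tau \hookrightarrow (\nDK\otimes_{\nBz}H)^\tau$ is immediate, so I only need to establish the reverse containment.

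When $H$ has nonzero slope $\nslope = s/r$, both sides will vanish. That $H^\tau = 0$ follows from Proposition~\ref{slopehom} (applicable since $\nOK$ is a DVR and hence reduced). For the $\nDK$-side I will choose a lattice $T \subset H$ with $\langle\tau^{r}T\rangle = z^sT$, write a $\tau$-invariant $v \in \nDK\otimes_{\nB}T$ in coordinates relative to a free $\nB$-basis of $T$, and derive the matrix identity $x = z^s B^T \nsigma^{r}(x)$ with $B \in \GL_r(\nB)$. Iterating $n$ times yields $x = z^{ns}E_n\nsigma^{nr}(x)$ with $E_n \in \GL_r(\nB)$. For $s>0$, each coordinate of $x$ lies in $\bigcap_n z^{ns}\nDK = 0$. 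For $s<0$, rearranging as $\nsigma^{nr}(x) = z^{n|s|}E_n^{-1} x$ and examining the equation at the level of Laurent coefficients, combined with the $\nDK$-convergence condition (forcing $v(a_{i,l}) \to \infty$ as $l \to -\infty$) and the injectivity of Frobenius on $K$, forces $x = 0$.

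For $H$ of slope zero the argument is more delicate. By Proposition~\ref{dmpure0}, $H = \nBlat[z^{-1}]$ for a $\nsigma$-module $\nBlat$ over $\nB$. My plan is to descend via the completion $L$ of the maximal unramified extension of $K$, with ring of integers $R'$; then $L$ is again complete and discretely valued and has separably closed residue field, so $\pi_1(\Spec R')$ is trivial. By Theorem~\ref{wellknown}, $\nBlat$ becomes a constant $\nsigma$-module of the form $V \otimes_{\ncoint} R'[[z]]$ after base change, where $V$ is the stalk representation attached to $\nBlat$; Proposition~\ref{buninv} applied over $L$ then gives $(\cC_L \otimes_{\nB} \nBlat)^\tau = V \otimes_{\ncoint} \ncoef =: V_\ncoef$. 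Galois descent under $G := \Gal(L/K)$ identifies $(\nDK \otimes_{\nB} \nBlat)^\tau$ with $V_\ncoef^G$, while Proposition~\ref{dmloc} combined with Theorem~\ref{wellknown} identifies $H^\tau$ with the same space.

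The principal obstacle will be this descent step in the slope-zero case: one must verify that the $G$-action on $V_\ncoef$ arising from the trivialization of $\nBlat$ over $R'[[z]]$ matches the Galois action on the stalk representation produced by the lisse sheaf formalism, and then run the descent carefully. The discreteness of $\nval$ enters essentially through Proposition~\ref{buninv} (dually, the statement from Remark~\ref{flop} that the Fargues--Fontaine curve $\nlFF$ is empty in this setting), which fails for nondiscrete $\nval$.
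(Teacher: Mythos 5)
Your overall reduction --- showing $H^\tau=(\nDK\otimes_{\nBz}H)^\tau$ for $H=\iHom(M,N)$ pure --- and your slope-zero treatment (descent to the completed maximal unramified extension, trivialization over the strictly henselian base, Proposition~\ref{buninv}, Galois descent) agree with the paper's strategy. The genuine gap is in the nonzero-slope case, and precisely in the subcase $s<0$, which is the one the good reduction criterion actually needs: in Lemma~\ref{main} the relevant $H$ has slope $-\frac{1}{r}<0$.

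For $s>0$ the justification ``$\bigcap_n z^{ns}\nDK=0$'' is false: $z^{-1}$ converges on the punctured disk, so $z$ is a unit in $\nDK$ and $z^{ns}\nDK=\nDK$. (The vanishing for $s>0$ can still be rescued by a valuation estimate on the Laurent coefficients, and in fact holds without discreteness.) More seriously, for $s<0$ your coefficient analysis does not close. Writing a coordinate vector as $x=\sum_l a_l z^l$, the iterated relation only yields $q^{nr}\,v(a_l)\geqslant\min_{l'\leqslant l-n|s|}v(a_{l'})$, from which one deduces $v(a_l)>0$ but not $a_l=0$; no estimate of this kind can succeed, since over an algebraically closed complete field $\cM_\nslope^\tau$ is infinite-dimensional for $\nslope<0$. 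Injectivity of Frobenius is not the relevant input. What Proposition~\ref{buninv} actually exploits is that the \emph{rank-one} recursion $\alpha_{n+ms}=\alpha_n^{q^m}$ exhibits $\alpha_{n-ms}$ as a $q^m$-th root of $\alpha_n$ inside $K$, so discreteness forces $v(\alpha_n)\in\{0,\infty\}$ and periodicity plus convergence kills everything. In your matrix identity the coordinates are mixed by $E_n$, so no individual coordinate is exhibited as a $q^m$-th power and the root-extraction argument is unavailable. The repair is exactly the maneuver you reserve for slope zero, applied uniformly to all slopes: the map $(\nBRz{\nRcur}\otimes_{\nBz}H)^\tau\to(\nDKcur\otimes_{\nBz}H)^\tau$ is $\Gal(\nKur/K)$-equivariant, so taking invariants reduces to separably closed residue field (this also sidesteps the compatibility check you flag, since the descent happens directly on $\tau$-invariants); then Proposition~\ref{puredecomp} together with Theorem~\ref{dmpure} identifies $H$ with $M_\nslope^{\oplus n}$, and only then does Proposition~\ref{buninv} apply.
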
%
\begin{exa}%
This theorem does not generalize to mixed isocrystals. 
Suppose that we are in the setting
of Example~\ref{exa-dmpure} and let $i\colon \unitob{\nBz} \to M$
be the non-split embedding of isocrystals constructed there.
The map $s\colon \nDK\otimes_{\nBz} M \to \unitob{\nDK}$
given by the formula
$s(\alpha e_0 + \beta e_1) = \alpha + \beta \sum_{n\geqslant 1} \zeta^{q^{n-1}} z^{-n}$
splits 
$i$ over $\nDK$.%
\end{exa}%
\begin{proof}[Proof of Theorem~\ref{hpzero}]%
Let $M$, $N$ be pure $\nBz$-isocrystals. Set $H = \iHom(M,N)$.
We shall prove that the natural map
$H^{\tau} \to (\nDK\otimes_{\nBz} H)^{\tau}$ is an isomorphism.
 
Let $\nKur$ be the maximal unramified extension of $K$.
Denote its completion by $\nKcur$ and let $\nRcur$ be the ring of integers of $\nKcur$.
Let $\nDKcur$ be the ring of Definition~\ref{defdk}
constructed for the field $\nKcur$.
The Galois group $G = \Gal(\nKur/K)$ acts on $\nKcur$ by continuity.
We get an induced action on $\nBRz{\nRcur}$ and $\nDKcur$.
The natural map
\begin{equation*}
(\nBRz{\nRcur}\otimes_{\nBz} H)^{\tau} \to
(\nDKcur\otimes_{\nBz} H)^{\tau}
\end{equation*}
is $G$-equivariant.
The action of $G$ commutes with $\tau$.
Hence the equality
$(\nKcur)^G = K$ implies that
\begin{equation*}
\big[(\nBRz{\nRcur}\otimes_{\nBz} H)^{\tau}\big]^G = H^{\tau}, \quad
\big[(\nDKcur\otimes_{\nBz} H)^{\tau}\big]^G = (\nDK\otimes_{\nBz} H)^{\tau}
\end{equation*}
Thus it is enough to prove the theorem for $\nKcur$ in place of $K$.
We are then free to assume that the residue field $\nresf$ of $K$ is separably closed.

The isocrystal $H$ is pure.
Let $\nslope = \frac{s}{r}$ be its slope written in lowest terms.
Denote by $M_\nslope$ the isocrystal with a $\nBz$-basis $e_1,\dotsc,e_r$
on which $\tau$ acts as in the definition of $\cM_\nslope$.
Proposition~\ref{puredecomp} implies that
for a suitable $n$ there exists an isomorphism
$\nBRz{\nresf}\otimes_{\nBz} H \xrightarrow{\isosign} \nBRz{\nresf} \otimes_{\nBz} M_\nslope^{\oplus n}$.
Theorem~\ref{dmpure} lifts it to an isomorphism
$H \xrightarrow{\isosign} M_\nslope^{\oplus n}$.
By naturality we reduce to the case $H = M_\nslope$ where
the claim follows from Proposition~\ref{buninv}.%
\end{proof}

\section{Drinfeld modules}\label{sec:goodred}

Let $A$ be a Dedekind domain of finite type over $\Fq$ with finite group of units $A^{\times}$.
We call $A$ the \emph{coefficient ring}.
We use the following notation:
\begin{itemize}
\item $\ncurve$ is the projective compactification of $\Spec A$ over $\Fq$,

\item $\infty$ is the unique closed point in the complement of $\Spec A$ in $\ncurve$,

\item $F_\infty$ is the local field of $\ncurve$ at $\infty$,

\item $\ninfdeg$ is the degree of the residue field of $F_\infty$ over $\Fq$,
%
\end{itemize}
%
Let $K$ be a field over $\Fq$. Fix a Drinfeld $A$-module $E$ of rank $r$ over $\Spec K$.

\subsection{The motive of a Drinfeld module}\label{ss:mot}%
Let us review the notion of a motive of $E$
introduced by Anderson~\cite{anderson}
and discuss several results 
of
Anderson~\cite{anderson}
and
Drinfeld~\cite{drinfeld-commrgs}
concerning its structure.

\begin{dfn}%
Given an $\Fq$-algebra $R$ we set $\nAR{R} = R\otimes_{\Fq} A$.
We equip $\nAR{R}$ with an endomorphism $\nsigma$
acting as the identity on $A$ and as the $q$-Frobenius on $R$.%
\end{dfn}


\begin{dfn}\label{dfnmot}%
The \emph{motive} $M$ of $E$ is
the left $\nAR{K}\{\tau\}$-module
$\Hom_{\Fq}(E,\,\bG_{a,K})$
of $\Fq$-linear group scheme
morphisms from $E$ to $\bG_{a,K}$.
The ring $A$ acts on $M$ via the group scheme~$E$.
The field $K$ and the element $\tau$ act by composition with the scalar endomorphisms
and the $q$-Frobenius of $\bG_{a,K}$ respectively.
\end{dfn}


\begin{prp}[Drinfeld \cite{drinfeld-commrgs}]\label{motrank}%
The $\nAR{K}$-module $M$ is locally free of rank $r$.%
\end{prp}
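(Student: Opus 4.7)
The plan is to identify $M$ with the twisted polynomial ring $K\{\tau\}$ and then exploit its Euclidean structure, first dispatching the case $A = \Fq[a]$ where $\nAR{K}$ is a PID and then transferring along the finite extension $K[a] \subset \nAR{K}$.

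I would begin by using the canonical isomorphism $\End_{\Fq}(\bG_{a,K}) \cong K\{\tau\}$ to identify $M = K\{\tau\}$ as an abelian group. Under this identification the left $K\{\tau\}$-action on $M$ is left multiplication, while $a \in A$ acts on $m$ by right multiplication by $\phi_a \in K\{\tau\}$, and these two actions commute to give the left $\nAR{K} = K \otimes_{\Fq} A$-module structure. Since $A$ is smooth of relative dimension~$1$ over $\Fq$, the ring $\nAR{K}$ is smooth of relative dimension~$1$ over $K$, hence regular Noetherian of dimension~$1$, and therefore a finite product of Dedekind domains. Over such a ring a finitely generated module is locally free if and only if it is torsion-free with locally constant rank.

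For finite generation I would fix a non-constant $a \in A$ so that $\phi_a$ has $\tau$-degree $d = r \deg a > 0$ with invertible leading coefficient; iterated right Euclidean division by $\phi_a$ in $K\{\tau\}$ expresses every element as a $K$-linear combination of the $\tau^i \phi_a^j$ with $0 \leq i < d$, showing that $1, \tau, \ldots, \tau^{d-1}$ generate $M$ over $K \otimes_{\Fq} \Fq[a]$ and hence over $\nAR{K}$ (since $A$ is finite over $\Fq[a]$). For torsion-freeness I would first dispatch the case $A = \Fq[a]$: here $\nAR{K} = K[a]$ is a PID, the injectivity of $\phi$ combined with $K\{\tau\}$ being a non-commutative integral domain gives $M$ torsion-free (hence free) over $K[a]$, and the rank equals $r$ since $\dim_K M/Ma = \dim_K K\{\tau\}/K\{\tau\}\phi_a = r$. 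For general $A$, a Cayley--Hamilton / norm argument along the finite free extension $K[a] \subset \nAR{K}$ lifts torsion-freeness: every $\nAR{K}$-non-zero-divisor $s$ satisfies $\mathrm{Nm}_{\nAR{K}/K[a]}(s) \in s \cdot \nAR{K}$ and this norm is a nonzero element of $K[a]$, hence acts injectively on the $K[a]$-torsion-free $M$; therefore $s$ itself acts injectively.

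The hard part will be verifying that the rank is uniformly $r$ on every connected component of $\Spec \nAR{K}$, rather than merely $r$ on average. To handle this I would base-change to an algebraic closure $\xbar{K}$ of $K$: then $\xbar{K} \otimes_{\Fq} A$ decomposes as a product indexed by $\Fq$-embeddings $\sigma \colon k \hookrightarrow \xbar{K}$ of the constant field $k = \Fqbar \cap \mathrm{Frac}(A)$, and on $\xbar{M} = \xbar{K}\{\tau\}$ an element $c \in k$ acts on the line $\xbar{K} \cdot \tau^i$ as the scalar $\iota_E(c)^{q^i}$, which picks out the embedding $\sigma_i = \iota_E|_k \circ \mathrm{Frob}^i$. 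Hence the $\xbar{K}$-basis $\{\tau^i\}_{i \geq 0}$ splits into $[k:\Fq]$ arithmetic progressions, one for each component of $\xbar{K}\otimes_{\Fq} A$, and one then checks directly by right division by $\phi_a$ that each component-piece is free of rank exactly $r$ over the corresponding factor $\xbar{K} \otimes_{k,\sigma} A$.
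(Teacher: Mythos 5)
Your argument is correct, but it is a genuinely different route from the one in the paper. The paper disposes of the statement in three lines by citing Proposition~3 of Drinfeld's \emph{Commutative subrings of certain noncommutative rings}: the quasi-coherent sheaf defined by $M$ on $\Spec \nAR{K}$ extends to a vector bundle of rank $r$ on the compactified surface $\Spec K\times_{\Fq}\ncurve$, and local freeness of constant rank $r$ follows by restriction. That citation is not wasted, since the same filtration $\{\cF_n\}$ of bundles on the complete curve is reused in Proposition~\ref{infslope} to prove purity of $M_\infty$ at infinity. You instead work entirely inside $K\{\tau\}$: right Euclidean division by $\phi_a$ for finite generation, a Cayley--Hamilton/norm descent along $K[a]\subset\nAR{K}$ for torsion-freeness (hence projectivity over the product of Dedekind domains), and --- for the genuinely delicate point, constancy of the rank across the connected components of $\Spec(\xbar{K}\otimes_{\Fq}A)$ --- the observation that a constant $c$ of the field of constants $k$ acts on the line $\xbar{K}\cdot\tau^i$ by $\iota_E(c)^{q^i}$, so the lines $\xbar{K}\tau^i$ are distributed cyclically and evenly among the $[k:\Fq]$ components, giving rank exactly $r$ on each. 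All the steps check out (in particular $\iota_E|_k$ is injective, $k\subset A$ because $A$ is integrally closed, and the basis $\tau^0,\dotsc,\tau^{r\deg(a)-1}$ of $M/aM$ splits into $[k:\Fq]$ progressions of length $r\dim_k(A/aA)$, matching $\dim_{\xbar K}(\xbar{K}\otimes_{k,\sigma}A)/(a)$). What your approach buys is a self-contained proof that makes visible exactly why the rank does not merely average to $r$; what the paper's approach buys is the compactified model of $M$, which it needs again later anyway.
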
%
\begin{proof}%
Let $X = \Spec K\times_{\Fq} \ncurve$. 
The $\nAR{K}$-module $M$ defines a quasi-coherent sheaf on 
the open subscheme $\Spec(K\otimes_{\Fq} A) \subset X$.
Proposition~3 of \cite{drinfeld-commrgs} implies that this sheaf
extends to a vector bundle of rank $r$ on $X$.%
\end{proof}

Let $\Lie_E(K)$ be the tangent space of $E$ at~$0$. 
The action of $A$ on $\Lie_E(K)$ provides the cotangent space
$\Lie_E(K)^* = \Hom_K(\Lie_E(K),\,K)$ with a structure of an $\nAK$-module.

\begin{prp}[Anderson~\cite{anderson}, Drinfeld~\cite{drinfeld-commrgs}]\label{motlie}%
The morphism $\taulin\colon\nsigma^* M \hookrightarrow M$ is injective
and the $\nAK$-module $\coker(\taulin)$ is canonically isomorphic
to $\Lie_E(K)^*$.%
\end{prp}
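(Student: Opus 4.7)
The plan is to reduce the statement to a computation in a twisted polynomial ring after choosing an ad hoc coordinate, while defining the isomorphism with $\Lie_E(K)^*$ intrinsically via the differential at the origin.

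First I would fix an $\Fq$-linear isomorphism $\epsilon \colon E \shortisosign \bG_{a,K}$ of group schemes, which exists by definition of a Drinfeld module. Conjugation by $\epsilon$ identifies $M = \Hom_{\Fq}(E, \bG_{a,K})$ with $\End_{\Fq}(\bG_{a,K}) = K\{\tau\}$, the twisted polynomial ring with relation $\tau \alpha = \alpha^q \tau$. Under this identification the left $K\{\tau\}$-module structure on $M$ becomes left multiplication, while the right $A$-action is given by the structure homomorphism $\phi \colon A \to K\{\tau\}$ of the Drinfeld module. The map $\taulin$ then corresponds to left multiplication by $\tau$, which is injective because $K\{\tau\}$ is a skew polynomial ring over a field and hence a domain.

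Next I would identify the cokernel explicitly. The constant-term map $\sum m_i \tau^i \mapsto m_0$ yields an isomorphism $K\{\tau\}/\tau K\{\tau\} \shortisosign K$ of $K$-vector spaces. Since $\phi_a \equiv \partial(a) \pmod{\tau K\{\tau\}}$ for each $a \in A$, right multiplication by $\phi_a$ descends on the quotient to multiplication by $\partial(a)$. Thus $\coker(\taulin)$ is $K$ equipped with the $A$-action through $\partial \colon A \to K$, the homomorphism that by definition governs the action of $A$ on the tangent space $\Lie_E(K) = K$.

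For the canonical identification with $\Lie_E(K)^*$, I would introduce the differentiation map $d \colon M \to \Lie_E(K)^*$ sending an $\Fq$-linear morphism $f$ to its derivative at the origin. This map is $K$-linear by construction, it annihilates $\tau M$ because the $q$-Frobenius of $\bG_{a,K}$ has vanishing differential in characteristic $p$, and the chain rule combined with the identity $d\phi_a|_0 = \partial(a)$ makes it $A$-linear for the natural $A$-action on $\Lie_E(K)^*$ induced by duality. It therefore factors through an $\nAK$-linear morphism $\bar d \colon \coker(\taulin) \to \Lie_E(K)^*$. Both sides are one-dimensional over $K$, and $\bar d(\epsilon) = d\epsilon|_0 \ne 0$ since $\epsilon$ is an isomorphism, so $\bar d$ is the desired canonical isomorphism. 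The only care needed is in the bookkeeping of the various $\nAK$-actions, in particular in checking that the $A$-action on $\Lie_E(K)^*$ induced by duality from $\Lie_E(K)$ matches the action inherited by the cokernel from $M$; there is no substantive obstacle, and $d$ is manifestly independent of the choice of $\epsilon$.
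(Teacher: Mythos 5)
Your overall strategy---trivialize $E$ as an $\Fq$-vector space scheme, identify $M$ with $K\{\tau\}$ so that $\nAK$ acts by left multiplication by $K$ and right multiplication by $\phi(A)$, read off the cokernel from the constant term, and describe the comparison map intrinsically as $f\mapsto df|_0$---is the standard one (the paper itself only cites Anderson and Drinfeld, whose arguments run along these lines), and the last step is handled correctly. But there is a genuine gap at the injectivity step, coming from a conflation of the $\nsigma$-semilinear map $\tau\colon M\to M$, $f\mapsto \tau f$, with its linearization $\taulin\colon\nsigma^*M\to M$. The source of $\taulin$ is $\nAK\otimes_{\nsigma,\nAK}M$, not $M$: a general element is $\sum_i x_i\otimes m_i$, sent to $\sum_i x_i\,\tau m_i$. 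That $K\{\tau\}$ is a domain shows only that $m\mapsto\tau m$ is injective, i.e.\ that $\taulin$ is injective on the image of $m\mapsto 1\otimes m$; when $K$ is imperfect that image is a proper subset of $\nsigma^*M$, so injectivity of $\taulin$ does not follow. The same conflation affects the cokernel: the image of $\taulin$ is not the right ideal $\tau K\{\tau\}$ (whose elements have all coefficients in $K^q$) but the $\nAK$-submodule it generates, namely all twisted polynomials with vanishing constant term. For imperfect $K$ the quotient $K\{\tau\}/\tau K\{\tau\}$ is strictly larger than $K$, so your intermediate isomorphism is literally false, although the constant-term map does compute the true cokernel and your final identification of the $A$-action via $\partial$ is correct.

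Both points are repairable with one additional observation. Since $K$ is free over $K^q$, a basis $\{e_\lambda\}$ of $K$ over $K^q$ with $e_0=1$ gives $\nsigma^*M=\bigoplus_\lambda e_\lambda\otimes M$, and $\taulin\big(\sum_\lambda e_\lambda\otimes m_\lambda\big)=\sum_\lambda e_\lambda\,\tau m_\lambda$; as each $\tau m_\lambda$ has coefficients in $K^q$, the linear independence of the $e_\lambda$ over $K^q$ together with injectivity of $\tau$ forces all $m_\lambda=0$. Alternatively, once the cokernel is known to be finite-dimensional over $K$ and hence a torsion $\nAK$-module, injectivity follows because $\nsigma^*M$ and $M$ are locally free of the same rank over the reduced ring $\nAK$, so the map is generically an isomorphism and its determinant is a nonzerodivisor. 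One of these arguments needs to be supplied; as written, the injectivity claim is unproved for imperfect~$K$.
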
%
\begin{proof}%
\cite[Lemma 1.3.4]{anderson}.
This also follows from
\cite[Proposition 3 (2)]{drinfeld-commrgs}.%
\end{proof}

For each $\Fq$-algebra $R$ we set $\nBR{R} = \nFBR{F_\infty}{R}$
and $\nBRz{R} = \nFBRz{F_\infty}{R}$.

\begin{dfn}%
$M_\infty = \nBKz\otimes_{\nAR{K}} M$.
\end{dfn}

\begin{prp}[Drinfeld~\cite{drinfeld-commrgs}]\label{infslope}%
The module $M_\infty$ is a pure isocrystal of slope~$-\frac{1}{r}$.
%
\end{prp}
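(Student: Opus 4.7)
My plan is to exhibit an $\nBR{K}$-lattice $\nBlat \subset M_\infty$ satisfying $\nlati{r\ninfdeg}{\nBlat} = z^{-1}\nBlat$; by Definition \ref{dfnpure} with $s = -1$ and $r' = r$, this exactly certifies that $M_\infty$ is pure of slope $-\tfrac{1}{r}$.

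For the lattice, I would invoke in full the extension theorem of Drinfeld cited in the proof of Proposition \ref{motrank}: the quasi-coherent sheaf associated to $M$ on $\Spec\nAK \subset X = \Spec K \times_{\Fq} \ncurve$ extends to a vector bundle $\overline{M}$ of rank $r$ on $X$. I would take $\nBlat$ to be the base change of $\overline{M}$ along the completion of the local ring of $X$ at the closed subscheme $\{\infty\} \times \Spec K$, which is precisely $\nBR{K}$. Then $\nBlat$ is a locally free $\nBR{K}$-module of rank $r$, and its generic fibre is $M_\infty$, so $\nBlat$ is indeed a lattice.

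For the identity $\nlati{r\ninfdeg}{\nBlat} = z^{-1}\nBlat$, I would appeal to Drinfeld's local description of $(\overline{M}, \tau)$ near $\infty$. The structure morphism $\taulin$ on $M$ extends to an $\cO_X$-linear morphism $\nsigma^*\overline{M} \to \overline{M}$ whose cokernel on the open part is $\Lie_E(K)^*$ (Proposition \ref{motlie}), and whose determinant has a simple pole concentrated at $\{\infty\} \times \Spec K$ coming from the top coefficient $a_r$ in Drinfeld's defining polynomial $E(t) = \iota_E(t) + a_1\tau + \dots + a_r\tau^r$. Completing at $\infty$, this pole structure translates into the asserted relation, the exponent $r$ averaging the determinant pole across the $r$ generators of $\nBlat$ and the exponent $\ninfdeg$ absorbing the residue degree of $F_\infty/\Fq$ (so rank-one slopes come out integral as intended by Definition \ref{dfnpure}). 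A clean sanity check is the Carlitz case ($A = \Fq[t]$, $r = \ninfdeg = 1$): with $z = 1/t$ the identity $t\cdot m = \iota_E(t)m + \tau m$ gives $\tau m = z^{-1}(1 - \iota_E(t)z)m$, and since $1 - \iota_E(t)z \in \nBR{K}^\times$ one obtains $\nlati{1}{\nBlat} = z^{-1}\nBlat$ at once.

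The main obstacle is the rank-$r$ analogue of this Carlitz local computation: one must exhibit a basis of $\overline{M}$ in a formal neighborhood of $\infty$ in which $\tau^r$ has leading $z$-power $z^{-1}$ and unit coefficient determined by $a_r$, so that iterating to $\tau^{r\ninfdeg}$ absorbs the lower-order $\tau^j$ ($j < r$) contributions modulo $\nBlat$ and leaves exactly $z^{-1}\nBlat$. This delicate combinatorics of iterating the defining polynomial at $\infty$ is the local content of Drinfeld's original analysis in \cite{drinfeld-commrgs}, and is where all the difficulty of the proof is concentrated.
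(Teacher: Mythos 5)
Your overall strategy---produce a lattice $\nBlat\subset M_\infty$ with $\nlati{r\ninfdeg}{\nBlat}=z^{-1}\nBlat$ and invoke Definition~\ref{dfnpure}---is the same as the paper's, but the step carrying all the content is missing. You take for $\nBlat$ the completion at $\infty$ of \emph{an} arbitrary rank-$r$ extension $\overline{M}$ of $M$ to $X$ (the one quoted in Proposition~\ref{motrank}) and then assert the exact relation via a determinant-pole heuristic, explicitly deferring the ``delicate combinatorics'' to Drinfeld's original analysis. That deferral is precisely the gap: the exact relation \emph{is} the proposition, and it simply does not hold for the lattice attached to an unspecified extension. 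Every lattice in $M_\infty$ glues with $M$ to some rank-$r$ extension of $M$ to $X$, yet for instance in the rank-two situation with $\tau e_1=e_2$, $\tau e_2=z^{-1}e_1$ (and $\ninfdeg=1$) the lattice $\ngen{e_1,\,e_2+\alpha z^{-1}e_1}$ with $\alpha$ transcendental over $\Fq$ satisfies $\ngen{\tau^{2n}\nBlat}\ne z^{-n}\nBlat$ for all $n>0$, so it certifies nothing; knowing only that the generic-fibre cokernel of $\taulin$ is $\Lie_E(K)^*$ (Proposition~\ref{motlie}) and that $\det\taulin$ has a pole at $\infty$ controls total colengths, not the exact periodicity of the lattice chain. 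You also never address why $M_\infty$ is an isocrystal at all (surjectivity of its structure morphism), though this would follow once the lattice relation is in hand.

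The paper sidesteps your ``main obstacle'' by quoting the full strength of Drinfeld's Proposition~3 in \cite{drinfeld-commrgs}: not merely that $M$ extends to a vector bundle, but that there is an increasing family $\{\cF_n\}_{n\in\bZ}$ of rank-$r$ extensions with morphisms $\nsigma^*\cF_n\to\cF_{n+1}$ extending $\taulin$ such that $\nsigma^*(\cF_n/\cF_{n-1})\to\cF_{n+1}/\cF_n$ is an isomorphism and $\cF_{n+r\ninfdeg}=\cF_n(\infty)$. Setting $\nBlat_n=\uH^0(\Spec\nBK,\cF_n)$, the twist property gives $\nBlat_{n+r\ninfdeg}=z^{-1}\nBlat_n$ on the nose, and the quotient property gives $\nlati{}{\nBlat_n}=\nBlat_{n+1}$ after reducing modulo $z$ and applying Nakayama; combining yields $z^{-1}\nBlat_0=\nlati{r\ninfdeg}{\nBlat_0}$, whence both the isocrystal property and purity, with no local basis computation at $\infty$. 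So the correct repair of your argument is not to redo the iteration of the defining polynomial of $E$ at $\infty$, but to cite (or reprove) the existence of this $\tau$-compatible filtered family of extensions and extract the slope-adapted lattice from it.
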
%
\begin{proof}
Let $X = \Spec K\times_{\Fq} \ncurve$ and let $\nsigma\colon X \to X$
be the endomorphism which acts as the $q$-Frobenius on $K$ and as
the identity on $\ncurve$. Denote by $U$ the open subscheme
$\Spec (K\otimes_{\Fq} A) \subset X$.
The complement of $U$ is a divisor. Let $\cO(\infty)$ be the corresponding line bundle.

According to Proposition~3 of \cite{drinfeld-commrgs} there is an
increasing family $\{\cF_n\}_{n\in\bZ}$
of rank $r$ vector bundles on $X$ extending $M$ on $U$
and a compatible system of morphisms $\nsigma^\ast\cF_n \to \cF_{n+1}$
extending the structure morphism $\sigma^\ast M \to M$ such that
for all $n$
\begin{enumerate}%
\renewcommand{\theenumi}{\roman{enumi}}%
\item\label{infslope-bunquot}%
the induced map $\nsigma^\ast(\cF_n/\cF_{n-1}) \to \cF_{n+1}/\cF_n$
is an isomorphism,

\item\label{infslope-bunshift}%
$\cF_{n+r\ninfdeg} = \cF_n(\infty)$.%
\end{enumerate}%
Set $\nBlat_n = \uH^0(\Spec\nBK,\,\cF_n)$.
The fibre product of $\Spec\nBK$ and $U$ over $X$ is $\Spec\nBKz$.
Therefore
$\{\nBlat_n\}_{n\in\bZ}$ is an increasing family of lattices in $M_\infty$.
We claim that
\begin{enumerate}%
\renewcommand{\theenumi}{\alph{enumi}}%
\item\label{infslope-step}%
$\nBlat_{n+1} = \nlati{}{\nBlat_n}$,

\item\label{infslope-shift}%
$\nBlat_{n+r\ninfdeg} = z^{-1}\nBlat_n$
where $z \in F_\infty$ is a uniformizer.%
\end{enumerate}%
Property~\eqref{infslope-shift} follows immediately from \eqref{infslope-bunshift}.
Let us prove~\eqref{infslope-step}.

Since the morphism $\sigma^\ast\cF_n \to \cF_{n+1}$ restricts to the
structure morphism of $M$ on $U$ it follows that
$\nlati{}{\nBlat_n} \subset \nBlat_{n+1}$.
Property~\eqref{infslope-bunquot} implies that
the induced map $\nsigma^\ast(\cF_n/\cF_{n-r\ninfdeg}) \to \cF_{n+1}/\cF_{n-r\ninfdeg+1}$
is an isomorphism. Now $\cF_{n-r\ninfdeg} = \cF_n(-\infty)$ by \eqref{infslope-bunshift}
so the inclusion
$\nlati{}{\nBlat_n} \hookrightarrow \nBlat_{n+1}$
induces an isomorphism modulo $z\nBK$.
As $\nBK$ is $\nadic$-adically complete Nakayama's lemma
implies that $\nlati{}{\nBlat_n} = \nBlat_{n+1}$.

It follows from \eqref{infslope-step} and \eqref{infslope-shift}
that $z^{-1}\nBlat_0 = \nlati{r\ninfdeg}{\nBlat_0}$.
This fact has two consequences.
First, the structure morphism of $M_\infty$ is surjective.
As it is also injective we conclude that $M_\infty$
is an isocrystal.
Second, the isocrystal $M_\infty$ is pure of slope $-\frac{1}{r}$. 
Whence the result.%
\end{proof}

%
%
%
%
%

\subsection{The Tate module at infinity}\label{ss:inftate}%
We shall define the Tate module of $M$ at the place $\infty$
and relate it to the representation of the Weil group introduced
by J.-K.~Yu~\cite{yu}. 
This construction was envisaged by Taelman. 

Fix a separable closure $K^s$ of $K$ and
let $\Fqbar \subset K^s$ be the separable closure of $\Fq$.
Fix a simple $\nBRz{\Fqbar}$-isocrystal $N$ of slope $-\frac{1}{r}$.

\begin{dfn}%
The \emph{Tate module of $M$ at infinity} is
\begin{equation*}
\nTinf{M} = \Hom_{\nBRz{\Fqbar}\{\tau\}}(N,\:\nBRz{K^s}\otimes_{\nAR{K}} M).
\end{equation*}
It carries an action of the ring $\nDN =\End N$ on the right
and the Weil group $W_K$ on the left, see Definition~\ref{dfnweilact}.
\end{dfn}

According to Proposition~\ref{simple}
the ring $\nDN$ is a central division algebra of invariant $\frac{1}{r}$ over $F_\infty$.
Proposition~\ref{taterank} shows that $\dim_\nDN \nTinf{M} = 1$.%

\begin{rmk*}%
We speak of the Tate module of the motive $M$ rather than $E$
to respect the directions of the underlying constructions:
the functor $E \mapsto M$ is contravariant but $M \mapsto \nTinf{M}$ is covariant.%
\end{rmk*}

Let $K\{\tau\}$ be the twisted polynomial ring associated to the $q$-Frobenius endomorphism $\nsigma\colon K \to K$.
Recall that $\End_{\Fq}(\bG_{a,K}) = K\{\tau\}$.
Fix an isomorphism $\mu\colon \bG_{a,K} \xrightarrow{\isosign} E$.
This determines a morphism
$\nphi\colon A \to K\{\tau\}$, $a \mapsto \mu^{-1} \circ a \circ \mu$. 

A construction of J.-K.~Yu~\cite[Construction~2.5 and Section~3.1]{yu}
associates to $\nphi$ 
a left $(\nDN^{\textup{op}})^\times$-torsor $Y_\iota$ equipped with an action of $W_K$
(see also our Section~\ref{ss:yucomp}).
Making a choice of an element $u \in Y_\iota$ one 
obtains a representation
\begin{equation*}
\rho_\infty\colon W_K \to (\nDN^{\textup{op}})^\times, \quad
\rho_\infty(\gamma) \cdot (\gamma u) = u\textup{ for all }\gamma\in W_K.
\end{equation*}
A different choice of $u$ changes $\rho_\infty$ by conjugation.

\begin{thm}\label{tateyu}%
There is a natural isomorphism of $W_K$-sets
$Y_\iota\xrightarrow{\isosign} \nTinf{M}\backslash\{0\}$.
This isomorphism transforms the left action of $(D^{\textup{op}})^\times$ 
to the right action of $D^\times$.
\end{thm}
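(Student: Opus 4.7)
The plan is to reduce Theorem~\ref{tateyu} to Theorem~\ref{tateyu-formal} combined with Proposition~\ref{formalcomp}, which together already supply such a $W_K$-equivariant bijection at the level of formal Drinfeld motives. To deploy them I must produce a formal Drinfeld motive $\nYuV$ whose associated isocrystal recovers $M_\infty$ after a harmless base change, and verify that all the structures match up.

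First I would pass to the perfect closure $K^{\textup{perf}}$ of $K$, since formal Drinfeld motives in the sense of Section~\ref{sec:fdm} are defined only over perfect base fields. The isomorphism $\mu$ identifies $M$ with $K\{\tau\}$ as a left $K\{\tau\}$-module, with $A$ acting on the right through $\phi$. Base changing to $K^{\textup{perf}}$ and completing in $\tau^{-1}$ yields a one-dimensional left $\rtauF{K^{\textup{perf}}}$-module $\nYuV$ carrying a continuous right action of $A$. Propositions~\ref{motrank} and~\ref{infslope} force this action to extend uniquely to a continuous embedding $\phi_\infty\colon F_\infty \to \rtauF{K^{\textup{perf}}}$ of $\tau^{-1}$-valuation $r\ninfdeg$ at a uniformizer of $F_\infty$, so that $\nYuV$ becomes a formal Drinfeld motive of height $r$ defined by precisely the homomorphism that enters Yu's construction of $Y_\iota$.

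Second I would identify $M(\nYuV)$ with $\nBRz{K^{\textup{perf}}}\otimes_{\nBRz{K}} M_\infty$ as $\nBRz{K^{\textup{perf}}}$-isocrystals. Both are pure of slope $-\tfrac{1}{r}$ and rank $r$, hence simple by Proposition~\ref{simpleiso}; tracing the action of $\tau$ through the presentation $M \cong K\{\tau\}$ produces an explicit nonzero morphism, which is therefore an isomorphism. Galois equivariance along $K^{\textup{perf}}/K$ is built in by Proposition~\ref{laurentgal}. Applying $\Hom_{\nBRz{\Fqbar}\{\tau\}}(N,-)$ and using that the natural inclusion $\nBRz{K^s} \hookrightarrow \nBRz{\bar{K}}$ induces an isomorphism on the relevant $\Hom$ spaces (both being free of the same rank over $\nDN$ by Propositions~\ref{purehom} and~\ref{taterank}) yields a natural $W_K$-equivariant $\nDN$-linear isomorphism $\nTate{M(\nYuV)} \shortisosign \nTinf{M}$.

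Finally, Theorem~\ref{tateyu-formal} produces a $W_K$-equivariant bijection $Y_\iota \shortisosign \nTate{\nYuV}\setminus\{0\}$ intertwining the left $\nCentN^\times$-action with the right $\nDN^\times$-action, while Proposition~\ref{formalcomp} supplies a $W_K$-equivariant $\nDN$-linear isomorphism $\nTate{\nYuV} \shortisosign \nTate{M(\nYuV)}$. Composing the three bijections yields the claimed $Y_\iota \shortisosign \nTinf{M}\setminus\{0\}$ with the correct equivariance. The main obstacle will be the identification in the preceding paragraph: two presentations of the same object must be reconciled, one as the $z$-adic completion produced in the proof of Proposition~\ref{infslope} and the other as the $\tau^{-1}$-adic completion of Proposition~\ref{laurentdm}, and one must confirm that the passage to $K^{\textup{perf}}$ does not disturb the $W_K$-action encoding Yu's representation.
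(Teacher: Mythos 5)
Your proposal is correct and follows essentially the same route as the paper: pass to the perfect closure, form the formal Drinfeld motive $\nV = \rtauF{K^{\textup{perf}}}\otimes_{K\{\tau\}} M$ of height $r$ defined by $\nphi$, identify $M(\nV)$ with the base change of $M_\infty$ by the purity/simplicity argument, and compose Theorem~\ref{tateyu-formal} with Proposition~\ref{formalcomp}. The reconciliation issues you flag at the end (the two completions, and the comparison of Tate modules over $K^s$ versus the perfect/algebraic closure) are exactly the points the paper's proof handles, in the same way.
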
%
\begin{rmk}\label{yudeps}%
The construction of the torsor $Y_\iota$ involves a choice of an isomorphism $\mu$.
In contrast the Tate module $\nTinf{M}$ is canonical in $E$ itself.
\end{rmk}%
\begin{proof}[Proof of Theorem~\ref{tateyu}]%
Let $K^{\npf}$ be the perfect closure of $K$.
The construction of the motive $M$ is compatible with arbitrary base change.
Hence the motive of $E$ over ${\Spec K^\npf}$ is $K^\npf\otimes_K M$
and the corresponding $\infty$-adic isocrystal is
$\nBRz{K^\npf}\otimes_{\nBKz} M_\infty$.
It is easy to see that $\nTinf{M} = \nTate{\nBRz{K^\npf}\otimes_{\nBKz} M_\infty}$.
At the same time the torsor $Y_\iota$ is defined in terms of the base change of $E$ to $\Spec K^\npf$.
Hence we are free to assume that $K$ is perfect.

Let $\nvalinf$ be the normalized valuation on $F_\infty$
and $\nvaltau$ the normalized $\tau^{-1}$-adic valuation on $\rtauF{K}$.
The fact that $E$ is a Drinfeld module of rank $r$ implies that
\begin{equation*}
\nvaltau(\nphi(a)) = r \ninfdeg \cdot \nvalinf(a)
\end{equation*}
for all $a\in A$.
So $\nphi$ extends uniquely to a continuous morphism $\nphi\colon F_\infty \to \rtauF{K}$.
By construction $\nvaltau(\nphi(z)) = r \ninfdeg$ for a uniformizer $z\in F_\infty$.

Set $\nV = \rtauF{K}\otimes_{K\{\tau\}} M$. The discussion above shows
that $\nV$ is of dimension~$1$ over $\rtauF{K}$
and that the action of $A$ on $M$ extends uniquely to a continuous
action of $F_\infty$ on $\nV$. This makes $\nV$ a formal Drinfeld motive.
The height of $\nV$ is $r$ since $\nvaltau(\nphi(z)) = r\ninfdeg$.

Proposition~\ref{laurentdm} equips $\nV$ with a structure of a $\nBKz$-isocrystal $M(\nV)$.
The embedding $M \hookrightarrow M(\nV)$ is $\nAK\{\tau\}$-linear by construction
and so extends to a nonzero morphism of isocrystals $M_\infty \to M(\nV)$.
Its source and target are of slope $-\frac{1}{r}$ and rank $r$
by Propositions~\ref{infslope} and \ref{laurentdm} respectively.
Proposition~\ref{simpleiso} shows that $M_\infty$ and $M(\nV)$ are simple.
Hence $M_\infty \xrightarrow{\isosign} M(\nV)$ is an isomorphism.

The choice of the element $1 \otimes \mu^{-1} \in \nV$ identifies $\nV$
with the formal Drinfeld motive $\nV(\nphi)$ defined by $\nphi$. 
Applying Theorem~\ref{tateyu-formal} to $\nV$ we get 
an isomorphism $Y_\iota \xrightarrow{\isosign} \nTate{\nV} \backslash\{0\}$.
The Tate module $\nTate{\nV}$ is isomorphic to $\nTate{M(\nV)}$ by Proposition~\ref{formalcomp}.
Whence the result.%
\end{proof}

\subsection{Generalization to \texorpdfstring{$\tau$}{tau}-sheaves}\label{ss:andpure}

A \emph{$\tau$-sheaf} over $\nAK$ is a left $\nAK\{\tau\}$-module $M$ such that
\begin{itemize}
\item
$M$ is locally free of finite rank over $\nAK$,

\item
the structure morphism of $M$ is injective.
\end{itemize}
These objects were introduced by Drinfeld and christened by Gardeyn
\cite[Section~1.1]{gardeyn-anstruct}.

One can show that $M_\infty = \nBKz\otimes_{\nAK} M$ is always an isocrystal.
A $\tau$-sheaf $M$ is called \emph{pure} if the isocrystal $M_\infty$ is pure
\cite[Definition~0.15]{gardeyn-thesis}. 
The effective $A$-motives of \cite{hartl-juschka} form a subcategory of $\tau$-sheaves and the notion of purity
for them is the same, see
\cite[Definitions 3.1, 3.5]{hartl-juschka}
and \cite[Sections 1.2, 1.9]{anderson}.

Let $M$ be a pure $\tau$-sheaf.
Making a choice of a pure and simple $\nBRz{\Fqbar}$-isocrystal $N$ of the same slope as $M_\infty$
we get the Tate module $\nTinf{M}$.
For a pure effective $A$-motive $M$ the rank of $\nTinf{M}$ over $\nDN = \End N$ is equal to
the greatest common divisor of the dimension and the rank of $M$.

%
%
%
%
%
%
%
%
%


\subsection{Models of the motive}
From now on we work with a fixed discrete valuation $\nval\colon K^\times \twoheadrightarrow \bZ$.
Let $\nOK$ be its ring of integers.
As before we fix a Drinfeld $A$-module $E$ of rank $r$ over $\Spec K$
and denote by $M$ its motive.

The $K$-vector space $\Lie_E(K)$ is one-dimensional,
and so the action of $A$ on it induces a homomorphism $\iota_E\colon A \to K$, the \emph{characteristic morphism} of~$E$.
Throughout the rest of this paper we make the following assumption:
\begin{align*}
\fbox{\ensuremath{\iota_E(A) \subset \nOK.}}
\end{align*}
We are then in the setting of reduction theory of Drinfeld
modules, cf.~\cite[\S4.1, first sentence]{goss}.

\begin{rmk}
One says that the Drinfeld module $E$ has \emph{finite residual characteristic} 
if $\iota_E(A) \subset \nOK$ as above. The aforementioned residual characteristic
is the preimage of the maximal ideal of $\nOK$ under $\iota_E$.
If $\iota_E(A) \not\subset\nOK$ then one says that the \emph{residual characterisctic
of $E$ is infinite.}
The analogous situation for abelian varieties is when
the base field $K$ is $\mathbb{R}$ or $\mathbb{C}$.
%
\end{rmk}

As before we set
$\nBzl = K\otimes_{\nOK}\nBz$.
The natural morphism $\nAR{\nOK} \to \nBz$ extends uniquely
to a morphism $\nAR{K} \to \nBzl$.

\begin{dfn}%
$\rMloc = \nBzl \otimes_{\nAR{K}} M$.%
\end{dfn}

\begin{prp}\label{mloctaubun}%
The module $\rMloc$ is a $\nBzl$-isocrystal.%
\end{prp}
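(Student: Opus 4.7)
The plan is to verify the two conditions in the definition of a $\nBzl$-isocrystal. Local freeness of $\rMloc$ of rank $r$ over $\nBzl$ is immediate from Proposition~\ref{motrank}, since base change along $\nAR{K} \to \nBzl$ preserves local freeness and rank. It remains to show the structure morphism $\taulin_{\rMloc}\colon \nsigma^*\rMloc \to \rMloc$ is an isomorphism. Both source and target are locally free $\nBzl$-modules of rank $r$, so any surjection between them is automatically bijective; hence it suffices to prove $\coker\taulin_{\rMloc} = 0$.

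By Proposition~\ref{motlie} the cokernel of $\taulin_M$ is canonically isomorphic to $\Lie_E(K)^*$ as an $\nAR{K}$-module. Right exactness of base change gives $\coker\taulin_{\rMloc} \cong \nBzl \otimes_{\nAR{K}} \Lie_E(K)^*$. Since $\Lie_E(K)^*$ is one-dimensional over $K$ with $A$ acting via $\iota_E$, as an $\nAR{K}$-module it is cyclic and annihilated exactly by the ideal
\begin{equation*}
I = \ker\bigl(\nAR{K} \twoheadrightarrow K,\; x \otimes a \mapsto x \cdot \iota_E(a)\bigr).
\end{equation*}
The task thus reduces to showing $I\nBzl = \nBzl$.

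To produce a unit in $I\nBzl$, I would pick any nonconstant $a \in A$ and consider the element $\xi_a = (1\otimes a) - \iota_E(a)(1\otimes 1) \in I$. Since $a$ is nonconstant its pole order at $\infty$ is $d = -v_\infty(a) \geq 1$, and its image in $F_\infty$ has the form $\alpha = z^{-d}\beta$ with $\beta \in \nOinf^\times$. Tracing $\xi_a$ through the natural map $\nAR{K} \to \nBzl$ (which uses $A \to F_\infty \hookrightarrow \nBzl$ on one tensor factor and $K \hookrightarrow \nBzl$ on the other) yields the image
\begin{equation*}
\alpha - \iota_E(a) \;=\; z^{-d}\bigl(\beta - z^d \iota_E(a)\bigr).
\end{equation*}
The standing hypothesis $\iota_E(A) \subset \nOK$ places $\iota_E(a)$ inside $\nBz$, so $z^d\iota_E(a) \in z\nBz$. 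Since $\nBz$ is $z$-adically complete, $z\nBz$ lies in its Jacobson radical, whence $\beta - z^d\iota_E(a) \in \nBz^\times$; multiplying by the unit $z^{-d}$ of $\nBzl = \nBz[z^{-1}]$ produces a unit of $\nBzl$, as desired.

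The only real obstacle is bookkeeping: carefully unwinding the natural map $\nAR{K} \to \nBzl$ and checking that the inclusion $\nOinf \hookrightarrow \nBz$ preserves units. Both reduce immediately to the definition of $\nBz$ as the $z$-adic completion of $\nOK \otimes_{\Fq} \nOinf$, so the argument is a direct computation rather than a structurally difficult one.
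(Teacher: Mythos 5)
Your argument is correct and follows essentially the same route as the paper: both proofs reduce the claim to showing that $1\otimes a-\iota_E(a)\otimes 1$ (which kills $\coker\taulin_M\cong\Lie_E(K)^*$ by Proposition~\ref{motlie}) becomes a unit of $\nBzl$ for a nonconstant $a\in A$, using exactly the two inputs $\iota_E(A)\subset\nOK$ and $\nvalinf(a)<0$; the paper simply exhibits the explicit inverse $(1\otimes a)^{-1}\sum_{n\geqslant 0}\iota_E(a)^n\otimes a^{-n}$ where you instead factor out $z^{-d}$ and invoke a unit-plus-radical argument. One correction to your bookkeeping: that completeness/Jacobson-radical step must be carried out in $\nB=\nBR{\nOK}$, which is the $\nadic$-adically complete ring containing both $1\otimes\beta$ and $z^{d}\iota_E(a)\otimes 1$ — not in $\nBz=\nB\otimes_{\ncoint}\ncoef$, where $z$ is already invertible so the ``$z$-adically complete'' claim is vacuous — and $\nBzl$ is $K\otimes_{\nOK}\nBz$ rather than $\nBz[z^{-1}]$.
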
%
This is false unless $\iota_E(A) \subset \nOK$.
\begin{proof}[Proof of Proposition~\ref{mloctaubun}]%
Let $a \in A$ be an element which is transcendental over $\Fq$ and let
$\alpha = \iota_E(a)$ be its image in $\nOK$.
Consider the element $x = 1 \otimes a - \alpha \otimes 1$ of $\nAR{\nOK}$.
According to Proposition~\ref{motlie} this element acts by zero
on the cokernel of the structure morphism of $M$.
It is enough to show that $x$ is invertible in $\nBzl$.

Let $\nvalinf\colon F_\infty^\times \to \bZ$ be the normalized valuation.
Since $a \in A$ is transcendental over $\Fq$ it follows that $\nvalinf(a) < 0$. As a consequence the series
$\sum_{n\geqslant 0} \alpha^n \otimes a^{-n}$
converges to an  element $y \in \nBz$. By construction $xy = 1 \otimes a$
and the result follows since $a$ is invertible in $F_\infty$.
\end{proof}

Next we review the notion
of a \emph{model of the motive $M$} which was introduced by Gardeyn in \cite{gardeyn-anstruct} and \cite{gardeyn-thesis}.
Let $\nresf$ be the residue field of the valuation $v\colon K^\times \twoheadrightarrow \bZ$.

\begin{dfn}
A \emph{model of the motive $M$ over $\nOK$}
is a left $\nAR{\nOK}\{\tau\}$-submodule $\roM$
such that
\begin{itemize}
\item
$\roM$ is a locally free $\nAR{\nOK}$-module of finite type,

\item
$K\otimes_{\nOK}\roM = M$.
\end{itemize}
A model $\roM$ is \emph{good} if the structure morphism of 
$\nAR{\nresf}\otimes_{\nAR{\nOK}} \roM$ 
is injective.
\end{dfn}

A good model is unique if it exists \cite[Proposition~2.13]{gardeyn-anstruct}.
Gardeyn proved that the motive $M$ admits a good model 
if and only if the Drinfeld module $E$ has good reduction 
\cite[Theorem~8.1]{gardeyn-goodred}.
His argument depends on the good reduction criterion of Takahashi~\cite{takahashi}.
As an alternative we shall show directly
that the good model of $M$ is necessarily a motive of a Drinfeld module over
$\Spec\nOK$ extending $E$,
see Theorem~\ref{gardeyn}.

\begin{dfn}%
Given a model $\roM$ we set
$\roM_\infty = \nBz\otimes_{\nAR{\nOK}} \roM$.%
\end{dfn}

This is a left $\nBz\{\tau\}$-module 
which is locally free over $\nBz$.
It is not necessarily an isocrystal
since the structure morphism $\nsigma^\ast \roM_\infty \hookrightarrow \roM_\infty$
may not be surjective.

\begin{prp}\label{dmgoodmod}%
A model $\roM$ is good if and only if $\roM_\infty$ is an isocrystal.%
\end{prp}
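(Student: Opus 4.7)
Set $C = \coker(\taulin_\roM)$, a finitely generated $\nAR{\nOK}$-module. The structure morphism $\taulin_\roM$ is injective: it is inherited from $\taulin_M$, which is injective by Proposition~\ref{motlie}, and the passage $\nsigma^*M\hookleftarrow\nsigma^*\roM$ preserves injectivity since the $q$-Frobenius is flat on the DVR~$\nOK$. We thus obtain a short exact sequence
\[
0 \to \nsigma^*\roM \to \roM \to C \to 0
\]
of finitely generated $\nAR{\nOK}$-modules, which I shall exploit throughout.

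The first step is to translate both conditions into vanishings of base-changes of $C$. Tensoring the sequence with $\nBz$ (flat over $\nAR{\nOK}$ since the map factors as a localization of Noetherian rings followed by an adic completion) preserves exactness, so the cokernel of $\taulin_{\roM_\infty}$ is $\nBz\otimes_{\nAR{\nOK}} C$; being a map between locally free $\nBz$-modules of the same rank $r$, the structure morphism is an isomorphism if and only if this cokernel vanishes. Similarly, applying $-\otimes_{\nAR{\nOK}}\nAR{\nresf}$ together with the Tor long exact sequence (using that $\roM$ and $\nsigma^*\roM$ are locally free and that $\fn=(\zeta)$ is principal) gives
\[
0 \to C[\zeta] \to \nsigma^*(\roM/\fn\roM) \xrightarrow{\overline{\taulin}} \roM/\fn\roM \to C/\fn C \to 0,
\]
so the goodness of $\roM$ amounts to $C$ being $\zeta$-torsion-free.

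The direction \emph{goodness $\Rightarrow$ $\roM_\infty$ an isocrystal} is then concrete. If $C$ is $\zeta$-torsion-free it embeds in $C\otimes_{\nOK}K = \coker(\taulin_M) = \Lie_E(K)^*$, which is $K$ with $A$ acting through $\iota_E\colon A\to\nOK$. A finitely generated $\nAR{\nOK}$-submodule of $K$ is in particular a fractional ideal of $\nOK$, so $C\cong\zeta^n\nOK$ for some $n\in\bZ$ and is annihilated by every $x_a := 1\otimes a - \iota_E(a)\otimes 1$, $a\in A$. For any $a\in A$ transcendental over $\Fq$ we have $a^{-1}\in\fm\subseteq\nOc\subseteq\nBz$, so the series $y = \sum_{n\geq 0}\iota_E(a)^n\otimes a^{-n}$ converges in $\nBz$; as in the proof of Proposition~\ref{mloctaubun}, $x_a\cdot y = 1\otimes a$, which is a unit in $\nBz$ with inverse $1\otimes a^{-1}$. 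Hence $x_a$ is a unit, and $\nBz\otimes C = 0$.

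For the converse, suppose $\nBz\otimes C = 0$; reducing modulo $\zeta$ gives $\nBRz{\nresf}\otimes_{\nAR{\nresf}}(C/\fn C) = (\nBz\otimes C)/\zeta(\nBz\otimes C) = 0$. The key technical point---which I expect to be the main obstacle---is that this forces $C/\fn C$ to be $\nAR{\nresf}$-torsion. Indeed, $\nBRz{\nresf}$ is a finite product of fields, each being a Laurent-series field over a component of $\nresf\otimes_{\Fq}(\nOc/\nmc)$; via the dense inclusion $F\hookrightarrow F_\infty$ one verifies that every generic point of $\Spec\nAR{\nresf} = \Spec(\nresf\otimes A)$ lies in the image of $\Spec\nBRz{\nresf}$, so faithful flatness of field extensions forces $(C/\fn C)_\xi = 0$ at every generic point $\xi$. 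Consequently the image of $\overline{\taulin}$ attains the full generic rank $r$; since its source is torsion-free, $\overline{\taulin}$ is then injective outright, which is the goodness of $\roM$.
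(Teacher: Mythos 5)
Your proof is correct, and one of its two directions takes a genuinely different route from the paper's. The paper argues directly with the structure morphisms, inserting an intermediate condition on the special fibre: writing $\rrM = \nAR{\nresf}\otimes_{\nAR{\nOK}}\roM$, it shows that injectivity of $\taulin_{\rrM}$ is equivalent to $\taulin$ becoming an isomorphism over $\nBRz{\nresf}$ (flat base change plus a rank count over a finite product of fields in one direction; the same ``every generic point of $\Spec\nAR{\nresf}$ is hit'' observation you invoke, in the other), and then climbs from $\nBRz{\nresf}$ up to $\nBz$ by Nakayama over the $\zeta$-adic completion $\nBzc$, Proposition~\ref{mloctaubun}, and the fpqc covering of $\Spec\nBz$ by $\Spec\nBzl$ and $\Spec\nBzc$. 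Your reformulation via $C=\coker(\taulin_{\roM})$ makes the implication from the isocrystal property to goodness essentially the paper's special-fibre step recast in cokernel language, but the other implication is new and more concrete: once $C$ is $\zeta$-torsion-free it embeds into $\Lie_E(K)^*$, is therefore a finitely generated $\nOK$-submodule of $K$ killed by the elements $1\otimes a-\iota_E(a)\otimes 1$, and these become units in $\nBz$; this replaces the Nakayama-plus-descent step entirely while reusing only the computation underlying Proposition~\ref{mloctaubun}. What the paper's route buys is that it never needs to know the shape of $\coker(\taulin_M)$ beyond Proposition~\ref{mloctaubun} as a black box; what yours buys is a shorter, completion-free argument that exploits Proposition~\ref{motlie} and the hypothesis $\iota_E(A)\subset\nOK$ directly. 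Two cosmetic repairs: the flatness of $\nAR{\nOK}\to\nBz$ is true but does not literally factor as a localization of $\nAR{\nOK}$ followed by a completion (the completion is taken of $\nOK\otimes_{\Fq}\nOc$, which lives over a different open set of the compactified curve); it is cleanest to see it as completion along the divisor at infinity followed by inverting $z$, each step flat --- the same flatness the paper uses for $\nBRz{\nresf}$ over $\nAR{\nresf}$. And in your last step you should record that $\nsigma^{*}(\roM/\zeta\roM)$ is torsion-free because it is locally free over the reduced ring $\nAR{\nresf}$, so that vanishing of the kernel at all generic points indeed forces the kernel to be zero.
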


\begin{proof}%
Let $\rrM = \nAR{\nresf} \otimes_{\nAR{\nOK}} \roM$
and let $\rrMinf = \nBRz{\nresf} \otimes_{\nAR{\nresf}} \rrM$.
Denote by $\roMa{f}$, $\rrMinfa{f}$ and $\rrMa{f}$ the structure morphisms
of $\roM_\infty$, $\rrMinf$
and $\rrM$ respectively.
We claim that the following properties
are equivalent:
\begin{enumerate}%
\renewcommand{\theenumi}{\roman{enumi}}%
\item\label{dmgoodmod-red}%
$\rrMa{f}$ is injective.%

\item\label{dmgoodmod-infred}%
$\rrMinfa{f}$ is an isomorphism.

\item\label{dmgoodmod-inf}%
$\roMa{f}$ is an isomorphism.
\end{enumerate}%

\eqref{dmgoodmod-red} $\Rightarrow$ \eqref{dmgoodmod-infred}.
The $\nAR{K}$-module $M$ is locally free 
of constant rank.
Since the map $\Spec \nAR{K} \to \Spec\nAR{\nOK}$
is bijective on connected components
it follows
that $\roM$ is a locally free $\nAR{\nOK}$-module of constant
rank. Hence the same is true of the $\nAR{\nresf}$-module $\rrM$
and the $\nBRz{\nresf}$-module $\rrMinf$.
The base change of $\rrMa{f}$ to $\nBRz{\nresf}$ is naturally isomorphic to $\rrMinfa{f}$.
Since $\nBRz{\nresf}$ is flat over $\nAR{\nresf}$ it follows
that $\rrMinfa{f}$ is injective.
As $\nBRz{\nresf}$ is a finite product of fields
and $\rrMinf$ is locally free of constant rank
we conclude that $\rrMinfa{f}$ is also surjective.%

\eqref{dmgoodmod-infred} $\Rightarrow$ \eqref{dmgoodmod-red}.
We have a commutative square
\begin{equation*}
\xymatrix{
\nsigma^\ast \rrMinf \ar[r] & \rrMinf \\
\nsigma^\ast \rrM \ar[r] \ar[u] & \rrM \ar[u]
}
\end{equation*}
The vertical arrows are injective since the image of $\Spec\nBkz$
contains all the generic points of $\nAR{\nresf}$.
So if the top arrow is an isomorphism
then the bottom arrow must be injective.%

\eqref{dmgoodmod-infred} $\Rightarrow$ \eqref{dmgoodmod-inf}.
Pick a uniformizer $\zeta\in K$.
The kernel of the reduction map
$\nBz \to \nBRz{\nresf}$ is $\zeta\nBz$.
Let $\nBzc$ be the completion of $\nBz$ at $\zeta\nBz$.
Nakayama's lemma implies that
the base change of $\roMa{f}$ to $\nBzc$
is an isomorphism.
The base change of $\roMa{f}$ to $\nBzl$ is the structure morphism of $\rMloc$.
This is an isomorphism by Proposition~\ref{mloctaubun}.
Since $\Spec\nBzl$ and $\Spec\nBzc$ form an fpqc covering
of $\Spec\nBz$ it follows
that $\roMa{f}$ is an isomorphism.
The implication
\eqref{dmgoodmod-inf} $\Rightarrow$ \eqref{dmgoodmod-infred}
is clear.%
\end{proof}

\pagebreak
\begin{thm}[Gardeyn]\label{gardeyn}%
The Drinfeld module $E$ has good reduction if and only if its motive $M$ admits a good model.
Moreover the good model of $M$, if it exists, is necessarily a motive of a Drinfeld module
over $\Spec\nOK$ extending~$E$.%
\end{thm}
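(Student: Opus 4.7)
The plan is to treat the two directions separately. The forward direction is essentially formal: if $E$ extends to a Drinfeld $A$-module $\mathcal{E}$ over $\Spec\nOK$, then $\roM = \Hom_{\Fq}(\mathcal{E},\,\bG_{a,\nOK})$ is an $\nAR{\nOK}\{\tau\}$-module defined exactly as in Definition~\ref{dfnmot}. Drinfeld's argument for Proposition~\ref{motrank} applies over a Dedekind base and shows $\roM$ to be locally free of rank $r$ over $\nAR{\nOK}$. Base change to $\nresf$ recovers the motive of the reduction of $\mathcal{E}$, so the structure morphism of $\roM$ remains injective modulo the maximal ideal of $\nOK$; hence $\roM$ is a good model of $M$.

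For the reverse direction, let $\roM$ be a good model of $M$. I first show that $C = \coker(\taulin|_\roM)$ is free of rank $1$ over $\nOK$ with the $A$-action factoring through $\iota_E\colon A\to\nOK$. The short exact sequence $0 \to \nsigma^\ast\roM \to \roM \to C \to 0$ remains exact after reduction modulo the maximal ideal, by the definition of goodness. A uniformizer $\pi\in\nOK$ acts injectively on $\roM$ by flatness, and the following chase shows that $C$ is $\nOK$-torsion-free: if $\pi c = 0$ in $C$, lift $c$ to $m\in\roM$ so that $\pi m = \taulin(x)$ for some $x\in\nsigma^\ast\roM$; reducing modulo the maximal ideal and using injectivity of the reduced structure morphism yields $\bar{x} = 0$, so $x = \pi y$ and thus $c = 0$. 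Now $K\otimes_\nOK C \cong \Lie_E(K)^\ast$ by Proposition~\ref{motlie} is one-dimensional over $K$ with $A$ acting through $\iota_E$; the embedding of $C$ into its generic fiber transports these properties back to $C$. In particular the $\nAR{\nOK}$-structure on $C$ factors through $\nOK$, so $C$ is finitely generated over $\nOK$, and being torsion-free over a DVR with one-dimensional generic fiber it is free of rank $1$.

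The pair $(\roM,\taulin)$ therefore satisfies the defining conditions of an Anderson--Drinfeld motive of rank $r$ and dimension $1$ over $\Spec\nOK$ with characteristic $\iota_E$. The Anderson--Drinfeld equivalence between such motives and Drinfeld $A$-modules of rank $r$, established over base fields in \cite{anderson} and \cite{drinfeld-commrgs} and extending formally to Dedekind bases, produces a unique Drinfeld module $\mathcal{E}$ over $\Spec\nOK$ of rank $r$ whose motive is $\roM$. Since $\roM$ is a model we have $K\otimes_\nOK\roM = M$, so the generic fiber of $\mathcal{E}$ has motive $M$ and is isomorphic to $E$ by the equivalence over $K$. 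The main obstacle is this invocation of the Anderson--Drinfeld equivalence in the generality of a DVR base; while the motive construction from a Drinfeld module is explicit, the inverse construction in the relative setting requires reproducing Anderson's argument with $\nOK$ in place of a field, recovering the $\Fq$-vector group scheme and its $A$-action from $\roM$ and its structure morphism.
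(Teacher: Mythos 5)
Your forward direction matches the paper's. The reverse direction, however, has a genuine gap at exactly the point you flag as "the main obstacle," and the gap is not merely the base-change from a field to a DVR: the set of conditions you verify does not characterize motives of Drinfeld modules even over a field. Local freeness of rank $r$ over $\nAR{\nOK}$, injectivity of the structure morphism, and a cokernel free of rank $1$ over $\nOK$ with $A$ acting through $\iota_E$ are \emph{not} sufficient. For instance, over a field with $A=\Fq[t]$ the direct sum of the Carlitz motive and the unit object $\unit$ has rank $2$, dimension $1$, and injective structure morphism, but it is not the motive of any Drinfeld module (it is not finitely generated over $K\{\tau\}$). The missing condition is precisely the behaviour at the place $\infty$: the module must be finitely generated over $\{\tau\}$, equivalently it must extend to an increasing family of vector bundles on the compactified curve satisfying Drinfeld's axioms, equivalently (in the language of this paper) the completion at $\infty$ must be pure of slope $-\tfrac{1}{r}$ with the correct lattice structure. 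Your proposed lemma about the cokernel $C$ is correct but sees nothing of this.

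This is where the paper does its real work. It applies Proposition~\ref{embed} to $M_\infty^*$ to produce a family of lattices $\{\raMinf{n}\}$ in $M_\infty$ with $\raMinf{n+1}=\nlati{}{\raMinf{n}}$ and $\raMinf{n+r\ninfdeg}=z^{-1}\raMinf{n}$, uses the goodness of the model (via Proposition~\ref{dmgoodmod}, i.e.\ that $\roM_\infty$ is an isocrystal) together with the unicity statements of Lemmas~\ref{abunlat} and \ref{bunlat} to descend these to lattices $\raoMinf_n\subset\roM_\infty$ with free rank-one quotients over $\nOK$, and then glues $\roM$ to the $\raoMinf_n$ along $\roM_\infty$ to obtain vector bundles $\cF_n$ on $\Spec\nOK\times_{\Fq}\ncurve$ satisfying Drinfeld's conditions. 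Only then can one invoke \cite[Proposition~3]{drinfeld-commrgs} — which, unlike Anderson's equivalence for $A=\Fq[t]$ over a field, is stated for arbitrary $A$ and arbitrary base — to produce the Drinfeld module over $\Spec\nOK$. Note also that the goodness of the model enters essentially here (an arbitrary model would not yield an isocrystal at $\infty$, and the lattice descent would fail), whereas in your argument goodness is used only to prove torsion-freeness of $C$. To repair your proof you would need to supply this extension of $\roM$ across $\infty$; once you do, you will have reconstructed the paper's argument.
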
%
\begin{proof}%
Suppose that $E$ is the generic fiber of a Drinfeld $A$-module
$\nDMOK$ over $\Spec\nOK$.
Let $\roM$ be the abelian group of $\Fq$-linear group scheme morphisms
from $\nDMOK$ to $\bG_{a,\nOK}$ over $\Spec\nOK$.
In analogy to $M$ this carries a canonical structure of a left $\nAR{\nOK}\{\tau\}$-module.
Proposition~3 of \cite{drinfeld-commrgs}
implies that $\roM$ is a locally free $\nAR{\nOK}$-module of constant
rank.
The construction of $\roM$ is compatible with the change
of $\nOK$. Hence 
\begin{itemize}
\item $\nMzl{\roM} = M$,

\item the structure morphism of $\nAR{\nresf}\otimes_{\nAR{\nOK}}\roM$
is injective.
\end{itemize}
The first claim follows since the generic fiber of $\nDMOK$
is $E$. The second claim follows from Proposition~\ref{motlie}
since the special fiber of $\nDMOK$ is a Drinfeld module.
Thus $\roM$ is a good model of $M$ and we are done.

Conversely, suppose that $M$ admits a good model.
Proposition~\ref{infslope} shows that $M_\infty$ is a pure isocrystal of rank $r$ and slope $-\frac{1}{r}$.
Applying Proposition~\ref{embed} to the dual $M_\infty^*$
we conclude that there is an increasing family
$\{\raMinf{n}\}_{n\in\bZ}$ of lattices in $M_\infty$ such that
for all $n$
\begin{enumerate}%
\renewcommand{\theenumi}{\arabic{enumi}a}
\item\label{gardeyn0-step}%
$\raMinf{n+1} = \nlati{}{\raMinf{n}}$,

\item%
$\raMinf{n+r\ninfdeg} = z^{-1} \raMinf{n}$ where $z \in F_\infty$ is a uniformizer,

\item\label{gardeyn-krank}%
$\dim_K \raMinf{n+1}/\raMinf{n} = 1$.
\end{enumerate}

Let $\roM$ be the good model of $M$ and set
$\raoMinf_n = \raMinf{n} \cap \roM_\infty$.
We claim that $\{\raoMinf_n\}_{n\in\bZ}$ is an increasing family
of lattices in $\roM_\infty$
such that
for all $n$
\begin{enumerate}%
\renewcommand{\theenumi}{\arabic{enumi}b}
\item\label{gardeyn-step}%
$\raoMinf_{n+1} = \nlati{}{\raoMinf_n}$,

\item\label{gardeyn-shift}%
$\raoMinf_{n+r\ninfdeg} = z^{-1}\raoMinf_n$

\item\label{gardeyn-qrank}%
$\raoMinf_{n+1}/\raoMinf_n$ is a free $\nOK$-module of rank $1$.%
\end{enumerate}

Applying Lemma~\ref{abunlat} and Lemma~\ref{bunlat} we conclude that $\raoMinf_n$
is the unique lattice in $\roM_\infty$ which generates $\raMinf{n}$ over $\nBK$.
The structure morphism of $\roM_\infty$ is an isomorphism by
Proposition~\ref{dmgoodmod}.
Hence $\nlati{}{\raoMinf_n}$ is a lattice in $\roM_\infty$.
The unicity property of $\raoMinf_n$ then implies
\eqref{gardeyn-step} and \eqref{gardeyn-shift}.
It follows from \eqref{gardeyn-shift} that
the quotient $\raoMinf_{n+1}/\raoMinf_n$ is a submodule
of a finitely generated free $\nOK$-module $\raoMinf_{n+r\ninfdeg}/\raoMinf_n$ =
$z^{-1} \raoMinf_n / \raoMinf_n$.
In particular this quotient is finitely generated free.
Property~\eqref{gardeyn-krank} then implies 
\eqref{gardeyn-qrank}.

Our next task is to build out of $\roM$ and the family $\{\raoMinf_n\}$
a Drinfeld module over $\Spec\nOK$ with the generic fiber $E$.
We use the following notation:%
\begin{itemize}%
\item $X = \Spec\nOK\times_{\Fq} \ncurve$,
\item $\nsigma\colon X \to X$ is the product of the $q$-Frobenius on $\nOK$
and the identity on $\ncurve$,
\item $Z = \Spec R \times_{\Fq} \{\infty\}$,
\item $\cO(\infty)$ is the line bundle corresponding to $Z$.%
\end{itemize}

The scheme $\Spec\nB$ is the completion of $X$ along $Z$.
The complement of $Z$ in $X$ is $\Spec(\nOK\otimes_{\Fq} A)$
and $\Spec\nBz = (X\backslash Z) \times_X \Spec\nB$.
So glueing $\roM$ to $\raoMinf_n$ along $\roM_\infty$
we obtain a vector bundle $\cF_n$ of rank $r$ on $X$.

The bundles $\cF_n$ form an increasing family.
By property \eqref{gardeyn-step}
the multiplication by $\tau$ on $\roM$ and $\raoMinf_n$
defines a compatible family of morphisms $\nsigma^\ast\cF_n \to \cF_{n+1}$.
We claim that for all $n$
\begin{enumerate}%
\renewcommand{\theenumi}{\roman{enumi}}%
\item\label{gardeynf-qiso}%
the induced map $\nsigma^\ast(\cF_n/\cF_{n-1}) \to \cF_{n+1}/\cF_n$
is an isomorphism,

\item\label{gardeynf-shift}%
$\cF_{n+r\ninfdeg} = \cF_n(\infty)$,%

\item\label{gardeynf-qrank}%
$\uH^0(X,\,\cF_n/\cF_{n-1})$ is a free $\nOK$-module of rank $1$.
\end{enumerate}%
Indeed Property \eqref{gardeynf-qiso} follows from \eqref{gardeyn-step} and Snake Lemma,
\eqref{gardeyn-shift} implies \eqref{gardeynf-shift},
and
\eqref{gardeyn-qrank} implies \eqref{gardeynf-qrank}.
Every quotient $\cF_n/\cF_{n-1}$ is supported on the affine subscheme $Z \subset X$.
So Property \eqref{gardeynf-qrank} implies that
$\chi(\cF_n) = \chi(\cF_{n-1}) + 1$ for all $n$.
Shifting the indices we can ensure that
\begin{enumerate}%
\setcounter{enumi}{3}%
\renewcommand{\theenumi}{\roman{enumi}}%
\item\label{gardeynf-eu}%
$\chi(\cF_{-1}) = 0$.
\end{enumerate}%
According to \cite[Proposition~3]{drinfeld-commrgs}
the family of vector bundles $\{\cF_n\}$
and morphisms $\{\sigma^\ast\cF_n \to \cF_{n+1}\}$
satisfying \eqref{gardeynf-qiso} -- \eqref{gardeynf-eu}
determines a Drinfeld module $\nDMOK$ over $\nOK$ with the motive $\roM$.
This construction is natural in $\nOK$ and so the motive
of the generic fiber of $\nDMOK$ is $M$.
The functor from Drinfeld modules
to motives is fully faithful \cite[Theorem~1]{anderson}.
Hence the generic fiber of $\nDMOK$ is $E$.%
\end{proof}

%

\subsection{The good reduction criterion}%

We keep the following notation:
\begin{itemize}
\item
$K$ is a field over $\Fq$ equipped
with a valuation $v\colon K^\times \twoheadrightarrow \bZ$,

\item
$\nOK$ is the ring of integers of $v$,

\item
$E$ is a Drinfeld $A$-module of rank $r$ over $\Spec K$,

\item
$M$ is the motive of $E$.
\end{itemize}
We also keep the assumption that the residual
characteristic of $E$ is finite with respect to $v$.

%
%
%
\begin{lem}\label{makemod}%
Let $P \subset M$ be a left $\nAR{\nOK}\{\tau\}$-submodule such that $\nMzl{P} = M$
and let $P_\infty = \nBz\otimes_{\nAR{\nOK}} P$.
If $P$ is finitely generated over $\nAR{\nOK}$
then there is a model $\roM\subset M$ containing $P$ such that
$\roM_\infty = P_\infty$.%
\end{lem}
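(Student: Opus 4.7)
The plan is to take $\roM$ to be the reflexive hull of $P$ inside $M$, namely
\begin{equation*}
\roM \;=\; \bigcap_{\fp} P_\fp,
\end{equation*}
where the intersection is formed inside $\nMzl{P} = M$ and $\fp$ ranges over the height-one primes of $\nAR{\nOK}$. Equivalently, $\roM$ is the biduale $P^{**}$ computed over $\nAR{\nOK}$ and realised naturally as a submodule of $M$. Since $M$ is locally free over $\nAK$ and $\nAR{\nOK} \hookrightarrow \nAK$ is injective, $P$ is torsion-free over $\nAR{\nOK}$. The coefficient ring $A$ is smooth over the perfect field $\Fq$ (any regular Dedekind domain of finite type over a perfect field is smooth), so $\nAR{\nOK} = \nOK \otimes_{\Fq} A$ is regular noetherian of dimension two. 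Its localisations at height-one primes are DVRs, hence $P$ is already locally free at every height-one prime and $\roM/P$ is of finite length, supported at finitely many maximal ideals. The Auslander--Buchsbaum formula then makes $\roM$ locally free of finite type. The inclusions $P \subset \roM \subset M$ are clear, and $\nMzl{P} = M$ gives $\nMzl{\roM} = M$. For $\tau$-stability I would invoke Kunz's theorem: $\sigma$ is flat on the regular ring $\nAR{\nOK}$ and so preserves heights of primes. Combined with the characterisation $\roM_\fp = P_\fp$ at every height-one prime, the given $\taulin_M(\sigma^*P) \subset P$ propagates to $\taulin_M(\sigma^*\roM) \subset \roM$, so $\roM$ is a left $\nAR{\nOK}\{\tau\}$-submodule.

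The key remaining identity is $\roM_\infty = P_\infty$. Tensoring $0 \to P \to \roM \to \roM/P \to 0$ with $\nBz$ over $\nAR{\nOK}$ reduces this to the vanishing of $\nBz \otimes_{\nAR{\nOK}} (\roM/P)$ and of $\Tor_1^{\nAR{\nOK}}(\roM/P,\nBz)$. Since $\roM/P$ has finite length, d\'evissage reduces both statements to a single claim: $\fm \nBz = \nBz$ for every maximal ideal $\fm$ of $\nAR{\nOK}$. Once this is known, the Koszul resolution of $\nAR{\nOK}/\fm$ by the length-two regular sequence generating $\fm$ locally at $\fm$ yields the $\Tor$-vanishing after tensoring. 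To prove $\fm \nBz = \nBz$, I first note that $\fm$ must contain the uniformiser $\pi$ of $\nOK$: otherwise its contraction to $\nAK$ would be a height-two prime, impossible since $\nAK$ is one-dimensional. Thus $L := \nAR{\nOK}/\fm$ is the quotient of the finite-type $\nresf$-algebra $\nAR{\nresf}$ by a maximal ideal, so by Zariski's lemma $L$ is a finite extension of $\nresf$. Fix any nonconstant element $a \in A$; then $\bar a \in L$ is algebraic over $\nresf$ and satisfies a monic polynomial $p(x) \in \nresf[x]$ of positive degree $n$. Lifting $p$ to a monic $\tilde p(x) \in \nOK[x]$ gives $\tilde p(a) \in \fm$. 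On the other hand $a$, being nonconstant, has a pole of some positive order $d$ at $\infty$, so its image in $\nBz$ has leading term $c\,z^{-d}$ for a unit $c \in \ncons_\infty^\times$. The top monomial $a^n$ then strictly dominates in $z$-valuation inside $\tilde p(a)$, exhibiting the image of $\tilde p(a)$ in $\nBz$ as a unit with leading term $c^n z^{-dn}$. Hence $\fm \nBz$ contains a unit, so $\fm \nBz = \nBz$.

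The main obstacle I anticipate is precisely this verification that every maximal ideal of $\nAR{\nOK}$ generates the unit ideal in $\nBz$: its crux is the interplay between Zariski's lemma, which forces $\bar a \in L$ to be algebraic over $\nresf$, and the Laurent expansion of nonconstant elements of $A$ at $\infty$, which converts that polynomial relation into a unit. Everything else is standard: torsion-freeness of $P$, local freeness of reflexive modules over two-dimensional regular rings, $\tau$-stability via Kunz's flatness, and the reduction of $\roM_\infty = P_\infty$ to behaviour at the finitely many maximal ideals of $\nAR{\nOK}$ occurring in the support of $P^{**}/P$.
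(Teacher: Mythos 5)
Your construction of $\roM$ is the same as the paper's: both take the reflexive hull $P^{**}$, realized inside $M \cong M^{**}$, and deduce local freeness from regularity of $\nAR{\nOK}$ in dimension two. The two proofs diverge afterwards. For $\tau$-stability the paper simply notes that double duals of finitely generated modules are functorial and compatible with base change, so $\taulin_P$ induces $\nsigma^*(P^{**}) \to P^{**}$ compatibly with $\taulin_{M}$; your route via the description $P^{**} = \bigcap_{\fp} P_\fp$ over height-one primes and height-preservation of $\nsigma$ also works, though the flatness you want is that of the Frobenius of the discrete valuation ring $\nOK$ base-changed along $\Fq \to A$ (a partial Frobenius), not Kunz for the absolute Frobenius of $\nAR{\nOK}$. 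For the key identity $\roM_\infty = P_\infty$ the paper argues that $P_\infty$ is finitely generated and torsion-free over $\nBz$, a finite product of one-dimensional regular rings, hence locally free and reflexive, and then uses compatibility of double duals with the base change $\nAR{\nOK}\to\nBz$. You instead kill the defect $\roM/P$ directly: it has finite length, is supported at finitely many closed points of the special fibre, and those points become the unit ideal in $\nBz$, so both $\nBz\otimes_{\nAR{\nOK}}(\roM/P)$ and the relevant $\Tor_1$ vanish. Your version avoids invoking flatness of $\nBz$ over $\nAR{\nOK}$ (which the paper uses elsewhere without proof), at the cost of the Zariski-lemma-plus-leading-term computation at $\infty$; both are legitimate, and your unit computation is essentially the one the paper performs in its Proposition on $\rMloc$ being an isocrystal.

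One intermediate claim must be restricted. It is \emph{not} true that $\fm\nBz = \nBz$ for every maximal ideal of $\nAR{\nOK}$: for $A = \Fq[t]$ the ideal $(\pi t - 1) \subset \nOK[t]$ (with $\pi$ a uniformizer of $\nOK$) is maximal of height one, does not contain $\pi$, and its image in $\nBz = \rF{\nOK}$ with $z = t^{-1}$ is the ideal generated by $z - \pi$, which is not a unit. Your argument that $\fm$ must contain $\pi$ ("otherwise its contraction to $\nAK$ would be a height-two prime") tacitly assumes $\fm$ has height two. This does not break the proof, because the only maximal ideals you actually need are those in the support of $\roM/P$, and these do have height two since $\roM/P$ vanishes at every prime of height at most one by construction of the reflexive hull; such ideals contain $\pi$, and your Zariski-lemma argument then applies verbatim. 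State the claim for those ideals only and the argument is complete.
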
%
\begin{proof}%
Let $P^{**}$ be the double dual of $P$ over $\nAR{\nOK}$
and let $M^{**}$ be the double dual of $M$ over $\nAR{K}$.
Since $M$ is locally free the natural map $M \to M^{**}$ is an isomorphism.
Let $\roM$ be the preimage of $P^{**}$ under this map.
By construction $\roM$ contains $P$ and
is a reflexive $\nAR{\nOK}$-module.
Since $\nAR{\nOK}$ is a regular ring of dimension $2$
it follows that $\roM$ is locally free \cite[Corollary~1.4]{hart}.

Double duals of finitely generated modules are functorial and compatible
with arbitrary base change.
In particular $\taulin_P$ gives rise to a morphism $\nsigma^\ast (P^{**}) \to P^{**}$
which is compatible with the morphism
$\nsigma^\ast (M^{**}) \to M^{**}$ induced by $\taulin_M$.
Hence $\roM$ is a left $\nAR{\nOK}\{\tau\}$-submodule of $M$.
Next, $\nMzl{P} = M$ 
so the compatibility of double duals with base change
implies that $\roM$ is a model of $M$.

By construction $P_\infty$ is a submodule of $M_\infty$ and so is torsion-free.
As $\nBz$ is a finite product of regular rings of dimension $1$
it follows that $P_\infty$ is locally free.
Therefore the natural map $P_\infty \to P^{**}_\infty$ is an isomorphism.
The fact that $\roM_\infty = P_\infty$ then follows from the
compatibility of double duals with base change.%
\end{proof}

\begin{prp}\label{metagood}%
Let $\nOK \to \nOL$ be a finite local morphism of discrete valuation rings
and let $\nL$ be the fraction field of $\nOL$.
Suppose that $\nL\otimes_K M$ has a good model over~$\nOL$.
If $M_\infty$ has good reduction over~$\nOK$ then 
$M$ has a good model over $\nOK$.%
\end{prp}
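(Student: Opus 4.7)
The plan is to apply Lemma~\ref{makemod} to the sub-$\nAR{\nOK}\{\tau\}$-module
\begin{equation*}
P = \roM_\nL \cap M \subset M,
\end{equation*}
where the intersection is taken inside $M_\nL = \nL \otimes_K M$ and $\roM_\nL$ denotes the given good model of $\nL\otimes_K M$ over $\nOL$. The hypotheses of Lemma~\ref{makemod} are verified as follows. The $\tau$-stability of $P$ is clear from $\tau(1 \otimes m) = 1 \otimes \tau(m)$ in $M_\nL$ combined with the $\tau$-stability of $\roM_\nL$. Finite generation of $P$ over $\nAR{\nOK}$ follows because $\roM_\nL$ is finitely generated over $\nAR{\nOL}$ and hence over the Noetherian ring $\nAR{\nOK}$ (since $\nOL$ is a free $\nOK$-module of finite rank), so that the submodule $P$ is also finitely generated. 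Finally, $\nMzl{P} = M$ follows by a denominator-clearing argument: for $m \in M$, the identity $\nL\otimes_{\nOL}\roM_\nL = M_\nL$ produces $0 \ne b \in \nOL$ with $bm \in \roM_\nL$, and writing $\pi_{\nOK}^n = bc$ with $c \in \nOL$ for large $n$ yields $\pi_{\nOK}^n m = c(bm) \in \roM_\nL \cap M = P$.

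Lemma~\ref{makemod} then supplies a model $\roM \supset P$ over $\nOK$ with $\roM_\infty = P_\infty$; by Proposition~\ref{dmgoodmod} this $\roM$ is a good model precisely when $P_\infty$ is an isocrystal. The hypothesis on $M_\infty$ produces a $\nBz$-model $\ro{M}_\infty \subset M_\infty$, and Proposition~\ref{modcmp} applied to the motive $\nL\otimes_K M$, combined with the uniqueness of $\nBz^{\nL}$-models (Proposition~\ref{dmmoduniq}), identifies $(\roM_\nL)_\infty$ with $\nOL \otimes_{\nOK} \ro{M}_\infty$ inside $\nL\otimes_K M_\infty$. The inclusion $P \subset \roM_\nL$ yields $P_\infty \subset (\roM_\nL)_\infty$; combined with $P_\infty \subset M_\infty$ and faithful flatness of $\nOL$ over $\nOK$, this gives $P_\infty \subset \ro{M}_\infty$.

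The main obstacle is the reverse inclusion $\ro{M}_\infty \subset P_\infty$, after which the equality $P_\infty = \ro{M}_\infty$ identifies $\roM_\infty$ with the isocrystal $\ro{M}_\infty$ and concludes the proof. I expect to obtain this inclusion by exhibiting enough elements of $P$ that $\nBz$-generate $\ro{M}_\infty$: lifting a system of generators of $\ro{M}_\infty$ modulo $\pi_{\nOK}\ro{M}_\infty$ (whose topology is nontrivial because $\nBz$ contains $\nOK$) to elements of $M \cap \ro{M}_\infty \subset P$, one applies Nakayama's lemma for finitely generated modules over the (semi-)local ring $\nBz$. The actual existence of such lifts is the delicate technical point, and it is here that both the good model $\roM_\nL$ and the $\nBz$-model $\ro{M}_\infty$ are jointly essential—the former supplying the $\nOK$-integrality and the latter supplying the $\nminf$-integrality.
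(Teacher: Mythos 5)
Your construction of $P=\roM_{\nL}\cap M$ and the reduction---via Lemma~\ref{makemod} and Proposition~\ref{dmgoodmod}---to showing that $P_\infty$ is an isocrystal coincide exactly with the paper's proof, and your verification of finite generation and of $\nMzl{P}=M$ is fine. The gap is the step you yourself flag as ``the delicate technical point'': the identification $P_\infty=\roM_\infty$. You only sketch a strategy for the inclusion $\roM_\infty\subset P_\infty$, and that strategy does not work as stated. First, it is not clear that $M\cap\roM_\infty\subset P$: an element of $M$ lying in the $\nBz$-model of $M_\infty$ need not lie in the good model $\roM_{\nL}$, since relating these two kinds of integral structures is precisely what is at stake. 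Second, Nakayama's lemma over $\nBz$ with respect to the ideal generated by a uniformizer $\pi$ of $\nOK$ fails: $\pi$ is not in the Jacobson radical of $\nBz$ (for $\ncoef=\rF{\Fq}$ one has $\nBz=\rF{\nOK}$, and $1-\pi z^{-1}$ is not invertible there, its formal inverse $\sum_k\pi^kz^{-k}$ having infinite principal part). So nothing in your write-up actually produces the reverse inclusion.

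The paper closes this step by a base-change computation rather than by exhibiting elements. Since $\nBz$ is flat over $\nAR{\nOK}$ and $P=M\cap\roM_{\nL}$ is the kernel of the difference map $M\oplus\roM_{\nL}\to\rML$, one gets
\begin{equation*}
P_\infty=\ker\big(\rMloc\oplus\roMLinf\xrightarrow{\ \textup{difference}\ }\rMLloc\big).
\end{equation*}
Proposition~\ref{modcmp} identifies $\rMloc=K\otimes_{\nOK}\roM_\infty$ and $\rMLloc=\nL\otimes_{\nOK}\roM_\infty$, and Proposition~\ref{dmmoduniq} gives $\roMLinf=\nBLz\otimes_{\nBz}\roM_\infty$; hence $P_\infty=\ker(\nBzl\oplus\nBLz\to\nBLzl)\otimes_{\nBz}\roM_\infty$. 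The identity $\nBzl\cap\nBLz=\nBz$ (which follows from $\nBR{\nOL}=\nOL\otimes_{\nOK}\nBR{\nOK}$, using that $\nOL$ is finite free over $\nOK$) then yields $P_\infty=\roM_\infty$ in one stroke. To repair your argument you should replace the lifting-plus-Nakayama step by this flatness computation, which is where the hypothesis that $\nOK\to\nOL$ is finite is actually used.
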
%
\begin{proof}%
We claim that it is enough to construct a left $\nAR{\nOK}\{\tau\}$-submodule $P \subset M$
with the following properties:
\begin{enumerate}
\item\label{premodlat}%
$P$ is finitely generated over $\nAR{\nOK}$ and $\nMzl{P} = M$,

\item\label{premodgood}%
$P_\infty = \nBz \otimes_{\nAR{\nOK}} P$ is an isocrystal.
\end{enumerate}%
Indeed Lemma~\ref{makemod} shows that there is a model $\roM \subset M$
such that $\roM_\infty = P_\infty$ is an isocrystal.
Proposition~\ref{dmgoodmod} implies that the model $\roM$ is good
and the claim follows.

Let us construct a submodule $P$ satisfying \eqref{premodlat} and \eqref{premodgood}.
Let $\roML$ be the good model of $\rML = \nL\otimes_K M$ over~$\nOL$.
%
The left $\nAR{\nOK}\{\tau\}$-module $P = M \cap \roML$
is of finite type over $\nAR{\nOK}$ since $\roML$ is finitely generated
over $\nAR{\nOL}$ and the morphism $\nAR{\nOK}\to\nAR{\nOL}$ is finite.
The equality $K\otimes_{\nOK} \roML = \rML$ implies that $\nMzl{P} = M$.
So $P$ has property~\eqref{premodlat}.

Before we continue with the proof of~\eqref{premodgood} let us
introduce the following notation:
\begin{equation*}
\nBLzl = \nMLzl{\nBLz}, \quad
\rMLloc = \nBLzl \otimes_{\nAR{\nL}} \rML, \quad
\roMLinf = \nBLz \otimes_{\nAR{\nOL}} \roML.
\end{equation*}
Since $\nBz$ is flat over $\nAR{\nOK}$ we conclude that
\begin{equation*}
P_\infty = 
\ker\big( \rMloc \oplus \roMLinf \xrightarrow{\:\textup{difference}\:} \rMLloc\big).
\end{equation*}
Let $\roM_\infty$ be the good model of $M_\infty$.
Recall that $\rMloc = \nBzl \otimes_{\nAR{\nOK}} M$.
Proposition~\ref{modcmp} shows that
$\rMloc = K\otimes_{\nOK}\roM_\infty$ and 
$\rMLloc = \nL\otimes_{\nOK} \roM_\infty$.
Similarly $\nBRz{\nOL}\otimes_{\nBz}\roM_\infty = \roMLinf$ by Proposition~\ref{dmmoduniq}.
So
\begin{equation*}
P_\infty = \ker\big(\nBzl \oplus \nBLz \xrightarrow{\:\textup{difference}\:} \nBLzl\big) \otimes_{\nBz} \roM_\infty.
\end{equation*}
Since $\nOL$ is finitely generated free over $\nOK$
it follows that $\nBR{\nOL} = \nOL\otimes_{\nOK} \nBR{\nOK}$.
Hence $\nBzl \cap \nBLz = \nBz$ and we conclude that $P_\infty = \roM_\infty$ is an isocrystal.
Thus $P$ has property~\eqref{premodgood} and the claim follows.%
\end{proof}

\begin{lem}\label{main}%
Assume that $A = \Fq[t]$. 
If the isocrystal $M_\infty$ has good reduction then the Drinfeld module $E$ has good reduction.%
\end{lem}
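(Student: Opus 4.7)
My plan is to pass to stable reduction after a finite separable base change, use the Drinfeld--Gardeyn analytic structure theorem to exhibit a saturated $\sigma$-subbundle of slope $0$ whose vanishing characterises good reduction, invoke Theorem~\ref{hpzero} to lift its inclusion to a morphism of pure isocrystals, apply Proposition~\ref{slopehom} to force this morphism to be zero, and finally descend good reduction back to $\nOK$ via Proposition~\ref{metagood}.

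First I would replace $K$ by its completion $\nKcompl$ with respect to $v$: this preserves good reduction of $M_\infty$ (by base change of its $\nBz$-model), and any eventual good model of $M$ over $\widehat{\nOK}$ descends to a good model of $M$ over $\nOK$ by faithfully flat descent combined with the uniqueness statement of Proposition~\ref{dmmoduniq}. Having assumed $K$ complete, the semistable reduction theorem of Drinfeld \cite[\S7]{drinfeld-ell} (which uses $A = \Fq[t]$) provides a finite separable extension $\nL/K$ with discrete valuation ring $\nOL$ finite over $\nOK$ such that $E_\nL$ has stable reduction over $\nOL$. Gardeyn's analytic uniformization theorem \cite[Theorem~1.2]{gardeyn-anmor} then produces a saturated subbundle $\cN$ of the $\sigma$-bundle $\cM$ attached to $M_\nL = \nL\otimes_K M$ on the punctured open unit disk over $\nL$, with the following properties:
\begin{itemize}
\item $\cN$ arises by base change from a pure $\nBz$-isocrystal $N$ of slope $0$ over $\nOL$,
\item $\cM/\cN$ arises from a pure isocrystal of good reduction over $\nOL$,
\item $\cN = 0$ if and only if $E_\nL$ has good reduction over $\nOL$.
\end{itemize}

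By hypothesis $M_\infty$ has good reduction over $\nOK$, so $M_{\nL,\infty}$ has good reduction over $\nOL$ by base change of the $\nBz$-model. Thus both $\cN$ and $\cM$ come from pure $\nBKz$-isocrystals of good reduction over $\nOL$. Applying Theorem~\ref{hpzero}, the inclusion $\cN \hookrightarrow \cM$ of $\sigma$-bundles is the image of a unique morphism $N_\nL \to M_{\nL,\infty}$ of pure $\nBKz$-isocrystals, where $N_\nL$ has slope $0$ and $M_{\nL,\infty}$ has slope $-\tfrac{1}{r}$ by Proposition~\ref{infslope}. Since the slopes differ and $\nL$ is a field (hence reduced), Proposition~\ref{slopehom} forces this morphism to be zero. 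Consequently $\cN = 0$, so $E_\nL$ has good reduction over $\nOL$, and Theorem~\ref{gardeyn} supplies a good model of $M_\nL$ over $\nOL$. Now Proposition~\ref{metagood} applies to the finite local extension $\nOK\to\nOL$: the motive $M$ has a good model over $\nOK$, whence by Theorem~\ref{gardeyn} the Drinfeld module $E$ has good reduction over $\nOK$.

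The main obstacle will be the invocation of the Drinfeld--Gardeyn analytic structure theorem for the $\sigma$-bundle of a semistable Drinfeld module. This classical input, which combines Tate uniformization with Gardeyn's analytic semistability, is what supplies the saturated subbundle $\cN$ of slope $0$ in a way that is compatible with the good reduction property. Once this structure is granted, the rest is a slope-matching argument enabled by the full faithfulness result Theorem~\ref{hpzero}, which relies crucially on the assumption that $v$ is discrete.
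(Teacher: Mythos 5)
Your proposal is correct and follows essentially the same route as the paper: complete $K$, pass to a finite separable extension with stable reduction via Proposition~\ref{metagood}, use Gardeyn's analytic theorem to produce the slope-$0$ subbundle $\cN\subset\cM$, lift $\cN\hookrightarrow\cM$ to a morphism of pure isocrystals by Theorem~\ref{hpzero}, kill it with Proposition~\ref{slopehom}, and descend. The only cosmetic slip is that Theorem~\ref{hpzero} produces a morphism of the $\nBz$-models (i.e.\ of $\nBz$-isocrystals over $\nOL$), not of $\nBKz$-isocrystals, but the slope argument applies equally there.
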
%
\begin{proof}%
We are free to assume that $K$ is $\nval$-adically complete.
By Theorem~\ref{gardeyn} it is enough to prove that $M$ has a good model.
In view of Proposition~\ref{metagood} we are free to replace $K$ by a finite separable extension.
%
So it is enough to treat the case when $E$ has stable reduction.

As in \cite{gardeyn-anmor} we denote by $\nEK$ the ring of entire power series
\begin{equation*}
\sum_{n\geqslant 0} \alpha_n t^n, \quad\alpha_n \in K, \quad
\lim_{n \to \infty} |\alpha_n|^{\frac{1}{n}} = 0.%
\end{equation*}%
We equip $\nEK$ with an endomorphism 
$\nsigma\colon \sum_n \alpha_n t^n \mapsto
\sum_n \alpha_n^q t^n$.

Set $M^{\textup{an}} = \nEK \otimes_{\nAR{K}} M$.
Gardeyn \cite[Theorem 1.2]{gardeyn-anmor}
constructed a short exact sequence of left $\nEK\{\tau\}$-modules
\begin{equation}\label{stablefilt}\tag{\ensuremath{\ast}}
0 \to N^{\nan} \to M^{\nan} \to M_1^{\nan} \to 0
\end{equation}
with the following properties:
\begin{itemize}
\item
$M_1^{\nan}$ is locally free over $\nEK$ and its rank
is equal to the stable rank of $E$,

\item
$N^{\nan}$ is potentially trivial:
there is a finite separable extension $L/K$
and an isomorphism of left $\nER{L}\{\tau\}$-modules
$\nER{L}\otimes_{\nEK} N \xrightarrow{\isosign} (\unit_{\nER{L}})^{\oplus n}$.
\end{itemize}
Thanks to Proposition~\ref{metagood} we are free to assume that $L = K$.

Let $\nDK$ be the ring of Definition~\ref{defdk}
with $z = t^{-1}$.
The natural inclusion $\nEK \subset \nDK$ commutes with $\nsigma$.
The base change of \eqref{stablefilt} to $\nDK$ gives a sequence
\begin{equation*}
0 \to \cN \to \cM \to \cM_1 \to 0.
\end{equation*}
This is exact since the $\nEK$-module $M_1^{\nan}$ is flat.

By construction the $\nsigma$-bundle $\cM$ arises from the isocrystal $\rMloc$.
Let $\roM_\infty$ be the $\nBz$-model of $M_\infty$.
Proposition~\ref{modcmp} shows that $K\otimes_{\nOK} \roM_\infty = \rMloc$.
Hence $\cM = \nDK\otimes_{\nBz}\roM_\infty$.
Moreover $\roM_\infty$ is pure of slope $-\frac{1}{r}$ by Proposition~\ref{dmredpure}.

At the same time the $\nsigma$-bundle $\cN$ arises from a $\nBz$-isocrystal
$\roN$ which is a direct sum of copies of $\unit$. This isocrystal
is a fortiori pure of slope~$0$.
Applying Theorem~\ref{hpzero} we conclude that the morphism $\cN \to \cM$
arises from a morphism $\roN \to \roM_\infty$.

The isocrystals $\roN$, $\roM_\infty$ are pure of different slopes $0$, $-\frac{1}{r}$.
So Proposition~\ref{slopehom} shows that $\Hom(\roN,\roM_\infty) = 0$.
Since the morphism $\cN \to \cM$ is injective we conclude that $\cN = 0$.
Thus the rank of $E$ is equal to the stable rank and so $E$ has good reduction.%
%
\end{proof}


\begin{thm}\label{crit}%
Let $K$ be a field over $\Fq$ equipped with a valuation
$v\colon K^\times \twoheadrightarrow \bZ$ and let $\nOK$
be its ring of integers.
Let $E$ be a Drinfeld module over $\Spec K$ with the motive~$M$.
Assume that the residual characteristic of $E$ is finite
with respect to~$v$.
Then the following are equivalent:
\begin{enumerate}
\item\label{e-goodred}%
The Drinfeld module $E$ has good reduction over $\Spec\nOK$.
 
\item\label{minf-goodred}%
The isocrystal $M_\infty$ has good reduction over $\nOK$.

\item\label{tinf-ur}%
The Tate module $\nTinf{M}$ is unramified at the valuation $\nval$.
\end{enumerate}
\end{thm}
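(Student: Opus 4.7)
\emph{Proof plan.} I plan to establish the cycle of implications $(1) \Rightarrow (2) \Leftrightarrow (3) \Rightarrow (1)$.

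For $(1) \Rightarrow (2)$: If $E$ has good reduction, Theorem~\ref{gardeyn} produces a good $\nAR{\nOK}$-model $\roM$ of the motive $M$. Its associated isocrystal $\roM_\infty$ is a $\nBz$-isocrystal by Proposition~\ref{dmgoodmod}, and base-changing to $\nBKz$ returns $M_\infty$. So $\roM_\infty$ is a $\nBz$-model of $M_\infty$ and the latter has good reduction.

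For $(2) \Leftrightarrow (3)$: Proposition~\ref{infslope} shows that $M_\infty$ is pure of slope $-\tfrac{1}{r}$, so the equivalence is the content of Theorem~\ref{tatered}.

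For $(2) \Rightarrow (1)$: The case $A = \Fq[t]$ is Lemma~\ref{main}. For general $A$, the strategy is to reduce to this case by restriction of scalars to $\Fq[t]$. I would pick a transcendental $t \in A$ with $v_\infty(t) < 0$; this yields a finite ring extension $\Fq[t] \hookrightarrow A$ for which $\infty$ is the unique place of $F$ lying above the infinity place of $\Fq(t)$. Consequently $\nBz^A$ is a finite free $\nBz^{\Fq[t]}$-algebra of rank $[F_\infty : \Fq((t^{-1}))]$, compatibly with the localizations, and the natural base-change identifications give $M_\infty^A \cong M_\infty^{\Fq[t]}$ as $\nBKz^{\Fq[t]}$-modules, where $M_\infty^{\Fq[t]}$ denotes the $\infty$-adic isocrystal of $M$ viewed as the motive of $E$ as a Drinfeld $\Fq[t]$-module. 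Any $\nBz^A$-model of $M_\infty^A$ is then automatically a $\nBz^{\Fq[t]}$-model of $M_\infty^{\Fq[t]}$ via restriction of scalars, so good reduction of $M_\infty^A$ descends to good reduction of $M_\infty^{\Fq[t]}$. Lemma~\ref{main} applied to $E$ as a Drinfeld $\Fq[t]$-module then yields good reduction of $E$ over $\Spec\nOK$ as a Drinfeld $\Fq[t]$-module, which via Theorem~\ref{gardeyn} furnishes a good $\nOK[t]$-model $\roM'$ of $M$. The final step is to upgrade this to good reduction as a Drinfeld $A$-module: each $a \in A$ satisfies a monic equation over $\Fq[t]$, and applying $\phi\colon A \to K\{\tau\}$ to this relation forces $\phi(A)\subset\nOK\{\tau\}$ by an integrality argument internal to the ring of twisted polynomials; combining with Lemma~\ref{makemod} applied to the $A_{\nOK}\{\tau\}$-submodule of $M$ generated by $\roM'$, one produces a good $\nAR{\nOK}$-model of $M$, and Theorem~\ref{gardeyn} concludes.

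\emph{Main obstacle.} The conceptually deep step is Lemma~\ref{main}, which rests on Gardeyn's analytic semistability theorem and the Hartl-Pink vanishing of Theorem~\ref{hpzero}. The reduction from general $A$ to $\Fq[t]$ is primarily bookkeeping — tracking how ranks, slopes and model rings transform under the finite extension $\nBz^{\Fq[t]} \hookrightarrow \nBz^A$ — but the final integrality step, promoting a good $\Fq[t]$-reduction of $E$ to a good $A$-reduction, requires care: the natural $v$-adic valuation on $K\{\tau\}$ is only submultiplicative in a restricted sense, so the integrality of $a \in A$ over $\Fq[t]$ has to be exploited carefully to conclude $\phi(A) \subset \nOK\{\tau\}$.
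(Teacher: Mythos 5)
Your proposal follows essentially the same route as the paper: $(1)\Rightarrow(2)$ via Theorem~\ref{gardeyn} and Proposition~\ref{dmgoodmod}, $(2)\Leftrightarrow(3)$ via purity (Proposition~\ref{infslope}) and Theorem~\ref{tatered}, and $(2)\Rightarrow(1)$ by restricting scalars to $\Fq[t]$ — using that $F_\infty = \Fq(\!(t^{-1})\!)\otimes_{\Fq[t]} A$ so that a $\nBz$-model for $A$ restricts to one for $\Fq[t]$ — and then invoking Lemma~\ref{main}. The only divergence is your final step promoting good $\Fq[t]$-reduction of $E$ to good $A$-reduction, which the paper leaves implicit as a standard fact; your integrality/centralizer sketch is a correct way to supply it.
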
%
\begin{proof}%
The implication
\eqref{e-goodred}~$\Rightarrow$~\eqref{minf-goodred}
is a consequence of Theorem~\ref{gardeyn}
and Proposition~\ref{dmgoodmod}.
The equivalence of \eqref{minf-goodred} and \eqref{tinf-ur} follows by Theorem~\ref{tatered}.
It remains to prove \eqref{minf-goodred} $\Rightarrow$ \eqref{e-goodred}.
Pick an element $t\in A$ which is transcendental over $\Fq$.
It is enough to show that $E$ has good reduction as a Drinfeld $\Fq[t]$-module.
The local field of $\Fq[t]$ at infinity is $\rFt{\Fq}$.
The analogs of rings $\nBKz$ and $\nBz$ for the coefficient field $\rFt{\Fq}$
are $\rFt{K}$ and $\rFt{\nOK}$ respectively.
The natural morphisms
\begin{equation*}
\rFt{K}\otimes_{K[t]} \nAR{K} \xrightarrow{\isosign} \nBKz, \quad
\rFt{\nOK}\otimes_{\nOK[t]} \nAR{\nOK} \xrightarrow{\isosign} \nBz.
\end{equation*}
are isomorphisms since
$F_\infty = \rFt{\Fq}\otimes_{\Fq[t]} A$.

Let $\roM_\infty \subset M_\infty$ be the good model.
Note the following facts:
\begin{itemize}
\item
The motive of the Drinfeld $\Fq[t]$-module $E$ is $M$
with the action of $K[t]\{\tau\}$ given by restriction of scalars
from $\nAR{K}\{\tau\}$.

\item $M_\infty = \rFt{K}\otimes_{K[t]} M$.

\item $\roM_\infty$ is a good model of the $\rFt{K}$-isocrystal $M_\infty$.%
\end{itemize}%
We thus reduce to the case $A = \Fq[t]$ where the claim follows from Lemma~\ref{main}.%
\end{proof}

\end{document}